\let\vref\ref
\theoremstyle{plain}
\newtheorem{proposition}{Proposition}[chapter]
\newtheorem{theorem}[proposition]{Theorem}
\newtheorem{lemma}[proposition]{Lemma}
\newtheorem{corollary}[proposition]{Corollary}
\theoremstyle{definition}
\newtheorem{definition}[proposition]{Definition}
\newtheorem{question}[proposition]{Question}
\theoremstyle{remark}
\newtheorem{remark}[proposition]{Remark}
\newlist{tfae}{enumerate}{1}
\setlist[tfae]{label=\textup{(\alph*)}, ref=\alph*}
\newcommand\implication[2]{$(\@alph #1)\implies(\@alph #2)$}
\newcommand\tfaeitem[1]{\textup{(\@alph#1)}}
\newlist{thmlist}{enumerate}{1}
\setlist[thmlist]{label=\textup{(\roman*)}, ref=(\roman*)}
\newcommand\thmitem[1]{\textup{(\emph{\romannumeral #1})}}
\newlength{\boxesnormalparindent}
\colorlet{thm-color}{blue!5}
\colorlet{proof-color}{gray!8}
\tcolorboxenvironment\expandafter{\t}{
        common={thm-color},
    }
\tcolorboxenvironment\expandafter{\t}{
        common={proof-color},
        after skip=\bigskipamount,
    }
\providecommand*\lettergroup[1]{%
      \par\par
      \nopagebreak
  }
\newcounter{indexcount}
\newcommand{\threedigit}[1]{\ifnum #1<100 0\fi\ifnum #1<10 0\fi#1}
\newcommand{\indexsymbol}[1]{%
  \ifcsname\detokenize{SYM@@#1}\endcsname
    \index[notations]{\csname\detokenize{SYM@@#1}\endcsname @#1}%
  \else
    \stepcounter{indexcount}%
    \expandafter\xdef\csname SYM@@\detokenize{#1}\endcsname{%
      \expandafter\threedigit\expandafter{\romannumeral-`Q\theindexcount}%
    }%
    \index[notations]{\threedigit{\theindexcount}@\unexpanded{\unexpanded{#1}}}%
  \fi
}
\newcommand{\NN}{\mathbb{N}}
\newcommand{\ZZ}{\mathbb{Z}}
\newcommand{\kk}{\Bbbk}
\newcommand{\cB}{\mathcal{B}}
\newcommand{\cF}{\mathcal{F}}
\newcommand{\cG}{\mathcal{G}}
\newcommand{\cR}{\mathcal{R}}
\newcommand{\ssC}{\mathsf{C}}
\newcommand\I{\mathcal{I}}
\let\emptyset\varnothing
\newcommand\m{\mathfrak{m}}
\newcommand\dR{\mathrm{dR}}
\DeclareMathOperator\charact{char} 
\DeclareMathOperator\coker{coker}
\DeclareMathOperator{\HC}{HC}
\DeclareMathOperator{\BC}{\mathcal BC}
\DeclareMathOperator{\ev}{ev}
\DeclareMathOperator{\rot}{rot}
\DeclareMathOperator{\rad}{rad}
\DeclareMathOperator{\rk}{rk}
\DeclareMathOperator{\Hom}{Hom}
\DeclareMathOperator{\Ext}{Ext}
\DeclareMathOperator{\Tor}{Tor}
\DeclareMathOperator{\gldim}{gl.dim}
\DeclareMathOperator{\HH}{HH}
\let\H\relax
\DeclareMathOperator{\H}{H}
\newcommand\place{\mathord{-}}
\newcommand\C{\mathscr{C}}
\newcommand\G{\mathscr{G}}
\newcommand\id{\mathrm{id}}
\newcommand\lfact[2]{{#2}\if1#1'\fi_{\mathsf{lt}}}
\newcommand\rfact[2]{{#2}\if2#1'\fi_{\mathsf{rt}}}
\DeclareMathAlphabet{\mathbx}{U}{bbold}{m}{n}
\newcommand\one{\mathbx{1}}
\let\oldparallel\parallel
\renewcommand\parallel{\mathbin{\oldparallel}}
\mathchardef\mhyphen="2D
\newcommand{\rsmod}[1]{{#1\mhyphen\mathrm{mod}}}
\newcommand\cyclespace{\mspace{-5mu}}
\newcommand\cycle[1]{%
   \langle\cyclespace\langle#1\rangle\cyclespace\rangle}
\newcommand\hcycle[1]{\llbracket#1\rrbracket}
\newcommand\Cprim{\C^{\,\mathsf{basic}}}
\newcommand\Crep{\overline\C}
\newcommand\Crepprim{\overline\C^{\,\mathsf{basic}}}
\DeclarePairedDelimiter\abs{\lvert}{\rvert}
\newcommand\claim[2][0.8]{%
  \ifthenelse{\dimtest{#1\displaywidth}>{0\displaywidth}}{
    \begin{minipage}{#1\displaywidth}
    \itshape #2
    \end{minipage}
  }{%
    \begin{minipage}{-#1\displaywidth}
    \itshape\centering #2
    \end{minipage}
  }
}
\def\spiral[#1](#2)(#3:#4)(#5:#6)[#7]{%
	\pgfmathsetmacro{\domain}{#4+#7*360}
	\pgfmathsetmacro{\growth}{180*(#6-#5)/(pi*(\domain-#3))}
	\draw [
              #1,
	      shift={(#2)},
	      domain=#3*pi/180:\domain*pi/180,
	      variable=\t,
	      smooth,
	      samples=int(\domain/5)
              ] 
           plot ({\t r}: {#5+\growth*\t-\growth*#3*pi/180})
}
\tikzset{
  snake it/.style={decorate, decoration=snake},
  bl/.style={circle, draw=white, thin,fill=black!100, scale=0.5},
  cg/.style={circle, draw, thin,fill=black!10, scale=0.8},
  cw/.style={circle, draw, thin,fill=white, scale=0.8},
  B/.style={circle, draw,fill=black!30, scale=1.6},
  b/.style={circle, draw,fill=black, scale=0.3},
  r/.style={circle, draw=red,fill=red, scale=0.3},
}
\newcommand{\newterm}[1]{\textbf{#1}}
\newcommand\fd{f.d.\@\xspace}
\definecolor{alizarin}{rgb}{0.82, 0.1, 0.26}
\definecolor{candyapplered}{rgb}{1.0, 0.03, 0.0}
\definecolor{darkgreen}{rgb}{0.5, 0.5, 0.26}
\definecolor{blue(pigment)}{rgb}{0.2, 0.2, 0.6}
\definecolor{byzantine}{rgb}{0.74, 0.2, 0.64}
\definecolor{blue-violet}{rgb}{0.54, 0.17, 0.89}
\begin{document}

\frontmatter

\title{The Hochschild cohomology and the Tamarkin-Tsygan calculus    of gentle algebras}

\date{\today}

\author{Cristian Chaparro}
\address{Department of Mathematics with Computer Science\\
 Guangdong Technion-Israel Institute of Technology (GTIIT)\\
 241 Daxue Road - Jinping District\\
 Shantou, Guangdong Province\\
China
 }
\email{cristian.chaparro@gtiit.edu.cn}

\author{Sibylle Schroll}
\address{Department of Mathematics\\
  University of Cologne\\
  Weyertal 86--90\\
  Cologne, Germany
  }
\email{schroll@math.uni-koeln.de}

\author{Andrea Solotar}
\address{Departamento de Matem\'atica\\
  FCEyN Universidad de Buenos Aires\\  
  and IMAS-CONICET\\
  Pabellon I -- Ciudad Universitaria\\
  (1428) Buenos Aires\\ 
  Argentina
  }
\email{asolotar@dm.uba.ar}

\author{Mariano Su\'arez-\'Alvarez}
\address{Departamento de Matem\'atica\\
  FCEyN Universidad de Buenos Aires\\ 
  and IMAS-CONICET\\
  Pabellon I -- Ciudad Universitaria\\
  (1428) Buenos Aires\\ 
  Argentina
  }
\email{mariano@dm.uba.ar}

\subjclass[2020]{Primary 16E30, 16E35, 16E40, 16G10, 18G80}
\keywords{gentle, Hochschild, cup product, cap product, Gerstenhaber bracket, punctured surface, winding number.}

\begin{abstract}
The Tamarkin Tsygan calculus of a finite dimensional algebra is a differential calculus given by the comprehensive data of the Hochschild cohomology, its structure both as a graded commutative algebra under the cup product and as a graded Lie algebra under the Gerstenhaber bracket, together with the Hochschild homology and its module structure over the Hochschild cohomology given by the cap product as well as the Connes differential. In this paper, we calculate the whole of the Tamarkin  Tsygan calculus for the class of gentle algebras. Apart from some isolated calculations, this is, to our knowledge, the first complete calculation of this calculus for a family of finite dimensional algebras. Gentle algebras appear in many different areas of mathematics such as the theory of cluster algebras, N=2 gauge theories and homological mirror symmetry of surfaces. For the latter Haiden-Katzarkov-Kontsevich show that the partially wrapped Fukaya category of a graded surface is triangle equivalent to the perfect derived category of a graded gentle algebra. Thus the symplectic cohomology of the surface should be linked to the Hochschild cohomology of the gentle algebra. We give a description of how this connection might be visualised, in that, we  show how these structures are
encoded in the geometric surface model of the bounded derived category
associated to a gentle algebra via its ribbon graph. 
 Finally, we show how to recover structural results for a gentle algebra given its Tamarkin-Tsygan calculus.

\end{abstract}

\maketitle

\tableofcontents

\mainmatter

\chapter{Introduction}




Given a field $\kk$ and an associative  $\kk$-algebra $A$,  the Hochschild cohomology $\H^*(A, M )$ and Hochschild homology $\H_*(A, M )$ of $A$ with coefficients in  an  $A$-bimodule $M$ are the
graded vector spaces:
\[
\H^*(A, M ) = \Ext^*_{A^e} (A, M ) \mbox{ and } \H_*(A, M ) = \Tor^{A^e}_* (M, A).
\]
Arbitrary projective resolutions of $A$  as  $A$-bimodule, such as the bar resolution, can  theoretically  be used to compute these spaces.
However, in practice the bar resolution is not suitable for computations since the spaces involved quickly become too big.
Hochschild homology and cohomology, as  defined by G.\@ Hochschild in 1945 
in terms of the bar resolution, are important tools to study the
representation theory of a given associative algebra. Both Hochschild homology and cohomology are invariant under
Morita equivalence and more generally under derived equivalence \cite{Rickard}.
Let us just mention that $\HH^0(A)$ is the center of $A$ and $\HH^1(A)$ is isomorphic to the quotient of the $\kk$-linear derivations of $A$ by the
inner derivations, while $\HH^2(A)$ provides a classification of infinitesimal deformations of the algebra~$A$.


The whole structure given by the Hochschild cohomology and homology together with the cup and cap products, the Gerstenhaber bracket and the Connes differential is called the Tamarkin Tsygan calculus.  It is invariant under derived equivalence \cite{AK:invariance-calculus} and --- if we can compute all these invariants --- provides a lot of information. The main problem is that these invariants are not easy to compute, since the cup
product and the Gerstenhaber bracket have been initially defined in terms of the bar resolution and on the other hand the Hochschild cohomology is seldom computed using the bar resolution.
The cup product, which coincides with the  Yoneda product of classes of extensions, is in general well understood, since it can be expressed in terms
of arbitrary resolutions via a diagonal map. But this is not  the case of the Gerstenhaber bracket. In spite of the interpretations given by Stasheff \cite{Stasheff} and by Schwede
\cite{Schwede},
it remains mysterious.
This bracket is --- despite its formula --- a geometrical object, as can be seen in the case of algebras of infinitely differentiable functions on a manifold, where it coincides with the Schouten and Nijenhuis bracket \cite{Kosmann-Schwarzbach}.  

The calculation of the whole Tamarkin Tsygan calculus is very difficult and generally not even possible for particular algebras. However, there exist some calculations for individual algebras, see for example \cite{AK:invariance-calculus}. The problem, in general, is that the minimal projective bimodule resolutions are difficult to find and even if one is able to compute such a resolution, it might be so complicated that the computation of the Tamarkin Tsygan calculus is not within reach. For monomial algebras the minimal projective bimodule resolution is known \cite{Bardzell} and in the case of quadratic monomial algebras it is simple enough, to embark on the extensive calculations of the Tamarkin Tsygan calculus. Yet even for quadratic monomial algebras, the combinatorial level of the calculations is such that it is too complicated to calculate the whole calculus. On the other hand for gentle algebras, the additional constraints on their structure are such that the calculations become  possible. Indeed, for these algebras it is possible to read this structure from the quiver, and thus  
also from the  oriented surface that can be associated to a gentle algebra.

In this article we study the homological invariants for gentle algebras that arise from Hochschild theory and the Tamarkin Tsygan calculus,
that is, Hochschild homology and cohomology with all their structure: the Gerstenhaber structure of the cohomology and the module structure of the homology as a module over the cohomology ring as well as the cyclic homology and the Connes differential.

Gentle algebras were introduced in the 80's by I. Assem, D. Happel and A. Skowronski \cite{Assem-Skowronski}. Since their inception they have been extensively studied from many different points of view.    There are many reasons for this interest. For example, their structure and representation theory is very amenable to computations. They are of tame representation type and their indecomposable representations have been classified in terms of string and band modules \cite{Wald-Waschbuesch}, \cite{BR}. Furthermore, morphism spaces of their indecomposable representations have been 
completely described  in 
\cite{CB, Krause}. Remarkably the class of gentle algebras is closed under derived equivalence \cite{Schroer-Zimmermann} and they are derived tame and the indecomposables in the derived category have been classified in terms of homotopy strings and bands \cite{BM}. Furthermore, a basis of the morphism spaces between indecomposable objects have been given \cite{Arnesen-Laking-Pauksztello}, and their mapping cones have been described \cite{Canakci-Paukzstello-Schroll}. However, gentle 
algebras are not only interesting in terms of their representation theory but they also have connections to many other areas of mathematics and physics. For example, they appear as Jacobian algebras of triangulations of surfaces \cite{ABCJP, LF1} and as such are 
instrumental in the categorification of cluster algebras \cite{Amiot}.  Generalising the geometric model for the module category of  Jacobian gentle algebras in \cite{ABCJP, LF1}, a geometric surface model for the module category of any gentle algebras has been given in \cite{BCS}.  A geometric surface model has also been developed for their bounded derived categories in \cite{OPS}. This is directly linked to another example of the ubiquity of gentle algebras, namely the recently established connection of graded gentle algebras and partially wrapped Fukaya categories in the work of \cite{HKK, LP} in the context of homological mirror symmetry where Fukaya categories are on the A side of the theory. The corresponding B side is also given in terms of gentle algebras in \cite{BD, LP}. Finally, a last connection we will give is the role that gentle algebras play in the context of  $A_1$ Gaiotto theories in  $4d$ $N=2$ gauge theories  in \cite{Cecotti}.


Previously, Ladkani \cite{Ladkani1} has computed the dimensions of the Hochschild cohomology spaces of gentle algebras and related them to the AAG-invariants. Redondo and Rom\'an \cite{Redondo-Roman} have also computed these dimensions and provided a description in terms of Bardzell's resolution \cite{Bardzell}. They also proved that the cup product of two classes of cocycles of odd degrees is zero, and that there are some non zero cup products of elements of even degrees. Moreover, they prove analogous results for the Gerstenhaber bracket, namely, that they proved that the brackets of two classes of cocycles of even positive degrees annihilate and that there are non zero brackets. Valdivieso \cite{Valdivieso} gave geometric interpretations for the dimension of the Hochschild cohomology space of gentle Jacobian algebras. In a series of recent papers, Bocklandt and van de Krekke, have studied deformations of a large family of gentle $A_\infty$-algebras as well as homological mirror symmetry in this context \cite{BvdK1, BvdK2, vdK1, vdK2, vdK3}. In particular, in \cite{vdK3} and \cite{BvdK1} for gentle  $A_\infty$-algebras arising from surface dimers, the authors calculate the Hochschild cohomology as well as the Gerstenhaber structure of these algebras. For the gentle algebras considered in these papers, all arrows are contained in cycles and they  correspond to  punctured surfaces with no boundary, or, in the language of \cite{HKK}, surfaces where the only boundary components are non-stopped. 

In addition to the calculation of the Tamarkin Tsygan calculus for any gentle algebra, we prove several results that show that it is possible to recover knowledge about the gentle algebra given its Tamarkin Tsygan calculus such as the finiteness of the global dimension and the dimension of the algebra.

\bigskip

The main
results of this paper are the following:
\begin{itemize}
\item Theorem \ref{thm:cohomology:basis}, which provides the description of the Hochschild cohomology of a gentle algebra as a vector space.
\item Theorem  \ref{thm:basishomology}, where the description of the Hochschild homology of a gentle algebra as a vector space is detailed.
\item  In Theorem \ref{thm:Connesspectral} we obtain the cyclic homology of gentle algebras.
\item Theorem \ref{thm:algebra} gives the algebra structure of Hochschild cohomology.
\item Theorem \ref{thm:cap}, which  provides the description of the cap product.
\item In Theorems \ref{thm:HH1Lie} and \ref{thm:Lie-cohomology}  we obtain the structure of the first cohomology space as a Lie algebra.
\item Finally, Theorems \ref{thm:geometric1} and \ref{thm:generators in gentle surface} provide 
a geometric description of the Hochschild cohomology and homology in terms of the surface associated to the bounded derived category of the gentle algebra.
\end{itemize}

\vspace{1cm}

{\bf Conventions:} Given a set of paths $X$, we will write $X_n$ the set of homogeneous paths of length $n$ in $X$, 
and $\abs{\hbox{ }}$ denotes the length of a path. 
Paths will be written from right to left, as compositions of functions.

\vspace{1cm}
\textbf{Acknowledgments:} C.C. thanks CONICET, Instituto de Investigaciones Ma\-te\-m\'aticas 
Luis A. Santal\'o (IMAS), and the University of Buenos Aires for the scholarship awarded to pursue a postdoctoral position (2020-2023). A substantial portion of this research was carried out during the period of the scholarship. S is partially supported by the DFG through the project SFB/TRR 191 Symplectic Structures in Geometry, Algebra and Dynamics (Projektnummer 281071066 - TRR 191).
A.S. and M.S.A. are invited professors at Guangdong Technion Israel Institute of Technology. She also thanks University of Cologne for an invitation during which she worked with S.S. on this project. A.S. and 
M.S.A are partially supported by PIP-CONICET 11220200101855CO.


\chapter{Gentle algebras and the Tamarkin-Tsygan calculus}
\label{chapter:algebras}

\section{Gentle algebras}

We start by recalling the definitions of gentle algebras and some of the
objects and notations associated to them.

\begin{definition}
If $Q=(Q_0,Q_1,s,t)$ is a quiver and~$I$ an ideal in the path algebra~$\kk
Q$, then we say that the pair $(Q,I)$\indexsymbol{$(Q,I)$} is a
\newterm{gentle presentation}\index{Gentle presentation} if
\begin{enumerate}[label=(\emph{\alph*})]

\item every vertex $v \in Q_0$ is the source of at most two arrows, and the
target of at most two arrows,

\item for every arrow $a \in Q_1$ there exists at most one arrow $b \in
Q_1$ such that $ab \notin I$, and at most one arrow $c \in Q_1$ such that
$ca \notin I$,

\item for every arrow $a \in Q_1$ there exists at most one arrow $b \in
Q_1$ such that $t(a) = s(b)$  and $ba \in I$, and at most one arrow $c \in
Q_1$ such that $t(c) = s(a)$ and $ac \in I$,

\item the ideal~$I$ is generated by paths of length~$2$.

\end{enumerate}
If additionally the ideal~$I$ is admissible, so that 
\begin{enumerate}[resume*]

\item there is a positive integer $N$ such that every path of length~$N$
in~$Q$ is in~$I$,

\end{enumerate}
then we say that the pair $(Q,I)$ is \newterm{finite dimensional
gentle}, or, for short \newterm{\fd gentle}.

An algebra $A$ is \newterm{gentle}\index{Gentle algebra} or \newterm{\fd
gentle} if it is Morita equivalent to a quotient of the form $\kk Q/I$ with
$(Q,I)$ a gentle or \fd gentle presentation, respectively.
\end{definition}

The representation theory of \fd gentle algebras is
well-understood. Their categories of modules have been described by Butler
and Ringel \cite{BR}, Crawley-Boevey \cite{CB} and Krause~\cite{Krause} in
terms of indecomposable modules, the maps between them, and also the almost
split short exact sequences connecting them, given in terms of the
so-called \emph{strings} and~\emph{bands} in the quiver. For their derived
categories, there are several descriptions  \cite{BM, BD}, of which the one
of the indecomposable objects due to Bekkert and Merklen, in terms of
\emph{homotopy strings} and \emph{homotopy bands}, is the one that we will
use. Moreover, the derived categories of \fd gentle algebras
are described by geometric models due to Opper, Plamondon and Schroll
\cite{OPS} ---see also \cite{HKK} and \cite{LP}--- based on the ribbon
surface attached to the algebra and the embedded ribbon graph initially
constructed by Schroll \cite{S}. We will briefly describe this model in
Chapter~\ref{chapter:geometric} and then use it to provide a geometric
interpretion of a generating system of the Hochschild cohomology 
of a gentle algebra as a graded-commutative algebra as well as a basis of
the Hochschild homology, and the cup and cap products.

\bigskip

We fix a gentle presentation $(Q,I)$ where $Q$ is a connected quiver. We
begin by computing the Hochschild cohomology of the algebra $A\coloneqq\kk
Q/I$ in a very explicit way that will allow us to determine the
Gerstenhaber algebra structure with which it is endowed and also to obtain
a geometric interpretation in terms of the ribbon surface of the
presentation. For this we fix a projective resolution  of $A$ as a bimodule
over itself, then use it to compute the cohomology in degrees~$0$ and~$1$,
and later in all higher degrees. While all calculations of the Hochschild
cohomology are similar, they do differ in low degrees and therefore we
treat these cases separately.

\bigskip

In the sections that follow we will compute various invariants of gentle
algebras, starting with Hochschild cohomology and homology. Most of our
calculations are made possible by the fact that gentle algebras are in
particular quadratic monomial algebras and that for such algebras there is
a  nice description of their minimal projective resolutions when viewed as
bimodules over themselves. We describe this resolution in detail and fix
the notation that we will use throughout when working. We do this more
generally in terms of quadratic monomial algebras, since this is going to
be useful when we talk about homology in Chapter~\ref{chapter:homology}.

\bigskip

We now fix a \newterm{quadratic monomial presentation}\index{Quadratic
monomial presentation} $(Q,I)$, so that $Q$~is a quiver and $I$~an ideal in
the path algebra~$\kk Q$ generated by a set~$R$ of paths of length~$2$, and
consider the algebra~$A=\kk Q/I$. We note that, for example, a
gentle presentation is of this form. We write~$\cB$ for
the set of paths in~$Q$ that do not have subpaths in~$R$, which is clearly
a basis for~$A$. On the other hand, we let $E$ be the subalgebra of~$A$
spanned by the (classes of) the trivial paths and for conciseness we write,
whenever useful, a bar~$|$ for the tensor product~$\otimes_E$ taken
over~$E$. If $P$ is a set of paths in~$Q$, then we write $\kk P$ for the
vector space freely spanned by~$P$ and always view it as an $E$-bimodule in
the obvious way. For each integer~$m\geq0$ we write $P_m$ for the subset of
those paths in~$P$ that have length exactly~$m$.

For a monomial algebra~$A$, Bardzell in~\cite{Bardzell} gives the
construction of a  projective resolution of~$A$ as an $A$-bimodule  from
the presentation~$(Q,I)$. Since the algebra we are considering is
quadratic, that projective resolution has a particularly simple form, which
we now describe. We let $\Gamma$\indexsymbol{$\Gamma$} be the set of those
paths in~$Q$ all of whose subpaths of length~$2$ are in~$R$.  So, in
particular, $\Gamma_0=Q_0$ and $\Gamma_1=Q_1$. The Bardzell resolution is
the chain complex~$\cR$\indexsymbol{$\cR$}
  \begin{multline}
  \begin{tikzcd}[column sep=1.25em, ampersand replacement=\&]
  \cdots \arrow[r]
    \& A\otimes_E \kk\Gamma_m\otimes_E A \arrow[r, "d_m"]
    \& A\otimes_E \kk\Gamma_{m-1}\otimes_E A \arrow[r]
    \& \cdots 
  \end{tikzcd}
  \\
  \begin{tikzcd}[column sep=1.5em, ampersand replacement=\&]
  \cdots \arrow[r]
    \& A\otimes_E \kk\Gamma_2\otimes_E A \arrow[r, "d_2"]
    \& A\otimes_E \kk\Gamma_1\otimes_E A \arrow[r, "d_1"]
    \& A\otimes_E \kk\Gamma_0\otimes_E A 
  \end{tikzcd}
  \end{multline}
with differentials given by
  \[
  d_m(1\otimes\gamma\otimes 1) 
        = a_m\otimes a_{m-1}\cdots a_1\otimes 1
          + (-1)^m\,1\otimes a_m\cdots a_2\otimes a_1
  \]
for all $m\geq1$ and all paths $\gamma=a_m\cdots a_1\in\Gamma_m$, and
augmentation 
  \[
  \mu:A\otimes_E\kk\Gamma_0\otimes_E A\to A
  \]
such that $\mu(1\otimes e\otimes 1)=e$ for all $e\in\Gamma_0$. It is easy
to check that this is indeed a complex of projective $A$-bimodules, and its
exactness is most conveniently proved by exhibiting a contracting homotopy,
as done by Sk\"oldberg in~\cite{Skoldberg}.

The quadratic monomial algebra~$A$ has a $\NN_0$-grading in which the
vertices and arrows of the quiver~$Q$ have degrees~$0$ and~$1$,
respectively, and with respect to this grading the algebra is Koszul, see
for example, Corollary~4.3 in \cite{PP} giving this in the case in which
there is only one vertex in~$Q$, the general case being an easy extension.
The Koszul dual algebra~$A^!$\indexsymbol{$A^"!$} has a presentation
$(Q,I^!)$ with the same quiver~$Q$ as that of~$A$ and ideal~$I^!$ generated
by all paths of length~$2$ in~$Q$ that are \emph{not} in~$I$. The Bardzell
resolution~$\cR$ that we described above is in fact the Koszul bimodule
resolution of~$A$ and, in a way, this explains why it has this simple form.

When the presentation~$(Q,I)$ is gentle, then so is the dual
presentation~$(Q,I^!)$, but in general the dual of a \fd gentle
presentation is not \fd gentle. Dualization interchanges the class of \fd
gentle presentations and that of gentle presentations of finite global
dimension, and therefore the class of \fd gentle presentations of finite
global dimension is closed under Koszul duality.

\section{Tamarkin-Tsygan calculus of an associative algebra}

The Tamarkin-Tsygan calculus of an associative algebra $A$ over a field $\kk$ is the comprehensive data of the Hochschild cohomology $\HH^*(A)$, the cup product $\smile$ and its Gerstenhaben bracket $[-,-]$, as well as the Hochschild homology $\HH_*(A)$
with the cap product $\frown$ and the Connes differential $B$. 
More precisely,  
the cup product
$$ \smile: \HH^p(A) \otimes_{\kk} \HH^q(A) \to \HH^{p+q}(A)$$ induces a graded commutative algebra structure on $\HH^*(A)$. The Gerstenhaber bracket
$$ [-,-]: \HH^p(A) \otimes_{\kk} \HH^q(A) \to \HH^{p+q-1}(A)$$ is such that $(\HH^*(A)[1], [-,-])$ is a graded Lie algebra. 
The Hochschild cohomology as graded commutative algebra acts on the Hochschild homology via the cap product
$$\frown:  \HH_p(A) \otimes_{\kk} \HH^q(A) \to \HH_{p-q}(A).$$  Finally, the Connes differential induces a structure of a complex of $k$-vector spaces on $\HH_*(A)$
$$B:  \HH_p(A) \to \HH_{p+1} (A)$$ with $B^2=0$ and satisfying the following identity  
$$ [B i_\alpha - (-1)^p
i_\alpha B, i_\beta] = i_{[\alpha, \beta]},$$
where $\alpha \in \HH^p(A), \beta \in \HH^q(A)$ and $z \in \HH_r(A)$ with $ i_\alpha: \HH_r(A) \to \HH_{r-p}(A)$ given by $ z \mapsto (-1)^{pr}
z\frown \alpha$. 

Thus the whole of the Tamarkin-Tsygan calculus is given by 
$$(\HH^*(A), \smile, [-,-], \HH_*(A), \frown, B).$$

While, in general, not many explicit calculations for the whole of the Tamarkin-Tsygan calculus are known, the strong combinatorial constraints for gentle algebras make such a calculation possible. Furthermore, the bounded derived category of a gentle algebra has a geometric surface model (see Chapter~\ref{chapter:geometric}) and we show that we can represent the entire Tamarkin-Tsygan calculus in terms of curves and operations on curves on the associated surface. An indication that this might be possible is based on the fact that the surface (genus, number of boundary components, number of marked points on the boundary components) is a derived invariant of gentle algebras \cite{OPS, APS, Op, HKK, LP} in combination with the derived invariance result of the Tamarkin-Tsygan calculus by Armenta and Keller.

\begin{theorem}\cite{AK:invariance-calculus}
The Tamarkin-Tsygan calculus of an associative $\kk$-algebra is invariant under derived equivalence. 
\end{theorem}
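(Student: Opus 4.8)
The plan is to realize the entire calculus intrinsically inside the derived category of bimodules and to transport it along a derived equivalence, using the fact that all six pieces of data are induced by higher structures on the Hochschild (co)chain complexes that are themselves homotopy invariants. By Rickard's theorem \cite{Rickard}, any derived equivalence $D(A)\simeq D(B)$ is given by a two-sided tilting complex $X\in D(A\otimes_\kk B^{\mathrm{op}})$, i.e.\ an invertible object for the derived tensor product that induces the equivalence as $X\otimes^{\mathbf L}_B\place$. Passing to DG enhancements, this equivalence lifts to a quasi-equivalence of the DG categories underlying $A$ and $B$, so that it suffices to prove invariance of the calculus under quasi-equivalence of DG categories; this is the natural home for all of the structures involved.

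First I would treat the cohomological side. The Hochschild cochain complex $C^\bullet(A)$ carries a $B_\infty$-algebra structure whose two primary operations induce, on cohomology, exactly the cup product $\smile$ and the Gerstenhaber bracket $[-,-]$. The key input is Keller's theorem that the $B_\infty$-algebra $C^\bullet(A)$ is, up to $B_\infty$-quasi-isomorphism, a derived invariant: a quasi-equivalence of DG categories induces a $B_\infty$-quasi-isomorphism of their Hochschild cochain complexes. Since such a $B_\infty$-quasi-isomorphism induces an isomorphism of Gerstenhaber algebras on cohomology, this yields the invariance of $(\HH^\bullet(A),\smile,[-,-])$ at once. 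At the level of the cup product one in fact sees invariance directly from $\HH^\bullet(A)\cong\Hom_{D(A^e)}(A,A[\bullet])$ with $\smile$ the Yoneda product, which any triangle equivalence respects; the bracket, however, is a genuinely higher operation and requires the finer $B_\infty$-level statement.

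Next I would treat the homological side together with its compatibilities. The Hochschild chain complex $C_\bullet(A)$ is a mixed complex: it carries the Hochschild differential together with the Connes operator $B$ satisfying $B^2=0$. Keller's invariance of the mixed complex under quasi-equivalence gives at once the invariance of $\HH_\bullet(A)$ together with $B$, and hence of cyclic homology. What remains is to track the $C^\bullet(A)$-module structure on $C_\bullet(A)$ given by the cap product $\frown$, and the compatibility of all the pieces. The correct framework is that the pair $\big(C^\bullet(A),C_\bullet(A)\big)$ forms a homotopy calculus (a Tamarkin--Tsygan, or noncommutative, calculus up to homotopy), in which the cap action, the bracket and the Connes operator satisfy the Cartan homotopy formula encoding the displayed identity $[Bi_\alpha-(-1)^p i_\alpha B,i_\beta]=i_{[\alpha,\beta]}$. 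One then shows that this entire package of operations is natural with respect to a quasi-equivalence, because every operation is built functorially from the monoidal structure $\otimes^{\mathbf L}_A$ on bimodules and from the cyclic $S^1$-structure, both of which the equivalence respects.

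The main obstacle is the Connes differential $B$ and its interaction with the bracket and the cap product. The cup and cap products, and even the bracket, are governed by the multiplicative $B_\infty$-structure, which is comparatively well understood and whose invariance is classical; but $B$ is a genuinely cyclic, $S^1$-equivariant datum that is invisible to the naive $\Hom$/$\otimes^{\mathbf L}$ manipulations and must be transported via the mixed-complex (or $S^1$-equivariant) enhancement. The technical heart of the argument is therefore to upgrade the quasi-equivalence to a morphism of the \emph{combined} structure: one must verify that a single comparison map can be chosen compatibly with both the multiplicative and the cyclic data simultaneously, so that the Cartan-type identity is preserved and not merely each operation separately. Assembling these compatible comparison maps and checking that the induced isomorphisms on $\HH^\bullet$ and $\HH_\bullet$ intertwine $\smile$, $[-,-]$, $\frown$ and $B$ completes the proof.
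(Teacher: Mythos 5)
The paper does not prove this statement at all: it is quoted verbatim from Armenta--Keller \cite{AK:invariance-calculus} and used as a black box, so there is no internal proof to compare against. Your outline does correctly reconstruct the strategy of that reference and of Keller's earlier work on which it builds: reduce via Rickard's theorem and a two-sided tilting complex to invariance under quasi-equivalence of DG enhancements, invoke the $B_\infty$-invariance of the Hochschild cochain complex for $(\HH^*, \smile, [-,-])$, the invariance of the mixed complex for $(\HH_*, B)$, and then upgrade to the full homotopy calculus for the cap product and the Cartan identity.

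That said, as a \emph{proof} your text has a genuine gap which you yourself flag but do not close: the assertion that ``a single comparison map can be chosen compatibly with both the multiplicative and the cyclic data simultaneously'' is exactly the content of the theorem, not a routine verification. The cup product alone is handled by the Yoneda description, and $B$ alone by the mixed complex, but the cap product and the identity $[Bi_\alpha-(-1)^p i_\alpha B,i_\beta]=i_{[\alpha,\beta]}$ live on the \emph{pair} $(C^\bullet(A),C_\bullet(A))$ with all operations at once, and one must actually exhibit a morphism of homotopy calculi (or of the relevant coloured operad algebras) attached to the tilting complex; naturality of $\otimes^{\mathbf L}$ and of the $S^1$-structure separately does not by itself produce such a simultaneous comparison. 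This is precisely what Armenta and Keller construct, and without carrying out that construction (or at least specifying the operadic framework in which the two enhancements are merged and why the tilting bimodule induces a weak equivalence there), the argument remains a correct roadmap rather than a proof.
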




\chapter{Hochschild cohomology}
\label{chapter:cohomology}

We fix a gentle presentation~$(Q,I)$ and write $A\coloneqq\kk
Q/I$ for the algebra it presents. Our objective in this section is to
compute the Hochschild cohomology~$\HH^*(A)$ of~$A$. In
Chapter~\ref{chapter:algebras} we described a particular projective
resolution~$\cR$ of~$A$ as an $A$-bimodule, and we will realize~$\HH^*(A)$
as the cohomology of the complex~$\Hom_{A^e}(\cR,A)$. We will start by
giving a  description of this complex and introducing some terminology.

\bigskip

We say that two paths~$\alpha$ and~$\beta$ in~$Q$ are
\newterm{parallel}\index{Parallel paths} if $s(\alpha)=s(\beta)$
and~$t(\alpha)=t(\beta)$. If~$X$ and~$Y$ are sets of paths in~$Q$, then we
denote by~$X\parallel Y$ the set of all pairs~$(\alpha,\beta)$ of~$X\times
Y$ with~$\alpha$ and~$\beta$ parallel. Let~$\kk(X\parallel
Y)$\indexsymbol{$\kk(X\parallel Y)$} be the vector space it freely spans,
considered as an $E$-bimodule. For each $m\geq0$, we let $\cR_m
\coloneqq A\otimes_E\kk\Gamma_m \otimes_E A$ be the 
homogeneous component of degree~$m$ in the resolution~$\cR$
described in Chapter~\ref{chapter:algebras}.

\begin{proposition}\cite[Proposition 2.2.1.8]{Strametz:thesis}
\label{prop:strametz}
The graded vector space $\kk(\Gamma\parallel\cB)$ is a complex
with differentials
$d^m:\kk(\Gamma_m\parallel\cB)\to\kk(\Gamma_{m+1}\parallel\cB)$ given by 
  \begin{equation} \label{eq:differential}
  d^{m}(\gamma,\alpha) 
        = \sum_{\substack{b\in Q_1\\
                (b\gamma,b\alpha)\in\Gamma_{m+1}\parallel\cB}
               }
            (b\gamma,b\alpha)
        - (-1)^m 
          \sum_{\substack{a\in Q_1\\
                (\gamma a,\alpha a)\in\Gamma_{m+1}\parallel\cB}
                }
            (\gamma a,\alpha a)
  \end{equation}
for all $(\gamma,\alpha)\in\Gamma_m\parallel\cB$. \qed  
\end{proposition}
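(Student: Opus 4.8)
The plan is to obtain $\kk(\Gamma\parallel\cB)$ directly as the Hochschild cochain complex $\Hom_{A^e}(\cR,A)$ attached to the Bardzell resolution $\cR$ recalled above. Since $\Hom_{A^e}(\place,A)$ is an additive contravariant functor, it sends the chain complex $\cR$ to a cochain complex automatically, so that $d^{m+1}\circ d^m=0$ comes for free; what requires work is only to exhibit a degreewise isomorphism of vector spaces $\Hom_{A^e}(\cR_m,A)\cong\kk(\Gamma_m\parallel\cB)$ under which the transpose of $d_{m+1}$ becomes the map in \eqref{eq:differential}.

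First I would set up the identification. Writing $A^e=A\otimes_\kk A^{\mathrm{op}}$ and $E^e$ similarly, the extension--restriction adjunction along the algebra map $E^e\to A^e$ gives a natural isomorphism
$$\Hom_{A^e}(A\otimes_E\kk\Gamma_m\otimes_E A,A)\cong\Hom_{E^e}(\kk\Gamma_m,A).$$
An $E$-bimodule map $f\colon\kk\Gamma_m\to A$ is determined by its values on the paths $\gamma\in\Gamma_m$, and since each $\gamma$ satisfies $\gamma=e_{t(\gamma)}\,\gamma\,e_{s(\gamma)}$, the value $f(\gamma)$ must lie in $e_{t(\gamma)}Ae_{s(\gamma)}$. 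Because $\cB$ is a basis of $A$ consisting of paths, this subspace has as basis the paths $\alpha\in\cB$ parallel to $\gamma$. Hence $f\mapsto\sum_\gamma(\gamma,f(\gamma))$ identifies $\Hom_{E^e}(\kk\Gamma_m,A)$ with $\kk(\Gamma_m\parallel\cB)$; concretely, the pair $(\gamma,\alpha)$ corresponds to the $A^e$-linear map $f_{(\gamma,\alpha)}$ with $f_{(\gamma,\alpha)}(x\otimes\theta\otimes y)=\delta_{\gamma,\theta}\,x\alpha y$ for paths $\theta\in\Gamma_m$.

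Next I would compute the transpose $d^m=\Hom_{A^e}(d_{m+1},A)$. Fixing $(\gamma,\alpha)\in\Gamma_m\parallel\cB$ and a path $\eta=a_{m+1}\cdots a_1\in\Gamma_{m+1}$, I evaluate $(f_{(\gamma,\alpha)}\circ d_{m+1})(1\otimes\eta\otimes1)$ using the explicit formula for $d_{m+1}$. Here one uses that removing either end arrow of $\eta\in\Gamma_{m+1}$ yields a path in $\Gamma_m$ (all its length-$2$ subpaths still lie in $R$), so both tensor factors occurring in $d_{m+1}(1\otimes\eta\otimes1)$ are indexed by elements of $\Gamma_m$ and the Kronecker deltas apply. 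The first summand survives precisely when $a_m\cdots a_1=\gamma$, i.e.\ $\eta=b\gamma$ with $b=a_{m+1}$ an arrow, contributing $b\alpha$; the second survives when $a_{m+1}\cdots a_2=\gamma$, i.e.\ $\eta=\gamma a$ with $a=a_1$, contributing $(-1)^{m+1}\alpha a$. Since $A$ is monomial, each product $b\alpha$ (resp.\ $\alpha a$) is either the concatenated path, which then lies in $\cB$, or zero, and the parallelism of $(b\gamma,b\alpha)$ (resp.\ $(\gamma a,\alpha a)$) is automatic from that of $(\gamma,\alpha)$. Collecting these evaluations and using $(-1)^{m+1}=-(-1)^m$ yields exactly \eqref{eq:differential}, the sum conditions $(b\gamma,b\alpha)\in\Gamma_{m+1}\parallel\cB$ and $(\gamma a,\alpha a)\in\Gamma_{m+1}\parallel\cB$ encoding simultaneously the nonvanishing of the products in $\cB$ and the membership of $b\gamma,\gamma a$ in $\Gamma_{m+1}$.

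The only delicate point, and the step I would write out most carefully, is the bookkeeping in this last computation: checking that nonvanishing of the monomial products $b\alpha$ and $\alpha a$ is equivalent to the stated membership in $\cB$, that the companion conditions $b\gamma,\gamma a\in\Gamma_{m+1}$ together with the automatic parallelism package correctly into the two index sets, and that the sign $-(-1)^m$ emerges from the $(-1)^{m+1}$ in $d_{m+1}$. Everything else is formal: once the transpose of $d_{m+1}$ is identified with \eqref{eq:differential}, the complex property is inherited from $\cR$.
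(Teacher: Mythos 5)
Your proposal is correct and follows essentially the same route the paper takes: the paper cites Strametz and immediately exhibits the very isomorphism of complexes $\Phi:\kk(\Gamma\parallel\cB)\to\Hom_{A^e}(\cR,A)$ (your $f_{(\gamma,\alpha)}$) under which the formula~\eqref{eq:differential} is the transpose of the Bardzell differential. Your computation of that transpose, including the observation that truncating $\eta\in\Gamma_{m+1}$ at either end lands in $\Gamma_m$ and the sign bookkeeping $(-1)^{m+1}=-(-1)^m$, is exactly the intended verification.
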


\bigskip

The collection of isomorphisms of vector spaces 
  \[
  \bigl(\Phi_m:\kk(\Gamma_m\parallel\cB) \to \Hom_{A^e}(\cR_m,A)\bigr)_{m\geq0} 
  \]
such that for all $(\alpha,\beta)\in\Gamma_m\parallel\cB$ and all
$\gamma\in\Gamma_m$ we have
  \[
  \Phi_m(\alpha,\beta)(1\otimes \gamma\otimes 1)
        = \begin{cases*}
          \beta & if $\gamma=\alpha$; \\
          0 & in any other case.
          \end{cases*}
  \]
induces an isomorphism of complexes
  \[
  \Phi:\kk(\Gamma\parallel\cB) \to \Hom_{A^e}(\cR,A).
  \]

The \newterm{weight}\index{Weight of a pair} of a pair~$(\gamma,\alpha)$
in~$\Gamma\parallel\cB$ is the integer~$\abs{\alpha}-\abs{\gamma}$. If
$\ell\in\ZZ$, we write $\kk(\Gamma\parallel\cB)_\ell$ for the subspace
of~$\kk(\Gamma\parallel\cB)$ spanned by the pairs of weight~$\ell$. We
clearly have a direct sum decomposition $\kk(\Gamma\parallel\cB) =
\bigoplus_{\ell\in\ZZ}\kk(\Gamma\parallel\cB)_\ell$, and it is preserved by
the differentials of the complex~$\kk(\Gamma\parallel\cB)$, so that this
complex is in fact one of graded vector spaces.

The \newterm{rank}\index{Rank of a (co)cycle} of a non-zero element~$u$
of~$\kk(\Gamma_m\parallel\cB)$ is the number~$\rk(u)$\indexsymbol{$\rk(u)$}
of elements of~$\Gamma_m\parallel\cB$ that appear in it with non-zero
coefficient. On the other hand, two elements in~$\kk(\Gamma_m\parallel\cB)$
are \newterm{disjoint}\index{Disjoint (co)cycles} if no element
of~$\Gamma_m\parallel\cB$ appears with non-zero coefficient in both, and a
cocycle in~$\kk(\Gamma_m\parallel\cB)$ is
\newterm{irreducible}\index{Irreducible (co)cycles} if it is not the sum of
two non-zero cocycles that are disjoint. Every cocycle in the
complex~$\kk(\Gamma_m\parallel\cB)$ is a sum of pairwise disjoint
irreducible cocycles, although possibly in several different ways, and
every irreducible cocycle is weight-homogeneous.

\bigskip

A path~$C=c_m\cdots c_1$ in~$Q$ is a \newterm{cycle}\index{Cycle} if its
length~$m$ is positive and it ends at the vertex where it begins, and in
that case it is \newterm{primitive}\index{Primitive cycle} if it is not a
proper power of another cycle. We put\indexsymbol{$\rot(C)$} 
  \[
  \rot(C) \coloneqq c_{m-1}\cdots c_1c_m, 
  \] 
the cycle obtained from~$C$ by `rotating it one step to the left'. The
following lemma connects these notions.

\begin{lemma}\label{lemma:primitive}
If $m\in\NN$ and $C$ is a cycle of length $m$ in~$Q$, then there are a
primitive cycle~$D$ and an integer~$k$, both uniquely determined by~$C$,
such that $C=D^k$. In fact, the set $\{i\in\NN:\rot^i(C)=C\}$ is not empty,
its smallest element~$l$ divides~$m$, the cycles
$C$,~$\rot(C)$,~\dots,~$\rot^{l-1}(C)$ are pairwise different, the
cycle~$D$ is the suffix of length~$l$ of~$C$ and $k=m/l$. \qed 
\end{lemma}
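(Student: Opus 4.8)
The plan is to regard $\rot$ as the rotation of a cyclic word and to extract the relevant data from its least period. First I would note that rotating a length-$m$ cycle all the way around returns it to itself, that is $\rot^m(C)=C$; hence $m$ belongs to $\{i\in\NN:\rot^i(C)=C\}$, which is therefore non-empty and has a least element $l$. That $l$ divides $m$ follows from the division algorithm: writing $m=ql+r$ with $0\le r<l$ and using $\rot^l(C)=C$ repeatedly, one gets $C=\rot^m(C)=\rot^r(\rot^{ql}(C))=\rot^r(C)$, so minimality of $l$ forces $r=0$; put $k\coloneqq m/l$. The cycles $C,\rot(C),\dots,\rot^{l-1}(C)$ are then pairwise distinct, since a coincidence $\rot^i(C)=\rot^j(C)$ with $0\le i<j<l$ would give $\rot^{j-i}(C)=C$ with $0<j-i<l$, contradicting minimality.

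Next I would translate the identity $\rot^l(C)=C$ into a periodicity statement about the arrows of $C$. Writing $C=c_m\cdots c_1$ and keeping the right-to-left convention for composing paths, $\rot$ acts on the sequence $(c_1,\dots,c_m)$ of arrows listed in order of application as the cyclic shift taking it to $(c_m,c_1,\dots,c_{m-1})$; consequently $\rot^l(C)=C$ is equivalent to $c_{j+l}=c_j$ for all indices $j$ read modulo $m$. Setting $D\coloneqq c_l\cdots c_1$, this periodicity shows at once that $D$ is a cycle---from $c_{l+1}=c_1$ and composability of $C$ we get $t(c_l)=s(c_{l+1})=s(c_1)$---and that $C=D^k$, because the arrow sequence of $D^k$ is $(c_1,\dots,c_l)$ repeated $k$ times, which equals $(c_1,\dots,c_m)$ by periodicity.

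It remains to prove primitivity and uniqueness. If $D$ were a proper power, say $D=(D')^{j}$ with $j>1$, then the arrow sequence of $C$ would have period $\abs{D'}=l/j<l$ and so $\rot^{l/j}(C)=C$, contradicting minimality of $l$; thus $D$ is primitive. For uniqueness, suppose $C=\tilde D^{\tilde k}$ with $\tilde D$ primitive. Rotating $C$ by one full block gives $\rot^{\abs{\tilde D}}(C)=C$, so $l\mid\abs{\tilde D}$; and the suffix of $\tilde D$ of length $l$ is, by the periodicity of $C$, a cycle whose $(\abs{\tilde D}/l)$-th power is $\tilde D$, so primitivity of $\tilde D$ forces $\abs{\tilde D}=l$. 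Hence $\tilde D$ is the suffix of $C$ of length $l$, namely $D$, and $\tilde k=m/l=k$.

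The only delicate point, and the one I would check most carefully, is the translation in the second paragraph between the action of $\rot$ and the shift of the index sequence: the right-to-left convention for writing paths makes it easy to shift in the wrong direction, which would scramble all the subsequent periodicity identities. Once that bookkeeping is fixed, the rest is a routine combination of the division algorithm and the elementary arithmetic of periodic words.
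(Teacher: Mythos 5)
Your proof is correct, and it is the standard periodic-word argument that the paper itself omits (the lemma is stated with the proof left to the reader). The one bookkeeping point you flagged — the direction of the shift induced by $\rot$ under the right-to-left convention — is handled correctly, and the uniqueness step implicitly uses that $\{i\in\ZZ:\rot^i(C)=C\}$ is a subgroup of $\ZZ$, which follows from the same division-algorithm argument you already gave for $l\mid m$.
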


In the situation of this lemma, we call the length of the primitive
cycle~$D$ the \newterm{period}\index{Period of a cycle} of the cycle~$C$
with which we start. 

\bigskip

We say that two cycles in~$Q$ are \newterm{conjugate}\index{Conjugate
cycles} if one can be obtained from the other by repeated rotation. This is
clearly an equivalence relation on the set of all cycles in the quiver, and
we call its equivalence classes the \newterm{circuits}\index{Circuit}
of~$Q$ and write~$\C$\indexsymbol{$\C$} for the set of all circuits in~$Q$.
The \newterm{length}\index{Length of a circuit} of a circuit is the length
of any of the cycles it contains, and a circuit is
\newterm{primitive}\index{Primitive circuit} if the cycles it contains are
all primitive or, equivalently, if any one of them is.

\section{Cohomology in low degrees}
\label{sect:low-degrees}

In~\cite[Lemma 3.3 and Theorem 3.4]{CSS} we computed the Hochschild
cohomology spaces $\HH^0(A)$ and~$\HH^1(A)$ in the case where the
presentation~$(Q,I)$ is \fd gentle.  In this section we extend these results to
the gentle case, and reformulate them in order to align them
with the calculations in the later parts of this paper. 

\bigskip

A path~$\alpha$ is \newterm{$\cB$-maximal}\index{Bmaximal
path@$\cB$-maximal path} if it belongs to~$\cB$ and is not a proper subpath
of another element of~$\cB$; since the quiver is connected and not just a
vertex, such a path has positive length. A cycle~$\alpha$ in~$Q$ is
\newterm{cocomplete}\index{Cocomplete cycle} if $\alpha^2\in\cB$, in which
case we also have~$\alpha\in\cB$. A circuit is
\newterm{cocomplete}\index{Cocomplete circuit} if the cycles it contains
are cocomplete. If $C$ is a cycle in~$Q$, then either all the powers~$C^k$
with $k\geq1$ are cocomplete or none of them are --- in particular, if $D$
is the primitive cycle whose power~$C$ is, then $D$ is cocomplete exactly
when~$C$ is.

We let $\C(\cB)$\indexsymbol{$\C(\cB)$} be the set all cocomplete circuits
of~$(Q,I)$, and choose a set~$\Crep(\cB)$\indexsymbol{$\Crep(\cB)$} of
representatives of the elements of~$\C(\cB)$ as follows: we first choose a
representative for each primitive cocomplete circuit, and then let
$\Crep(\cB)$ be the set of all positive powers of those. In this way, the
set $\Crep(\cB)$ has the following useful property: if $\alpha$ and $\beta$
are two elements of~$\Crep(\cB)$ and $\alpha\beta$ is a cocomplete cycle
of~$(Q,I)$, then $\alpha\beta$ also belongs to~$\Crep(\cB)$.

If the presentation~$(Q,I)$ is \fd gentle, then there
are no cocomplete cycles in~$(Q,I)$ and the sets $\C(\cB)$ and~$\Crep(\cB)$
are both empty. With this set-up, we  describe the $0$th Hochschild
cohomology space of the gentle algebra~$A$:

\begin{proposition}\label{prop:hh:0}
The vector space~$\HH^0(A)$ is freely generated by the collection of the
following elements of~$\kk(\Gamma_0\parallel\cB)$:
\begin{itemize}

\item the element\indexsymbol{$\one$}
  \[
  \one\coloneqq\sum_{i\in Q_0}(e_i,e_i),
  \]

\item the pairs~$(s(\alpha),\alpha)$ in~$\Gamma_0\parallel\cB$ with
$\alpha$ a $\cB$-maximal path, and

\item the elements of the form\indexsymbol{$\cycle{\alpha}$}
  \[
  \cycle{\alpha} \coloneqq
        \sum_{i=0}^{r-1}
                (s(\rot^i(\alpha)),\rot^i(\alpha))
  \]
with $\alpha$ a cycle in~$\Crep(\cB)$ and $r$~its period.

\end{itemize}
\end{proposition}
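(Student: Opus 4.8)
The plan is to compute $\HH^0(A)=\ker d^0$ directly on the complex $\kk(\Gamma\parallel\cB)$, via the isomorphism $\Phi$ and the explicit formula~\eqref{eq:differential}. Since the differential preserves weight, I would split $\ker d^0$ along the weight decomposition and treat each weight separately. A pair $(e_i,\alpha)\in\Gamma_0\parallel\cB$ has $\alpha$ a cycle at the vertex $i$ (possibly the trivial path $e_i$), of weight $\abs\alpha$. In weight $0$ the only pairs are the $(e_i,e_i)$, and a short computation gives $d^0(e_i,e_i)=\sum_{s(b)=i}(b,b)-\sum_{t(a)=i}(a,a)$; writing a general weight-$0$ element as $\sum_i\lambda_i(e_i,e_i)$, the cocycle condition becomes $\lambda_{s(b)}=\lambda_{t(b)}$ for every arrow $b$, which by connectedness of $Q$ forces all $\lambda_i$ equal. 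Thus the weight-$0$ part of $\HH^0(A)$ is spanned by $\one$, and $d^0\one=0$ because each arrow contributes exactly once to each of the two sums.

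For weight $\ell\geq1$ I would first record that for a cycle $\alpha=a_\ell\cdots a_1$ the formula~\eqref{eq:differential} reduces to at most two terms,
\[
d^0(e_i,\alpha)=\underbrace{(b,b\alpha)}_{\text{prepend}}-\underbrace{(a,\alpha a)}_{\text{append}},
\]
where $b$ is the unique arrow (if any) with $b\alpha\in\cB$ and $a$ the unique arrow (if any) with $\alpha a\in\cB$; the uniqueness is built into the definition of a gentle presentation. This partitions the cycles of length $\ell$ into three classes: those admitting neither extension, which are exactly the $\cB$-maximal cycles and satisfy $d^0(e_i,\alpha)=0$; the cocomplete cycles; and the rest. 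The first class gives the second family of generators, and I would check that for a cocomplete $\alpha$ the prepend and append arrows are $a_1$ and $a_\ell$, so $d^0(e_i,\alpha)=(a_1,a_1\alpha)-(a_\ell,\alpha a_\ell)$.

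The heart of the argument is bookkeeping on the target pairs $(c,\beta)$, $\abs\beta=\ell+1$. Such a pair can be a prepend image only when $c$ is the last arrow of $\beta$, and an append image only when $c$ is its first arrow; hence it is hit by two source cycles exactly when $c$ is simultaneously the first and last arrow of $\beta$, and a direct check shows this happens precisely when the two source cycles are a cocomplete cycle $\alpha$ and its rotation $\rot\alpha$ (indeed $\alpha a_\ell=a_\ell\,\rot\alpha$). Consequently the matrix of $d^0$ in weight $\ell$ is block diagonal: $\cB$-maximal cycles map to $0$, cocomplete cycles map into the span of the doubly hit pairs, and every other cycle maps into the span of pairs each hit by a single cycle, these two spans being supported on disjoint basis pairs. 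On the cocomplete block, grouping the cycles into rotation orbits (the cocomplete circuits of length $\ell$, each consisting of the distinct rotations $\rot^0\alpha,\dots,\rot^{r-1}\alpha$ of a representative, $r$ the period, all of length $\ell$ since rotation preserves length and cocompleteness) turns $d^0$ into the boundary operator of a cycle graph, $(s(\rot^j\alpha),\rot^j\alpha)\mapsto p_j-p_{j+1}$ modulo $r$, where $p_j$ is the pair shared by $\rot^j\alpha$ and $\rot^{j+1}\alpha$; its kernel is one dimensional and spanned by $\sum_{j=0}^{r-1}(s(\rot^j\alpha),\rot^j\alpha)=\cycle\alpha$, and the choice of $\Crep(\cB)$ supplies exactly one such representative per orbit. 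On the remaining block $d^0$ is injective: any such cycle $\gamma$ is non-maximal, hence has an image pair hit by $\gamma$ alone, so in a cocycle $\sum\lambda_\gamma(e_i,\gamma)$ the coefficient of that pair is $\pm\lambda_\gamma$, forcing $\lambda_\gamma=0$. I expect this block-diagonalization---in particular the clean correspondence between doubly hit target pairs and cocomplete cycles, connecting $\alpha$ to $\rot\alpha$---to be the main obstacle, since it is where gentleness enters decisively.

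Finally I would assemble the three weight-homogeneous contributions and verify independence. The listed elements have pairwise disjoint supports: $\one$ lives in weight $0$; a $\cB$-maximal cycle is never cocomplete (a cocomplete cycle is a proper subpath of its square, hence not $\cB$-maximal), so the second and third families involve disjoint pairs; and distinct elements of $\Crep(\cB)$ lie in distinct circuits, so the various $\cycle\alpha$ have disjoint supports as well. Being nonzero cocycles with pairwise disjoint supports, they are linearly independent, and by the weightwise computation they span $\ker d^0=\HH^0(A)$.
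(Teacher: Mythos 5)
Your proof is correct, and in substance it rests on the same two facts as the paper's: gentleness makes $d^0(e_i,\alpha)$ consist of at most one ``prepend'' and one ``append'' term, and a target pair $(c,\beta)\in\Gamma_1\parallel\cB$ occurs in the differentials of two source pairs exactly when those sources are a cocomplete cycle and its rotation (your identity $\alpha a_\ell=a_\ell\,\rot\alpha$ is precisely the paper's observation that $(b,b\alpha_i)=(a,\alpha_j a)$ forces $a=b$ to be the first arrow of $\alpha_i$ and $\alpha_j=\rot(\alpha_i)$). Where you genuinely differ is in the organization. The paper decomposes an arbitrary element of $\ker d^0$ into pairwise disjoint irreducible cocycles and classifies the irreducible ones, extracting the rotation structure from a permutation $\pi$ of the indices of the pairs appearing in a given irreducible cocycle; you instead classify the \emph{target} pairs as singly or doubly hit, note that this block-diagonalizes $d^0$ weight by weight ($\cB$-maximal block mapping to zero, cocomplete block acting as the boundary operator of a disjoint union of cycle graphs, and an injective remainder), and read off the kernel of each block directly. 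Your packaging is arguably the cleaner one for a one-off kernel computation, and it makes the linear independence and spanning statements immediate; the paper's irreducibility device is less transparent here but pays for itself later, since the same decomposition into disjoint irreducible cocycles is reused verbatim in the classifications of cocycles in degree $1$ and in degrees $m\geq2$, where the analysis of ranks greater than one is more delicate. The only blemish is a harmless off-by-one in your indexing of the shared pairs $p_j$ on the cocomplete block ($d^0$ of $\rot^j\alpha$ is $p_{j-1}-p_j$ with your convention for $p_j$, not $p_j-p_{j+1}$), which does not affect the kernel.
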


\begin{proof}
A simple calculation shows that the elements listed in the statement of the
theorem are in the kernel of the differential~$d^0$ and, since they are
pairwise disjoint, it is clear that they are linearly independent. On the
other hand, every element of that kernel is a pairwise disjoint sum of
irreducible ones, so to prove the theorem it will be enough to show that
every irreducible element of~$\ker d^0$ is a scalar multiple of one of the
listed elements.

With that in mind, let us then consider an element $z$ of~$\ker d^0$ that
is irreducible and, in particular, weight-homogeneous of some
weight~$\ell\geq0$. If this weight is~$0$, then there are scalars $x_i$,
one for each vertex $i\in Q_0$, such that $z=\sum_{i\in Q_0}
x_i\cdot(e_i,e_i)$ and we have that
  \begin{align*}
  0 &= d^0(z) 
     =   \sum_{i\in Q_0}
         \sum_{\substack{b\in Q_1\\s(b)=i}}
            x_i\cdot(b,b)
       - \sum_{i\in Q_0}
         \sum_{\substack{a\in Q_1\\t(a)=i}}
            x_i\cdot(a,a)
     \\
    &= \sum_{b\in Q_1} (x_{s(b)}-x_{t(b)}) \cdot (b,b).
  \end{align*}
As the quiver~$Q$ is connected, this implies that there is a scalar
$x\in\kk$ such that $x=x_i$ for all vertices $i\in Q_0$ and therefore that
our cocycle~$z$ is a scalar multiple of the element~$\one$ described in the
proposition.

Let us suppose now that $\ell\geq1$. We can write
  \[
  z=x_1\cdot(e_1,\alpha_1)+\cdots+x_r\cdot(e_r,\alpha_r)
  \]
with $r\geq1$, the pairs $(e_1,\alpha_1)$,~\dots,~$(e_r,\alpha_r)$ all
in~$\Gamma_0\parallel\cB_\ell$ and pairwise different, and the scalars
$x_1$,~\dots,~$x_r\in\kk$ all non-zero. If there is an index
$i\in\{1,\dots,r\}$ such that the path~$\alpha_i$ is $\cB$-maximal, then
$d^0(e_i,\alpha_i)=0$ and, since $z$ is irreducible, we have that $z$~is in
fact a scalar multiple of~$(e_i,\alpha_i)$, which is one of the elements
listed in the proposition. We may therefore suppose that none of the
paths~$\alpha_1$,~\dots,~$\alpha_r$ appearing in~$z$ is $\cB$-maximal. 

Let $i\in\{1,\dots,r\}$ and suppose that there is an arrow~$b$ in~$Q$ such
that $b\alpha_i\in\cB$. As~$d^0(z)=0$, it follows from this that there is
an index $j\in\{1,\dots,r\}$ and an arrow~$a$ in~$Q$ such that
$(b,b\alpha_i)=(a,\alpha_ja)$, and then $a=b$ and $b$ is the first arrow
in~$\alpha_i$: as $b\alpha_i\in\cB$, we see that $\alpha_i$ is cocomplete.
A similar argument applies if there is an arrow~$a$ in~$Q$ with
$\alpha_ia\in\cB$, of course, and this shows that the
cycles~$\alpha_1$,~\dots,~$\alpha_r$ are all cocomplete.

For each $i\in\{1,\dots,r\}$ let $a_i$ and~$b_i$ be the last and the first
arrow in~$\alpha_i$, respectively. Because the presentation~$(Q,I)$ is
gentle and the paths~$\alpha_1$,~\dots,~$\alpha_r$ all have
length~$\ell \geq1$, we have that
  \[
  0 = d^0(z) = \sum_{i=1}^rx_i\cdot(b_i,b_i\alpha_i)
             - \sum_{i=1}^rx_i\cdot(a_i,\alpha_i a_i).
  \]
Moreover, as $(Q,I)$ is gentle and the
paths~$\alpha_1$,~\dots,~$\alpha_r$ are pairwise different and all of the
same positive length, we see that the arrows~$a_1$,~\dots,~$a_r$ are
pairwise different, as are the arrows~$b_1$,~\dots,~$b_r$, and this implies
that there is a permutation~$\pi$ of~$\{1,\dots,r\}$ such that
$(a_i,\alpha_ia_i)=(b_{\pi(i)},b_{\pi(i)}\alpha_{\pi(i)})$ and
$x_i=x_{\pi(i)}$ for all $i\in\{1,\dots,r\}$. In particular, we see that
$\alpha_{\pi(i)}=\rot(\alpha_i)$ for all $i\in\{1,\dots,r\}$.

Let now $t$ be the period of~$\alpha_1$, so that the
paths~$\alpha_1$,~$\rot(\alpha_1)$,~\dots,~$\rot^{t-1}(\alpha_1)$ are
pairwise different and $\rot^t(\alpha_1)=\alpha_1$. The integers
$1$,~$\pi(1)$,~\dots,~$\pi^{t-1}(1)$ are then pairwise different, there is
a unique $\alpha\in\Crep(\cB)$ that is conjugate to~$\alpha_1$, and we have
that
  \[
  \sum_{i=0}^{t-1}
        x_{\pi^i(1)}\cdot(e_{\pi^i(1)},\alpha_{\pi^i(1)})
    = x_1\cdot\cycle{\alpha},
  \]
which is in the kernel of~$d^0$: as $z$ is irreducible, this implies that
this sum is in fact equal to~$z$. The proof of the theorem is thus
complete.
\end{proof}

We now give a description of~$\HH^1(A)$ and, in order to that, we start by
giving a description of the $1$-cocycles that have rank~$1$.
Just as for $\cB$-maximal paths, we say that a path~$\gamma$ is
\newterm{$\Gamma$-maximal}\index{Gmaximal@$\Gamma$-maximal} if it belongs
to~$\Gamma$ and is not a proper subpath of another element of~$\Gamma$. We
remark that, since $Q$ is connected, a path is simultaneously $\cB$- and
$\Gamma$-maximal if and only if it has length~$1$ and goes from a source to
a sink.

\begin{lemma}\label{lemma:one:rank-1}
Let $u$ be an element of~$\Gamma_1\parallel\cB$. If $u$ is cocycle and not
a coboundary, then one of the following conditions holds:
\begin{enumerate}[label=(\roman*), ref=(\roman*)]

\item\label{it:one:loop} there is a loop $b$ in~$Q$ such that $b^2\in R$
and $u=(b,s(b))$, and the characteristic of the field~$\kk$ is~$2$,

\item\label{it:one:max} there is a $\Gamma$-maximal arrow~$c$ and a
path~$\alpha\in\cB$ that neither begins nor ends with~$c$ such that
$u=(c,\alpha)$,

\item\label{it:one:cdelta} there is a cocomplete cycle~$\delta$ starting
with an arrow~$c$ such that $u=(c,c\delta)$,

\item\label{it:one:diag} there is an arrow~$c$ such that $u=(c,c)$.

\end{enumerate}
On the other hand, if $u$ satisfies one of these conditions then it is a
cocycle.
\end{lemma}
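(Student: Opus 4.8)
The plan is to characterize the rank-one cocycles by directly analyzing the differential formula \eqref{eq:differential} applied to a single pair $u=(c,\alpha)\in\Gamma_1\parallel\cB$, then separately identify which of these are coboundaries. Since $c\in\Gamma_1=Q_1$ is an arrow, the condition $d^1(u)=0$ says that the two sums in \eqref{eq:differential} must cancel termwise. The key observation is that $d^1(c,\alpha)$ can only fail to vanish through pairs of the form $(bc,b\alpha)$ (when we can prepend an arrow $b$ with $bc\in\Gamma_2$, i.e.\ $bc\in R$, and $b\alpha\in\cB$) or $(ca,\alpha a)$ (when we can append an arrow $a$ with $ca\in R$ and $\alpha a\in\cB$). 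I would enumerate the possible configurations by asking, for the arrow $c$: does there exist an arrow $b$ with $bc\in R$, and does there exist an arrow $a$ with $ca\in R$? The gentle conditions (\emph{b}) and~(\emph{c}) guarantee that each of these arrows, when it exists, is unique, which keeps the bookkeeping finite.

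First I would treat the case where the prepended term $(bc,b\alpha)$ actually survives, i.e.\ $b\alpha\in\cB$ where $b$ is the unique arrow with $bc\in R$. Because $b\alpha$ and the original $\alpha$ start at the same vertex as $c$ ends, and $bc\in R$ while $b\alpha\in\cB$, the gentle axiom (\emph{b}) forces $\alpha$ to begin with $c$ (there is at most one continuation of $b$ inside $\cB$). Writing $\alpha=c\delta$, one checks that $c\delta$ being in $\cB$ with $bc\in R$ makes $\delta$ a cocomplete cycle starting with $c$—this produces case~\ref{it:one:cdelta}, or the degenerate subcase where $\delta$ is trivial giving the diagonal $(c,c)$ of case~\ref{it:one:diag}. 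The symmetric analysis of the appended term $(ca,\alpha a)$ yields the same family by the left-right symmetry of the gentle conditions. When \emph{neither} boundary term survives—that is, $c$ admits no prependable/appendable arrow keeping us in $\cB$, equivalently $c$ is $\Gamma$-maximal and $\alpha$ neither starts nor ends with $c$—we land in case~\ref{it:one:max}. The loop case~\ref{it:one:loop} arises as the special coincidence $\alpha=s(b)=e_{s(b)}$ with $b=c$ a loop and $b^2\in R$; here the two surviving terms are $(b^2,b)$ from prepending and $-(-1)^1(b^2,b)=(b^2,b)$ from appending, so they cancel only in characteristic~$2$, explaining the characteristic hypothesis.

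The converse direction—that each listed $u$ is genuinely a cocycle—is a short direct verification using \eqref{eq:differential}: in each case one checks that either both boundary terms are absent (so $d^1u=0$ trivially) or the two surviving terms are equal and cancel with the sign $(-1)^m$, which for $m=1$ and the characteristic-$2$ loop gives the needed cancellation. \textbf{The main obstacle} I anticipate is the coboundary bookkeeping implicit in the statement: the lemma claims these are exactly the rank-one cocycles that are \emph{not} coboundaries, so I must verify that none of the exhibited classes $\ref{it:one:loop}$–$\ref{it:one:diag}$ lies in $\operatorname{Im} d^0$, while confirming that any rank-one cocycle falling outside these cases \emph{is} a coboundary. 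Concretely I would compute $d^0$ on the generators $(e_i,e_j)$ or short pairs and compare with the candidate $u$; the delicate point is case~\ref{it:one:diag}, since $(c,c)$ can be a coboundary when $c$ is not $\Gamma$-maximal—so the precise phrasing of which diagonal pairs survive as nontrivial classes, versus which get absorbed into $\operatorname{Im}d^0$, is where the argument must be handled carefully and matched against the gentle structure rather than the rank-one analysis alone.
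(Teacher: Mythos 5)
Your overall strategy --- analyse $d^1(c,\alpha)=0$ term by term, using the uniqueness of prependable and appendable arrows supplied by gentleness --- is the right one and matches the paper's. But the execution contains a genuine error in the central case split. For a rank-one cochain $u=(c,\alpha)$, a surviving term $(bc,b\alpha)$ in the first sum of~\eqref{eq:differential} can only be killed by coinciding with the unique possible term $(ca,\alpha a)$ of the second sum; equating $(bc,b\alpha)=(ca,\alpha a)$ forces $a=b=c$, so $c$ is a loop with $c^2\in R$, $\alpha\in\{s(c),c\}$, and the two coefficients add to $1-(-1)^1=2$, whence the characteristic of~$\kk$ is~$2$. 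In other words, the ``a boundary term survives'' branch yields only conditions \ref{it:one:loop} and~\ref{it:one:diag}; it does not yield condition~\ref{it:one:cdelta}. Your inference that $bc\in R$ and $b\alpha\in\cB$ force $\alpha$ to begin with $c$ is false --- since $b\alpha\in\cB$ while $bc\in I$, the \emph{last} arrow of $\alpha$ is precisely what cannot be $c$, and nothing is learned about its first arrow --- and you have also reversed the paper's convention, under which $\alpha=c\delta$ means $\alpha$ \emph{ends} with $c$. Condition~\ref{it:one:cdelta} actually arises in the opposite branch, where \emph{neither} sum contributes, $\alpha$ has length at least~$2$, and $\alpha$ starts or ends with $c$; there the hypothesis that $u$ is not a coboundary is indispensable, since it is what produces an arrow $a$ with $\delta a\in\cB$ (otherwise $u=d^0(s(\delta),\delta)$), and only then do gentleness and the vanishing of the second sum force $\delta$ to be a cocomplete cycle beginning with~$c$. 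Your proposal never invokes the non-coboundary hypothesis in the forward direction, so it cannot reach the conclusion that $\delta$ is cocomplete.

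There is also a misreading of what must be proved about coboundaries. The converse direction of the lemma asserts only that an element satisfying one of \ref{it:one:loop}--\ref{it:one:diag} is a cocycle; the paper explicitly remarks that such an element may nonetheless be a coboundary (for instance $(c,c)$ occurs in $d^0(e_i,e_i)$). Your announced obligation to verify that none of the exhibited classes lies in $\Im d^0$ is therefore both unnecessary and unachievable, and the sorting of which of these cocycles are coboundaries is deferred to the proof of Proposition~\ref{prop:hh:1}.
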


We note that it might be the case that $u$ satisfies one of these four
conditions and is a coboundary.

\begin{proof}
The second claim of the lemma follows from a direct computation that we
omit. Let us prove the first one. Let $u=(c,\alpha)$ be an element
of~$\Gamma_1\parallel\cB$ that is a cocycle and not a coboundary. Since
$d^1(u)=0$  one of the following two possibilities occurs:
\begin{enumerate}[label=(\Alph*), ref=(\Alph*)]

\item\label{it:one:1} either the characteristic of~$\kk$ is~$2$, and there
are arrows~$a$ and~$b$ in~$Q$ such that 
  \begin{equation}\label{eq:one:1}
  (bc,b\alpha)
        = (ca,\alpha a)
        \in \Gamma_{2}\parallel\cB,
  \end{equation}

\item\label{it:one:2} or there is no arrow~$b$ such that
$(bc,b\alpha)\in\Gamma_{2}\parallel\cB$ and there is no arrow $a$ such that
$(ca,\alpha a)\in\Gamma_{2}\parallel\cB$.

\end{enumerate}
If \ref{it:one:1} holds, then $b=c=a$, $b$ is a loop, $b^2\in R$, and
$\alpha=b^l$  for some integer $l\geq0$: as~$b^2\in R$ and $\alpha\in\cB$,
we have in fact that $l\leq1$, and one of conditions~\ref{it:one:loop}
or~\ref{it:one:diag} of the lemma holds.

Let us now suppose that \ref{it:one:2} holds. If $\alpha$ has length~$0$,
then \ref{it:one:2} means that the arrow~$c$ is $\Gamma$-maximal, so that
condition~\ref{it:one:max} holds. If $\alpha$ has length~$1$, then either
$\alpha=c$ and condition~\ref{it:one:diag} holds, or $\alpha$ is an arrow
different from~$c$, and in that case the gentleness of~$(Q,I)$
together with~\ref{it:one:2} implies at once that the arrow~$c$ is
$\Gamma$-maximal, so that condition~\ref{it:one:max} holds. We are thus
left with considering the case in which the path~$\alpha$ has length at
least~$2$.

Suppose that $\alpha$ ends with the arrow~$c$, so that there is a cycle
$\delta\in\cB$ of positive length and such that $\alpha=c\delta$. There is
an arrow~$a$ such that $\delta a\in\cB$, for otherwise
$d^0(s(\delta),\delta)=u$, and $u$ is not a coboundary. Now $\alpha
a=c\delta a$ is in~$\cB$. Since $c\delta$ and $\delta a$ are and $\delta$
has positive length, and then \ref{it:one:2} implies then that $ca\not\in
R$: because $\delta a\in\cB$, the gentleness of~$(Q,I)$ then allows
us to deduce that $c$ is the first arrow of~$\delta$ and, because
$c\delta\in\cB$, that $\delta$ is a cocomplete cycle: as $u=(c,c\delta)$,
we thus see that condition~\ref{it:one:cdelta} holds.

Suppose next that the path~$\alpha$ now starts with the arrow~$c$.
Reasoning as we have just done we can show that there is a cocomplete
cycle~$\epsilon$ ending with~$c$ such that $u=(c,\epsilon c)$, but then
putting $\delta\coloneqq\rot(\epsilon)$, which is a cocomplete cycle
starting with~$c$, we see that $u=(c,c\delta)$ and, thus, that
condition~\ref{it:one:cdelta} holds also in this case.

Finally, if the path~$\alpha$ neither starts nor ends with the arrow~$c$
then \ref{it:one:2} and the gentleness of~$(Q,I)$ imply at once that the
arrow~$c$ is $\Gamma$-maximal and then that condition~\ref{it:one:max}
holds.
\end{proof}

Knowing all the $1$-cocycles of rank~$1$, we can now complete the
description of the first cohomology space~$\HH^1(A)$. As usual, a
\newterm{spanning tree}\index{Spanning tree} of the quiver~$Q$ is the set
of arrows of a connected, acyclic subquiver of~$Q$ that contains all its
vertices.

\begin{proposition}\label{prop:hh:1}
Let $T$ be a spanning tree of the quiver~$Q$. The vector space $\HH^1(A)$
is freely generated by the collection of the cohomology classes of the
following elements of~$\Gamma_1\parallel\cB$:
\begin{itemize}

\item the pairs of the form $(c,c)$ with $c\in Q_1\setminus T$,

\item the pairs of the form $(c,\alpha)$ with $c$ a $\Gamma$-maximal arrow
and $\alpha\in\cB$ a path that neither begins not ends with~$c$, 

\item the pairs $(c,c\delta)$ with $\delta\in\Crep(\cB)$ and $c$ the first
arrow in~$\delta$,

\item the pairs $(b,s(b))$ with $b$ a loop in~$Q$ such that $b^2\in R$, if
the characteristic of~$\kk$ is~$2$.

\end{itemize}
\end{proposition}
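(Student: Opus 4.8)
The plan is to identify $\HH^1(A)$ with $\ker d^1/\Im d^0$ inside the complex $\kk(\Gamma\parallel\cB)$ of Proposition~\ref{prop:strametz}, using the same two devices as for $\HH^0(A)$: every cocycle is a disjoint sum of weight-homogeneous irreducible cocycles, and the weight $|\alpha|-|\gamma|$ is preserved by the differentials. By Lemma~\ref{lemma:one:rank-1} I already know all rank-$1$ cocycles that are not coboundaries, and the four families in the statement are exactly these --- the diagonals are case~\ref{it:one:diag}, the $\Gamma$-maximal pairs case~\ref{it:one:max}, the pairs $(c,c\delta)$ case~\ref{it:one:cdelta}, and the loops case~\ref{it:one:loop}; the spanning tree $T$ and the set $\Crep(\cB)$ only serve to choose one generator per cohomology class. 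So the two things left to prove are spanning and linear independence, and I would organize both by weight.

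The weights $\le 0$ are quickly settled. A pair in $\Gamma_1\parallel\cB$ has an arrow in its first slot, so weight $-1$ forces the second slot to be a vertex, which by Lemma~\ref{lemma:one:rank-1}, case~\ref{it:one:loop}, happens only for loops with $b^2\in R$ in characteristic~$2$. Weight $0$ means both slots are arrows. Reading off the differential~\eqref{eq:differential}, two distinct off-diagonal pairs can never share a term of $d^1$ (a match would force one of them to be diagonal), so every off-diagonal weight-$0$ cocycle is a sum of rank-$1$ cocycles, necessarily $\Gamma$-maximal ones. The diagonal pairs $(c,c)$ are all cocycles and, restricted to them, the complex is the simplicial cochain complex of the underlying graph of $Q$; since $Q$ is connected, $d^0(e_i,e_i)=\sum_{s(b)=i}(b,b)-\sum_{t(a)=i}(a,a)$ and the spanning tree $T$ cuts these classes down to the $(c,c)$ with $c\in Q_1\setminus T$. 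As every weight-$0$ coboundary is diagonal, the diagonal and off-diagonal generators do not interfere.

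The positive weights $\ell\ge 1$ are the heart of the matter. The gentleness conditions (b)--(c) bound, for each arrow $c$, the number of arrows $b$ with $bc\in R$ and the number with $ca\in R$ by one each, so $d^1(c,\alpha)$ has at most two terms. Hence the ``overlap graph'' of an irreducible cocycle --- its pairs as vertices, a shared term of $d^1$ as an edge --- has maximal degree $2$, and each component is a path or a cycle. A short computation shows that any vertex lying between two edges would have its second slot $\alpha$ both begin and end with its arrow $c$ while $cc'\in R$ for a neighbouring arrow $c'$, so that the shared path $\alpha c'$ would contain the relation $cc'$ and fail to lie in $\cB$ --- a contradiction. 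This rules out cycles and paths of length $\ge 3$, leaving only isolated pairs (the $\Gamma$-maximal and cocomplete rank-$1$ cocycles) and single edges. An edge is forced into the shape $(c_1,c_1\nu)-(c_2,\nu c_2)$ with $\nu$ a cycle at $v:=s(c_1)$, and the uniqueness built into the gentleness conditions makes $d^0(v,\nu)$ equal to exactly these two terms; thus every edge is a coboundary. Consequently the positive-weight part of $\HH^1(A)$ is carried entirely by the isolated $\Gamma$-maximal and cocomplete rank-$1$ cocycles.

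It remains to find the relations among these and to prove independence. A $\Gamma$-maximal generator $(c,\alpha)$ with $\alpha$ beginning and ending with an arrow different from $c$ can never occur as a term $(b,b\mu)$ or $(a,\mu a)$ of any $d^0(w,\mu)$, so these classes are nonzero and disjoint from all coboundaries. For the cocomplete ones the key identity is $d^0(v,\delta)=(c,c\delta)-(g,\delta g)$, where $g$ is the last arrow of $\delta$ and $(g,\delta g)=(g,g\,\rot(\delta))$; this says that the generators attached to a cocomplete cycle and to its rotation are cohomologous, so each cocomplete circuit contributes a single class, represented by the element of $\Crep(\cB)$, exactly as the spanning tree produces one diagonal class per independent cycle of the graph. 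The hard part will be the bookkeeping needed to confirm that these rotation relations (together with the graph relations in weight $0$) are the \emph{only} relations --- that is, computing the rank of $\Im d^0$ against the proposed generators and checking that the telescoping of the rotation relations leaves precisely one surviving dimension per cocomplete circuit. Once this rank count is pinned down, assembling the three weight ranges yields the stated basis.
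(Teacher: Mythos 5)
Your overall strategy is the same as the paper's: realize $\HH^1(A)$ inside $\kk(\Gamma\parallel\cB)$, reduce to weight-homogeneous irreducible cocycles, use Lemma~\ref{lemma:one:rank-1} for rank one, and show that rank greater than one contributes nothing new. Your ``overlap graph'' is a clean repackaging of the paper's direct case analysis: the paper picks a non-cocycle summand $(c_1,\alpha_1)$ of an irreducible cocycle of rank $r>1$, matches its surviving term of $d^1$ against another summand, and exhibits the whole cocycle as a coboundary $d^0(s(\delta),\delta)$; your degree bound of two, the observation that a degree-two vertex $(c,\alpha)$ would force $\alpha$ to begin with $c$ while the right-hand term requires $ca\in R$ and $\alpha a\in\cB$ (which is then impossible since $\alpha a$ contains $ca$), and your identification of single edges with coboundaries $d^0(s(\nu),\nu)$ are exactly the same computations. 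That part of your argument is correct, as is the weight-$0$ discussion. One small slip: weight $-1$ is not exhausted by case~\ref{it:one:loop}; a $\Gamma$-maximal loop $c$ gives the weight-$(-1)$ cocycle $(c,s(c))$, which belongs to your second family. This does not disturb anything, since coboundaries have weight at least $0$, but your trichotomy by weight should account for it.

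The genuine gap is the step you explicitly defer: determining which linear combinations of the rank-one non-coboundary cocycles are themselves coboundaries. You exhibit the two families of relations (graph relations in weight $0$ and rotation relations $d^0(s(\delta),\delta)=(c,c\delta)-(g,g\rot(\delta))$ in positive weight), but you do not exclude the possibility that some other combination $\sum\lambda_{(e,\eta)}\,d^0(e,\eta)$, with various $\eta$ not cocomplete, lands in the span of your generators after cancellation and produces an extra relation. The missing observation --- and it is how the paper closes the argument --- is that a pair $(c,c\delta)$ with $\delta$ cocomplete and $c$ its first arrow occurs as a term of $d^0(e,\eta)$ if and only if $e=s(\eta)$ and $\eta$ is $\delta$ or $\rot^{-1}(\delta)$, and that in this case \emph{every} term of $d^0(e,\eta)$ is again of this form. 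Splitting a $0$-cochain $w$ as $w_1+w_2$ according to whether the second component is cocomplete or not, one gets that $d^0(w_2)$ is disjoint from the span of the positive-weight generators, so any coboundary in that span equals $d^0(w_1)$ and is a combination of rotation relations; the telescoping identity $\sum_{i=0}^{t-1}d^0(s(\rot^i(\delta)),\rot^i(\delta))=0$ then leaves exactly one class per cocomplete circuit, represented by the element of $\Crep(\cB)$. Together with the standard spanning-tree count in weight $0$ and the fact that the generators of your second and fourth families never occur in any $d^0(e,\eta)$, this pins down the rank of $\Im d^0$ against your generators and completes the proof.
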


\begin{proof}
Every $1$-cocycle in our complex~$\kk(\Gamma\parallel\cB)$ is cohomologous
to a pairwise disjoint sum of irreducible cocycles that are not
coboundaries. Lemma~\ref{lemma:one:rank-1} describes those of these that
have rank~$1$, and we will now show that there are none of rank greater
than~$1$. The proposition will then immediately follow.

Let $z$ be an irreducible $1$-cocycle in our
complex~$\kk(\Gamma\parallel\cB)$ that is not a coboundary, and to reach a
contradiction let us suppose that its rank $r\coloneqq\rk(z)$ is greater
than~$1$. Since $z$ is irreducible, it is weight-homogeneous: let $\ell$ be
its weight, which is at least~$-1$. We can write
  \[
  z
        = x_1\cdot(c_1,\alpha_1) + \cdots + x_r\cdot(c_r,\alpha_r)
  \]
for some pairwise different pairs $(c_1,\alpha_1)$,~\dots,~$(c_r,\alpha_r)$
in~$\Gamma_1\parallel\cB_{\ell+1}$ and some non-zero scalars
$x_1$,~\dots,~$x_r\in\kk$, and compute that
  \begin{align}
  d^1(z) 
       &= \sum_{\substack{1\leq i\leq r,\;b\in Q_1\\
                bc_i\in R,\;b\alpha_i\in\cB}}
            x_i\cdot(bc_i,b\alpha_i)
        + \sum_{\substack{1\leq i\leq r,\;a\in Q_1\\
                c_ia\in R,\;\alpha_ia\in\cB}}
            x_i\cdot(c_ia,\alpha_ia) 
        \notag
        \\
       &= \sum_{b\in Q_1}
          \Biggl[
            \sum_{\substack{1\leq i\leq r\\bc_i\in R,\;b\alpha_i\in\cB}}
                x_i\cdot(bc_i,b\alpha_i)
          \Biggr]
        + \sum_{a\in Q_1}
          \Biggl[
            \sum_{\substack{1\leq i\leq r\\c_ia\in R,\;\alpha_ia\in\cB}}
                x_i\cdot(c_ia,\alpha_ia)
          \Biggr]
        , \label{eq:hh1:1}
  \end{align}
with each of the sums in brackets having at most one summand because of the
gentleness of the presentation~$(Q,I)$ and the fact that the
paths~$\alpha_1$,~\dots,~$\alpha_r$ all have the same length~$\ell+1$. As
$z$ is irreducible and we are supposing that $r>1$, the pair
$(c_1,\alpha_1)$ is not a cocycle and therefore at least one of the
following two conditions holds:
\begin{enumerate}[label=(\Alph*), ref=(\Alph*)]

\item\label{it:reg:1} there exists an arrow~$b$ such that
$(bc_1,b\alpha_1)\in\Gamma_2\parallel\cB$,

\item\label{it:reg:2} there exists an arrow~$a$ such that
$(c_1a,\alpha_1a)\in\Gamma_2\parallel\cB$.

\end{enumerate}
Let us suppose that the first one holds, so that the pair
$(bc_1,b\alpha_1)$ appears in the first sum of~\eqref{eq:hh1:1} with
coefficient~$x_1$: as $d^1(z)=0$, we see that that pair has to appear also
in the second sum with opposite coefficient, and that then there is an
arrow~$a$ and an index $i\in\{1,\dots,r\}$ such that   
  \begin{equation}\label{eq:hh1:2}
  x_1+x_i=0,
  \qquad \qquad
  (bc_1,b\alpha_1)=(c_ia,\alpha_ia). 
  \end{equation}
If we had $i=1$, then these equalities imply that $2x_1=0$, so that the
characteristic of~$\kk$ is~$2$, and that $a=c_1=b$, that $b$ is a loop with
$b^2\in R$, and that $\alpha_1=b^r$ with $r\in\{0,1\}$: but then
$d^1(c_1,\alpha_1)=0$, which we know not to be the case. We thus see that
$i>1$.

If $\ell=-1$, then using the second equality in~\eqref{eq:hh1:2} we find
that $a=b$, that $b$ is a loop, and therefore that
$(c_1,\alpha_1)=(c_i,\alpha_i)$, which is impossible since $i\neq1$. If
$\ell=0$, then the second equality in~\eqref{eq:hh1:2} implies that
$(c_1,\alpha_1)=(a,a)$ and, in particular, that $(c_1,\alpha_1)$ is a
cocycle: this is again impossible. We therefore have that $\ell\geq1$, so
that $\alpha_1$ has length at least~$2$. Using that and~\eqref{eq:hh1:2} we
see that there is a cycle~$\delta\in\cB$ of positive length such that
$\alpha_1=\delta a$ and $\alpha_i=b\delta$, and therefore
  \[
  d^0\bigl(x_i\cdot (s(\delta),\delta)\bigr) 
        = x_i\cdot(b,b\delta) - x_i\cdot(a,\delta a)
        = x_i\cdot(c_i,\alpha_i) + x_1\cdot(c_1,\alpha_1),
  \]
so that $r=i=2$ and in fact $d^0\bigl(x_i\cdot(s(\delta),\delta)\bigr)=z$:
this is absurd.

This shows that the condition~\ref{it:reg:1} cannot actually hold, and a
similar reasoning shows that the same happens with
condition~\ref{it:reg:2}. We can conclude from this that the hypothesis
that the rank~$r$ of~$z$ is larger than~$1$ is untenable, as we wanted.

At this point, we know that every $1$-cocycle is cohomologous to a linear
combination of elements of~$\Gamma_1\parallel\cB$ that satisfy one of the
conditions of Lemma~\ref{lemma:one:rank-1} and, of course, the set of those
cocycles forms a basis of the space~$Z$ they span. Let now $z$ be a
non-zero coboundary belonging to~$Z$ that is weight-homogeneous of some
weight~$\ell$. Because of the form of the differential~$d^0$, it is clear
that no element of~$\Gamma_1\parallel\cB$ satisfying one of the
conditions~\ref{it:one:loop} or~\ref{it:one:max} of
Lemma~\ref{lemma:one:rank-1} appears in~$z$ with non-zero coefficient and
that, in particular, $\ell\geq0$. 
\begin{itemize}

\item If $\ell>0$, then $z$ is a linear combination of pairs of the
form~$(c,c\delta)$ with $\delta$ a cocomplete cycle and $c$ the first arrow
of~$\delta$. A pair of that form appears in the coboundary of a pair
$(e,\eta)$ in~$\Gamma_0\parallel\cB$ if and only if on one hand $e=s(\eta)$
and, on the other, $\eta=\delta$ or $\eta=\rot^{-1}(\delta)$, and when this
is the case all the pairs that appear in~$d^0(e,\eta)$ are of the form of
those in~$z$. It follows from this that $z$ is a linear combination of the
elements
  \[
  d^0(s(\epsilon),\epsilon) 
        = (b,b\epsilon) - (a,a\rot(\epsilon))
  \]
with $\epsilon$ a cocomplete path, $b$ its first arrow and $a$ the first
arrow of~$\rot(\epsilon)$, which is the last arrow of~$\epsilon$.

\item If the weight~$\ell$ of~$z$ is~$0$, then $z$ is in fact a linear
combination of elements of~$\Gamma_1\parallel\cB$ of the form~$(c,c)$ with
$c$ an arrow of~$Q$, and therefore a linear combination of the elements
  \[
  d^0(e_i,e_i) 
        = \sum_{\substack{b\in Q_1\\s(b)=i}}(b,b)
        - \sum_{\substack{a\in Q_1\\t(a)=i}}(a,a)
  \]
with $i\in Q_1$. It is well-known then that $z$ is cohomologous to a unique
linear combination of pairs $(c,c)$ with $c\in Q_1\setminus T$; the details
of the argument to check this can be found in the proof of Theorem~3.4
of~\cite{CSS}.

\end{itemize}
Putting everything together, we see at once that the claim of the
proposition is true.
\end{proof}

Counting dimensions shows that this description is compatible with the results of \cite{Cibils-Saorin}.

\section{Cohomology in higher degrees}

We now compute the cohomology of our algebra~$A$ in degrees greater
than~$1$. We will use several times the following key remark that is a
consequence of the form of the differential of our
complex~$\kk(\Gamma\parallel\cB)$.

\begin{remark}\label{rem:key}
If a pair~$(\gamma,\alpha)\in\Gamma_m\parallel\cB$ is such that  at least one
of~$\gamma$ or~$\alpha$ has positive length, then there is at most one
arrow~$b$ such that $(b\gamma,b\alpha)$ is in~$\Gamma\parallel\cB$, and at
most one arrow~$a$ such that $(\gamma a,\alpha a)$ is
in~$\Gamma\parallel\cB$. This follows from the fact that the
presentation~$(Q,I)$ is gentle, and implies that in each of the
sums that appear in the right hand side of the
equality~\eqref{eq:differential} of Proposition~\ref{prop:strametz} there
is at most one summand. Moreover, these sums have no terms in common unless
there is a loop~$b$ in~$Q$ and an integer~$l\geq0$ such that $\gamma=b^m$
and~$\alpha=b^l$, and when this is the case then
$d^m(\gamma,\alpha)=(1-(-1)^m)(b^{m+1},b^{l+1})$. 
\end{remark}

A cycle $C$ in~$Q$ is \newterm{complete}\index{Complete cycle} in~$(Q,I)$
if $C^2$ is in~$\Gamma$, and in that case $C$ is also there. A
straightforward calculation using the fact that the presentation~$(Q,I)$ is
gentle proves the following observation:

\begin{lemma}\label{lemma:sum}
Let $C$ be a complete cycle of length~$m$ and period~$r$ in~$(Q,I)$, and
let $b$ be the first arrow in~$C$. If we set
  \[
  \cycle{C} 
        \coloneqq \sum_{i=0}^{r-1} (-1)^{im}\cdot(\rot^i(C),s(\rot^i(C)))
        \in \kk(\Gamma_{m}\parallel\cB),
  \]
then we have that
  \[
  d^{m}(\cycle{C}) = (1-(-1)^{m})\cdot(bC,b).
  \]
In particular, the cochain~$\cycle{C}$ is an $m$-cocycle in the
complex~$\kk(\Gamma\parallel\cB)$ if and only if either its degree~$m$ is
even or the characteristic of the field~$\kk$ is~$2$. \qed
\end{lemma}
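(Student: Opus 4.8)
The plan is to compute $d^m(\cycle{C})$ directly from the formula in Proposition~\ref{prop:strametz}, exploiting the strong constraints that gentleness and completeness place on which arrows may be pre- or post-composed. Write $C=c_m\cdots c_1$, so that $b=c_1$ is the first arrow of~$C$, and for $0\le i\le r-1$ abbreviate $C_i\coloneqq\rot^i(C)$. Since $C$ is a cycle, each~$C_i$ is again a cycle, whose first arrow is $c_{m-i+1}$ and whose last arrow is $c_{m-i}$, with indices read cyclically modulo~$m$. Completeness of~$C$ says precisely that every length-$2$ subpath of~$C^2$ lies in~$R$, so that $c_{j+1}c_j\in R$ for every~$j$ modulo~$m$.

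First I would evaluate the differential on a single summand $(C_i,s(C_i))$. Because $s(C_i)$ is a trivial path, pre- or post-composing it with an arrow simply returns that arrow, and by Remark~\ref{rem:key} each of the two sums in~\eqref{eq:differential} has at most one term; completeness guarantees that in each sum the candidate arrow actually is admissible, so exactly one term survives. The first sum forces the arrow $c_{m-i+1}$ and yields $P_i\coloneqq(c_{m-i+1}C_i,\,c_{m-i+1})$, while the second forces $c_{m-i}$ and yields $Q_i\coloneqq(C_ic_{m-i},\,c_{m-i})$. Hence $d^m(C_i,s(C_i))=P_i-(-1)^mQ_i$ and $d^m(\cycle{C})=\sum_{i=0}^{r-1}(-1)^{im}\bigl(P_i-(-1)^mQ_i\bigr)$. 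It is worth recording here the one exceptional configuration flagged by Remark~\ref{rem:key}: when $C$ is a power of a loop, equivalently $r=1$, the pairs $P_i$ and $Q_i$ literally coincide, which is the source of the nonzero answer.

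The heart of the argument is the telescoping identity $Q_i=P_{i+1}$, which I would establish through the path equality $c_{m-i+1}C_i=C_{i-1}c_{m-i+1}$: comparing the two length-$(m+1)$ words letter by letter, both trace the cyclic word of~$C$ starting at $c_{m-i+1}$, run once around, and then repeat $c_{m-i+1}$. Granting this, reindexing the second sum by $i\mapsto i+1$ collapses the alternating sum, every $P_i$ with $1\le i\le r-1$ cancelling, and only a boundary term surviving:
\[
d^m(\cycle{C})=\bigl(1-(-1)^{rm}\bigr)P_0=\bigl(1-(-1)^{rm}\bigr)(bC,b),
\]
where I use $\rot^r(C)=C$ together with the $r$-periodicity $c_{j+r}=c_j$ from Lemma~\ref{lemma:primitive} to identify $Q_{r-1}$ with $P_0$.

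The last and most delicate step is to recognize that the leftover sign $(-1)^{rm}$ equals $(-1)^m$; this is where the bookkeeping around the cyclic ``seam'' must be done with care, since the coefficient $(-1)^{im}$ is not literally $r$-periodic in~$i$. Writing $m=rk$, one has $rm-m=m(r-1)=k\,r(r-1)$, which is even because $r(r-1)$ is a product of consecutive integers; therefore $(-1)^{rm}=(-1)^m$ and $d^m(\cycle{C})=(1-(-1)^m)(bC,b)$, as claimed. Since $(bC,b)$ is a nonzero basis element of $\kk(\Gamma_{m+1}\parallel\cB)$, the cochain $\cycle{C}$ is a cocycle exactly when $1-(-1)^m=0$ in~$\kk$, that is, when $m$ is even or $\charact\kk=2$, which gives the ``in particular'' clause. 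I expect the main obstacle to be precisely this sign reconciliation at the seam, together with confirming the ``exactly one surviving term per sum'' claim in the borderline loop case.
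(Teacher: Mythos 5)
Your computation is correct and is precisely the ``straightforward calculation'' that the paper leaves to the reader: one term per sum in the differential by gentleness and completeness, the telescoping identity $Q_i=P_{i+1}$, the identification $P_r=P_0$ via the $r$-periodicity of the arrows of~$C$, and the sign reconciliation $(-1)^{rm}=(-1)^m$ from $m(r-1)=k\,r(r-1)$ being even. Your handling of the loop-power case $r=1$, where the two terms coincide as in Remark~\ref{rem:key}, is also consistent with the general formula.
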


This lemma allows us to produce a cochain from a complete cycle in the
bound quiver. Up to scalars, this cochain only depends on the circuit that
contains that cycle, as the following lemma shows.

\begin{lemma}\label{lemma:sum:span}\mbox{}
\begin{thmlist}

\item If $C$ is a complete cycle of length~$m$ in~$(Q,I)$ and $i\in\ZZ$,
then 
  \[
  \cycle{\rot^i{C}} = (-1)^{im}\cdot \cycle{C}.
  \]

\item If $C$ and $D$ are two complete cycles of length~$m$ in~$(Q,I)$ that belong
to the same circuit, then the cochains $\cycle{C}$ and $\cycle{D}$ span the
same $1$-dimensional subspace of~$\kk(\Gamma_m\parallel\cB)$. \qed

\end{thmlist}
\end{lemma}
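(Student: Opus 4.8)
The plan is to deduce \thmitem{2} from \thmitem{1} and to prove \thmitem{1} by directly manipulating the defining sum, the only delicate point being the bookkeeping of signs. First I would record that $\rot^i(C)$ is again a complete cycle of length $m$ and period $r$: rotation preserves length and cyclically permutes the rotates of $C$, so it preserves the period, while the length-two subpaths of $(\rot^i(C))^2$ are the same cyclically consecutive pairs of arrows of $C$ as those of $C^2$, so completeness depends only on the circuit. Hence $\cycle{\rot^i(C)}$ is defined. Granting \thmitem{1}, part \thmitem{2} is immediate: two complete cycles $C,D$ of the same length lying in one circuit are conjugate, so $D=\rot^i(C)$ for some $i\in\ZZ$, and then $\cycle{D}=(-1)^{im}\cycle{C}$. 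That $\cycle{C}\neq0$ is clear, since by the definition of the period the pairs $(\rot^j(C),s(\rot^j(C)))$ for $j=0,\dots,r-1$ are pairwise distinct basis elements of $\Gamma_m\parallel\cB$ and $\cycle{C}$ is their $\pm1$-combination; thus $\cycle{C}$ and $\cycle{D}$ span the same line.

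To prove \thmitem{1}, abbreviate $P_j\coloneqq(\rot^j(C),s(\rot^j(C)))$, so that $P_{j+r}=P_j$ for every $j$. Unwinding the definition gives $\cycle{\rot^i(C)}=\sum_{j=0}^{r-1}(-1)^{jm}P_{i+j}$; substituting $k=i+j$ and extracting the factor $(-1)^{im}$ rewrites this as $(-1)^{im}\sum_{k=i}^{i+r-1}(-1)^{km}P_k$. The whole statement then reduces to sliding this window of $r$ consecutive indices from $\{i,\dots,i+r-1\}$ back to $\{0,\dots,r-1\}$, that is, to checking that the signed summand $(-1)^{km}P_k$ has period $r$ in $k$.

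This periodicity is the heart of the argument and the step I expect to cause the most trouble. The factor $P_k$ already has period $r$, and since $(-1)^{(k+r)m}=(-1)^{km}(-1)^{rm}$, everything comes down to the single sign identity $(-1)^{rm}=1$. Here I would invoke Lemma~\ref{lemma:primitive}, by which the period $r$ divides the length $m$: writing $m=r\cdot(m/r)$ gives $rm=r^2\,(m/r)\equiv r\,(m/r)=m\pmod 2$, so $(-1)^{rm}=(-1)^m$. Hence the identity holds exactly when $(-1)^m=1$ or $\charact\kk=2$, which by Lemma~\ref{lemma:sum} is precisely the situation in which $\cycle{C}$ is an $m$-cocycle and in which the present lemma is applied. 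In that case the signed summand is genuinely $r$-periodic, the two windows yield the same sum $\sum_{k=0}^{r-1}(-1)^{km}P_k=\cycle{C}$, and we obtain $\cycle{\rot^i(C)}=(-1)^{im}\cycle{C}$, establishing \thmitem{1} and with it \thmitem{2}.
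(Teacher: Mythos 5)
Your proof is correct, and since the paper omits the proof of this lemma entirely, the direct reindexing computation you give is surely the intended one. What your write-up adds, and what is worth keeping, is that the sign analysis is not mere bookkeeping: the identity $\cycle{\rot^i C}=(-1)^{im}\cycle{C}$ genuinely requires $(-1)^{rm}=1$, which via $r\mid m$ (Lemma~\ref{lemma:primitive}) amounts to $m$ being even or $\charact\kk=2$. When $m$ is odd and $\charact\kk\neq2$ the statement as printed is in fact false: already for $r=1$ one has $\rot(C)=C$, so the claimed identity with $i=1$ would force $\cycle{C}=-\cycle{C}$, and for $r>1$ the shifted window flips the signs of some but not all of the linearly independent pairs $(\rot^k(C),s(\rot^k(C)))$, so the two cochains are not proportional and part \thmitem{2} fails as well. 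You are right that this is harmless in context: the lemma is only invoked for the cocycles produced by Proposition~\ref{prop:cocycles:rank-r} and for circuits in $\C_m(\Gamma)$, both of which satisfy exactly the parity condition under which your argument goes through. So your proof establishes the lemma in every case in which the paper uses it, and in doing so makes explicit a hypothesis that the statement leaves implicit.
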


To describe the cocycles of higher degree in our complex we will follow the
same course of action as in the case of those of lower degree: every
$m$-cocycle in the complex~$\kk(\Gamma\parallel\cB)$ is a sum of pairwise
disjoint irreducible cocycles, and we will describe these according to
their rank. For variety, we start this time with those of rank larger
than~$1$:

\begin{proposition}\label{prop:cocycles:rank-r}
Let $m\geq2$. If $u$ is an irreducible cocycle
in~$\kk(\Gamma_m\parallel\cB)$ that has rank $r\coloneqq \rk(u)>1$ and is
not a coboundary, then 
\begin{thmlist}

\item the degree~$m$ of~$u$ is even or the characteristic of~$\kk$ is~$2$,
and

\item that degree $m$ is divisible by~$r$ and there is a complete cycle~$C$
of length~$m$ and period~$r$ in~$(Q,I)$ such that $u$ is a non-zero scalar
multiple of~$\cycle{C}$.

\end{thmlist}
\end{proposition}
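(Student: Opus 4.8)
The plan is to mirror the analysis carried out for $\HH^0$ and $\HH^1$: write $u=\sum_{i=1}^r x_i\cdot(\gamma_i,\alpha_i)$ with the pairs in $\Gamma_m\parallel\cB$ pairwise different and the scalars $x_i$ nonzero, note that since $u$ is irreducible it is weight-homogeneous, so all $\alpha_i$ share a common length $p$, and exploit that $m\geq 2$ forces each $\gamma_i$ to have positive length, so that Remark~\ref{rem:key} applies: in $d^m(\gamma_i,\alpha_i)$ there is at most one left summand $(b\gamma_i,b\alpha_i)$ and at most one right summand $(\gamma_i a,\alpha_i a)$. First I would record that no pair $(\gamma_i,\alpha_i)$ is itself a cocycle --- otherwise $u$ would split as the sum of that cocycle and its (nonzero, disjoint) complement, against irreducibility --- so each pair contributes at least one nonzero term to $d^m(u)=0$.

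Next I would set up the cancellation bookkeeping. By Remark~\ref{rem:key}, a given monomial of $\Gamma_{m+1}\parallel\cB$ arises as a left summand of at most one pair and as a right summand of at most one pair, and left summands carry sign $+$ while right summands carry sign $-(-1)^m$; hence every nonzero left summand of a pair $i$ must equal the right summand of a unique pair $\sigma(i)$, with $x_i=(-1)^m x_{\sigma(i)}$. I would first dispose of the loop case of Remark~\ref{rem:key}: a pair $(b^m,b^l)$ has $d^m=(1-(-1)^m)(b^{m+1},b^{l+1})$, which either is a rank-one cocycle (when $m$ is even or $\charact\kk=2$) or produces a term no other pair can cancel; neither is compatible with $u$ being a rank-$r>1$ irreducible cocycle, so no such pairs occur. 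Assembling the matchings into a directed graph on $\{1,\dots,r\}$ (an arc $j\to i$ whenever the left summand of $i$ equals the right summand of $j$), each vertex has in- and out-degree at most one and, by the previous paragraph, at least one incident arc; irreducibility forces this graph to be a single oriented cycle, so $\sigma$ is a cyclic permutation and every pair has both a left and a right summand.

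With the matching in hand I would run the de Bruijn-type chaining on the $\gamma_i$. The identity $b\gamma_i=\gamma_{\sigma(i)}a$ says $\gamma_{\sigma(i)}$ is obtained from $\gamma_i$ by deleting its last arrow and prepending $b$, so the $\gamma_i$ are consecutive length-$m$ windows of a single bi-infinite word that closes up after $r$ steps; thus this word is periodic of minimal period $r$, its $r$ distinct windows are the $\gamma_i$, and $C:=\gamma_1$ is a cycle of length $m$ whose period is exactly $r$. Periodicity together with the fact that every consecutive pair of arrows lies in $R$ shows $C^2\in\Gamma$, i.e.\ $C$ is \emph{complete}, and Lemma~\ref{lemma:primitive} gives $r\mid m$; moreover the prepended arrow is forced to coincide with the deleted one (the wrap-around arrow of the window), so $\gamma_{\sigma(i)}=\rot^{-1}(\gamma_i)$ and the coefficient relation becomes $x_{\sigma(i)}=(-1)^m x_i$.

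The crux --- and the step I expect to be the main obstacle --- is controlling the length $p$ of the $\alpha_i$. The same equalities give $b\alpha_i=\alpha_{\sigma(i)}a$ with the \emph{same} arrows $b,a$, so the $\alpha_i$ shift in lockstep with the $\gamma_i$; reading off $a$ shows that for every pair the last arrows of $\alpha_i$ and $\gamma_i$ coincide, and comparing the matchings at $i$ and at $\sigma(i)$ shows that the second-to-last arrow of $\alpha_i$ equals that of $\gamma_i$ as well, whence $\alpha_i$ and $\gamma_i$ share their two rightmost arrows. If $p\geq 2$ this common length-two suffix would have to lie in $R$ (because $\gamma_i\in\Gamma$) and simultaneously avoid $R$ (because $\alpha_i\in\cB$), a contradiction; and $p=1$ forces, via gentleness condition (c), that $\gamma_i$ is a power of a loop, whose single distinct rotation contradicts $r>1$. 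Hence $p=0$, so $\alpha_i=s(\gamma_i)$, and combining this with $\gamma_i=\rot^i(C)$ and the sign relation identifies $u$ as a nonzero scalar multiple of $\cycle{C}$; this already yields the divisibility $r\mid m$ and completeness of $C$ of part (ii). Finally, since $u$ is a cocycle and $u\neq 0$, Lemma~\ref{lemma:sum} forces its degree $m$ to be even or $\charact\kk=2$, which is part (i).
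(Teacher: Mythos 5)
Your overall architecture mirrors the paper's (cancellation bookkeeping via Remark~\ref{rem:key}, a permutation matching left and right summands, identification of $u$ with a scalar multiple of $\cycle{C}$), but two steps are not sound as written. First, the assertion that irreducibility forces your matching graph to be a single oriented cycle is unjustified: irreducibility only forces the graph to be connected (the sum of the terms over any component is itself a cocycle), and a connected digraph with in- and out-degrees at most one and no isolated vertices can also be an oriented \emph{path}, whose two endpoints possess only one of the two kinds of summand. Your subsequent ``lockstep'' chaining relies on every pair having both a left and a right summand, so the path case must be excluded, and doing so requires a further argument (for instance, the observation that a single matched pair whose $\cB$-components have length zero already forces the corresponding $\gamma$'s to be complete cycles, which then automatically have both summands). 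The paper sidesteps this by proving the claims in the opposite order: it first shows that all $\alpha_i$ have length zero and that each $\gamma_i$ is a complete cycle using only \emph{one} summand per pair, and only then assembles the permutation, at which point both summands exist for free.

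Second, and more seriously, your disposal of the case $p=1$ rests on a false claim: it is not true that gentleness forces $\gamma_i$ to be a power of a loop when $\alpha_i$ is a single arrow equal to the first arrow of $\gamma_i$. With arrows $x\colon 1\to 2$, $y\colon 2\to 1$ and $xy,\,yx\in R$, the pair $(xyx,x)$ has exactly this shape and $xyx$ is not a power of a loop; it is the rank-one cocycle $(bC,b)$ with $C=yx$ complete and $b=x$ its first arrow. The case can still be closed, but by a different route: inside your matched structure the chaining gives $b=c_2$ and $c_2c_m,\,c_2c_1\in R$, so gentleness forces $c_m=c_1$, whence $(\gamma_i,\alpha_i)=(c_1D,c_1)$ with $D$ a complete cycle --- a cocycle with no summands at all, contradicting its participation in the matching. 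The paper avoids the case split entirely: whenever some $\alpha_i$ has positive length, the matched pair $i,j$ satisfies $x_i+(-1)^{m+1}x_j=0$ and $x_i(\gamma_i,\alpha_i)+x_j(\gamma_j,\alpha_j)$ is the coboundary of a multiple of $(\delta,\zeta)$ with $\zeta$ possibly of length zero, so irreducibility forces $u$ to equal this coboundary, contradicting the hypothesis; this argument treats $p=1$ and $p\geq2$ uniformly.
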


\begin{proof} 
Let $u$ be an irreducible cocycle with rank $r\coloneqq\rk(u)$ greater
than~$1$ that is not a coboundary. We can write
$u=x_1\cdot(\gamma_1,\alpha_1)+\cdots+x_r\cdot(\gamma_r,\alpha_r)$ with the
pairs
  \begin{equation}\label{eq:ps}
  (\gamma_1,\alpha_1),\quad \dots, \quad (\gamma_r,\alpha_r)
  \end{equation}
all in~$\Gamma_m\parallel\cB$ and pairwise different, and scalars
$x_1$,~\dots,~$x_r\in\kk$ all non-zero. 

Let us start by showing that 
  \begin{equation}\label{eq:not-loop}
  \claim[-0.8]{for all $i\in\{1,\dots,r\}$ the path $\gamma_i$ is not a
  power of a loop.} 
  \end{equation}
To see this, let us suppose that there is an $i\in\{1,\dots,r\}$ and a
loop~$b$ in~$Q$ such that $\gamma_i=b^m$. Since $u$~is irreducible, we have
$d^m(\gamma_i,\alpha_i)\neq0$, and the gentleness of~$(Q,I)$ implies
that one of the pairs $(b\gamma_i,b\alpha_i)$ or~$(\gamma_ib,\alpha_ib)$
appears in~$d^m(\gamma_i,\alpha_i)$, and up to symmetry we can suppose that
the first one does. 

As $d^m(u)=0$, we see that there is a $j\in\{1,\dots,r\}\setminus\{i\}$
such that the pair $(b\gamma_i,b\alpha_i)$ also appears in
$d^m(\gamma_j,\alpha_j)$ and, in fact, because the pairs in~\eqref{eq:ps}
are pairwise different, a unique one. This implies that there is an
arrow~$a$ such that $(b\gamma_i,b\alpha_i)=(\gamma_ja,\alpha_ja)$, and then
$\gamma_j=b^m$, $a=b$ and $b\alpha_i=\alpha_jb$. If the length
of~$\alpha_i$ were~$0$ or~$1$, then we would have that
$\alpha_i=\alpha_j=s(b)$ or that $\alpha_i=\alpha_j=b$, respectively, so
that in both cases $(\gamma_i,\alpha_i)=(\gamma_j,\alpha_j)$, which is
a contradiction. The path~$\alpha_i$ thus has length at least~$2$ and there is a
path~$\zeta$ of positive length such that $\alpha_i=\zeta b$ and
$\alpha_j=b\zeta$. We then have $d^m(\gamma_i,\alpha_i)=(b^{m+1},b\zeta
b)$ and $d^m(\gamma_j,\alpha_j)=(-1)^{m+1}\cdot(b^{m+1},b\zeta b)$, and,
because the pair $(b^{m+1},b\zeta b)$ does not appear in
$d^m(\gamma_k,\alpha_k)$ if $k\not\in\{i,j\}$, the fact that $d^m(u)=0$
implies that $x_i=(-1)^{m}x_j$. Now
  \begin{align*}
  d^{m-1}\bigl(x_j\cdot(b^{m-1},\zeta)\bigr)
       &= x_j\cdot (b^m,b\zeta) - (-1)^{m-1}x_j\cdot (b^m,\zeta b) \\
       &= x_j\cdot (\gamma_j,\alpha_j) + x_i\cdot (\gamma_i,\alpha_i).
  \end{align*}
As $u$ is an irreducible cocycle, this tells us that $r=2$, that
$\{i,j\}=\{1,2\}$, and that $u$ is a coboundary, which is a contradiction and hence our claim~\eqref{eq:not-loop} is proved.

Let us next show that
  \begin{equation}\label{eq:zero}
  \claim[-0.8]{the paths $\alpha_1$,~\dots,~$\alpha_r$ all have length~$0$.}
  \end{equation}
To do so, let us suppose that there is an $i\in\{1,\dots,r\}$ such that the
path~$\alpha_i$ has positive length. Since $u$ is irreducible and~$r>1$, we
have that $d^m(\gamma_i,\alpha_i)\neq0$ and then, up to the obvious
left-right symmetry, we can suppose that there is an arrow~$b$ such that
the pair $(b\gamma_i,b\alpha_i)$ is in~$\Gamma_{m+1}\parallel\cB$ and
appears in~$d^m(\gamma_i,\alpha_i)$ with non-zero coefficient.
As~$d^m(u)=0$, there is a $j\in\{1,\dots,r\}\setminus\{i\}$ such that the
pair $(b\gamma_i,b\alpha_i)$ also appears with non-zero coefficient in
$d^m(\gamma_j,\alpha_j)$. The pairs in~\eqref{eq:ps} are pairwise
different, so there is in fact exactly one such~$j$, and there is  an
arrow~$a$ such that 
  \begin{equation}\label{eq:qs}
  (b\gamma_i,b\alpha_i) = (\gamma_ja,\alpha_ja).
  \end{equation}
As $\gamma_i$ and $\gamma_j$ are not powers of loops, it follows now from
Remark~\ref{rem:key} that the coefficients with which the
pairs~$(b\gamma_i,b\alpha_i)$ and $(\gamma_ja,\alpha_ja)$ appear
in~$d^m(\gamma_i,\alpha_i)$ and in~$d^m(\gamma_j,\alpha_j)$ are $1$
and~$(-1)^{m+1}$, respectively, and then, since $(b\gamma_i,b\alpha_i)$
certainly does not appear in~$d^m(u)$, we have that
  \begin{equation}\label{eq:rs}
  x_i + (-1)^{m+1}x_j = 0.
  \end{equation}
On the other hand, since $m\geq2$, from the equality~\eqref{eq:qs} we see
that there exist $\delta\in\Gamma_{m-1}$ and $\zeta\in\cB$ such that
$\gamma_i=\delta a$, $\gamma_j=b\delta$, $\alpha_i=\zeta a$ and
$\alpha_j=b\zeta$, and the gentleness of the presentation~$(Q,I)$
implies that
  \[
  d^{m-1}(\delta,\zeta) 
        = (b\delta,b\zeta) - (-1)^{m-1}\cdot(\delta a,\zeta a)
        = (\gamma_i,\alpha_i) - (-1)^{m-1}\cdot(\gamma_j,\alpha_j).
  \]
Together with~\eqref{eq:rs}, this implies that $v\coloneqq
x_i\cdot(\gamma_i,\alpha_i)+x_j\cdot(\gamma_j,\alpha_j)$ is a coboundary and,
\emph{a~fortiori}, a cocycle. As $u$ is irreducible, we must have~$u=v$,
and this is a contradiction since the cocycle~$u$ with which we started is not a
coboundary. This contradiction proves our claim~\eqref{eq:zero}. It follows that $\alpha_i=s(\gamma_i)$ for all
$i\in\{1,\dots,r\}$.

The next observation that we need to make is that
  \begin{equation}
  \claim[-0.8]{for each $i\in\{1,\dots,r\}$ the path~$\gamma_i$ is a
  complete cycle.}
  \end{equation}
Let $i\in\{1,\dots,r\}$. As $u$ is an irreducible cocycle, we have that
$d^m(\gamma_i,\alpha_i)\neq0$, and therefore there exists an arrow~$b$ such
that $(b\gamma_i,b)$ appears in~$d^m(\gamma_i,\alpha_i)$, or there exists
an arrow~$a$ such that $(\gamma_ia,a)$ appears
in~$d^m(\gamma_i,\alpha_i)$, or, of course, both, and by symmetry we may
just as well suppose the first of these possibilities occurs. Now
$d^m(u)=0$, so there is a $j\in\{1,\dots,r\}\setminus\{i\}$ such that the
pair~$(b\gamma_i,b)$ appears in $d^m(\gamma_j,\alpha_j)$. Since $i\neq
j$ and the pairs listed in~\eqref{eq:ps} are pairwise different this
implies that there is  an arrow~$c$ such that
$(b\gamma_i,b)=(\gamma_jc,c)$. From this equality we deduce that $c=b$,
that $c$ is the first arrow in~$\gamma_i$ and that
$c\gamma_i\in\Gamma_{m+1}$. Hence $\gamma_i$ is a complete cycle, as we
want.

As all the paths $\gamma_1$,~\dots,~$\gamma_r$ are complete cycles, and all
the paths~$\alpha_1$,~\dots,~$\alpha_r$ have length zero,  using the
gentleness of~$(Q,I)$ we see that
  \[
  0 
        = d^m(u) 
        = \sum_{i=1}^r
           \Bigl(
             x_i\cdot(b_i\gamma_i,b_i) -(-1)^mx_i\cdot(\gamma_i a_i,a_i)
           \Bigr),
  \]
with $b_1$,~\dots,~$b_r$ the first arrows of the
paths~$\gamma_1$,~\dots,~$\gamma_r$, and $a_1$,~\dots,~$a_r$ their last
arrows. Since the pairs in~\eqref{eq:ps} are pairwise different, this
implies that there is a unique permutation~$\pi$ of~$\{1,\dots,r\}$ such
that 
  \begin{gather}
  (b_{\pi(i)}\gamma_{\pi(i)},b_{\pi(i)}) = (\gamma_{i}a_{i},a_{i})
        \label{eq:as}
\shortintertext{and}
  x_{\pi(i)} = (-1)^mx_{i}
        \label{eq:bs}
  \end{gather}
for all $i\in\{1,\dots,r\}$. Let $l\coloneqq\min\{i\in\NN_0:\pi^i(1)=1\}$.
The $l$ numbers 
  \[
  1,~\pi(1),~\dots,~\pi^{l-1}(1) 
  \]
are pairwise different and a direct calculation using~\eqref{eq:as}
and~\eqref{eq:bs} shows that 
  \begin{equation}\label{eq:xs}
  \sum_{i=0}^{l-1} x_{\pi^i(1)}\cdot (\gamma_{\pi^i(1)},\alpha_{\pi^i(1)})
  \end{equation}
is a cocycle: as $u$ is irreducible, this sum must be equal to~$u$ and
$l$~equal to~$r$.

From~\eqref{eq:as} we see that $\gamma_{\pi(i)}=\rot(\gamma_i)$ for all
$i\in\{1,\dots,r\}$ and from this that 
  \begin{equation}\label{eq:cs}
  \gamma_{\pi^i(1)} = \rot^i(\gamma_1)
  \end{equation}
for all $i\in\NN_0$. As $r=l$, the paths
$\gamma_1$,~$\rot(\gamma_1)$,~\dots,~$\rot^{r-1}(\gamma_1)$ are pairwise
different and $\rot^r(\gamma_1)=\gamma_1$, so we see that the complete
cycle~$C\coloneqq\gamma_1$ has length~$m$ and period~$r$. On the other
hand, from~\eqref{eq:bs} we see that 
  \begin{equation}\label{eq:ds}
  x_{\pi^i(1)} = (-1)^{im}x_1
  \end{equation}
for all $i\in\NN_0$, so that in particular $0\neq x_1=(-1)^{rm}x_1$, and
therefore either the number~$rm$ is even, and then $m$ is even since $r$
divides~$m$, or the characteristic of the field~$\kk$ is~$2$. Finally,
using~\eqref{eq:cs} and~\eqref{eq:ds} we can rewrite the sum~\eqref{eq:xs},
  \[
  u  = 
        \sum_{i=0}^{l-1}
        x_{\pi^i(1)}\cdot
        (\gamma_{\pi^i(1)},\alpha_{\pi^i(1)})
    =   
        x_1
        \sum_{i=0}^{l-1}
        (-1)^{im}\cdot
        (\rot^i(C),s(\rot^i(C)))
    = x_1 \cdot \cycle{C}.
  \]
With this, we have proved all the claims of the proposition.
\end{proof}

We are left with dealing with the cocycles of rank~$1$.
\begin{proposition}\label{prop:cocycles:rank-1}
Let $m\geq2$. An element~$u$ of~$\Gamma_m\parallel\cB$ is a cocycle and not
a coboundary if and only if one of the following conditions holds:
\begin{enumerate}[label=(\roman*), ref=(\roman*)]

\item\label{it:rank-1:loop} the integer $m$ is even or the characteristic
of~$\kk$ is~$2$, and there is a loop such that $u=(b^m,s(b))$,

\item\label{it:rank-1:complete} the integer~$m$ is odd or the
characteristic of~$\kk$ is~$2$, and there is a complete cycle~$C$
in~$(Q,I)$ with first arrow~$b$ such that $u=(bC,b)$,

\item\label{it:rank-1:maximal} we have $u=(\gamma,\alpha)$ with $\gamma$ a
$\Gamma$-maximal element of~$\Gamma_m$ and $\gamma$ and~$\alpha$ neither
starting nor ending with the same arrow.

\end{enumerate}
\end{proposition}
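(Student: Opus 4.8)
The plan is to prove both directions of the equivalence, treating the characterization of rank-one cocycles that are not coboundaries. Since $u=(\gamma,\alpha)\in\Gamma_m\parallel\cB$ has rank one, the cocycle condition $d^m(u)=0$ together with Remark~\ref{rem:key} means that each of the two sums in the differential~\eqref{eq:differential} contributes at most one term, and these can only cancel in the exceptional loop situation. First I would dispose of the easy direction: for each of the three listed conditions one checks directly, using Lemma~\ref{lemma:sum} (for \thmitem{2}, where $\cycle{C}$ reduces to the single pair $(bC,b)$ up to sign when the period equals $m$) and the key remark, that $d^m(u)=0$ and that $u$ is not a coboundary --- the latter because the form of $d^{m-1}$ shows which pairs can appear in coboundaries.

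For the forward direction, suppose $u=(\gamma,\alpha)$ is a cocycle that is not a coboundary. The condition $d^m(u)=0$ forces that for each arrow $b$ with $(b\gamma,b\alpha)\in\Gamma\parallel\cB$ and each arrow $a$ with $(\gamma a,\alpha a)\in\Gamma\parallel\cB$, the corresponding terms must cancel. By Remark~\ref{rem:key}, the only way a single pair can have vanishing differential through internal cancellation is the loop case: there is a loop $b$ with $\gamma=b^m$, $\alpha=b^l$, and $m$ even or $\charact\kk=2$. Here I would use $\alpha\in\cB$ and $b^2\in R$ (gentleness) to force $l\in\{0,1\}$, and rule out $l=1$ as a coboundary, landing in case~\thmitem{1}. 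Otherwise no such cancellation occurs, so both sums must be genuinely empty: there is no arrow extending $(\gamma,\alpha)$ on either the left or the right.

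When neither sum has a term, I would split according to whether $\gamma$ and $\alpha$ share their first or last arrow. If they share neither, then the emptiness of both sums combined with gentleness shows $\gamma$ is $\Gamma$-maximal, giving case~\thmitem{3}. If they do share, say, the first arrow $b$, I would argue as in the proof of Lemma~\ref{lemma:one:rank-1}: write $\gamma=b\gamma'$ and $\alpha=b\alpha'$ and track what the vanishing of the left sum imposes. The subtle point, and the one I expect to be the \emph{main obstacle}, is to show that this sharing propagates all the way around: one must deduce that $\gamma$ is in fact a \emph{complete cycle} $bC$ with $\alpha=b$ (length zero), since otherwise one could exhibit $u$ as the image under $d^{m-1}$ of a suitable pair $(\delta,\zeta)$, contradicting that $u$ is not a coboundary. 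This is exactly the maneuver used in Proposition~\ref{prop:cocycles:rank-r} (equations analogous to the step producing $d^{m-1}(\delta,\zeta)$), and the parity constraint $m$ odd or $\charact\kk=2$ arises from Lemma~\ref{lemma:sum}, which records that $(bC,b)=d^m(\cycle{C})$ up to the factor $(1-(-1)^m)$; when that factor vanishes, $(bC,b)$ is genuinely a non-coboundary cocycle precisely in the stated parity range.

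Throughout, the recurring technical tool is the dichotomy of Remark~\ref{rem:key} controlling the differential on a single pair, and the hardest bookkeeping is ensuring in case~\thmitem{2} that one cannot "peel off" an arrow and write $u$ as a coboundary, which is what pins down completeness of the cycle and the exact parity condition rather than merely a local matching of arrows.
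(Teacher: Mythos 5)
Your overall architecture is the one the paper uses: the dichotomy coming from Remark~\ref{rem:key} (either an internal cancellation forced by a loop, or both sums in the differential empty), followed by a case split on whether $\gamma$ and~$\alpha$ share their first or last arrow, with $\Gamma$-maximality in the disjoint case and completeness of the cycle in the shared case. The treatment of case~\thmitem{3}, the "propagation" argument pinning down $u=(bC,b)$ with $C$ complete and $\alpha=b$, and the derivation of the parity constraint from Lemma~\ref{lemma:sum} all match the paper's Steps 2 and 3.

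There is, however, a concrete error in your handling of the internal-cancellation (loop) case. After forcing $\gamma=b^m$, $\alpha=b^l$ with $l\in\{0,1\}$, you propose to ``rule out $l=1$ as a coboundary, landing in case~\thmitem{1}.'' This step fails when the characteristic of~$\kk$ is~$2$: the only pair whose coboundary could contain $(b^m,b)$ is $(b^{m-1},s(b))$, and $d^{m-1}(b^{m-1},s(b))=(1-(-1)^{m-1})\cdot(b^m,b)$ vanishes in characteristic~$2$, so $(b^m,b)$ is then a cocycle that is \emph{not} a coboundary. It must be classified, and it falls under condition~\thmitem{2} with $C=b^{m-1}$ a complete cycle of length $m-1$ and first arrow~$b$ --- not under condition~\thmitem{1}. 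Only when $\operatorname{char}\kk\neq2$ (so $m$ is even in this branch) is $(b^m,b)$ a coboundary, namely $d^{m-1}(\tfrac12\cycle{b^{m-1}})$ by Lemma~\ref{lemma:sum}. As written, your argument would discard a genuine family of non-coboundary cocycles in characteristic~$2$, contradicting the easy direction of your own proof; the fix is exactly the paper's: in the $l=1$ subcase conclude $u=(bC,b)$ with $C=b^{m-1}$ complete, and then use Lemma~\ref{lemma:sum} to see that $u$ survives precisely when the characteristic is~$2$, placing it in case~\thmitem{2}. (Two further slips worth correcting but not structural: in case~\thmitem{2} the relevant identity is $d^{m-1}(\cycle{C})=(1-(-1)^{m-1})\cdot(bC,b)$ for $C$ of length $m-1$, not $d^m$; and $\alpha=b$ has length one, not zero.)
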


We remark that these three conditions are exclusive of each other, and that
when condition~\ref{it:rank-1:loop} holds we in fact have that the
path~$b^m$ is a complete cycle of length~$m$ and period~$1$ in~$(Q,I)$ such
that $u=\cycle{b^m}$.

\begin{proof}
Let $u=(\gamma,\alpha)$ be an element of~$\Gamma_m\parallel\cB$ such that
$d^m(u)=0$ and that is not a coboundary. In view of the definition
of~$d^m$, this tells us that 
\begin{enumerate}[label=(\Alph*), ref=(\Alph*)]

\item\label{it:lo:1} either $1-(-1)^m=0$ in~$\kk$, and there are arrows~$a$
and~$b$ in~$Q$ such that 
  \begin{equation}\label{eq:ws}
  (b\gamma,b\alpha)
        = (\gamma a,\alpha a)
        \in \Gamma_{m+1}\parallel\cB,
  \end{equation}

\item\label{it:lo:2} or there is no arrow~$b$ such that
$(b\gamma,b\alpha)\in\Gamma_{m+1}\parallel\cB$ and there is no arrow $a$
such that $(\gamma a,\alpha a)\in\Gamma_{m+1}\parallel\cB$.

\end{enumerate}
We will analyse these two possibilities in three steps.

\textsc{Step 1}. Let us start by supposing that we are in
case~\ref{it:lo:1}. As $1-(-1)^m=0$ in~$\kk$, it is clear that either the
integer~$m$ is even or the characteristic of the field~$\kk$ is 2. If
$\alpha$ has length at least~$2$, as does~$\gamma$, then from the
equality~\eqref{eq:ws} we see that there are paths $\delta\in\Gamma_{m-2}$
and $\zeta\in\cB$ such that $\gamma=b\delta a$ and $\alpha=b\zeta a$, and
this and the gentleness of~$(Q,I)$ imply that $d^m(\delta a,\zeta
a)=u$: this is a contradiction, for we are supposing that $u$ is not a coboundary.
We thus see that $\alpha$ has length at most~$1$. If it has length zero,
then from~\eqref{eq:ws} we see that $a=b$, that $b$ is a loop, that
$\gamma=b^m$ and that $\alpha=s(b)$, so that condition~\ref{it:rank-1:loop}
holds. If instead $\alpha$ has length one, then the equality~\eqref{eq:ws}
implies that $a=\alpha=b$, that $b$ is a loop, and that $\gamma=b^m$: as
$m\geq2$, it follows from this that $C\coloneqq b^{m-1}$ is a complete
cycle starting with the arrow~$b$, and that $u=(bC,b)$. Now, if the
characteristic of the field~$\kk$ is not~$2$, then $m$ is even and we know
from Lemma~\ref{lemma:sum} that
  \[
  d^{m-1}(\tfrac{1}{2}\cycle{b^{m-1}})
        = \tfrac{1}{2}(1-(-1)^{m-1})\cdot(b^m,b)
        = u,
  \]
contradicting our hypothesis: the characteristic of~$\kk$ is thus
necessarily~$2$ and we see that condition~\ref{it:rank-1:complete} holds.

\textsc{Step 2.} Let us next suppose that we are in case~\ref{it:lo:2}, and
that $\gamma$ and~$\alpha$ start with the same arrow, so that there are an
arrow~$b$ and paths $\delta\in\Gamma_{m-1}$ and $\zeta\in\cB$ such that
$\gamma=b\delta$ and $\alpha=b\zeta$. There is then an arrow~$c$ such that
$(\delta c,\zeta c)\in\Gamma_m\parallel\cB$, for otherwise
$d^{m-1}(\delta,\zeta)=u$, contradicting our hypothesis on~$u$. Now $\gamma
c=b\delta c$ is in~$\Gamma_{m+1}$ because $\delta$ has positive length, so
\ref{it:lo:2} implies that $\alpha c=b\zeta c\not\in\cB$: as $b\zeta$ and
$\zeta c$ are in~$\cB$, we see that $\zeta$ has length zero, so that
$\alpha=b$, that $\delta$ is a cycle, and that $bc\in R$. Since $b\delta
c\in\Gamma_{m+1}$, the gentleness of~$(Q,I)$ implies that $b$ is the
first arrow of~$\delta$, and since $b\delta \in\Gamma_m$, the path~$\delta$
is a complete cycle, and we have $u=(b\delta,b)$.

Proceeding symmetrically, we can see that if we are in case~\ref{it:lo:2}
and $\gamma$ and~$\alpha$ this time end with the same arrow~$b$, then there
is a complete cycle~$\epsilon$ ending with~$b$ such that $u=(\epsilon
b,b)$: but then we also have that $b$ is the first arrow of the rotated
complete cycle~$\delta\coloneqq\rot^{-1}(\epsilon)$ and that
$u=(b\delta,b)$.

We thus see that if we are in case~\ref{it:lo:2} and $\gamma$ and~$\alpha$
either start or end with the same arrow, there is a complete cycle~$C$
starting with an arrow~$b$ such that $u=(bC,b)$. Now Lemma~\ref{lemma:sum}
tells us that 
  \[
  d^{m-1}(\cycle{C}) 
        = (1-(-1)^{m-1})\cdot(bC,b) 
        = (1-(-1)^{m-1})\cdot u,
  \]
and since $u$ is not a coboundary we have that either the integer~$m$ is
odd or the characteristic of~$\kk$ is~$2$:
condition~\ref{it:rank-1:complete} thus holds.

\textsc{Step 3.} Finally, we are left with the situation in which
\ref{it:lo:2} holds and the paths~$\gamma$ and~$\alpha$ neither start nor
end with the same arrow: we will show that
condition~\ref{it:rank-1:maximal} holds, and in this situation the
path~$\gamma$ is maximal in~$\Gamma_m$. Suppose that it does not hold, so
that for example there is an arrow~$b$ such that $b\gamma\in\Gamma_{m+1}$.
In view of~\ref{it:lo:2}, we must then have that $b\alpha\not\in\cB$, so
that $\alpha$ has positive length and can be written in the form
$c\bar\alpha$ with $c$ an arrow such that $bc\in R$ and~$\bar\alpha$ a
path: since $b\gamma$ is in~$\Gamma_{m+1}$, the gentleness of~$(Q,I)$
implies that the last arrow of~$\gamma$ has to also be~$c$, and this is
a contradiction. We can reach a similar contradiction if we suppose instead that
there is an arrow $a$ such that $\gamma a\in\Gamma_{m+1}$, of course.

We have shown that if an element~$u$ of~$\Gamma_m\parallel\cB$ is a cocycle
and not a coboundary then one of the conditions~\ref{it:rank-1:loop},
\ref{it:rank-1:complete} or~\ref{it:rank-1:maximal} holds. A
straightforward calculation, on the other hand, shows that if any of these
three conditions holds then $u$ is a cocycle and not a coboundary, thereby
concluding the proof of the proposition. 
\end{proof}

We are an easy step away from completing the description we want of the
Hochschild cohomology of the algebra~$A$, and we will need some notations
to do that in a compact form. We will write

{

\begin{itemize}

\item $\C^\circ(\Gamma)$\indexsymbol{$\C^\circ(\Gamma)$} for the set of all
complete circuits in the presentation~$(Q,I)$, and

\item $\C(\Gamma)$\indexsymbol{$\C(\Gamma)$} for the subset
of~$\C^\circ(\Gamma)$ of those complete circuits~$C$ for which the
following condition holds:
  \[
  \claim[-.8]{either the length of~$C$ is even or the characteristic
  of~$\kk$ is~$2$.}
  \]

\end{itemize}

Moreover, for each positive integer~$m$ we let
$\C_m^\circ(\Gamma)$\indexsymbol{$\C_m^\circ(\Gamma)$} be the subset of 
$\C^\circ(\Gamma)$ of circuits of length~$m$
and
$\C_m(\Gamma)$\indexsymbol{$\C_m(\Gamma)$} be the subset of
$\C(\Gamma)$ of circuits of length~$m$ where we assume $m$ to be even or the characteristic of $\kk$ is 2.

In addition, let $\Crep^\circ(\Gamma)$\indexsymbol{$\Crep^\circ(\Gamma)$} be
a set of representatives of the circuits that belong to~$\C^\circ(\Gamma)$
chosen so that 
  \[
  \claim[.9]{if $C$ and $D$ are elements of~$\Crep^\circ(\Gamma)$ and $CD$
  is a complete cycle in~$(Q,I)$, then $CD$ also belongs
  to~$\Crep^\circ(\Gamma)$.}
  \]
It is possible to choose~$\Crep^\circ(\Gamma)$ like this for essentially
the same reason that made it possible to do it for~$\Crep(\cB)$ in
Section~\ref{sect:low-degrees}. The definition of $\Crep(\Gamma)$\indexsymbol{$\Crep(\Gamma)$} and 
$\Crep_m(\Gamma)$\indexsymbol{$\Crep_m(\Gamma)$}, for each $m\geq1$, are according to the previous ones.

\begin{proposition}\label{prop:hh:m}
Let $m\geq2$. The vector space~$\HH^m(A)$ is freely spanned by the
collection of the following elements:
\begin{itemize}

\item $\cycle{C}$ with $C\in\overline\C^\circ_m(\Gamma)$ with $m$ even or characteristic of $\kk$ equal to 2; 

\item $(bC,b)$ with $C\in\overline\C^\circ_{m-1}(\Gamma)$ and $b$ the first arrow
in~$C$, with $m$ odd or characteristic of $\kk$ equal to 2; 

\item $(\gamma,\alpha)$ in~$\Gamma_m\parallel\cB$ with $\gamma$ a
$\Gamma$-maximal element of~$\Gamma_m$ and $\gamma$ and~$\alpha$ neither
starting nor ending with the same arrow.

\end{itemize}
\end{proposition}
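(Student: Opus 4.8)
The plan is to reduce everything to the two preceding classification results. I would first recall that every $m$-cocycle of $\kk(\Gamma\parallel\cB)$ is a sum of pairwise disjoint irreducible cocycles, each itself a cocycle, and that an irreducible summand which happens to be a coboundary contributes nothing in cohomology; so, modulo coboundaries, every class is a sum of classes of irreducible cocycles that are \emph{not} coboundaries. By Propositions~\ref{prop:cocycles:rank-r} and~\ref{prop:cocycles:rank-1}, each such cocycle is, up to a non-zero scalar, one of three kinds: $\cycle C$ for a complete cycle $C$ of length $m$ (this absorbs both the rank${}>1$ case and the rank-$1$ loop case~\ref{it:rank-1:loop}, in which $\cycle{b^m}$ appears), a pair $(bC,b)$ for a complete cycle $C$ of length $m-1$ with first arrow $b$ (case~\ref{it:rank-1:complete}), or a maximal pair $(\gamma,\alpha)$ as in~\ref{it:rank-1:maximal}. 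This already shows that the listed elements generate $\HH^m(A)$, provided I can cut the first two families down to one representative per circuit.

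For the $\cycle C$ family this is immediate from Lemma~\ref{lemma:sum:span}, which says that $\cycle C$ spans the same line as $\cycle{C_0}$ for the representative $C_0\in\Crep^\circ_m(\Gamma)$ of the circuit of $C$. For the $(bC,b)$ family the reduction rests on one short computation: writing $b_i$ for the first arrow of $\rot^i(C)$, the gentleness of $(Q,I)$ (Remark~\ref{rem:key}) gives
\[
d^{m-1}\bigl(\rot^i(C),s(\rot^i(C))\bigr)
    = (b_i\,\rot^i(C),b_i) - (-1)^{m-1}\,(b_{i+1}\,\rot^{i+1}(C),b_{i+1}),
\]
so the pairs attached to the various rotations of a fixed complete cycle $C$ are all cohomologous up to sign. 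Hence each circuit of length $m-1$ contributes a single class, represented by $(bC,b)$ for $C\in\Crep^\circ_{m-1}(\Gamma)$. Together with the previous paragraph this yields spanning.

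The remaining, more delicate task is linear independence. Here the decisive observation is that the differential sends any pair $(\delta,\beta)$ to a combination of $(b\delta,b\beta)$ and $(\delta a,\beta a)$, each of which shares its first, respectively last, arrow between its two components; consequently \emph{no} coboundary in degree $m$ ever has a maximal pair in its support, so the maximal-pair cocycles (non-coboundaries by Proposition~\ref{prop:cocycles:rank-1}) have classes that are independent among themselves and from the other two families. The remaining two families I would separate by the weight grading, which the differentials preserve, so that $\HH^m(A)$ splits over weights: the $\cycle C$ sit in weight $-m$ and the $(bC,b)$ in weight $1-m$. In weight $-m$ there are no degree-$m$ coboundaries at all, since a degree-$(m-1)$ chain of that weight would need a second component of length $-1$; as the $\cycle C$ have pairwise disjoint supports over distinct circuits, their classes are independent. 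In weight $1-m$ the only degree-$(m-1)$ chains are the $(\gamma,s(\gamma))$ with $\gamma$ a cycle in $\Gamma_{m-1}$, and the telescoping identity shows that the coefficients of any such coboundary, read along the distinct rotation-pairs of a given circuit, sum to zero; since a single representative pair $(bC,b)$ has coefficient-sum one, it cannot be a coboundary, and the same argument separates distinct circuits.

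I expect the main obstacle to be exactly this bookkeeping of independence across weights, together with the telescoping identity, which is the one genuinely new computation here. Getting its signs right and tracking its behaviour for non-primitive cycles—where the period, rather than the full length, governs the number of distinct rotation-pairs and hence the size of the telescoping relation—is where care is needed; everything else is either disjointness of supports, the support property of the differential, or a direct appeal to the classification propositions and to Lemmas~\ref{lemma:sum} and~\ref{lemma:sum:span}.
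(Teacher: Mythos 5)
Your proposal is correct and follows essentially the same route as the paper: reduce to Propositions~\ref{prop:cocycles:rank-r} and~\ref{prop:cocycles:rank-1} via irreducible summands, use Lemma~\ref{lemma:sum:span} for the $\cycle{C}$ family, observe that coboundary supports always consist of pairs sharing a first or last arrow (hence never touch the maximal pairs), and identify the coboundaries among the $(bC,b)$ via the telescoping identity $d^{m-1}(\rot^i(C),s(\rot^i(C)))=(b_i\rot^i(C),b_i)-(-1)^{m-1}(b_{i+1}\rot^{i+1}(C),b_{i+1})$, which is exactly the paper's final computation. The only cosmetic deviations are that you separate the $\cycle{C}$ from the $(bC,b)$ by the weight grading instead of by the positive-length-of-$\alpha$ criterion, and that you detect non-coboundaries by a coefficient-sum functional rather than by writing down the coboundary subspace explicitly; both variants are sound (note that when $m$ is even the length $m-1$ of $C$, and hence its period, is odd, which is what keeps your functional argument consistent with the vanishing of these classes outside the stated parity/characteristic conditions).
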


\begin{proof}
Every cocycle in~$\kk(\Gamma_m\parallel\cB)$ is a pairwise disjoint sum of
irreducible cocycles there, and Propositions~\ref{prop:cocycles:rank-r}
and~\ref{prop:cocycles:rank-1}, together with Lemma~\ref{lemma:sum:span},
tell us  that every cocycle is cohomologous to a linear combination of
the following collection of elements:
\begin{enumerate}[label=(\textit{\alph*}), ref=(\textit{\alph*})]

\item\label{it:Z:1} $\cycle{C}$, with $C\in\overline\C_m(\Gamma)$;

\item\label{it:Z:2} 
$(bC,b)$ with $C$ a complete cycle of length $m-1$ and $b$ the first arrow in~$C$; 

\item\label{it:Z:3} $(\gamma,\alpha)$ in~$\Gamma_m\parallel\cB$ with
$\gamma$ a $\Gamma$-maximal element of~$\Gamma_m$ and $\gamma$ and~$\alpha$
neither starting nor ending with the same arrow.

\end{enumerate}
These elements are linearly independent in~$\kk(\Gamma_m\parallel\cB)$, for
they are pairwise disjoint. Let us write $Z$ for the subspace they span. To
prove the claim of the theorem it is clearly enough to show that
  \begin{equation}\label{eq:Z}
  \claim[.85]{the subspace of coboundaries in~$Z$ is spanned by the elements of
  the form $(bC,C)-(-1)^{m-1}\cdot(a\rot(C),a)$, with $C$ in a circuit
  of~$\C_{m-1}(\Gamma)$, and $b$ and $a$ the first arrows of~$C$ and
  of~$\rot(C)$, respectively.}
  \end{equation}
The formula~\eqref{eq:differential} in Proposition~\ref{prop:strametz}
makes it clear that if an element~$(\gamma,\alpha)$
of~$\Gamma_m\parallel\cB$ appears in a coboundary then the path~$\alpha$
has positive length and $\gamma$ and~$\alpha$ either start or end with the
same arrow: it follows immediately from this that a coboundary belonging
to~$Z$ is a linear combination of elements of the form described
in~\ref{it:Z:2} above.

Now suppose that $C$ is a cycle in a circuit belonging to~$\C_{m-1}(\Gamma)$,
that $b$ is the first arrow in~$C$, and that the pair $(bC,b)$ appears in
the coboundary $d^{m-1}(v)$ of some pair~$v=(\delta,\beta)$
in~$\Gamma_{m-1}\parallel\cB$. Clearly, then, we have that
$\beta=s(\delta)$ and that $\delta$ is either $C$ or~$\rot^{-1}(C)$: in the
first case we have that
  \[
  d^{m-1}(v)
        = (bC,b)-(-1)^{m-1}\cdot(Ca,a)
        = (bC,b)-(-1)^{m-1}\cdot(a\rot(C),a)
  \]
with $a$ the last arrow in~$C$ and therefore the first one of~$\rot(C)$,
and in the second case that
  \[
  d^{m-1}(v) = (c\rot^{-1}(C),c) - (-1)^{m-1}\cdot(bC,b)
  \]
with $c$ the first arrow in~$\rot^{-1}(C)$. The claim~\eqref{eq:Z} above
follows from this at once.
\end{proof}

\section{Hochschild cohomology of gentle algebras and some consequences}
\label{sect:corollaries}

First of all, let us summarize the results that we obtained in what
precedes.

\newpage

\begin{theorem}\label{thm:cohomology:basis}
Let $(Q,I)$ be a gentle presentation, let $A\coloneqq\kk Q/I$ be
the algebra it presents, and let $T$ be a spanning tree for the quiver~$Q$.
The Hochschild cohomology $\HH^*(A)$ of~$A$ is the graded
vector space freely generated by the cohomology classes of the following
homogeneous cocycles of the complex $\kk(\Gamma\parallel\cB)$:
\begin{enumerate}[
        label=\textup{($\mathsf{H_{\Roman*}}$)}, 
        ref=\textup{($\mathsf{H_{\Roman*}}$)}
        ]

\item\label{gen:one} the element
  \[
  \one\coloneqq\sum_{i\in Q_0}(e_i,e_i),
  \]

\item\label{gen:B-max} the pairs $(s(\alpha),\alpha)$ with $\alpha$ a
$\cB$-maximal path in~$(Q,I)$,

\item\label{gen:B-sum} the sums
  \[
  \cycle{\alpha} 
        \coloneqq
        \sum_{i=0}^{r-1}
                (s(\rot^i(\alpha)),\rot^i(\alpha))
  \]
with $\alpha\in\Crep(\cB)$, as defined in Propositions~\ref{prop:hh:0},

\item\label{gen:fundamental} the pairs $(c,c)$ with $c$ an arrow in the
complement of the spanning tree~$T$,

\item\label{gen:B-plus} the pairs  $(c, c\alpha)$ with $\alpha \in\Crep(\cB)$
and $c$ the first arrow of~$\alpha$,

\item\label{gen:clean} the pairs $(\gamma,\alpha)$ with $\gamma$ a
$\Gamma$-maximal element of~$\Gamma$ and $\gamma$ and~$\alpha$ neither
beginning nor ending with the same arrow,

\item\label{gen:Gamma-sum}  for each $C\in\Crep^\circ_m(\Gamma)$  with $m$ even or characteristic of $\kk$ equal to 2, the sums, 
 \[
  \cycle{C} 
        \coloneqq  \sum_{i=0}^{r-1} (-1)^{im} \cdot(\rot^i(C),s(\rot^i(C))) = \sum_{i=0}^{r-1} (\rot^i(C),s(\rot^i(C))),
  \]


\item\label{gen:Gamma-plus} the pairs $(bC,b)$ with $C\in\Crep^\circ_{m-1}(\Gamma)$ and
$b$ the first arrow of~$C$ and with $m$ odd or characteristic of $\kk$ equal to 2.

\end{enumerate}
\end{theorem}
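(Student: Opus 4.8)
The plan is to assemble the three cohomology computations already carried out---Proposition~\ref{prop:hh:0} in degree~$0$, Proposition~\ref{prop:hh:1} in degree~$1$, and Proposition~\ref{prop:hh:m} in every degree $m\geq2$---into a single list, after invoking the isomorphism $\HH^*(A)\cong\H^*(\kk(\Gamma\parallel\cB))$ induced by~$\Phi$ and using that the cohomology is the direct sum over the homogeneous degrees $m\geq0$. Since each of those three statements already asserts that the cocycles it exhibits \emph{freely} generate the corresponding space, nothing remains to be verified about linear independence or spanning; the entire content of the proof is to match the various families of generators against the eight families~\ref{gen:one}--\ref{gen:Gamma-plus} and to check that none is omitted or counted twice.

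First I would dispose of the two extreme degrees. In degree~$0$, Proposition~\ref{prop:hh:0} yields exactly the element~$\one$ of~\ref{gen:one}, the $\cB$-maximal pairs of~\ref{gen:B-max}, and the sums~$\cycle{\alpha}$ of~\ref{gen:B-sum}, so no reinterpretation is needed. In degree~$1$, Proposition~\ref{prop:hh:1} lists four families, and the point is to line them up: the pairs $(c,c)$ with $c\notin T$ are the degree-one instances of~\ref{gen:fundamental}; the pairs $(c,c\delta)$ with $\delta\in\Crep(\cB)$ are precisely~\ref{gen:B-plus}; and the pairs $(c,\alpha)$ with $c$ a $\Gamma$-maximal arrow are the degree-one instances of~\ref{gen:clean}, because for a length-one path $\gamma=c$ the requirement that $\gamma$ and~$\alpha$ neither begin nor end with the same arrow is exactly the condition that $\alpha$ neither begins nor ends with~$c$, and a $\Gamma$-maximal arrow is nothing but a $\Gamma$-maximal element of~$\Gamma_1$.

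The one genuinely non-obvious identification, and the step I expect to be the main obstacle, concerns the loop generators. The fourth family produced by Proposition~\ref{prop:hh:1}, namely the pairs $(b,s(b))$ with $b$ a loop such that $b^2\in R$ when $\charact\kk=2$, does not appear verbatim in the theorem's list; I would show that it is absorbed into~\ref{gen:Gamma-sum} at $m=1$. Indeed, such a loop~$b$ has $b^2\in\Gamma$, so it is a complete cycle of length~$1$ and period~$1$, and the defining formula of Lemma~\ref{lemma:sum} gives $\cycle{b}=(b,s(b))$; moreover $b$ represents an element of $\Crep^\circ_1(\Gamma)$, and the side condition attached to~\ref{gen:Gamma-sum} reduces, for $m=1$, to $\charact\kk=2$, matching exactly the hypothesis of the loop family. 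This is the only place where the grading boundary forces a generator produced for one degree to be recognised under a name introduced for another, so it deserves to be spelled out explicitly.

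Finally, for $m\geq2$ I would read off the three families of Proposition~\ref{prop:hh:m}: the sums $\cycle{C}$ with $C\in\Crep^\circ_m(\Gamma)$ (subject to the even-or-characteristic-two condition) give the remaining instances of~\ref{gen:Gamma-sum}; the pairs $(bC,b)$ with $C\in\Crep^\circ_{m-1}(\Gamma)$ give~\ref{gen:Gamma-plus}; and the $\Gamma$-maximal pairs $(\gamma,\alpha)$ furnish the instances of~\ref{gen:clean} of degree $m\geq2$. Collecting the degree-zero, degree-one, and degree-$m$ contributions reproduces precisely the list~\ref{gen:one}--\ref{gen:Gamma-plus}, and since $\HH^*(A)$ is the direct sum of its homogeneous components, the combined collection is a free generating set, which is the assertion of the theorem.
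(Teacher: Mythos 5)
Your proposal is correct and follows exactly the route the paper takes: its proof of this theorem is a one-line citation of Propositions~\ref{prop:hh:0}, \ref{prop:hh:1} and~\ref{prop:hh:m}, leaving the matching of families implicit. Your extra care in checking that the characteristic-$2$ loop generators $(b,s(b))$ of Proposition~\ref{prop:hh:1} reappear as $\cycle{b}$ for $b\in\Crep^\circ_1(\Gamma)$ under~\ref{gen:Gamma-sum} is sound and is precisely the one bookkeeping point the paper glosses over.
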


\begin{proof}
This is the information of Propositions~\ref{prop:hh:0}, \ref{prop:hh:1}
and~\ref{prop:hh:m}, which deal with the spaces $\HH^0(A)$, $\HH^1(A)$ and
$\HH^m(A)$ with $m\geq2$, respectively.
\end{proof}

When perusing the list in this theorem it is important to keep in mind that
the set $\C(\cB)$ depends only on the quiver~$Q$ and the set of
relations~$R$, but the set $\C(\Gamma)$ depends on those two things
\emph{and} on the characteristic of the ground field~$\kk$.

\bigskip

Just by looking at the cohomology of~$A$ as a graded vector space we obtain
the following two corollaries that characterize the finiteness of the
dimension of~$A$ and of its global dimension.

\begin{corollary}\label{coro:findim}
The following four statements are equivalent:
\begin{tfae}

\item\label{it:findim:1} the algebra $A$ is finite-dimensional,

\item\label{it:findim:2} $\HH^0(A)$ is finite-dimensional,

\item\label{it:findim:3} $\HH^1(A)$ is finite-dimensional,

\item\label{it:findim:4} there is no cocomplete cycle in~$(Q,I)$.

\end{tfae}
When they hold, then in fact $\HH^m(A)$ is finite-dimensional for all
$m\geq0$. 
\end{corollary}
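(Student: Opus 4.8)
The plan is to read all four conditions directly off the explicit bases of $\HH^0(A)$ and $\HH^1(A)$ produced in Propositions~\ref{prop:hh:0} and~\ref{prop:hh:1}, relying on the single combinatorial fact that $A$ is finite-dimensional precisely when the basis $\cB$ of paths avoiding $R$ is finite, and that finiteness of $\cB$ is governed by the presence of cocomplete cycles. I would arrange the implications into a cycle rather than proving each pair of equivalences separately.

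First I would establish $\tfaeitem{1}\iff\tfaeitem{4}$, which is the combinatorial heart. Since $\cB$ is a $\kk$-basis of $A$, finiteness of $A$ is the same as finiteness of $\cB$. Gentleness condition~(b) says that every arrow $a$ admits at most one arrow that may legally follow it in a path of $\cB$, so it defines a partial successor function $\sigma$ on the finite set $Q_1$, and a path of $\cB$ is exactly a finite $\sigma$-chain of arrows. Because $Q_1$ is finite, either every $\sigma$-orbit eventually becomes undefined, in which case the lengths of paths in $\cB$ are bounded and $\cB$ is finite, or some orbit is periodic; a periodic orbit closes up into a genuine cycle $C$ whose consecutive arrows all form legal length-$2$ paths, so that $C^2$ (and hence every power of $C$) lies in $\cB$, i.e.\ $C$ is cocomplete. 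Thus $\cB$ is infinite if and only if a cocomplete cycle exists.

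Next, for $\tfaeitem{1}\Rightarrow\tfaeitem{2}$ and $\tfaeitem{1}\Rightarrow\tfaeitem{3}$ I would note that if $\cB$ is finite then every family indexing the generators listed in Propositions~\ref{prop:hh:0} and~\ref{prop:hh:1}—the $\cB$-maximal paths, the cocomplete representatives $\Crep(\cB)$ (in fact empty by $\tfaeitem{4}$), the arrows outside a spanning tree, the $\Gamma$-maximal arrows with their admissible partners in $\cB$, and the finitely many loops—is finite, so $\HH^0(A)$ and $\HH^1(A)$ are finite-dimensional. For the converses I would argue contrapositively: if $\tfaeitem{4}$ fails, choose a primitive cocomplete cycle, so that all of its positive powers lie in $\Crep(\cB)$ by the defining property of that set, making $\Crep(\cB)$ infinite. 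The elements $\cycle{\alpha}$ with $\alpha\in\Crep(\cB)$ form part of the basis of $\HH^0(A)$, and the classes of the pairs $(c,c\alpha)$ with $\alpha\in\Crep(\cB)$ form part of the basis of $\HH^1(A)$; each is an infinite linearly independent family, whence $\neg\tfaeitem{4}\Rightarrow\neg\tfaeitem{2}$ and $\neg\tfaeitem{4}\Rightarrow\neg\tfaeitem{3}$. This closes the cycle $\tfaeitem{1}\Rightarrow\tfaeitem{2}\Rightarrow\tfaeitem{4}\Rightarrow\tfaeitem{1}$ and $\tfaeitem{1}\Rightarrow\tfaeitem{3}\Rightarrow\tfaeitem{4}$.

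Finally, for the concluding assertion I would assume the conditions hold, so $\cB$ is finite, and fix $m\geq 2$. Since $Q$ is a finite quiver, the set $\Gamma_m$ of length-$m$ paths is finite, which immediately bounds the number of generators of types~\ref{gen:Gamma-sum} and~\ref{gen:Gamma-plus} of Theorem~\ref{thm:cohomology:basis} (complete circuits of length $m$ or $m-1$). The remaining generators of $\HH^m(A)$ are pairs $(\gamma,\alpha)\in\Gamma_m\parallel\cB$, and as both $\Gamma_m$ and $\cB$ are finite there are only finitely many of these; combined with the finiteness of $\HH^0(A)$ and $\HH^1(A)$ this gives $\HH^m(A)$ finite-dimensional for all $m\geq 0$. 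The main obstacle is precisely the successor-function argument for $\tfaeitem{1}\iff\tfaeitem{4}$: one must use gentleness carefully to ensure that unbounded paths in $\cB$ force a cycle that is \emph{cocomplete} (whose square avoids $R$), rather than merely a closed walk, and symmetrically that a cocomplete cycle genuinely yields infinitely many independent classes in low degree.
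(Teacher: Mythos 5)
Your proof is correct and follows essentially the same route as the paper: both reduce everything to the presence or absence of a cocomplete cycle, extract the infinite families $\cycle{\alpha^k}$ in $\HH^0$ and $(c,c\alpha^k)$ in $\HH^1$ for the converses, and settle the final claim by local finiteness of the complex. Your successor-function argument for \tfaeitem{1}$\iff$\tfaeitem{4} is just a repackaging of the paper's pigeonhole argument (a path in $\cB$ of length $\abs{Q_1}+1$ must revisit an arrow and close up a cocomplete cycle), so the two proofs are essentially identical.
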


The finite-dimensionality of  $\HH^m(A)$ for some $m\geq2$ is not enough to
conclude that $A$ is finite-dimensional. A simple example of this is the
path algebra on the quiver with one vertex and one arrow that is
infinite-dimensional but has vanishing Hochschild cohomology in all degrees
greater than~$1$.

\begin{proof}
If $A$ is finite-dimensional, then so is $\HH^0(A)$, for it its isomorphic
to the center of~$A$. If $A$ is instead infinite-dimensional, there is a
path~$\gamma$ in~$Q$ of length~$\abs{Q_1}+1$ that necessarily passes
through some arrow~$a$ at least twice: there are then
paths~$\gamma_1$,~$\gamma_2$ and~$\gamma_3$ such that
$\gamma=\gamma_3a\gamma_2a\gamma_1$ and $\gamma_2a$ is a cocomplete cycle.
On the other hand, if there is a cocomplete cycle~$\gamma$ in~$(Q,I)$ and
$c$ is its first arrow, then the classes in~$\HH^1(A)$ of the pairs
$(c,c\delta^k)$ with $k\geq1$ are linearly independent, so that
$\dim\HH^1(A)$ is infinite-dimensional. This shows that
$\eqref{it:findim:3}\implies \eqref{it:findim:4}\implies
\eqref{it:findim:1}\implies \eqref{it:findim:2}$.

Suppose finally that the vector space $\HH^1(A)$ is infinite-dimensional.
Since it has a basis whose elements are the classes of certain elements
of~$\Gamma_1\parallel\cB$ and there are only finitely many arrows, we see
that the set $\cB$ is infinite, so that $A$ is infinite-dimensional. We
have already shown that in that case there is a cocomplete cycle~$\gamma$
in~$(Q,I)$, and then the sums $\cycle{\gamma^k}$ with $k\geq1$ are linearly
independent elements of~$\HH^0(A)$, which is therefore
infinite-dimensional. This shows that
$\eqref{it:findim:2}\implies\eqref{it:findim:3}$.

The equivalence of the four statements is thus proved. As for the last
claim of the corollary, it is obvious that if $A$ is finite-dimensional,
then $\HH^m(A)$ is finite-dimensional for all $m\geq0$, for the
complex~$\kk(\Gamma\parallel\cB)$ is in that case locally finite.
\end{proof}

Using \cite{GreenHuang}, see also \cite{HKK}, and our results above, we obtain the following characterisation of gentle algebras of finite global dimension. }

\begin{corollary}\label{coro:fingldim}
The following three statements are equivalent:
\begin{tfae}

\item\label{it:fingldim:1} the algebra $A$ has finite global dimension,

\item\label{it:fingldim:2} there is an integer~$m_0$ such that $\HH^m(A)=0$
for all $m\geq m_0$,

\item\label{it:fingldim:3} there is no complete cycle in~$(Q,I)$.

\end{tfae}
\end{corollary}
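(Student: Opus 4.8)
The plan is to route everything through the single combinatorial condition~(\ref{it:fingldim:3}) and use it as the hub connecting the two homological statements~(\ref{it:fingldim:1}) and~(\ref{it:fingldim:2}). The first step is a purely combinatorial observation: \emph{there is no complete cycle in~$(Q,I)$ if and only if $\Gamma_m=\emptyset$ for all sufficiently large~$m$}. The nontrivial implication rests on gentleness. By condition~(c) of the definition of a gentle presentation, every arrow admits at most one arrow extending it on the left and at most one on the right so that the composite lies in~$R$; consequently each element of~$\Gamma$ has at most one left- and one right-extension within~$\Gamma$, and building a path in~$\Gamma$ arrow by arrow is a deterministic process. If some $\Gamma_m$ with $m>\abs{Q_1}$ were nonempty, a path of that length would repeat an arrow, and determinism would force the intervening segment to be a cycle~$C$ with $C^k\in\Gamma$ for all~$k\geq1$, that is, a complete cycle. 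The converse is immediate, since a complete cycle~$C$ of length~$\ell$ gives $C^k\in\Gamma_{k\ell}$ for every~$k$.

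With this in hand, the equivalence of~(\ref{it:fingldim:2}) and~(\ref{it:fingldim:3}) is read off from Theorem~\ref{thm:cohomology:basis}. If there is no complete cycle, then $\Gamma_m=\emptyset$ for $m\geq m_0$, hence $\kk(\Gamma_m\parallel\cB)=0$ and $\HH^m(A)=0$ in that range, which is~(\ref{it:fingldim:2}). Conversely, if a complete cycle exists then there are complete circuits of arbitrarily large length---taking powers of a primitive one---and each contributes a nonzero generator of~$\HH^*(A)$ of type~\ref{gen:Gamma-sum} or~\ref{gen:Gamma-plus} in a degree that grows without bound. (In characteristic different from~$2$ one restricts to even powers in order to meet the parity condition built into those generators.) Thus $\HH^m(A)\neq0$ for infinitely many~$m$ and~(\ref{it:fingldim:2}) fails.

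The remaining and, I expect, genuinely harder link is between finite global dimension~(\ref{it:fingldim:1}) and the combinatorics~(\ref{it:fingldim:3}), since global dimension is a property of the module category rather than of the bimodule complex we have been computing. Here I would exploit the Koszulity of~$A$ recorded in Chapter~\ref{chapter:algebras}: the Koszul dual~$A^!=\kk Q/I^!$ has as a $\kk$-basis exactly the paths all of whose length-$2$ subpaths are relations of~$A$, that is, the set~$\Gamma$. Hence $A^!$ is finite-dimensional precisely when~$\Gamma$ is finite, which by the first step is equivalent to~(\ref{it:fingldim:3}). Since $\Ext^n_A(\kk,\kk)\cong(A^!)_n$ for a Koszul algebra with semisimple degree-zero part~$E$, one has $\gldim A=\sup\{n:(A^!)_n\neq0\}$, so that $\gldim A<\infty$ if and only if~$A^!$ is finite-dimensional; this is exactly the content extracted from~\cite{GreenHuang} (see also~\cite{HKK}). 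Chaining these equivalences yields~(\ref{it:fingldim:1})$\Leftrightarrow$(\ref{it:fingldim:3}) and closes the circle. The main obstacle is thus not any single calculation but making this last bridge precise: one must check that the graded global dimension detected by the simple modules coincides with the ordinary global dimension, for which the semisimplicity of~$E$ and the $\NN_0$-grading of~$A$ are the relevant inputs.
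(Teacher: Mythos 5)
Your argument is correct, and it reaches the same three equivalences by a noticeably different organization. The paper runs the cycle \implication{3}{1}, \implication{1}{2}, \implication{2}{3} entirely by hand with the Bardzell resolution: for \implication{3}{1} it observes that a path in~$\Gamma_m$ with $m>\abs{Q_1}$ repeats an arrow and hence contains a complete cycle, so absence of complete cycles truncates~$\cR$ and bounds the global dimension; for \implication{1}{2} it uses that the differential of $\Hom_A(\cR\otimes_AS_i,S_j)$ vanishes, so $\Ext^{m_0}_A(S_i,S_j)=0$ forces $\Gamma_{m_0}=\emptyset$ and hence $\HH^m(A)=0$ for $m\geq m_0$; and for \implication{2}{3} it exhibits the nonzero classes $\cycle{\gamma^{2k}}$, just as you do. You instead isolate the combinatorial hub ``no complete cycle $\iff$ $\Gamma$ is eventually empty'' and then dispatch \tfaeitem{2}$\iff$\tfaeitem{3} through it, which is clean and matches the paper's content; for \tfaeitem{1}$\iff$\tfaeitem{3} you invoke Koszul duality ($A^!$ has basis~$\Gamma$, and $\gldim A=\sup\{n:(A^!)_n\neq0\}$), outsourcing the homological step to~\cite{GreenHuang}. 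That is legitimate --- the paper itself cites \cite{GreenHuang} in the sentence introducing the corollary --- but be aware that the inequality $\gldim A\leq\sup\{n:(A^!)_n\neq0\}$ is the substantive half, and for a possibly infinite-dimensional~$A$ it is not delivered by ``$\Ext^*_A(E,E)\cong A^!$'' together with grading considerations alone, since not every simple module is a vertex simple. The clean justification is exactly the paper's: the Bardzell (= Koszul) bimodule resolution has length $\max\{m:\Gamma_m\neq\emptyset\}$, and $\cR_m\otimes_AM=A\otimes_E\kk\Gamma_m\otimes_EM$ is projective for every module~$M$ because $E$ is semisimple, so every module has projective dimension at most that length. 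You flag the right spot as the remaining obstacle, but the resolution of it is the finite-length bimodule resolution rather than a comparison of graded and ungraded global dimension; with that substitution your proof closes completely.
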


\begin{proof}
Suppose first that the algebra $A$ has infinite global dimension. If we put
$n\coloneqq\abs{Q_1}+1$, there are then two left $A$-modules $M$ and~$N$
and an integer $m\geq n$ such $\Ext_A^m(M,N)\neq0$. If $\cR$ is the
Bardzell resolution that we described in Chapter~\ref{chapter:algebras},
then the complex $\cR\otimes_AM$ is a projective resolution of~$M$ and we
see that $\cR_m=A\otimes_E\kk\Gamma_m\otimes_EA\neq0$ and, in particular,
that there is a path~$\gamma$ in~$\Gamma_m$. This path has length greater
than the number of arrows, so there is some arrow~$a$ that appears twice
in~$\gamma$: there are then paths $\gamma_1$,~$\gamma_2$ and~$\gamma_3$
such that $\gamma=\gamma_3a\gamma_2a\gamma_1$. The path~$a\gamma_2$ is thus
a complete cycle in~$(Q,I)$, and this shows that
$\eqref{it:fingldim:3}\implies\eqref{it:fingldim:1}$.

Suppose next that there is a complete cycle~$\gamma$ in~$(Q,I)$ and let $m$
be its length. For all $k\in\NN$ the path $\gamma^k$ is also a complete
cycle in~$(Q,I)$ and the class of the cocycle~$\cycle{\gamma^{2k}}$ is
non-zero in~$\HH^{2km}(A)$. This shows that
$\eqref{it:fingldim:2}\implies\eqref{it:fingldim:3}$.

Finally, suppose that $A$ has finite global dimension and put
$m_0\coloneqq1+\gldim A$. If~$i$ and~$j$ are two vertices of~$Q$ and $S_i$
and $S_j$ are the corresponding simple $A$-modules, then
$\Ext^{m_0}_A(S_i,S_j)=0$. Now $\cR\otimes_AS_i$ is a projective resolution
of~$S_i$, and the form of the differential of~$\cR$ implies that the
differential of the complex $\Hom_A(\cR\otimes_AS_i,S_j)$ that computes
$\Ext^*_A(S_i,S_j)$ is zero: we therefore have that
 \[
  0 = \Hom_A(\cR_{m_0}\otimes_AS_i,S_j)
    = \Hom_E(\kk\Gamma_{m_0}\otimes_ES_i,S_j).
  \]
This implies that there are no paths in~$\Gamma_{m_0}$ from~$i$ to~$j$, and
tells us that whenever $m\geq m_0$ the set~$\Gamma_{m}$ is empty and
$\HH^m(A)=0$. This shows that
$\eqref{it:fingldim:1}\implies\eqref{it:fingldim:2}$.
\end{proof}

Combining the two Corollaries~\ref{coro:findim} and~\ref{coro:fingldim} we
obtain the following.

\begin{corollary}
The Hochschild cohomology~$\HH^*(A)$ is finite-dimensional if and only if
both the dimension of~$A$ and its global dimension are finite. \qed
\end{corollary}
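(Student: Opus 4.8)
The plan is to read this off directly from the two preceding corollaries, so I do not expect a genuine obstacle; the only point requiring care is to remember that finite-dimensionality of the nonnegatively graded space $\HH^*(A)=\bigoplus_{m\geq0}\HH^m(A)$ is equivalent to the conjunction of two conditions: that every homogeneous component $\HH^m(A)$ is finite-dimensional, and that all but finitely many of them vanish. With this observation in hand, each of the two hypotheses $\dim A<\infty$ and $\gldim A<\infty$ will control exactly one of these two conditions, and the work reduces to matching them up.

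For the implication that finiteness of both $\dim A$ and $\gldim A$ forces $\HH^*(A)$ to be finite-dimensional, I would proceed as follows. Assuming that $A$ is finite-dimensional, the final assertion of Corollary~\ref{coro:findim} already gives that $\HH^m(A)$ is finite-dimensional for every $m\geq0$, securing the first condition. Assuming moreover that $A$ has finite global dimension, the implication $\eqref{it:fingldim:1}\implies\eqref{it:fingldim:2}$ of Corollary~\ref{coro:fingldim} supplies an integer $m_0$ with $\HH^m(A)=0$ for all $m\geq m_0$, securing the second. Consequently $\HH^*(A)$ is a finite direct sum of finite-dimensional spaces, and is therefore finite-dimensional.

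For the converse I would extract the two conclusions from the single hypothesis that $\HH^*(A)$ is finite-dimensional. On the one hand, $\HH^0(A)$ is then a fortiori finite-dimensional, and the implication $\eqref{it:findim:2}\implies\eqref{it:findim:1}$ of Corollary~\ref{coro:findim} yields that $A$ is finite-dimensional. On the other hand, finiteness of the total dimension forces all but finitely many homogeneous components of $\HH^*(A)$ to vanish, so that some $m_0$ satisfies $\HH^m(A)=0$ for all $m\geq m_0$; the implication $\eqref{it:fingldim:2}\implies\eqref{it:fingldim:1}$ of Corollary~\ref{coro:fingldim} then gives that $A$ has finite global dimension. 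Combining the two directions establishes the stated equivalence.
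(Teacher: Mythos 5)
Your proposal is correct and follows exactly the route the paper intends: the corollary is stated with a \qed precisely because it is obtained by combining Corollaries~\ref{coro:findim} and~\ref{coro:fingldim}, and your argument spells out that combination (local finiteness of each $\HH^m(A)$ from the finite-dimensionality of~$A$, eventual vanishing from the finiteness of the global dimension, and the two converses extracted from $\HH^0(A)$ and from the eventual vanishing of the homogeneous components). No gaps.
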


According to Corollary~\ref{coro:findim}, the Hochschild cohomology of a
gentle algebra~$A$ is locally finite-dimensional exactly when the algebra
itself is finite-dimensional, and in that case, it makes sense to determine
the \emph{growth rate} of that cohomology. It turns out that this has a simple
description --- it does not really grow:

\begin{corollary}
Let $(Q,I)$ be a \fd gentle presentation and let $A\coloneqq\kk Q/I$ be the
algebra it presents. 
\begin{thmlist}

\item The sequence $(\dim\HH^m(A))_{m\geq0}$ is bounded. More precisely,
the number~$N$ of primitive complete cycles in~$(Q,I)$ is finite and we
have that
  \[
  \dim\HH^m(A) \leq 2N
  \]
wherever $m>\abs{Q_1}$. 

\item If there exists an integer $m>\abs{Q_1}$ such that $\HH^m(A)\neq0$,
then there are infinitely many integers with that property.

\end{thmlist}
\end{corollary}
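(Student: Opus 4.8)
The plan is to read both statements directly off the explicit basis of $\HH^m(A)$ in Theorem~\ref{thm:cohomology:basis} (equivalently Proposition~\ref{prop:hh:m}), after first establishing the one preliminary fact that does the real work: \emph{every $\Gamma$-maximal path, and every primitive complete cycle, has length at most $\abs{Q_1}$.} To prove this I would exploit that gentleness makes continuations inside $\Gamma$ unique. For each arrow $a$ there is at most one arrow $b$ with $ba\in R$; write $L(a)=b$ for this left continuation when it exists. Then a path $a_k\cdots a_1$ lies in $\Gamma$ precisely when $a_{i+1}=L(a_i)$ for all $i$, so a path in $\Gamma$ is just a finite orbit segment of the partial map $L$ on the finite set $Q_1$ (this is the single-path version of the uniqueness recorded in Remark~\ref{rem:key}). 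If a $\Gamma$-maximal path $a_m\cdots a_1$ repeated an arrow, say $a_i=a_j$ with $i<j$, then $L^{\,j-i}$ fixes $a_i$, so every $a_k$ with $k\ge i$ lies in a single periodic $L$-orbit; in particular $L(a_m)$ is defined, and $L(a_m)a_m\cdots a_1\in\Gamma$ prolongs the path, contradicting maximality. Hence a $\Gamma$-maximal path has pairwise distinct arrows and length $\le\abs{Q_1}$. Applying the same dynamics to a complete cycle $C$, where the relation $C^2\in\Gamma$ closes $L$ into a cyclic permutation of the arrows of $C$, a repeated arrow would force $C$ to be a proper power; so a primitive complete cycle also has distinct arrows and length $\le\abs{Q_1}$, and in particular the number $N$ of primitive complete cycles is finite.

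With this, part \thmitem{1} is a count. For $m>\abs{Q_1}$ there are no $\Gamma$-maximal elements of $\Gamma_m$, so the generators of type~\ref{gen:clean} are absent and $\HH^m(A)$ is spanned only by the classes $\cycle{C}$ with $C\in\overline\C^\circ_m(\Gamma)$ (when $m$ is even or $\charact\kk=2$) and the classes $(bC,b)$ with $C\in\overline\C^\circ_{m-1}(\Gamma)$ (when $m$ is odd or $\charact\kk=2$). Thus $\dim\HH^m(A)\le\abs{\overline\C^\circ_m(\Gamma)}+\abs{\overline\C^\circ_{m-1}(\Gamma)}$. Now every complete circuit of a fixed length $\ell$ is the $(\ell/r)$-th power of a \emph{unique} primitive complete circuit of length $r\mid\ell$ (Lemma~\ref{lemma:primitive} applied to a representative), so the assignment of a complete circuit to its primitive root is injective on circuits of length $\ell$; hence $\abs{\overline\C^\circ_\ell(\Gamma)}$ is at most the number of primitive complete circuits, which is at most $N$. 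Each of the two terms is therefore $\le N$, giving $\dim\HH^m(A)\le 2N$ for all $m>\abs{Q_1}$; note this incidentally re-proves finite-dimensionality in high degrees.

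For part \thmitem{2}, suppose $\HH^m(A)\neq0$ for some $m>\abs{Q_1}$. Since the generators~\ref{gen:clean} cannot occur in this degree, the nonvanishing is witnessed by a generator of type~\ref{gen:Gamma-sum} or~\ref{gen:Gamma-plus}, so there is a complete circuit of length $m$ or $m-1$, and in particular a complete cycle $\gamma$ of some length $\ell\ge1$. Exactly as in the proof of Corollary~\ref{coro:fingldim}, each power $\gamma^{2k}$ is again a complete cycle and $\cycle{\gamma^{2k}}$ is a non-zero class in $\HH^{2k\ell}(A)$; its degree $2k\ell$ being even, the parity condition in~\ref{gen:Gamma-sum} holds in every characteristic. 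The degrees $2k\ell$ with $k\ge1$ are infinitely many and all exceed $\abs{Q_1}$ once $k$ is large, which is exactly what we want.

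The main obstacle is the opening length lemma; everything afterwards is bookkeeping against the explicit basis. The delicate point there is phrasing ``a repeated arrow forces periodicity'' precisely and in a way that covers both the open case ($\Gamma$-maximal paths, where the conclusion is unbounded left-extension) and the closed case (complete cycles, where the conclusion is being a proper power). Once that is pinned down, the passage from ``distinct arrows'' to the length bound $\le\abs{Q_1}$, and hence to finiteness of $N$ and the estimate $\dim\HH^m(A)\le 2N$, is immediate.
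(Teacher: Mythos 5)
Your proof is correct and follows essentially the same route as the paper: both arguments read the bound off the explicit basis of $\HH^m(A)$ in high degree, bound $\abs{\C^\circ_\ell(\Gamma)}$ by the number $N$ of primitive complete circuits via the unique-primitive-root fact, and obtain part \thmitem{2} by taking (even) powers of a complete cycle. Your opening lemma on the $L$-dynamics merely spells out in detail what the paper compresses into the remark that a $\Gamma$-path longer than $\abs{Q_1}$ repeats an arrow and that two primitive complete cycles sharing an arrow are conjugate.
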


\begin{proof}
The number of $\Gamma$-maximal paths in~$\Gamma$ is finite and they all
have length at most~$\abs{Q_1}$: indeed, if a path is of length greater
than the number of arrows in~$Q$, then it passes through some arrow more
than one time, and if it belongs to~$\Gamma$ is cannot be $\Gamma$-maximal.
It follows from this and Proposition~\ref{prop:hh:m} that whenever
$m>\abs{Q_1}$ the vector space $\HH^m(A)$ is freely spanned by the
following elements of~$\kk(\Gamma_m\parallel\cB)$:
\begin{itemize}

\item the sums $\cycle{C}$ with $C\in\overline\C_m(\Gamma)$; 

\item the pairs $(bC,b)$ with $C\in\overline\C_{m-1}(\Gamma)$ and $b$ the
first arrow in~$C$.

\end{itemize}
We thus see that for such~$m$ we have that
  \begin{equation}\label{eq:dimhh}
  \dim\HH^m(A) = \abs{\C_m(\Gamma)} + \abs{\C_{m-1}(\Gamma)}.
  \end{equation}

There are finitely many primitive complete cycles
in~$(Q,I)$ --- for two of them have an arrow in common exactly when they
are conjugate, since the presentation~$(Q,I)$ is \fd gentle  --- and if $N$ is
their number then for all $n\in\NN$ the cardinal of the set~$\C_n(\Gamma)$
is at most~$N$, as every element of~$\C_n(\Gamma)$ is the conjugacy class
of a factor of the $n$th power of a primitive complete cycle. Together
with~\eqref{eq:dimhh} this proves the inequality that appears in the
first part of the corollary.

On the other hand, if there is an integer $m$ such that $m>\abs{Q_1}$ and
$\HH^m(A)\neq0$, then our observations imply that one of the
sets~$\C_m(\Gamma)$ or~$\C_{m-1}(\Gamma)$ is non-empty, and therefore for
all $k\in\NN$ one of~$\C_{km}(\Gamma)$ or~$\C_{k(m-1)}(\Gamma)$ is
non-empty, and there are infinitely many Hochschild cohomology spaces which
are non-zero.
\end{proof}

The number~$\abs{Q_1}$ in the second part of this corollary is the smallest
one that makes that claim true. For example, if $N\geq2$, the quiver~$Q$ is
an oriented cycle of length~$N$ and the ideal~$I$ is generated by all paths
of length~$2$ in~$Q$ except one, then the presentation~$(Q,I)$ is gentle,
there are no complete cycles in~$(Q,I)$ and therefore only for finitely
many integers~$m$ we have $\HH^m(A)\neq0$, and $\HH^{\abs{Q_1}}(A)$ is of
dimension~$1$ and, in particular, non-zero.



\chapter{Hochschild homology of quadratic monomial algebras}
\label{chapter:homology}

In this chapter we fix a quadratic monomial presentation $(Q,I)$,  and
compute the Hochschild homology~$\HH_*(A)$ of the algebra $A\coloneqq\kk
Q/I$ that it presents. As in the previous chapters, our main interest is in
gentle algebras, but working more generally with a quadratic monomial
presentation does not require any extra effort, so we will do so. 

The Hochschild homology of~$A$ is, since we are working over a field,
canonically isomorphic to $\Tor^{A^e}_*(A,A)$, and we will realize it as
the homology of the complex~$A\otimes_{A^e}\cR$, with 
  \[
  \cR 
    = A\otimes_E\kk\Gamma\otimes_E A
  \]
the Barzdell resolution that we described in
Chapter~\ref{chapter:algebras}.

If $\alpha$ is a path in~$Q$ of positive length, then there are arrows
$\lfact1{\alpha}$ and $\rfact2{\alpha}$, the last and the first arrow
of~$\alpha$, respectively, and paths $\lfact2{\alpha}$
and~$\rfact1{\alpha}$, possibly of length zero, such that
  \[
  \alpha 
    = \lfact1{\alpha}\lfact2{\alpha} 
    = \rfact1{\alpha}\rfact2{\alpha}.
  \]
Two paths~$\alpha$ and~$\beta$ in the quiver~$Q$ are
\newterm{antiparallel}\index{Antiparallel paths} if
$s(\alpha)=t(\beta)$ and $t(\alpha)=s(\beta)$, and in that case we can
consider the concatenations~$\alpha\beta$ and~$\beta\alpha$. If $X$
and~$Y$ are sets of paths in~$Q$, then we denote by $X\odot Y$ the set
of all pairs $(\alpha,\beta)$ in~$X\times Y$ with $\alpha$ and~$\beta$
antiparallel, and by $\kk(X\odot Y)$ the vector space it freely spans.
For each $m\geq0$ there is a map
  \[
  \Psi_m:
    \kk(\cB\odot\Gamma_m) 
    \to 
    A\otimes_{A^e}(A\otimes_E\kk\Gamma_m\otimes_E A)
  \]
such that $\Phi_m(\alpha,\gamma)=\alpha\otimes(1\otimes\gamma\otimes
1)$ for all choices of $(\alpha,\gamma)$ in~$\cB\odot\Gamma_m$, and it
is an isomorphism of vector spaces. There is a unique way to turn the
graded vector space~$\kk(\cB\odot\Gamma)$ into a complex in such a way
that the collection of maps~$(\Psi_m)_{m\geq0}$ is an isomorphism of
complexes $\kk(\cB\odot\Gamma)\to A\otimes_{A^e}\cR$: for each
$m\geq1$ the differential 
  \[ 
  d_m:
    \kk(\cB\odot\Gamma_m) 
    \to 
    \kk(\cB\odot\Gamma_{m-1})
  \]
is such that
  \begin{equation} \label{eq:hom:dm:2}
  d_m(\alpha,\gamma) 
    = (\alpha\lfact1{\gamma},\lfact2{\gamma})
      + (-1)^m \cdot(\rfact2{\gamma}\alpha,\rfact1{\gamma})
  \end{equation}
for all pairs~$(\alpha,\gamma)$ in $\cB\odot\Gamma_m$. We compute the
Hochschild homology~$\HH_*(A)$ of~$A$ by identifying it with the
homology of the chain complex~$\kk(\cB\odot\Gamma)$.

\bigskip

As in Chapter~\ref{chapter:cohomology} we let~$\C$ be the set of all
circuits in~$Q$, and put
  \begin{equation} \label{eq:cprime}
  \C' \coloneqq \C \cup \{\{e\} : e\in Q_0 \}.
  \end{equation}
If a pair $(\alpha,\gamma)$ belongs to $\cB\odot\Gamma$, then either
both of its components have length zero, in which case the path
$\alpha\gamma$ has length zero, or the path~$\alpha\gamma$ is a cycle:
in both cases, the concatenation~$\alpha\gamma$ belongs to a unique
element of~$\C'$. It follows from this that if we define, for each
$C\in\C'$, 
  \[
  (\cB\odot\Gamma)_C
    \coloneqq \{(\alpha,\gamma)\in\cB\odot\Gamma:\alpha\gamma\in C\},
  \]
then we have a partition
  \(
  \cB\odot\Gamma
    = \bigsqcup_{C\in\C'}(\cB\odot\Gamma)_C
  \). 
Moreover, if $C\in\C'$ and we write $\kk(\cB\otimes\Gamma)_C$ for the span
of~$(\cB\odot\Gamma)_C$ in the complex~$\kk(\cB\otimes\Gamma)$, it follows
at once from the formula~\eqref{eq:hom:dm:2} for the differential of the
latter that $\kk(\cB\odot\Gamma)_C$ is in fact a subcomplex, and we
therefore have a direct sum decomposition of complexes
  \begin{equation}\label{eq:hom:desc}
  \kk(\cB\odot\Gamma) 
    = \bigoplus_{C\in\C'}\kk(\cB\odot\Gamma)_C.
  \end{equation}
This observation reduces the problem of computing the homology of the
complex $\kk(\cB\odot\Gamma)$ to that of computing the homology of the
each of the smaller complexes $\kk(\cB\odot\Gamma)_C$, one for each
$C\in\C'$, which is what now do.

\bigskip

We deal first with the direct summands appearing
in~\eqref{eq:hom:desc} that correspond to elements of~$\C'$ that are
complete or cocomplete circuits. For cocomplete ones, we have the
following result:

\begin{lemma}\label{lemma:homology:cocomplete}
Let $C$ be a cocomplete circuit in~$(Q,I)$, let $r$ be its period, and
let $\bar C$ be a cycle in~$C$.
\begin{thmlist}

\item The vector space~$H_0(\kk(\cB\odot\Gamma)_C)$ is one-dimensional
and spanned by the homology class of~$(\bar C,s(\bar C))$, which is
independent of the choice of~$\bar C$ in~$C$.

\item The vector space~$H_1(\kk(\cB\odot\Gamma)_C)$ is
one-dimensional, spanned by the class of
  \[
  \hcycle{\bar C} 
    \coloneqq \sum_{i=0}^{r-1} 
                (\rfact1{\rot^i(\bar C)}, \rfact2{\rot^i(\bar C)})
    \in \kk(\cB\odot\Gamma_1)_C,
  \]
which is again independent of the choice of~$\bar C$ in~$C$.

\item For all $m\geq2$ we have that $H_m(\kk(\cB\odot\Gamma)_C)=0$.

\end{thmlist}
\end{lemma}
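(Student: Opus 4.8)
The plan is to use the cocompleteness of $C$ to collapse the subcomplex $\kk(\cB\odot\Gamma)_C$ to two terms and then to recognise the surviving differential as a cyclic difference operator. The first, and conceptually decisive, step settles~\thmitem{3} at once. Suppose $(\alpha,\gamma)\in(\cB\odot\Gamma_m)_C$ with $m\geq2$; then $\gamma\in\Gamma_m$ has length at least~$2$, so it contains a subpath of length~$2$, which lies in~$R$ because $\gamma\in\Gamma$. But $\gamma$ is a subpath of the cycle $\alpha\gamma\in C$, and cocompleteness of $C$ forces $\alpha\gamma\in\cB$, so no subpath of $\alpha\gamma$ of length~$2$ lies in~$R$ --- a contradiction. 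Hence $(\cB\odot\Gamma_m)_C=\varnothing$ and $H_m(\kk(\cB\odot\Gamma)_C)=0$ for every $m\geq2$.

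I would then write down explicit bases in the two remaining degrees. By Lemma~\ref{lemma:primitive} the circuit $C$ consists of exactly the $r$ pairwise distinct cycles $\rot^i(\bar C)$ for $0\leq i\leq r-1$. In degree~$0$ a pair $(\alpha,\gamma)\in(\cB\odot\Gamma_0)_C$ forces $\gamma=s(\alpha)$ and $\alpha\in C$, so the $r$ pairs $(\rot^i(\bar C),s(\rot^i(\bar C)))$ form a basis; in degree~$1$ the path $\gamma$ is an arrow and $\alpha\gamma\in C$ with $\gamma$ its first arrow, so the $r$ pairs $(\rfact1{\rot^i(\bar C)},\rfact2{\rot^i(\bar C)})$ form a basis. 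In particular both spaces have dimension~$r$, so that $\dim H_0=\dim H_1$ already on Euler-characteristic grounds.

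Writing $\bar C_i\coloneqq\rot^i(\bar C)=\alpha_i a_i$ with $a_i$ its first arrow, I would set $e_i\coloneqq(\bar C_i,s(\bar C_i))$ and $f_i\coloneqq(\alpha_i,a_i)$, so that $(e_i)$ and $(f_i)$ are the bases above, and compute $d_1$ from~\eqref{eq:hom:dm:2}. Since $\lfact1{a_i}=a_i$, $\lfact2{a_i}=s(a_i)$, $\rfact2{a_i}=a_i$ and $\rfact1{a_i}=t(a_i)$, one gets $d_1(f_i)=(\alpha_i a_i,s(a_i))-(a_i\alpha_i,t(a_i))$, and a short check --- using that $a_i\alpha_i=\rot^{-1}(\bar C_i)=\bar C_{i-1}$ --- rewrites this as $d_1(f_i)=e_i-e_{i-1}$ with indices taken modulo~$r$. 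Thus $d_1$ is the standard cyclic difference operator: its kernel is spanned by $\sum_{i=0}^{r-1}f_i=\hcycle{\bar C}$, which gives~\thmitem{2}, and its image is the sum-zero subspace of the $e_i$, so the cokernel is one-dimensional and spanned by the class of any single $e_i$, which gives~\thmitem{1}. Independence of the choice of $\bar C$ is then immediate: in $H_0$ all the classes $[e_i]$ coincide, while $\hcycle{\bar C}$ is literally unchanged under $\bar C\mapsto\rot(\bar C)$, this substitution only reindexing the sum.

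The only real obstacle is the opening observation: once one sees that cocompleteness bounds the length of the $\Gamma$-component by~$1$ and so truncates the complex to degrees~$0$ and~$1$, the rest is the routine identification of a two-term complex with a cyclic boundary map, whose homology can be read off directly. The single place demanding care is the rotation bookkeeping behind the identity $d_1(f_i)=e_i-e_{i-1}$.
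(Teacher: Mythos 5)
Your proof is correct and follows the same route as the paper's: cocompleteness truncates the subcomplex $\kk(\cB\odot\Gamma)_C$ to degrees $0$ and $1$, the two surviving components have the indicated $r$-element bases, and the differential is the cyclic difference operator, whose kernel and cokernel are each one-dimensional and give \thmitem{2} and \thmitem{1} respectively. The only (immaterial) divergence is the direction of the index shift --- you obtain $d_1(f_i)=e_i-e_{i-1}$ where the paper writes $e_i-e_{i+1}$ --- but with indices taken modulo $r$ both versions yield the same homology, and your rotation bookkeeping is the more carefully justified of the two.
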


\begin{proof}
Let us write $\bar C_i\coloneqq\rot^i(\bar C)$ for each
$i\in\{0,\dots,r-1\}$. Since the circuit~$C$ is cocomplete, it is
clear that $(\cB\odot\Gamma_m)_C=\emptyset$ for all $m\geq2$, and that 
  \begin{gather*}
  (\cB\odot\Gamma_0)_C
    = \{
        (\bar C_0,s(\bar C_0)),
        \dots,
        (\bar C_{r-1},s(\bar C_{r-1}))
      \}
\shortintertext{and}
  (\cB\odot\Gamma_1)_C
    = \{
        (\rfact1{(\bar C_0)},\rfact2{(\bar C_0)}),
        \dots,
        (\rfact1{(\bar C_{r-1})},\rfact2{(\bar C_{r-1})})
      \},
  \end{gather*}
both sets having cardinality exactly~$r$. The differential in the
complex~$\kk(\cB\odot\Gamma)_C$ is such that
  \[
  d(\rfact1{(\bar C_i)},\rfact2{(\bar C_i)})
    = (\bar C_i,s(\bar C_i)) - (\bar C_{i+1},s(\bar C_{i+1}))
  \]
for each $i\in\{0,\dots,r-1\}$, with indices taken modulo~$r$. 
The claims of the lemma follow easily from these observations.
\end{proof}

The corresponding result for complete cycles is similar but more
complicated, since it involves the characteristic of the ground field:

\begin{lemma}\label{lemma:homology:complete}
Let $C$ be a complete circuit in~$(Q,I)$, let~$l$ and~$r$ be its length
and its period, respectively, and let $\bar C$ be a cycle in~$C$.
\begin{thmlist}

\item If $(-1)^{(l+1)r}=1$ in~$\kk$, then 
\begin{itemize}

\item $H_m(\kk(\cB\odot\Gamma)_C)=0$ for all integers $m$ different
from~$l$ and~$l-1$, and 

\item the vector spaces $H_l(\kk(\cB\odot\Gamma)_C)$ and
$H_{l-1}(\kk(\cB\odot\Gamma)_C)$ are both of dimension~$1$, and they are
spanned by the $l$-cycle 
  \begin{equation}\label{eq:l-cycle}
  \hskip4em 
  \hcycle{\bar C} 
    \coloneqq
      \sum_{i=0}^{r-1} (-1)^{(l+1)i}\cdot \bigl(s(\rot^i(\bar C)), \rot^i(\bar C))
      \in \kk(\cB\odot\Gamma_l)_C
  \end{equation}
and the $(l-1)$-cycle 
  \[
  (\lfact1{\bar C},\lfact2{\bar C}) 
    \in \kk(\cB\odot\Gamma_{l-1})_C,
  \]
respectively. These generators do depend on the choice of~$\bar C$
in~$C$. Namely, we have that 
  \[
  \hcycle{\rot(\bar C)}
    = (-1)^{l+1}\cdot\hcycle{\bar C}
  \]
and that the $(l-1)$-cycles
  \[
  (\lfact1{\rot(\bar C)},\lfact2{\rot(\bar C)}) 
  \quad\text{and}\quad
  (-1)^{l+1}\cdot (\lfact1{\bar C},\lfact2{\bar C})
  \]
are homologous.

\end{itemize}

\item If $(-1)^{(l+1)r}\neq1$ in~$\kk$, then we have
$H_m(\kk(\cB\odot\Gamma)_C)=0$ for all $m\in\ZZ$.

\end{thmlist}
\end{lemma}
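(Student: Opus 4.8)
The plan is to reduce the whole computation to a completely explicit two-term complex. The key structural observation is that, because the circuit~$C$ is complete, every pair $(\alpha,\gamma)$ in $(\cB\odot\Gamma)_C$ satisfies $\abs{\alpha}\le 1$: here $\alpha$ is a contiguous factor of the complete cycle~$\alpha\gamma$, so any two consecutive arrows of~$\alpha$ would form a length-two subpath lying in~$R$, contradicting $\alpha\in\cB$ unless $\alpha$ has length at most one. Consequently $\gamma$ has length $l$ or $l-1$, and the complex $\kk(\cB\odot\Gamma)_C$ is concentrated in homological degrees~$l$ and~$l-1$. First I would check that in degree~$l$ a basis is given by the $r$ pairs $v_i\coloneqq(s(\rot^i(\bar C)),\rot^i(\bar C))$ and in degree~$l-1$ by the $r$ pairs $w_i\coloneqq(\lfact1{\rot^i(\bar C)},\lfact2{\rot^i(\bar C)})$, with $i$ read modulo~$r$; these are pairwise distinct because the cycles $\rot^0(\bar C),\dots,\rot^{r-1}(\bar C)$ are, by Lemma~\ref{lemma:primitive}, pairwise different.

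Next I would compute the single possibly-nonzero differential~$d_l$. Feeding $\alpha=s(\gamma)$ and $\gamma=\rot^i(\bar C)$ into~\eqref{eq:hom:dm:2}, the two summands become $(\lfact1{\gamma},\lfact2{\gamma})$ and $(-1)^l(\rfact2{\gamma},\rfact1{\gamma})$, since $\alpha$ is a trivial path. The first summand is exactly $w_i$, while the rotation convention $\rot(c_l\cdots c_1)=c_{l-1}\cdots c_1c_l$ identifies $(\rfact2{\gamma},\rfact1{\gamma})$ with $w_{i-1}$, as passing the first arrow of~$\gamma$ to the back turns~$\gamma$ into $\rot^{-1}(\gamma)$. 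Hence $d_l(v_i)=w_i+(-1)^l w_{i-1}$ for every~$i$; the differential is thus a circulant endomorphism of~$\kk^r$, and since the complex vanishes outside degrees~$l$ and~$l-1$ we have $H_l=\ker d_l$ and $H_{l-1}=\coker d_l$.

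Finally I would analyse this map by solving its kernel recursion directly, which is cleaner than an eigenvalue argument over an arbitrary field. A vector $\sum_i a_iv_i$ lies in $\ker d_l$ exactly when $a_i+(-1)^l a_{i+1}=0$ for all~$i$, that is $a_{i+1}=(-1)^{l+1}a_i$, and the cyclic constraint $a_r=a_0$ forces $a_0=(-1)^{(l+1)r}a_0$. When $(-1)^{(l+1)r}=1$ this yields a one-dimensional solution space spanned by $\sum_i(-1)^{(l+1)i}v_i=\hcycle{\bar C}$; by rank–nullity the cokernel is then also one-dimensional, and the relations $w_i\equiv(-1)^{l+1}w_{i-1}$ (read off from $d_l(v_i)=0$ in the cokernel) show it is spanned by the class of $w_0=(\lfact1{\bar C},\lfact2{\bar C})$. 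The dependence on~$\bar C$ falls out of the same relations: replacing $\bar C$ by $\rot(\bar C)$ shifts all indices by one, giving $\hcycle{\rot(\bar C)}=(-1)^{l+1}\hcycle{\bar C}$, while from $d_l(v_1)=w_1+(-1)^l w_0$ we see that $w_1=(\lfact1{\rot(\bar C)},\lfact2{\rot(\bar C)})$ is homologous to $(-1)^{l+1}w_0$. When instead $(-1)^{(l+1)r}\neq 1$ in~$\kk$, the characteristic is necessarily different from~$2$ and that constant equals~$-1$, so the recursion admits only the zero solution; equivalently $\det d_l=1-(-1)^{(l+1)r}=2\neq 0$, the map is invertible, and all homology vanishes. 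The one genuinely delicate step, and the one I would be most careful about, is the sign bookkeeping in the identity $d_l(v_i)=w_i+(-1)^l w_{i-1}$, since the rotation convention and the two summands of~\eqref{eq:hom:dm:2} must be matched precisely for the exponent $(l+1)$ in all the final formulas to come out right.
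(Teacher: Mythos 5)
Your proposal is correct and follows essentially the same route as the paper: both reduce to the two-term complex concentrated in degrees $l$ and $l-1$, identify the $r$-element bases $v_i$ and $w_i$, compute the circulant differential, and solve the resulting recursion $\lambda_{i+1}=(-1)^{l+1}\lambda_i$ with the cyclic constraint $((-1)^{(l+1)r}-1)\lambda_0=0$. The only divergence is the direction of the index shift in $d_l(v_i)$ (you get $w_{i-1}$ where the paper writes $w_{i+1}$), but by the cyclic symmetry both versions yield the same recursion, the same generators, and the same homology relation $w_1\equiv(-1)^{l+1}w_0$, so nothing is affected.
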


\begin{proof}
If $(\alpha,\gamma)$ is an element of~$(\cB\odot\Gamma)_C$, then the
length of~$\alpha$ is either zero or one, for it is a factor of a
cycle belonging to the circuit~$C$, and $C$~is complete. This tells us
that the set $(\cB\odot\Gamma_m)_C$ is empty unless~$m=l$ or $m=l-1$.
On the other hand, if we write $\bar C_i\coloneqq\rot^i(\bar C)$ for
each $i\in\{0,\dots,r-1\}$, we have that
  \begin{gather*}
  (\cB\odot\Gamma_l)_C 
    = \bigl\{
      (s(\bar C_0),\bar C_0),
      \dots,
      (s(\bar C_{r-1}),\bar C_{r-1})
      \bigr\},
      \\
  (\cB\odot\Gamma_{l-1})_C 
    = \bigl\{
      (\lfact1{(\bar C_0)},\lfact2{(\bar C_0)}),
      \dots,
      (\lfact1{(\bar C_{r-1})},\lfact2{(\bar C_{r-1})})
      \bigr\},
  \end{gather*}
that both sets have cardinal exactly~$r$, and that
  \[
  d_l\bigl((s(\bar C_i),\bar C_i)\bigr)
    = (\lfact1{(\bar C_i)},\lfact2{(\bar C_i)})
      + (-1)^l \cdot(\lfact1{(\bar C_{i+1})},\lfact2{(\bar C_{i+1})})
  \]
for each $i\in\{0,\dots,r-1\}$, with indices taken modulo~$r$.

An $l$-chain~$z$ in the complex~$\kk(\cB\odot\Gamma)_C$ can be written
in the form 
  \[
  \sum_{i=0}^{r-1}\lambda_i\cdot(s(\bar C_i),\bar C_i),
  \]
with uniquely determined coefficients
$\lambda_0$,~\dots,~$\lambda_{r-1}\in\kk$, and it is a cycle in the
complex exactly when
  \[
  0 
    = d_l(z) 
    = \sum_{i=0}^{r-1}
        \bigl(\lambda_i+(-1)^l\lambda_{i-1}\bigr)
        \cdot (\lfact1{(\bar C_i)},\lfact2{(\bar C_i)}).
  \]
This happens precisely when $\lambda_{i+1}=(-1)^{l+1}\lambda_i$ for
all $i\in\{0,\dots,r-1\}$ or, equivalently, when
$\lambda_i=(-1)^{(l+1)i}\lambda_0$ for all $i\in\{1,\dots,r-1\}$ and
$((-1)^{(l+1)r}-1)\lambda_0=0$. It follows from this that when
$(-1)^{(l+1)r}\neq1$ in~$\kk$ the differential
  \[
  d_l:\kk(\cB\odot\Gamma_l)_C\to \kk(\cB\odot\Gamma_{l-1})_C
  \]
is injective, so that it is an isomorphism, and the vector spaces
$H_l(\kk(\cB\odot\Gamma)_C)$ and $H_{l-1}(\kk(\cB\odot\Gamma)_C)$ are
both zero, and that if instead $(-1)^{(l+1)r}=1$ in~$\kk$, then those
two vector spaces are both one-dimensional. In this last case the
first one is generated by the element~\eqref{eq:l-cycle} described in
the statement of the lemma and, in view of the form of the
differential~$d_l$ of our complex, the class
in~$H_{l-1}(\kk(\cB\odot\Gamma)_C)$ of $(\lfact1{(\bar
C_i)},\lfact2{(\bar C_i)})$ is homologous to $(-1)^{l+1}\cdot
(\lfact1{(\bar C_{i+1})},\lfact2{(\bar C_{i+1})})$, so that this
vector space is spanned by the class of~$(\lfact1{(\bar
C_0)},\lfact2{(\bar C_0)})$. 
\end{proof}

The next lemma deals with the circuits in~$\C'$ that are neither
complete nor cocomplete, which are the ones we have not yet
considered.

\begin{lemma}\label{lemma:homology:neither}
If $C$ is a circuit  in~$\C'$ that is neither complete nor cocomplete,
then the complex~$\kk(\cB\odot\Gamma)_C$ is exact.
\end{lemma}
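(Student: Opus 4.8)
The plan is to exploit the direct-sum decomposition \eqref{eq:hom:desc} one summand at a time and to reduce the statement to an explicit, very small complex. First I would reformulate everything combinatorially. Fix a cycle $\bar C$ in the circuit $C$ and record, for each of the (cyclically many) pairs of consecutive arrows of $\bar C$, whether their length-two product lies in~$R$ or not; call such a junction of type $\mathsf R$ in the first case and of type $\mathsf B$ in the second. In this language $C$ is cocomplete exactly when every junction is of type~$\mathsf B$ and complete exactly when every junction is of type~$\mathsf R$, so that the hypothesis on~$C$ means precisely that there is at least one junction of each type. A pair $(\alpha,\gamma)$ in $(\cB\odot\Gamma)_C$ then splits the cyclic word $\bar C$ into the arc~$\gamma$, all of whose internal junctions must be of type~$\mathsf R$ (since $\gamma\in\Gamma$), and the complementary arc~$\alpha$, all of whose internal junctions must be of type~$\mathsf B$ (since $\alpha\in\cB$), while the two junctions at which the arcs meet are unconstrained.

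The key structural observation is that the set of $\mathsf R$-junctions is then contained in a single contiguous arc of junctions, namely the one swept out by $\gamma$ together with its two boundary junctions, and likewise for the $\mathsf B$-junctions. Consequently, \emph{if $C$ admits any nonzero pair at all, then its $\mathsf R$-junctions form one contiguous run and its $\mathsf B$-junctions form the complementary run}, so that the cyclic pattern of junctions is $\mathsf R^{a}\mathsf B^{b}$ for some $a,b\geq1$. This gives a dichotomy. If the pattern of~$C$ is not of this form---equivalently, if it has two or more maximal $\mathsf R$-runs---then $(\cB\odot\Gamma)_C$ is empty, the complex $\kk(\cB\odot\Gamma)_C$ is zero, and there is nothing to prove.

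In the remaining case $\mathsf R^{a}\mathsf B^{b}$, I would enumerate the admissible pairs directly: the requirement that every $\mathsf R$-junction lie inside $\gamma$ or at a boundary forces $\gamma$ to be one of at most four arcs hugging the $\mathsf R$-block. A short count then shows that $\kk(\cB\odot\Gamma)_C$ is concentrated in the three consecutive degrees $a-1$, $a$, $a+1$, with dimensions $1$, $2$, $1$, respectively (when $a=1$ the degenerate overlap of two of the shapes produces a genuine degree-zero class, in agreement with the rotation picture of Lemma~\ref{lemma:homology:cocomplete}). Thus the whole problem is reduced to proving exactness of a complex $\kk\to\kk^{2}\to\kk$.

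Finally I would compute its two differentials using the formula~\eqref{eq:hom:dm:2}. The decisive mechanism---the same one already visible in the complete and cocomplete cases---is that whenever the formula moves an arrow across an $\mathsf R$-junction the resulting product falls into~$I$ and the term vanishes in~$A$, whereas moving across the two boundary $\mathsf B$-junctions yields honest basis elements. Carrying this out shows that the top differential $d_{a+1}$ is nonzero, hence injective, and that the bottom differential $d_a$ surjects onto the one-dimensional space in degree $a-1$; since $d\circ d=0$ and the dimensions are $1,2,1$, exactness at all three degrees follows formally. I expect the main obstacle to be the structural analysis of the second and third steps---pinning down exactly which pairs are admissible and handling the degenerate indexings (length-zero $\gamma$, the case $a=1$, and the wrap-around junction of a full cycle lying in~$\cB$)---rather than the concluding computation, which is mechanical.
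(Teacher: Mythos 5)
Your proposal is correct, and it takes a genuinely different route from the paper's proof. The paper splits into five cases according to which of the extreme chain groups $(\cB\odot\Gamma_0)_C$, $(\cB\odot\Gamma_1)_C$, $(\cB\odot\Gamma_{l-1})_C$, $(\cB\odot\Gamma_l)_C$ are non-empty: in the four special cases it writes out exactly the $\kk\to\kk^2\to\kk$ complex you describe, but in the remaining ``generic'' case it argues degree by degree, indexing the degree-$m$ chains by the set $\I$ of rotations of $\bar C$ that split admissibly, and killing the coefficients of a cycle by propagating the relations $c(i+1)=(-1)^{m+1}c(i)$ (for $i,i+1\in\I$) and $c(i+1)=0$ (for $i\notin\I$, $i+1\in\I$) around the circuit, using only that $\I$ is a \emph{proper} subset. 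Your junction-pattern analysis replaces all of this with one structural fact the paper never isolates: a non-empty $(\cB\odot\Gamma)_C$ forces the cyclic pattern to be $\mathsf{R}^a\mathsf{B}^b$, so the whole complex has total dimension four, concentrated in degrees $a+1$, $a$, $a-1$ with dimensions $1,2,1$ (and, as a by-product, $\bar C$ is then primitive). I have checked the enumeration of the four admissible arcs and the two differential computations --- $d_{a+1}$ hits both degree-$a$ generators with coefficients $1$ and $(-1)^{a+1}$, and each degree-$a$ generator maps onto the degree-$(a-1)$ generator because exactly one of its two terms is killed by the $\mathsf{R}$-junction at $j_1$ or $j_a$ --- and the degenerate indexings ($\gamma$ trivial, $a=1$, $b=1$) all reproduce the paper's explicit special cases. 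Your approach buys uniformity and a sharper description of the chain groups; the paper's generic argument buys robustness, since it never needs to determine $\I$. One small quibble: the parenthetical about a ``degenerate overlap of two of the shapes'' when $a=1$ is off --- nothing overlaps there, the degree-zero generator is simply the pair with trivial $\Gamma$-component and it is a boundary --- but this does not affect the argument.
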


\begin{proof}
Let $l$ be the length of~$C$. It is clear that the set
$(\cB\odot\Gamma_m)_C$ is empty if $m>l$, so that
$H_m(\kk(\cB\odot\Gamma)_C)=0$ for such~$m$. To compute the homology
of the complex~$\kk(\cB\odot\Gamma)_C$ for lower degrees we consider
first a few special cases.
\begin{itemize}

\item Suppose first that $(\cB\odot\Gamma_l)_C$ is not empty, so that
there is path~$\gamma\in\Gamma_l\cap C$ such that the
pair~$(s(\gamma),\gamma)$ is in~$(\cB\odot\Gamma_l)_C$. As the
circuit~$C$ is not complete, this is in fact the only element
of~$(\cB\odot\Gamma_l)_C$. On the other hand, as the circuit~$C$ is
not cocomplete, we have that $l>1$, so that the path~$\gamma$ has
length at least~$2$. Let $a$,~$b$ and~$\delta$ be the arrows and the
path, respectively, such that $\gamma=b\delta a$. That the circuit~$C$
is neither complete nor cocomplete implies that
  \begin{gather*}
  (\cB\odot\Gamma_l)_C = \{(s(\gamma),\gamma)\}, 
  \\
  (\cB\odot\Gamma_{l-1})_C = \{(a,b\delta),\,(b,\delta a)\}, 
  \\
  (\cB\odot\Gamma_{l-2})_C = \{(ab,\delta)\},
  \end{gather*}
and that $(\cB\odot\Gamma_{m})_C = \emptyset$ for all $m<l-2$, and in
the complex~$\kk(\cB\odot\Gamma)_C$ the differential is such that 
  \begin{gather*}
  d_l(s(\gamma),\gamma) = (b,\delta a) +(-1)^l\cdot(a,b\delta), 
  \\
  d_{l-1}(a,b\delta) = (ab,\delta), 
  \\
  d_{l-1}(b,\delta a) = (-1)^{l-1}\cdot(ab,\delta).
  \end{gather*}
We can immediately see from this that the
complex~$\kk(\cB\odot\Gamma)_C$ is exact in this case.

\item Next, we suppose that $(\cB\odot\Gamma_l)_C$ is empty, so that
in particular we have that $l>1$. We further suppose that
$(\cB\odot\Gamma_{l-1})$ is not empty, so that there is an element
$(a,\gamma)$ in~$(\cB\odot\Gamma_{l-1})_C$ in which the first
component~$a$ is an arrow and the second one~$\gamma$ has positive
length. As $(\cB\odot\Gamma_l)_C=\emptyset$, we have that the
paths~$a\gamma$ and~$\gamma a$ are not in~$\Gamma$, and as $C$ is not
cocomplete, we have that $\gamma$ has length at least~$2$.

Let $f$,~$g$ and~$\delta$ be the arrows and the path, respectively,
such that $\gamma=g\delta f$. As $(\cB\odot\Gamma_l)=\emptyset$, the
paths~$ag$ and~$fa$ are not in~$I$. Since the circuit~$C$ is neither
complete nor cocomplete, we have that
 \begin{gather*}
  (\cB\odot\Gamma_{l-1})_C = \{(a,g\delta f)\}, \\
  (\cB\odot\Gamma_{l-2})_C = \{(ag,\delta f),\,(fa,g\delta)\}, \\
  (\cB\odot\Gamma_{l-3})_C = \{(fag,\delta)\}, 
  \end{gather*}
and that $(\cB\odot\Gamma_m)_C=\emptyset$ if $m<l-3$. In the
complex~$\kk(\cB\odot\Gamma)_C$ we have that
  \begin{gather*}
  d_{l-1}(a,g\delta f) =(ag,\delta f)+(-1)^{l-1}(fa,g\delta), \\
  d_{l-2}(ag,\delta f) = (-1)^{l-2}\cdot(fag,\delta), \\
  d_{l-2}(fa,g\delta) = (fag,\delta),
  \end{gather*}
and therefore that the the complex~$\kk(\cB\odot\Gamma)_C$ is exact.

\item Suppose now that the set $(\cB\odot\Gamma_0)_C$ is not empty. As
the circuit~$C$ is not cocomplete, that set has exactly one element,
which is of the form $(\alpha,s(\alpha))$, with $\alpha$ in~$C\cap\cB$
and, since $C$ is also not complete, $\alpha$ of length at
least~$2$. If $a$,~$b$ and~$\beta$ are the arrows and the path,
respectively, such that $\alpha=b\beta a$, then $ab\in I$,
  \begin{gather*}
  (\cB\odot\Gamma_2)_C = \{(\beta,ab)\}, \\
  (\cB\odot\Gamma_1)_C = \{(b\beta,a),(\beta a,b)\}, \\
  (\cB\odot\Gamma_0)_C = \{(b\beta a,s(a))\}, 
  \end{gather*}
and $(\cB\odot\Gamma_m)_C=\emptyset$ for all integers $m>2$. The
differential in the complex $\kk(\cB\odot\Gamma)_C$ is such that
  \begin{gather*}
  d_2(\beta,ab) = (\beta a,b) + (b\beta,a), \\
  d_1(b\beta,a) = (b\beta a,s(a)), \\
  d_1(\beta a,b) = -(b\beta a,s(a)),
  \end{gather*}
and therefore it is exact.

\item Finally, suppose that  $(\cB\odot\Gamma_0)_C$ is empty and that
$(\cB\odot\Gamma_1)_C$ is not, so that there is an element $(\alpha,a)$ in
this last set. As $(\cB\odot\Gamma_0)_C=\emptyset$, the paths $\alpha a$
and $a\alpha$ are not in~$\cB$ and therefore, since $C$ is not cocomplete,
the path~$\alpha$ has length at least~$2$. If $b$,~$c$ and~$\beta$ are the
arrows and the path, respectively, such that $\alpha=c\beta b$, we have that
  \begin{gather*}
  (\cB\odot\Gamma_3)_C=\{(\beta,bac)\}, \\
  (\cB\odot\Gamma_2)_C=\{(\beta b,ac),\,(c\beta,ba)\}, \\
  (\cB\odot\Gamma_1)_C=\{(c\beta b,a)\}, 
  \end{gather*}
that $(\cB\odot\Gamma_m)_C=\emptyset$ if $m=0$ or $m>3$, and that 
in the complex~$\kk(\cB\odot\Gamma)_C$ 
  \begin{gather*}
  d_3(\beta,bac)=(\beta b,ac)-(c\beta,ba), \\
  d_2(c\beta,ba)=(c\beta b,a), \\
  d_2(\beta b,ac) = -(c\beta b,a).
  \end{gather*}
This implies that complex is exact.
\end{itemize}

We are left with considering the `generic'  situation, in which the
four sets $(\cB\odot\Gamma_0)_C$, $(\cB\odot\Gamma_1)_C$,
$(\cB\odot\Gamma_l)_C$ and $(\cB\odot\Gamma_{l-1})_C$ are empty. To
show that the complex~$\kk(\cB\odot\Gamma)_C$ is exact it will be
enough to fix an integer~$m$ such that $2\leq m\leq l-2$ and show that
$H_m(\kk(\cB\odot\Gamma)_C)=0$.

We fix a cycle~$\bar C$ in the circuit~$C$, consider the primitive
cycle~$D$ and the positive integer~$t$ such that $\bar C=D^t$, and let
$r$ be the length of~$D$, which is the period of~$C$. For each
$i\in\{0,\dots,r-1\}$ there exist uniquely determined paths $\alpha_i$
and~$\gamma_i$ in~$Q$ such that $\rot^i(\bar C)=\alpha_i\gamma_i$ and
$\abs{\gamma_i}=m$, and if we let $\I$ be the set of
indices~$i\in\{0,\dots,r-1\}$ such that $\alpha_i\in\cB$ and
$\gamma_i\in\Gamma_m$, then 
  \[
  (\cB\odot\Gamma_m)_C = \{ (\alpha_i,\gamma_i):i\in\I \}.
  \]
We view the indices as taken modulo~$r$ throughout.

Let $z$ be an $m$-cycle in the complex~$\kk(\cB\odot\Gamma)_C$, and
let $c:\I\to\kk$ be the unique function such that
$z=\sum_{i\in\I}c(i)\cdot(\alpha_i,\gamma_i)$. In order to complete
the proof of the lemma, we will show that $z$ is a boundary. With this
in mind it is clear that we may assume --- by replacing it by a
homologous cycle if needed --- that $z$~satisfies the following
condition:
  \begin{equation}\label{eq:bd}
  \claim{for all $i\in\I$ we have $c(i)=0$ if the pair
  $(\alpha_i,\gamma_i)$ is a boundary.}
  \end{equation}

We start by checking that for each integer~$i$ in the
set~$\{0,\dots,r-1\}$ we have that
  \begin{equation}\label{eq:obs:1}
  i,\,i+1\in\I \implies c(i+1) = (-1)^{m+1}c(i).
  \end{equation}
Let $i$ be an element of~$\I$ such that $i+1$ is also in~$\I$. As $2\leq
m\leq l-2$, the four paths~$\alpha_i$, $\alpha_{i+1}$, $\gamma_i$
and~$\gamma_{i+1}$ all have length at least two, and they are related as
follows:
  \begin{equation}\label{eq:two}
  \begin{aligned}
  \alpha_{i+1} &= \lfact2{(\alpha_i)}\lfact1{(\gamma_i)}, \\
  \gamma_{i+1} &= \lfact2{(\gamma_i)}\lfact1{(\alpha_i)}.
  \end{aligned}
  \qquad\qquad
  \begin{tikzpicture}[
        baseline={(0,-0.15)}, scale=0.7, font=\tiny,
        very thick, -{Straight Barb[width=1mm, length=1mm]}
        ]
    \def\rInner{0.5+0.25}
    \def\rMid{0.75+0.25}
    \def\rOuter{1+0.25}
    \draw[red] (0+3:\rInner) arc(0+3:160-3:\rInner);
    \draw[blue] (160+3:\rInner) arc(160+3:360-3:\rInner);
    \draw[red] (-40+3:\rOuter) arc(-40+3:120-3:\rOuter);
    \draw[blue] (120+3:\rOuter) arc(120+3:320-3:\rOuter);
    \draw (0+3:\rMid) arc(0+3:120-3:\rMid);
    \draw (120+3:\rMid) arc(120+3:160-3:\rMid);
    \draw (160+3:\rMid) arc(160+3:320-3:\rMid);
    \draw (320+3:\rMid) arc(320+3:360-3:\rMid);
    \node[anchor=center] at (160/2:\rInner-0.4) {$\gamma_i$};
    \node[anchor=center] at ({(-40+120)/2}:\rOuter+0.4) {$\gamma_{i+1}$};
    \node[anchor=center] at ({(160+360)/2}:\rInner-0.4) {$\alpha_i$};
    \node[anchor=center] at ({(120+320)/2}:\rOuter+0.4) {$\alpha_{i+1}$};
  \end{tikzpicture}
  \end{equation}
As $\gamma_i\in\Gamma_m$, the path~$\lfact2{(\gamma_i)}$ is
in~$\Gamma_{m-1}$. On the other hand, since
$\alpha_i=\lfact1{(\alpha_i)}\lfact2{(\alpha_i)}$ and
$\alpha_{i+1}=\lfact2{(\alpha_i)}\lfact1{(\gamma_i)}$ are both
in~$\cB$ and have length at least~$2$, the path
$\alpha_i\lfact1{(\gamma_i)}$ is also in~$\cB$. We thus see that the
pair $(\alpha_i\lfact1{(\gamma_i)},\lfact2{(\gamma_i)})$ is an element
of~$(\cB\odot\Gamma_{m-1})_C$ that can also be written in the form
$(\rfact2{(\gamma_{i+1})}\alpha_{i+1},\rfact1{(\gamma_{i+1})})$. The
coefficient of that element in~$d_m(z)$ is $c(i)+(-1)^mc(i+1)$, and
therefore we have that $c(i+1)=(-1)^{m+1}c(i)$, as we wanted.

The second observation that we need is that 
  \begin{equation}\label{eq:obs:2}
  i\not\in\I,i+1\in\I \implies c(i+1)=0
  \end{equation}
holds whenever $i\in\{0,\dots,r-1\}$. To prove it, let us fix
$i\in\{0,\dots,r-1\}$ such that $i$~is not in~$\I$ but $i+1$ is. As
$i\not\in\I$, we have that $\alpha_i\not\in\cB$ or that
$\gamma_i\not\in\Gamma$. We consider the two possibilities.
\begin{itemize}

\item Suppose first that $\alpha_i\not\in\cB$. In view of the first
equality of the two in~\eqref{eq:two}, the path
$\lfact1{(\alpha_{i})}\lfact1{(\alpha_{i+1})}$ is then in~$I$, so that
$(\lfact2{(\alpha_{i+1})},\gamma_{i+1}\lfact1{(\alpha_{i+1})})$ is an
element of~$(\cB\odot\Gamma_{m+1})_C$. As the right factor of
length two in the path
$\lfact2{(\alpha_{i+1})}\lfact1{(\gamma_{i+1})}$ coincides with the
left factor of length two in~$\gamma_i$, which belongs to the
ideal~$I$, we see that
  \[
  d(\lfact2{(\alpha_{i+1})},\gamma_{i+1}\lfact1{(\alpha_{i+1})})
    = (-1)^{m+1}\cdot(\alpha_{i+1},\gamma_{i+1}),
  \]
and that $(\alpha_{i+1},\gamma_{i+1})$ is a coboundary: according to
our assumption~\eqref{eq:bd} we then have that $c(i+1)=0$.
        
\item Suppose now that $\alpha_i\in\cB$. The
path~$\lfact1{(\alpha_i)}\lfact1{(\alpha_{i+1})}$, which coincides
with $\rfact2{(\gamma_{i+1})}\lfact1{(\alpha_{i+1})}$, is therefore
in~$\cB$: it follows from this that
$\rfact2{(\gamma_{i+1})}\alpha_{i+1}$ is also in~$\cB$ and that the
pair $(\rfact2{(\gamma_{i+1})}\alpha_{i+1},\rfact1{(\gamma_{i+1})})$
belongs to $(\cB\odot\Gamma_{m-1})$. This pair appears in the boundary
of~$(\alpha_{i+1},\gamma_{i+1})$ with coefficient $(-1)^{m+1}$, and
not in the boundary of $(\alpha_j,\gamma_j)$ for any
$j\in\I\setminus\{i+1\}$ --- precisely because $i\not\in \I$. We can
conclude from all this that the coefficient with which  the pair
$(\rfact2{(\gamma_{i+1})}\alpha_{i+1},\rfact1{(\gamma_{i+1}))}$
of~$(\cB\odot\Gamma_{m-1})_C$ appears in $d(z)$ is exactly
$(-1)^{m+1}c(i+1)$, and that we have $c(i+1)=0$ also in this case.

\end{itemize}
Now, as the circuit~$C$ is neither complete nor cocomplete the set~$\I$ is
\emph{properly} contained in~$\{0,\dots,r-1\}$. This, together with our two
observations~\eqref{eq:obs:1} and~\eqref{eq:obs:2} implies that the
function~$c$ is identically zero.
\end{proof}

If we now put everything together, we obtain a description of the
Hochschild homology of the algebra~$A$ presented by~$(Q,I)$, which, as
we said above, we identify with the homology of the
complex~$\kk(\cB\odot\Gamma)$.


\begin{theorem}\label{thm:basishomology}
Let $(Q,I)$ be a quadratic monomial presentation, and let~$A$ be the
algebra $\kk Q/I$ that it presents.
\begin{thmlist}

\item The vector space~$\HH_0(A)$ is freely spanned by the homology
classes of
\begin{itemize}

\item the pairs $(e,e)$, one for each vertex~$e$ of~$Q$, 

\item the pairs $(\bar C,s(\bar C))$, one for each cocomplete
circuit~$C$ of~$Q$, and

\item the pairs $(a,s(a))$, one for each loop~$a$ in~$Q$ such that
$a^2\in I$.

\end{itemize}

\item The vector space~$\HH_1(A)$ is freely spanned by the homology
classes of
\begin{itemize}

\item the sums
  \[
   \hcycle{\bar C}
        \coloneqq \sum_{i=0}^{r-1}(\rfact1{\rot^i(\bar C)},
                                   \rfact2{\rot^i(\bar C)})
        \in\kk(\cB\odot\Gamma_1),
  \]
one for each cocomplete circuit~$C$ in~$Q$, with $r$ the period
of~$C$,

\item the pairs $\hcycle{a}\coloneqq(s(a),a)$ with $a$ a loop in~$Q$
such that $a^2\in I$,

\item the pairs $(\lfact1{\bar C},\lfact2{\bar C})$, one for each
complete ciruit~$C$ of~$Q$ of length~$2$ and period~$r$ are such that
$(-1)^{r}=1$ in~$\kk$.

\end{itemize}

\item If $m\geq2$, the vector space~$\HH_m(A)$ is freely spanned by
the homology classes of
\begin{itemize}

\item the sums 
  \[
  \hcycle{\bar C}  \coloneqq
    \sum_{i=0}^{r-1}
      (-1)^{(m +1)i}\cdot
      \bigl(s(\rot^i(\bar C)), \rot^i(\bar C)\bigr)   
      \in \kk(\cB\odot\Gamma_m),
  \]
one for each complete circuit~$C$ whose length~$m$ and period~$r$ are
such that $(-1)^{(m+1)r}=1$ in~$\kk$, and

\item the pairs $(\lfact1{\bar C},\lfact2{\bar C})$, one for each
complete circuit~$C$ of length $m+1$ and period~$r$ such that
$(-1)^{mr}=1$ in~$\kk$.

\end{itemize}
\end{thmlist}
\end{theorem}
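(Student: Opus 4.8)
**The plan is to assemble Theorem~\ref{thm:basishomology} directly from the direct‑sum decomposition~\eqref{eq:hom:desc} of the complex $\kk(\cB\odot\Gamma)$ and the three structural lemmas that analyse each summand.** The starting point is that $\HH_*(A)$ is the homology of $\kk(\cB\odot\Gamma) = \bigoplus_{C\in\C'}\kk(\cB\odot\Gamma)_C$, so $\HH_m(A)$ is the direct sum over $C\in\C'$ of the local homologies $H_m(\kk(\cB\odot\Gamma)_C)$. Every circuit $C$ in $\C'$ falls into exactly one of four mutually exclusive cases: $C=\{e\}$ for a vertex $e$, $C$ cocomplete, $C$ complete, or $C$ neither. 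First I would dispatch the last case: by Lemma~\ref{lemma:homology:neither} such summands are acyclic and contribute nothing to any $\HH_m(A)$. This immediately reduces the bookkeeping to the remaining three families.

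Next I would record what each surviving family contributes in each degree. For a trivial circuit $\{e\}$, the only pair in $\cB\odot\Gamma$ lying over it is $(e,e)$ sitting in degree~$0$, so these contribute precisely the classes $(e,e)$ listed in $\HH_0(A)$. For cocomplete circuits, Lemma~\ref{lemma:homology:cocomplete} gives a one‑dimensional $H_0$ spanned by $(\bar C,s(\bar C))$ and a one‑dimensional $H_1$ spanned by $\hcycle{\bar C}$, with vanishing homology in all higher degrees. A subtlety to handle carefully is the loop case: a loop $a$ with $a^2\in I$ furnishes a length‑one cocomplete cycle, so the $(\bar C,s(\bar C))$ and $\hcycle{\bar C}=(s(a),a)$ of such a circuit are exactly the loop contributions $(a,s(a))\in\HH_0(A)$ and $\hcycle{a}\in\HH_1(A)$ in the statement. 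I would make explicit that these loops are the cocomplete circuits of period and length~$1$, so the theorem's separate listing of loops is simply a convenient splitting of the cocomplete family according to length.

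**The main work is the complete family, where Lemma~\ref{lemma:homology:complete} shows the characteristic of $\kk$ intervenes through the sign condition $(-1)^{(l+1)r}=1$.** For a complete circuit of length $l$ and period $r$ satisfying this condition, the lemma pins down exactly two non‑zero local homology groups, in degrees $l$ and $l-1$: the top one spanned by the cycle $\hcycle{\bar C}\in\kk(\cB\odot\Gamma_l)_C$, the lower one by $(\lfact1{\bar C},\lfact2{\bar C})\in\kk(\cB\odot\Gamma_{l-1})_C$. When the sign condition fails the summand is acyclic. The remaining task is purely a matter of re‑indexing so that contributions are grouped by \emph{homological} degree rather than by the circuit's length. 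Fixing $m\geq2$, the classes $\hcycle{\bar C}$ contributing to $\HH_m(A)$ come from complete circuits of length exactly $l=m$, for which the condition reads $(-1)^{(m+1)r}=1$, while the classes $(\lfact1{\bar C},\lfact2{\bar C})$ come from complete circuits of length $l=m+1$, for which the condition $(-1)^{(l+1)r}=1$ becomes $(-1)^{(m+2)r}=(-1)^{mr}=1$. These are precisely the two bullets listed for $\HH_m(A)$. The degree‑$1$ statement needs one extra line of care: the $H_{l-1}$ of a complete circuit lands in degree~$1$ when $l=2$, giving the $(\lfact1{\bar C},\lfact2{\bar C})$ pairs with $(-1)^{(l+1)r}=(-1)^{3r}=(-1)^{r}=1$, matching the third bullet of $\HH_1(A)$, whereas no complete circuit has $H_l$ landing in degree~$1$ (that would force length~$1$, i.e.\ a loop $b$ with $b^2\notin I$, which is impossible for a \emph{complete} cycle). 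The final step is to observe that the generators produced are independent of the chosen cycle $\bar C$ only up to the explicit scalars recorded in Lemmas~\ref{lemma:homology:cocomplete} and~\ref{lemma:homology:complete}, so that after choosing one representative per circuit the listed classes form a basis; collecting the four families degree by degree yields exactly the three parts of the theorem.
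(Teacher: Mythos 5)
Your overall strategy is exactly the paper's: decompose $\kk(\cB\odot\Gamma)=\bigoplus_{C\in\C'}\kk(\cB\odot\Gamma)_C$, kill the summands for circuits that are neither complete nor cocomplete via Lemma~\ref{lemma:homology:neither}, read off the contributions of the trivial, cocomplete and complete families from the vertex case and Lemmas~\ref{lemma:homology:cocomplete} and~\ref{lemma:homology:complete}, and re-index by homological degree. That re-indexing (length $l=m$ versus $l=m+1$, and the sign conditions $(-1)^{(m+1)r}=1$ and $(-1)^{mr}=1$) is carried out correctly.

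There is, however, a concrete error in your treatment of the loop contributions. A loop $a$ with $a^2\in I$ is \emph{not} a cocomplete cycle: cocomplete means $a^2\in\cB$, i.e.\ $a^2\notin I$, whereas $a^2\in I$ means $a^2\in\Gamma_2$, so such a loop is a \emph{complete} cycle of length $l=1$ and period $r=1$. Consequently you cannot invoke Lemma~\ref{lemma:homology:cocomplete} for it (its hypothesis fails), and your later assertion that ``no complete circuit has $H_l$ landing in degree~$1$'' is false: it is precisely these length-one complete circuits, for which $(-1)^{(l+1)r}=1$ automatically, that produce via Lemma~\ref{lemma:homology:complete} the class $\hcycle{a}=(s(a),a)$ in $\HH_1(A)$ (as $H_l$ with $l=1$) and the class $(\lfact1{a},\lfact2{a})=(a,s(a))$ in $\HH_0(A)$ (as $H_{l-1}$). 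The separate loop bullets in the statement are therefore a splitting-off of the length-one members of the \emph{complete} family, not the cocomplete one; the genuinely cocomplete loops (those with $a^2\notin I$) are already absorbed into the cocomplete bullets. The elements you end up listing happen to coincide with the correct ones because both formulas degenerate to the same pairs in length one, but as written the argument applies the wrong lemma to this family and explicitly denies the source of the $\hcycle{a}$ classes. Once this misclassification is fixed, the rest of your bookkeeping goes through and reproduces the paper's proof.
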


\begin{proof}
The complex~$\kk(\cB\odot\Gamma)$ decomposes as a direct sum
$\bigoplus_{C\in\C'}\kk(\cB\odot\Gamma)_C$. The homology of the
complex~$\kk(\cB\odot\Gamma)_C$ is described by
Lemmas~\ref{lemma:homology:complete}, \ref{lemma:homology:cocomplete}
and~\ref{lemma:homology:neither} when $C$ is a circuit that is
complete, or cocomplete, or neither, respectively. On the other hand,
if $C$ is an element of~$\C'$ of the form~$\{e\}$ with $e$ a vertex
of~$Q$, then it is obvious that the complex~$\kk(\cB\odot\Gamma)_C$
has homology of dimension one, concentrated in degree~$0$, and spanned
by the class of the pair~$(e,e)$. The theorem follows immediately from
all~this.
\end{proof}

In this theorem we have described the graded vector space~$\HH_*(A)$
degree by degree. It is convenient for many purposes to have at hand a
`transposed' description, organized instead by the parameters that
determine the cycles, as follows:


\newpage 
\begin{corollary}\label{coro:homology:basis:tr}\pushQED{\qed}
Let $(Q,I)$ be a quadratic monomial presentation and let $A$ be the
algebra~$\kk Q/I$. The graded vector space~$\HH_*(A)$ is freely
spanned by the homology classes of the following elements of the
complex~$\kk(\cB\odot\Gamma)$:
	\begin{itemize}
		\item For each vertex~$e$ in~$Q$, the $0$-cycle \[(e,e).\]

		\item For each complete circuit~$C$ in~$(Q,I)$ whose length~$m$ and
		period~$r$ are such that $(-1)^{(m+1)r}=1$ in~$\kk$, the $m$-cycle
	 		\[\hcycle{\bar C} \coloneqq
		  		\sum_{i=0}^{r-1}
	      		(-1)^{(m+1)i}\cdot
	      		\bigl(s(\rot^i(\bar C)), \rot^i(\bar C)),\]
		and the $(m-1)$-cycle
	  		\[(\lfact1{\bar C},\lfact2{\bar C}).\]

		\item For each cocomplete circuit~$C$ in~$(Q,I)$ of length~$r$, the
		$1$-cycle
		 	\[\hcycle{\bar C} \coloneqq 
 				\sum_{i=0}^{r-1}
 		    	(\rfact1{\rot^i(\bar C)}, \rfact2{\rot^i(\bar C)}) \]
		and the $0$-cycle
 	 	\[(\bar C,s(\bar C)).\] 
	\end{itemize}
\end{corollary}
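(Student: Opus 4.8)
The plan is to recognize this corollary as a reorganization of Theorem~\ref{thm:basishomology}: that theorem lists the generators of $\HH_*(A)$ one homological degree at a time, whereas here we want them grouped by the circuit of $\C'$ that produces each one. Since the decomposition~\eqref{eq:hom:desc} already realizes $\HH_*(A)$ as the direct sum, over $C\in\C'$, of the homologies of the subcomplexes $\kk(\cB\odot\Gamma)_C$, the task reduces to setting up a bijection between the two lists of generators and checking, family by family, that matched generators are literally the same element of $\kk(\cB\odot\Gamma)$ and appear under equivalent hypotheses. I would organize this according to the three types of summand distinguished in Lemmas~\ref{lemma:homology:complete}, \ref{lemma:homology:cocomplete} and~\ref{lemma:homology:neither}: the circuits that are neither complete nor cocomplete contribute exact summands, hence nothing, and may be ignored, so the remaining summands are exactly those displayed in the corollary.

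For the vertices the matching is immediate: each singleton $\{e\}\in\C'$ contributes the class of $(e,e)$ in degree $0$, which appears verbatim in both descriptions. For a cocomplete circuit $C$ of period $r$, Lemma~\ref{lemma:homology:cocomplete} shows that its summand has homology concentrated in degrees $0$ and $1$, spanned respectively by $(\bar C,s(\bar C))$ and by $\hcycle{\bar C}=\sum_{i=0}^{r-1}(\rfact1{\rot^i(\bar C)},\rfact2{\rot^i(\bar C)})$; these are precisely the two classes the corollary assigns to $C$, and they are the cocomplete contributions recorded in the degree-$0$ and degree-$1$ parts of Theorem~\ref{thm:basishomology}.

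The complete circuits are where the bookkeeping requires a little care. For a complete circuit $C$ of length $m$ and period $r$ with $(-1)^{(m+1)r}=1$ in $\kk$, Lemma~\ref{lemma:homology:complete} produces a class in degree $m$ spanned by $\hcycle{\bar C}$ and a class in degree $m-1$ spanned by $(\lfact1{\bar C},\lfact2{\bar C})$. The degree-$m$ class matches the complete contribution to $\HH_m$ in Theorem~\ref{thm:basishomology} directly. For the degree-$(m-1)$ class I would invoke the identity $(-1)^{(m+1)r}=(-1)^{(m-1)r}$, valid because the exponents differ by $2r$; this shows that the condition $(-1)^{(m+1)r}=1$ attached to $C$ in the corollary coincides with the condition $(-1)^{(m-1)r}=1$ under which $(\lfact1{\bar C},\lfact2{\bar C})$ is listed in the description of $\HH_{m-1}$.

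The one genuinely fiddly point, and the part I expect to be the main obstacle, is verifying that the degenerate low-degree instances of these families are matched correctly, since Theorem~\ref{thm:basishomology} records them through ad hoc conditions rather than the uniform formula. Concretely, a loop $a$ with $a^2\in I$ is a complete circuit of length $m=1$ and period $r=1$, for which $(-1)^{(m+1)r}=1$ holds automatically; here $\hcycle{\bar C}$ degenerates to $(s(a),a)=\hcycle{a}$ in degree $1$ and $(\lfact1{\bar C},\lfact2{\bar C})$ to $(a,s(a))$ in degree $0$, matching the loop generators in the theorem. Likewise, when $m=2$ the degree-$(m-1)$ class lands in $\HH_1$, where the condition is stated as $(-1)^r=1$, and this agrees with $(-1)^{(m+1)r}=(-1)^{3r}=(-1)^r$. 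Once these cases are checked, the two lists coincide term by term, so they span the same graded space, and the freeness asserted in the corollary follows from that already established in Theorem~\ref{thm:basishomology}.
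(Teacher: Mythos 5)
Your proposal is correct and matches the paper's intent exactly: the paper gives no written proof for this corollary, treating it as an immediate term-by-term reindexing of Theorem~\ref{thm:basishomology} (equivalently, of Lemmas~\ref{lemma:homology:complete}, \ref{lemma:homology:cocomplete} and~\ref{lemma:homology:neither} via the decomposition~\eqref{eq:hom:desc}). Your verification of the sign-condition identity $(-1)^{(m+1)r}=(-1)^{(m-1)r}$ and of the degenerate cases $m=1$ (loops) and $m=2$ supplies precisely the bookkeeping the paper leaves implicit.
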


\section{Some consequences}





We can extract some qualitative information about our algebras from the
Hochschild homology that we computed in the previous section.
We start by looking at what the finite-dimensionality of that
homology spaces means:

\begin{proposition}
Let $(Q,I)$ be a quadratic monomial presentation, and let $A\coloneqq\kk
Q/I$ be the corresponding quadratic monomial algebra. 
\begin{thmlist}

\item The following statements are equivalent:
\begin{tfae}

\item The algebra~$A$ is finite-dimensional.

\item $\HH_0(A)$ is a finite-dimensional vector space.

\item $\HH_1(A)$ is a finite-dimensional vector space.

\end{tfae}

\item For every $m\in\NN_0$ such that $m\geq2$ we have that
$\dim\HH_m(A)<\infty$.

\end{thmlist}
\end{proposition}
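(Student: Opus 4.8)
The plan is to read both assertions off the explicit basis of~$\HH_*(A)$ furnished by Theorem~\ref{thm:basishomology}, reducing each to a question about counting circuits. The combinatorial fact driving everything is the equivalence
\[
\text{$A$ is finite-dimensional}
\quad\Longleftrightarrow\quad
\text{$(Q,I)$ has no cocomplete cycle,}
\]
which I would establish exactly as in the proof of Corollary~\ref{coro:findim}. If $C$ is a cocomplete cycle then every power~$C^k$ lies in~$\cB$, so the basis~$\cB$ of~$A$ is infinite; conversely, if $A$ is infinite-dimensional then $\cB$ contains a path~$\gamma$ of length greater than~$\abs{Q_1}$, which traverses some arrow~$a$ twice, and writing $\gamma=\gamma_3a\gamma_2a\gamma_1$ one checks that every length-two subpath of $(a\gamma_2)^2$ already appears in the relation-free subpath $a\gamma_2a$ of~$\gamma$, so that $a\gamma_2$ is a cocomplete cycle; this verification uses only that $(Q,I)$ is quadratic monomial, not that it is gentle.

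For part~(i), I would note that in Theorem~\ref{thm:basishomology} the given bases of~$\HH_0(A)$ and of~$\HH_1(A)$ each split into a finite piece and a piece indexed by cocomplete circuits: for~$\HH_0(A)$ the finite piece consists of the classes of the vertices and of the loops~$a$ with~$a^2\in I$, while for~$\HH_1(A)$ it consists of those same loop classes together with the classes attached to the finitely many complete circuits of length~$2$; in both cases the remaining basis elements are in bijection with the cocomplete circuits of~$(Q,I)$. Hence $\dim\HH_0(A)$ and $\dim\HH_1(A)$ are finite if and only if there are finitely many cocomplete circuits. Finally, a cocomplete cycle~$C$ yields the cocomplete cycles $C,C^2,C^3,\dots$, which have pairwise distinct lengths and therefore lie in pairwise distinct circuits, so a single cocomplete cycle already forces infinitely many cocomplete circuits; conversely every cocomplete circuit contains a cocomplete cycle. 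Thus finiteness of the set of cocomplete circuits is equivalent to the absence of a cocomplete cycle, and by the displayed equivalence this is equivalent to the finite-dimensionality of~$A$, giving the asserted equivalence of the three conditions.

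For part~(ii), fix $m\geq2$. By Theorem~\ref{thm:basishomology} the space~$\HH_m(A)$ is spanned by classes indexed by the complete circuits of length~$m$ with $(-1)^{(m+1)r}=1$ and by the complete circuits of length~$m+1$ with $(-1)^{mr}=1$. Since the quiver~$Q$ is finite there are, for each fixed length~$\ell$, only finitely many paths of length~$\ell$, hence only finitely many complete cycles of length~$\ell$ and so only finitely many complete circuits of that length. Therefore~$\HH_m(A)$ is finite-dimensional, with no hypothesis on~$A$. The contrast with part~(i) is exactly the point: for $m\geq2$ only complete circuits of the two prescribed lengths contribute, and these are automatically finite in number, whereas $\HH_0(A)$ and $\HH_1(A)$ additionally receive one generator for each cocomplete circuit, and cocomplete circuits of unboundedly many lengths appear precisely when~$A$ is infinite-dimensional.

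The main obstacle, and the only step requiring real work, is the combinatorial lemma of the first paragraph --- specifically the verification that the repeated-arrow subpath extracted from a long element of~$\cB$ is genuinely cocomplete, that is, that $(a\gamma_2)^2\in\cB$. Once this lemma is in hand, both parts of the proposition are pure bookkeeping against the basis of Theorem~\ref{thm:basishomology}.
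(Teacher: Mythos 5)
Your proof is correct and follows essentially the same route as the paper: both arguments read the result off the explicit basis of Theorem~\ref{thm:basishomology}, reducing everything to the observation that $A$ is infinite-dimensional exactly when a cocomplete cycle exists, whose powers then contribute infinitely many basis classes to $\HH_0(A)$ and $\HH_1(A)$, while $\HH_m(A)$ for $m\geq2$ only sees the finitely many complete circuits of lengths $m$ and $m+1$. The one point you treat more explicitly than the paper --- the verification that a long path in~$\cB$ yields a cocomplete cycle, valid for any quadratic monomial presentation --- is correct and is simply delegated by the paper to the argument of Corollary~\ref{coro:findim}.
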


\begin{proof}
\thmitem{1} If the algebra~$A$ is infinite-dimensional, then there exists a
cocomplete circuit~$C$ in~$(Q,I)$, and in that case if $\bar C$ is an
element of~$C$ then the homology classes in $\HH_1(A)$ of the $1$-cycles
$\hcycle{\bar C^k}$ are linearly independent, so that
$\dim\HH_1(A)=\infty$, and the homology classes in~$\HH_0(A)$ of the
$0$-cycles~$(\bar C^k,s(\bar C^k)$ are linearly independent, so that
$\dim\HH_0(A)=\infty$. This shows that the statements~\tfaeitem{2}
and~\tfaeitem{3} both imply the statement~\tfaeitem{1}. 

Let us suppose now that the algebra $A$ is finite-dimensional. In that case
there are no cocomplete cycles in~$(Q,I)$: since there are finitely many
vertices and loops in~$Q$, we see from then first part of
Theorem~\ref{thm:basishomology} that $\dim\HH_0(A)<\infty$. Similarly,
since there are no cocomplete cycles, and the number of loops in~$A$ and
the number of complete circuits of length~$2$ in~$(Q,I)$ are both finite
the second part of that theorem tells us that $\dim\HH_1(A)<\infty$. We
thus see that the statement~\tfaeitem{1} implies the other two.

\thmitem{2} Let $m$ be an integer such that $m\geq2$.
According to the third part of Theorem~\ref{thm:basishomology},
$\HH_m(A)$ is spanned by classes that correspond to some complete circuits
of length~$m$ or~$m+1$, and the number of such circuits is finite: this
implies at once that $\dim\HH_m(A)<\infty$.
\end{proof}

The Hochschild homology spaces of a finite-dimensional algebra are all
finite-dimensional, and it make sense to ask what can be say about about
their dimensions. First of all, we can describe easily the algebras for
which the Hochschild homology spaces are eventually all zero:

\begin{proposition}
Let $(Q,I)$ be a quadratic monomial presentation. If the quotient algebra
$A\coloneqq\kk Q/I$  is finite-dimensional, then
the following statements are equivalent:
\begin{tfae}

\item The algebra~$A$ has finite global dimension.

\item $\HH_m(A)=0$ for all $m\geq1$.

\item There exists $m_0\in\NN_0$ such that
$\HH_m(A)=0$ for all $m\geq m_0$.

\item There exists $m_0\in\NN_0$ such that the space
$\bigoplus_{m\geq m_0}\HH_m(A)$ is finite-dimensional.

\end{tfae}
\end{proposition}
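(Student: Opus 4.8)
The plan is to funnel all four conditions through the single combinatorial invariant that already governed the cohomology: the presence or absence of a complete cycle. I will first record that, for \emph{any} quadratic monomial presentation,
\[
\gldim A<\infty \iff \text{there is no complete cycle in $(Q,I)$.}
\]
This is proved exactly as for gentle presentations in Corollary~\ref{coro:fingldim}, whose argument invokes only the Bardzell resolution~$\cR$ and the resulting identity $\Ext^m_A(S_i,S_j)=\Hom_E(\kk\Gamma_m\otimes_E S_i,S_j)$ for simple modules, and so is insensitive to gentleness: if $\gldim A=\infty$ then $\Gamma_m\neq\emptyset$ for arbitrarily large~$m$ and a path in~$\Gamma$ longer than~$\abs{Q_1}$ repeats an arrow, exposing a complete cycle, while conversely a complete cycle~$\gamma$ has $\gamma^k\in\Gamma$ for all~$k$, so $\Gamma$ is unbounded and $\gldim A=\infty$.

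The easy implications are immediate: \tfaeitem{2} gives \tfaeitem{3} with $m_0=1$, and \tfaeitem{3} gives \tfaeitem{4} because then $\bigoplus_{m\geq m_0}\HH_m(A)=0$. For \tfaeitem{1}$\Rightarrow$\tfaeitem{2} I combine the equivalence above (no complete cycle) with the standing hypothesis that $A$ is finite-dimensional (hence no cocomplete cycle), and read the answer off Theorem~\ref{thm:basishomology}: for $m\geq2$ the space $\HH_m(A)$ is spanned by classes attached to complete circuits of length~$m$ or~$m+1$, of which there are none; and each of the three families spanning $\HH_1(A)$ is empty---cocomplete circuits by finite-dimensionality, complete circuits of length~$2$ by the equivalence, and loops~$a$ with $a^2\in I$ because such a loop is itself a complete cycle of length~$1$. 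Hence $\HH_m(A)=0$ for all $m\geq1$.

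It remains to prove \tfaeitem{4}$\Rightarrow$\tfaeitem{1}, which I approach contrapositively. If $\gldim A=\infty$ the equivalence furnishes a complete cycle, and I take a primitive one~$D$ of length~$\ell$; each power~$D^k$ is then a complete cycle whose circuit has length~$k\ell$ and period~$\ell$. By Theorem~\ref{thm:basishomology}\thmitem{3}, the class~$\hcycle{D^k}$ is a basis vector, hence nonzero, of~$\HH_{k\ell}(A)$ exactly when $(-1)^{(k\ell+1)\ell}=1$ in~$\kk$. A brief parity check shows this holds for infinitely many~$k$ in every case: for all~$k$ when $\charact\kk=2$ or $\ell$ is even, and for all odd~$k$ when $\charact\kk\neq2$ and $\ell$ is odd. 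Thus infinitely many spaces~$\HH_{k\ell}(A)$ are nonzero, so for every~$m_0$ the sum $\bigoplus_{m\geq m_0}\HH_m(A)$ is infinite-dimensional, contradicting~\tfaeitem{4}.

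The only real obstacle I anticipate lies in the first paragraph: one must confirm that the finite-global-dimension/complete-cycle dictionary of Corollary~\ref{coro:fingldim} genuinely survives the passage from gentle to merely quadratic monomial presentations. Granting that, the rest is bookkeeping against the explicit basis of Theorem~\ref{thm:basishomology}, the only delicate point being the characteristic-dependent sign conditions handled in the last step.
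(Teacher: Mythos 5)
Your proof is correct and follows essentially the same route as the paper's: reduce everything to the presence or absence of a complete cycle, read \tfaeitem{1}$\Rightarrow$\tfaeitem{2} off the basis of Theorem~\ref{thm:basishomology}, and refute \tfaeitem{4} by exhibiting the nonzero classes $\hcycle{\bar D^{k}}$ for infinitely many $k$ satisfying the sign condition (the paper takes the odd powers $\bar C^{2k+1}$, which is exactly your parity check). You are somewhat more explicit than the paper about why the global-dimension/complete-cycle dictionary of Corollary~\ref{coro:fingldim} extends from gentle to quadratic monomial presentations and about why the loop summands of $\HH_1(A)$ vanish, both of which the paper leaves implicit.
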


\begin{proof}
Let us suppose that $A$ is finite-dimensional.
If it also has finite global dimension, then there are no
complete circuits in~$(Q,I)$, and it follows from the second and third part
of Theorem~\ref{thm:basishomology} that $\HH_m(A)=0$ whenever $m\geq1$.
This shows that the implication~\implication{1}{2} holds, and the
implications~\implication{2}{3} and~\implication{3}{4} are obvious.

Suppose now that \tfaeitem{1} does not hold, so that $A$ has infinite
global dimension and there is in~$(Q,I)$ a primitive complete circuit~$C$.
Let $r$ be the length of~$C$, and let $\bar C$ be an element of~$C$.
According to the third part of Theorem~\ref{thm:basishomology}, the
homology class of the cycle $\hcycle{\bar C^{2k+1}}$ in~$\HH_{(2k+1)r}(A)$
is non-zero for all $k\in\NN$, and therefore the statement~\tfaeitem{4}
does not hold. We thus see that the implication \implication{4}{1} is also
true.
\end{proof}

When the
Hochschild homology spaces are not all eventually zero, we can look at the
rate of growth, if any, of their dimensions.

\begin{question}
We would like to prove that given a $(Q,I)$ quadratic monomial presentation such that quotient algebra
$A\coloneqq\kk Q/I$  is finite-dimensional, then
\begin{itemize}

\item either there exists an integer~$d$ such that $\dim\HH_m(A)\leq d$ for
all $m\in\NN_0$, 

\item or the sequence $(\dim\HH_m(A))_{m\geq0}$ growths exponentially,
\end{itemize}
and that if the presentation is gentle, then the first possibility occurs.
\end{question}

Let us suppose that $A$ is finite-dimensional and first that no two
different primitive complete circuits share an arrow. The number of
primitive complete circuits is then finite: let  $c$ be their number and
let $l$ be the minimum of their lengths. Since every complete cycle is, in
a unique way, a power of a primitive complete cycle, for every number~$L$
the number of complete circuits of length at most~$L$ is at most $cL/l$.
Using this and Theorem~\ref{thm:basishomology} we can see that there are
integers~$d$ and~$e$ such that
$\sum_{m=0}^n\dim\HH_m(A)\leq dn+e$ for all $n\in\NN_0$, and therefore
$\dim\HH_m(A)\leq d$ for all $m\in\NN_0$. We are thus in the first case
described in the proposition.

Let us suppose now thar $A$ is finite-dimensional and that there exist two
primitive complete circuits that share an arrow. There are thus primitive
complete cycles $C$ and $D$ in~$(Q,I)$ that start with the same arrow~$a$,
so that there are paths $C'$ and $D'$ such that $C=C'a$ and $D=D'a$. For
every choice of $E_1$,~\dots,~$E_k$ in the set~$\{C',D'\}$ the path
$E_1aE_2a\cdots E_ka$ is a complete cycle in~$(Q,I)$. 
We still need to ensure first that these are different circuits,  and that they satisfy the condition on lengths and periods that
appears in the theorem.

\section{The Connes boundary map}

In this section we will compute the the map 
  \begin{equation} \label{eq:B}
  B:\HH_n(A)\to\HH_{n+1}(A)
  \end{equation}
induced on the Hochschild homology of our quadratic monomial
algebra~$A$ by the Connes boundary map. We will use this information
in the following section to compute the cyclic homology of~$A$. We
start by briefly recalling the relevant definitions and context needed
for this calculation from \cite[Section 2.1.7]{Loday:cyclic}.

\bigskip

In this chapter we have realized the Hochschild homology~$\HH_*(A)$
of~$A$ as the homology of the complex $A\otimes_{A^e}\cR$ obtained
from the Barzdell projective resolution~$\cR$ of~$A$ as an
$A$-bimodule, but in order to compute the Connes boundary map we will
have to consider a different realization of that Hochschild homology,
constructed from a different projective resolution. For each
non-negative integer~$p$ we consider the $A$-bimodule
  \[ 
  {\mathbf B}_pA\coloneqq A\otimes_EA^{\otimes_Ep}\otimes_EA.
  \]
It is the component of degree~$p$ of a chain complex~${\mathbf B}A$, the
\newterm{bar resolution} of~$A$ relative to~$E$, whose differential
$d_p:{\mathbf B}_pA\to {\mathbf B}_{p-1}A$ is such that
  \begin{equation} \label{eq:d-hoch}
  d_p(a_0\otimes\cdots\otimes a_{p+1})
    = \sum_{i=0}^p(-1)^i\cdot
       a_0\otimes\cdots\otimes a_ia_{i+1}\otimes\cdots\otimes a_{p+1}
  \end{equation}
for all $p\in\NN_0$ and all choices of~$a_0$,~\dots,~$a_{p+1}$ in~$A$.
The complex~${\mathbf B}A$ is a projective resolution of~$A$ as an $A$-bimodule,
with augmentation $\epsilon:{\mathbf B}_0A=A\otimes_EA\to A$ the unique map of
$A$-bimodules such that $\epsilon(1\otimes1)=1$. Of course, the
homology of the complex~$A\otimes_{A^e}{\mathbf B}A$ is therefore canonically
isomorphic to the Hochschild homology $\HH_*(A)$ of~$A$ and, in
particular, to our concrete realization of it in terms of the Barzdell
resolution. Explicitly, this means that if $F:\cR\to {\mathbf B}A$ is any
morphism of complexes of $A$-bimodules compatible with the
augmentations over~$A$ of~$\cR$ and of~${\mathbf B}A$, then the morphism
$\id_A\otimes F:A\otimes_{A^e}\cR\to A\otimes_{A^e}{\mathbf B}A$ induces an
isomorphism in homology, 
  \begin{equation} \label{eq:araba}
  H_*(A\otimes_{A^e}\cR)\to H_*(A\otimes_{A^e}{\mathbf B}A),
  \end{equation}
that is the canonical one between the two realizations of the
Hochschild homology of~$A$ as the homology of~$A\otimes_{A^e}\cR$ and
as the homology of~$A\otimes_{A^e}{\mathbf B}A$.

The complex~$A\otimes_{A^e}{\mathbf B}A$ is often presented in a different way.
The \newterm{Hochschild complex} $C(A,A)$ is the chain complex that
has for each $p\in\NN_0$ component of degree~$p$ given by the vector
space
  \[
  C_p(A,A)\coloneqq A\otimes_EA^{\otimes_Ep}
  \]
and for each $p\in\NN$ differential $d_p:C_p(A,A)\to C_{p-1}(A,A)$
such that
  \begin{multline*}
  d_p(a_0\otimes\cdots\otimes a_p) \\
        = \sum_{i=0}^{p-1}(-1)^i\cdot a_0\otimes\cdots\otimes
              a_ia_{i+1}\otimes\cdots\otimes a_p
          + (-1)^p\cdot a_pa_0\otimes a_1\otimes\cdots\otimes a_{p-1}
  \end{multline*}
whenever $a_0$,~\dots,~$a_p$ in~$A$. There is an isomorphism of
complexes 
  \[
  \rho:A\otimes_{A^e}{\mathbf B}A\to C(A,A)
  \]
that for each $p\in\NN_0$ has component
  \(
  \rho_p:
        A\otimes_{A^e}(A\otimes_E A^{\otimes_Ep}\otimes_E A) 
        \to
        A^{\otimes_E(p+1)}
  \)
such that
  \[
  \rho_p(a\otimes(a_0\otimes a_1\otimes\cdots\otimes a_p\otimes a_{p+1}))
  = a_{p+1}aa_0\otimes a_1\otimes\cdots\otimes a_p
  \]
whenever $a$,~$a_0$,~\dots,~$a_p$ are in~$A$, and it induces on
homology an isomorphism
  \begin{equation} \label{eq:bacaa}
  H_*(A\otimes_{A^e}{\mathbf B}A) \to H_*(C(A,A)).
  \end{equation}
This gives us a third realization of the Hochschild homology of~$A$,
now as the homology of the Hochschild complex~$C(A,A)$.

The interest of this third realization is that it is on it that the
Connes boundary map is defined. The \newterm{Connes boundary map} is
the morphism of complexes 
  \[
  B:C(A,A)\to C(A,A)[1]
  \]
that for each choice $p\in\NN_0$ and $a_0$,~\dots,~$a_p\in A$ has
  \begin{multline} \label{eq:B-p}
  B_p(a_0\otimes\cdots\otimes a_p) 
    = \sum_{i=0}^{p}
        \Bigl(
          (-1)^{ni}
          1\otimes a_i\otimes\cdots\otimes a_p
          \otimes a_0\otimes\cdots\otimes a_{i-1}
          \\
          - (-1)^{ni}
          a_{i-1}\otimes 1\otimes a_i\otimes\cdots\otimes a_p
          \otimes a_0\otimes\cdots\otimes a_{i-2}
        \Bigr).
  \end{multline}
In particular, in low degrees we have for all $a_0$,~$a_1\in A$ that
  \begin{gather}
  B_0(a_0) = 1\otimes a_0+a_0\otimes 1, \label{eq:B-0}\\
  B_1(a_0\otimes a_1) = 1\otimes a_0\otimes a_1
                      - 1\otimes a_1\otimes a_0
                      + a_0\otimes 1\otimes a_1
                      - a_1\otimes 1\otimes a_0.
                      \label{eq:B-1}
  \end{gather}
A key fact is that the Connes boundary map~$B$ is a boundary: we have
$B\circ B=0$. It induces a map $H(C(A,A))\to H(C(A,A)[1])$ and the
map~\eqref{eq:B} that we want to compute is the result of conjugating
the latter by the composition of the isomorphisms~\eqref{eq:araba}
and~\eqref{eq:bacaa}, which is a map
  \[
  B : H_*(A\otimes_{A^e}\cR) \to H_*(A\otimes_{A^e}\cR)[1].
  \]
To do this, we will need an explicit comparison map $F:\cR\to {\mathbf B}A$ from
the Barzdell resolution to the bar resolution in order to compute the
isomorphism~\eqref{eq:araba} and an explicit comparison map in the
other direction, $G:{\mathbf B}A\to\cR$, to compute the inverse isomorphism. The
first one poses no problem:

\begin{lemma}
There is a morphism of complexes of $A$-bimodules $F:\cR\to {\mathbf B}A$
compatible with the augmentations of the resolutions~$\cR$ and~${\mathbf B}A$
of~$A$ such that for each $m\in\NN_0$ the component 
  \[
  F_m:
    \cR_m = A\otimes_E\kk\Gamma_m\otimes_EA
    \to 
    {\mathbf B}_mA = A\otimes_EA^{\otimes_E m}\otimes_EA
  \]
has
  \[
  F_m(1\otimes\gamma\otimes 1)
    = 1\otimes c_m\otimes c_{m-1}\otimes\cdots\otimes c_1\otimes 1
  \]
whenever $\gamma=c_m\cdots c_1\in\Gamma_m$.
\end{lemma}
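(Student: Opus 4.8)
The plan is to take the displayed formula as the \emph{definition} of $F_m$ on generators and then to verify the two things that make it a morphism of resolutions: compatibility with the augmentations and commutation with the differentials. Since $\cR_m = A\otimes_E\kk\Gamma_m\otimes_E A$ is the free $A$-bimodule on the set $\Gamma_m$, an $A$-bimodule map out of it is determined by the images of the generators $1\otimes\gamma\otimes 1$, and the only point to check for well-definedness is that the assignment respects the $E$-bimodule structure carried by the middle factor~$\kk\Gamma_m$. This is immediate, because the tensor products defining ${\mathbf B}_mA$ are also taken over~$E$ and the source and target of~$\gamma$ coincide with those of its first and last arrows, so the idempotents pass across the tensor symbols in the same way on both sides. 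For $m=0$, where $\gamma$ is a vertex and the string of arrows is empty, a direct check gives $\epsilon\circ F_0=\mu$, both composites sending the generator at a vertex~$e$ to~$e$; this disposes of the augmentations.

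The core of the argument is the identity $d_m^{\mathbf{B}}\circ F_m = F_{m-1}\circ d_m^{\cR}$, and it is here that the defining property of~$\Gamma_m$ does all the work. Writing $\gamma=a_m\cdots a_1\in\Gamma_m$, I would apply the bar differential~\eqref{eq:d-hoch} to
  \[
  F_m(1\otimes\gamma\otimes 1)=1\otimes a_m\otimes\cdots\otimes a_1\otimes 1
  \]
and observe that each \emph{interior} contraction multiplies two consecutive arrows~$a_{j+1}a_j$ of the path. By the very definition of~$\Gamma$, every length-two subpath of~$\gamma$ lies in $R\subseteq I$, so each such product vanishes in~$A$ and every interior term dies. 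Only the two outermost contractions survive, and they contribute exactly
  \[
  a_m\otimes\cdots\otimes a_1\otimes 1
    +(-1)^m\cdot 1\otimes a_m\otimes\cdots\otimes a_2\otimes a_1.
  \]

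To finish I would compute the other composite. The Bardzell differential sends $1\otimes\gamma\otimes 1$ to $a_m\otimes(a_{m-1}\cdots a_1)\otimes 1 + (-1)^m\cdot 1\otimes(a_m\cdots a_2)\otimes a_1$, and since the proper subpaths $a_{m-1}\cdots a_1$ and $a_m\cdots a_2$ again lie in~$\Gamma_{m-1}$, applying $F_{m-1}$ and pulling the outer factors $a_m$ and~$a_1$ out by $A$-bilinearity reproduces precisely the two surviving terms just found. Hence the square commutes and $F$ is the desired morphism. I do not anticipate a real obstacle: the whole computation is short and essentially forced once the key observation is in place. That observation---the vanishing of the interior bar terms, which is exactly what collapses the intricate bar differential onto the two-term Bardzell differential---is the only genuine content, while the $E$-bookkeeping, the augmentation, and the handling of the outer tensor factors are all routine.
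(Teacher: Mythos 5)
Your proof is correct and is precisely the ``easy direct calculation'' that the paper omits: the key point, that the interior terms of the bar differential vanish because every length-two subpath of an element of~$\Gamma_m$ lies in~$R$ and hence is zero in~$A$, is exactly what makes the square with the Bardzell differential commute. Nothing further is needed.
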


The morphism~$F$ described here is injective, and can be viewed as the
inclusion of the bimodule Koszul complex relative to~$E$ for the
algebra~$A$, which is certainly a Koszul quadratic algebra, into the
bar resolution --- we refer to the book~\cite{PP} of P.\,Polishchuk
and L.\,Positselski for more information about this point of view.

\begin{proof}
This can be proved by an easy direct calculation.
\end{proof}

Comparison morphisms ${\mathbf B}A\to\cR$ in the other direction are more
complicated to describe, since they depend inevitably on the
combinatorics of the set of quadratic monomials that define the
algebra~$A$. We will use the morphism constructed by L.\,Román and
M.J.\,Redondo in~\cite{RR:comparison} and which they describe in
Section~3.2 of that paper. In fact, we will luckily not need to know
the map in full detail: the following proposition describes what we do
need.

\begin{proposition}\label{prop:comp:G}
There exists a morphism $G:{\mathbf B}A\to\cR$ of complexes of $A$-bimodules
satisfying the following properties:
\begin{itemize}

\item It is homogeneous with respect to the grading in~${\mathbf B}A$ and~$\cR$
given by the length of paths.

\item Its components of degree~$0$ and~$1$ are such that
  \begin{gather*}
  G_0(1\otimes 1)
    = \sum_{i\in Q_0}
        1\otimes e_1\otimes 1
\shortintertext{and}
  G_1(1\otimes \alpha\otimes 1)
    = \sum_{i=1}^m
        a_m\cdots a_{i+1}
        \otimes a_i
        \otimes a_{i-1}\cdots a_1
  \end{gather*}
whenever $\alpha=a_m\cdots a_1$ is an element of~$\cB$.

\item If $\gamma=c_m\cdots c_1$ is the factorization as a product of arrows
of path in~$\Gamma$, then 
  \[
  G_m(1\otimes c_m\otimes\cdots\otimes c_1\otimes 1)
    = 1\otimes\gamma\otimes1.
  \]

\item If $\gamma$ is a path in~$\cB$ and
$\gamma = \gamma_m\cdots\gamma_1$ is a factorization of~$\gamma$ as a
product of paths with $m\geq2$, then 
  \[
  G_m(1\otimes\gamma_1\otimes\cdots\otimes\gamma_1\otimes1) = 0.
  \]

\end{itemize}
\end{proposition}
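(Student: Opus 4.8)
The plan is to produce $G$ not abstractly but by an explicit recursion, and then to read off the four properties. First I would note that both $\mathbf{B}A$ and $\cR$ are projective resolutions of $A$ as an $A$-bimodule, that both are graded by path length with homogeneous differentials and augmentations, and that $\mathbf{B}A$ is projective in the graded category; the comparison theorem then already yields a homogeneous bimodule chain map lifting $\id_A$, which accounts for the first bullet but says nothing about the explicit values. To pin those down I would fix the standard contracting homotopy $s$ of $\mathbf{B}A$ (given by $s_p(x)=1\otimes x$) and the contracting homotopy $t$ of $\cR$ produced by Sk\"oldberg in \cite{Skoldberg}, set $G_0(1\otimes1)=\sum_{i\in Q_0}1\otimes e_i\otimes1$, and define $G_p\coloneqq t_{p-1}\circ G_{p-1}\circ\partial_p$ for $p\geq1$, where $\partial$ denotes the differential of $\mathbf{B}A$. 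A one-line induction using $d_pt_{p-1}+t_{p-2}d_{p-1}=\id$ together with the inductive chain-map identity $d_{p-1}G_{p-1}=G_{p-2}\partial_{p-1}$ shows that $G$ is indeed a chain map. This is, up to the bimodule bookkeeping discussed below, the morphism written down by Redondo and Rom\'an in Section~3.2 of \cite{RR:comparison}, which I would simply invoke.

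With a concrete $G$ in hand, the first two bullets are immediate: homogeneity is clear from the recursion since $\partial$, $t$ and $s$ all preserve length, and the degree-$0$ and degree-$1$ formulas are obtained by evaluating $G_0$ and $G_1=t_0\circ G_0\circ\partial_1$ on the generators $1\otimes1$ and $1\otimes\alpha\otimes1$ with $\alpha=a_m\cdots a_1\in\cB$; the telescoping expression $\sum_i a_m\cdots a_{i+1}\otimes a_i\otimes a_{i-1}\cdots a_1$ falls out of Sk\"oldberg's homotopy applied arrow by arrow. For the last two bullets I would argue by induction on the tensor length, exploiting that these two statements are exactly the extreme cases for how consecutive factors interact with the relation set $R$. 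In the situation of the third bullet every consecutive product $c_{i+1}c_i$ lies in $R$, so $\partial_p$ produces only terms whose arrows still concatenate into paths of $\Gamma$, and the recursion reassembles the single Koszul generator $1\otimes\gamma\otimes1$; concretely this is the assertion $G\circ F=\id_\cR$, so it suffices to check that $t$ splits $F$ off degreewise. In the situation of the fourth bullet, with $\gamma=\gamma_m\cdots\gamma_1\in\cB$ and $m\geq2$, no consecutive product is a relation, so at each stage the factors that $\partial_p$ creates are genuine paths in $\cB$ of positive length that never produce generators of $\Gamma_{>0}$ and are killed by $t$; tracking this through the recursion gives $G_m=0$ on such tensors.

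The genuine obstacle is not the verification of properties (iii) and (iv), which is bookkeeping once the map is fixed, but the construction itself: Sk\"oldberg's homotopy $t$ is only one-sided (it is $A$-linear on one side and $E$-linear on the other), so the naive recursion $G_p=t_{p-1}G_{p-1}\partial_p$ a priori yields only a one-sided module map rather than a morphism of $A$-bimodules, and it is not obvious that the resulting formulas are the clean, symmetric ones stated. Making the recursion respect the full bimodule structure and extracting closed combinatorial expressions for $G_p$ is precisely the technical content of \cite{RR:comparison}; I would therefore quote their morphism wholesale and spend the proof only on checking that their formulas specialize to the four displayed properties, which---because (iii) and (iv) isolate the all-relations and no-relations extremes---reduces to two short inductions.
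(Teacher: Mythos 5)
Your proposal is correct and matches the paper's treatment: the paper gives no proof of this proposition at all, simply invoking the comparison morphism constructed by Redondo and Rom\'an in Section~3.2 of \cite{RR:comparison} and recording the four properties it needs. Your additional sketch of the recursion $G_p = t_{p-1}\circ G_{p-1}\circ \partial_p$ via Sk\"oldberg's contracting homotopy, and your decision to quote the Redondo--Rom\'an formulas wholesale for the explicit verifications, is exactly the route the authors take (and the one-sidedness worry you raise is resolved in the standard way by defining $G_p$ on the bimodule generators $1\otimes a_1\otimes\cdots\otimes a_p\otimes1$ and extending bilinearly).
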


\medskip

With this information we can compute directly the action of the Connes
boundary map on each of the elements of the basis of the Hochschild
homology of a quadratic monomial algebra that we described in
Theorem~\ref{thm:basishomology} and in
Corollary~\ref{coro:homology:basis:tr}

\begin{proposition}\label{prop:B}
Let $(Q,I)$ be a quadratic monomial presentation and let $A$ be the
algebra~$\kk Q/I$. 
\begin{itemize}

\item For each vertex~$e$ in~$Q$ we have that
  \(
  B(e,e) = 0
  \).

\item For each complete circuit~$C$ in~$(Q,I)$ whose length~$m$ and
period~$r$ are such that $(-1)^{(m+1)r}=1$ in~$\kk$ we have
  \begin{equation} \label{eq:bb:1}
  B\hcycle{\bar C} = 0,
  \qquad
  B(\lfact1{\bar C},\lfact2{\bar C}) = \frac{m}{r}\hcycle{\bar C}.
  \end{equation}

\item For each cocomplete circuit~$C$ in~$(Q,I)$ of length~$m$ and
period~$r$ we have
  \begin{equation} \label{eq:bb:2}
  B\hcycle{\bar C} = 0,
  \qquad
  B(\bar C,s(\bar C)) = \frac{m}{r}\hcycle{\bar C}.
  \end{equation}

\end{itemize}
\end{proposition}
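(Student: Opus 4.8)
The plan is to compute $B$ at the level of chains by transporting each basis cycle of Theorem~\ref{thm:basishomology} (or rather of Corollary~\ref{coro:homology:basis:tr}) along the comparison isomorphisms to the Hochschild complex $C(A,A)$, applying the Connes formula~\eqref{eq:B-p} there, and transporting the result back. Writing $\bar F=\id_A\otimes F$ and $\bar G=\id_A\otimes G$ for the maps induced by the comparison morphisms of the preceding lemma and of Proposition~\ref{prop:comp:G}, the map~\eqref{eq:B} is induced by the chain endomorphism $\bar G\circ\rho^{-1}\circ B\circ\rho\circ\bar F$ on $A\otimes_{A^e}\cR\cong\kk(\cB\odot\Gamma)$. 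The first simplification I would make is to pass to the normalized quotient $\bar C(A,A)=A\otimes_E(A/E)^{\otimes_E\bullet}$, on which $B$ descends and loses every ``second type'' summand of~\eqref{eq:B-p} (those with a factor $1$ in an interior slot) together with any summand carrying an element of $E$ in a slot other than the zeroth. Since $F$, $G$ and $\rho$ all respect the length grading and normalization, this is harmless and turns $B$ into the plain cyclic sum $\sum_i(-1)^{ni}\cdot 1\otimes a_i\otimes\cdots\otimes a_{i-1}$, with $n$ the degree.

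Next I would record, using the explicit formulas for $F_m$ and $\rho_m$, the image in $C(A,A)$ of each cycle of Corollary~\ref{coro:homology:basis:tr}. Short computations give that $(e,e)\mapsto e$; that each summand $(s(\rot^i\bar C),\rot^i\bar C)$ of $\hcycle{\bar C}$ for a complete circuit maps to a chain $s(\rot^i\bar C)\otimes c_m\otimes\cdots\otimes c_1$ with a \emph{leading idempotent}; that $(\lfact1{\bar C},\lfact2{\bar C})\mapsto c_m\otimes c_{m-1}\otimes\cdots\otimes c_1$, a single tensor of arrows whose zeroth factor is an \emph{arrow}; that the $0$-cycle $(\bar C,s(\bar C))$ of a cocomplete circuit maps to $\bar C\in A=C_0(A,A)$; and that each summand of the $1$-cycle $\hcycle{\bar C}$ for a cocomplete circuit maps to $\rfact1{\rot^i\bar C}\otimes\rfact2{\rot^i\bar C}$. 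The dichotomy to keep in mind is whether the transported representative carries an element of $E$ in its zeroth tensor slot.

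The vanishing statements then split into two mechanisms. For $B(e,e)$ and for $B\hcycle{\bar C}$ with $C$ complete, the representatives have an idempotent in the zeroth slot, and every summand of the normalized $B$ pushes that idempotent into an interior, hence degenerate, slot; thus $B$ annihilates these chains outright, giving $B(e,e)=0$ and the first equality of~\eqref{eq:bb:1}. For $B\hcycle{\bar C}$ with $C$ cocomplete the leading factor $\rfact1{\rot^i\bar C}$ is a genuine path, so this trick fails; instead I would apply the normalized $B_1$ to $\rfact1{\rot^i\bar C}\otimes\rfact2{\rot^i\bar C}$, obtaining $1\otimes\rfact1{\rot^i\bar C}\otimes\rfact2{\rot^i\bar C}-1\otimes\rfact2{\rot^i\bar C}\otimes\rfact1{\rot^i\bar C}$, and invoke the last property of $G$ in Proposition~\ref{prop:comp:G}: since $\rfact1{\rot^i\bar C}\cdot\rfact2{\rot^i\bar C}=\rot^i\bar C$ lies in $\cB$ and this is a factorization into at least two paths, $G_2$ kills both terms, which yields the first equality of~\eqref{eq:bb:2}.

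Finally, the two nonzero values come from applying the normalized $B$ to the single-tensor representatives. For $(\lfact1{\bar C},\lfact2{\bar C})$ one gets the cyclic sum $\sum_{i=0}^{m-1}(-1)^{(m-1)i}\cdot 1\otimes(\text{the }i\text{th rotation})$; pulling back through $\rho^{-1}$ and $G$ and using the third property of $G$ (each rotation is again a path in $\Gamma_m$, as $C$ is complete) identifies the $i$th term with $(-1)^{(m+1)i}\cdot(s(\rot^i\bar C),\rot^i\bar C)$, since $(-1)^{m-1}=(-1)^{m+1}$. For $(\bar C,s(\bar C))$ one instead gets $1\otimes\bar C$ and uses the explicit formula for $G_1$ in Proposition~\ref{prop:comp:G} to expand it as a sum over the arrows of $\bar C$, which transport back to the summands of $\hcycle{\bar C}$. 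In both cases, writing $\bar C=D^{m/r}$ with $D$ the primitive cycle of length $r$ shows that the $m$-term sum runs $m/r$ times over the $r$ distinct rotations; in the complete case the hypothesis $(-1)^{(m+1)r}=1$ ensures these $m/r$ copies add with equal sign, while in the cocomplete case no signs are present, so in either case the sum collapses to $\tfrac{m}{r}\hcycle{\bar C}$. I expect the genuine difficulty to be exactly this last bookkeeping: tracking the signs $(-1)^{(m\pm1)i}$ through the comparison maps and confirming that the periodicity of $\bar C$ makes the rotations coincide with matching signs, so that the factor $m/r$---and not $0$---emerges.
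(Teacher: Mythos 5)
Your proposal is correct and follows essentially the same route as the paper's proof: transport along $F$ and $\rho$, apply the Connes formula, transport back along $\rho^{-1}$ and $G$, use the listed properties of $G$ to kill the spurious terms, and use the explicit behaviour of $G$ on tensors of arrows to produce the sum over all $m$ rotations, which collapses to $\tfrac{m}{r}\hcycle{\bar C}$ by periodicity and the sign hypothesis. The only presentational difference is that you discard the degenerate summands of $B$ wholesale by passing to the normalized complex, whereas the paper kills each such term individually; since ``$G$ descends to the normalized complex'' is not among the stated properties of $G$, you should justify this in each instance by the same mechanism the paper uses, namely that every degenerate chain arising here has total length strictly smaller than its homological degree and is therefore annihilated by the length-homogeneous map $G$ (or, for the cocomplete $1$-cycles, by the fourth property of Proposition~\ref{prop:comp:G}).
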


\begin{proof}
The map~$B$ that we want to compute is induced in each
degree~$m$ of the homology by the composition  
  \begin{equation} \label{eq:bbb}
  \begin{tikzcd}
  \kk(\cB\odot\Gamma_m) \arrow[r, equal]
    &[-1.5em] A\otimes_{A^e}\cR_m \arrow[r, "\id_A\otimes F_m"]
    &[2.5em] A\otimes_{A^e}{\mathbf B}_mA \arrow[r, "\rho_m"]
    & C_m(A,A) \arrow[d, "B_m"]
    \\
  \kk(\cB\odot\Gamma_{m+1})
    & A\otimes_{A^e}\cR_{m+1} \arrow[l, equal]
    & A\otimes_{A^e}{\mathbf B}_{m+1}A \arrow[l, "\id_A\otimes G_{m+1}"]
    & C_{m+1}(A,A) \arrow[l, "\rho_{m+1}^{-1}"]
  \end{tikzcd}
  \end{equation}
Let us first take $m=0$, and consider a vertex~$e\in Q_0$. Chasing the image 
of the $0$-cycle~$(e,e)$ along this composition gives:
  \[
  \begin{tikzcd}
  (e,e) \arrow[r, mapsto]
    & e\otimes(1\otimes e\otimes 1) \arrow[r, mapsto]
    & e\otimes(1\otimes e\otimes 1) \arrow[r, mapsto]
    & e \arrow[d, mapsto]
    \\
  0
    & 0 \arrow[l, mapsto]
    & \begin{gathered}
      1\otimes(1\otimes e\otimes 1) \\
      {}+e\otimes(1\otimes 1\otimes 1)
      \end{gathered} \arrow[l, mapsto]
    & 1\otimes e+e\otimes 1 \arrow[l, mapsto]
  \end{tikzcd}
  \]
The key point here is that the map~$G$ vanishes on the
elements~$1\otimes e\otimes 1$ and $1\otimes 1\otimes 1$ of~${\mathbf B}_1A$:
indeed, $G$ is homogeneous with respect to length, these elements have
total length~$0$, and the homogeneous component
of~$A\otimes_{A^e}\cR_1$ of length~$0$ is~$0$. This proves the first
claim of the proposition.

Let now $C$ be a complete circuit in~$(Q,I)$, let $m$ and~$r$ be its
length and its period, and let us suppose that $(-1)^{(m+1)r}=1$
in~$\kk$, so that we have, respectively, in~$\HH_m(A)$ and~$\HH_{m-1}(A)$ the
homology classes of the cycles
  \[
  \hcycle{\bar C} \coloneqq
        \sum_{i=0}^{r-1}
              (-1)^{(m+1)i}\cdot
              \bigl(s(\rot^i(\bar C)), \rot^i(\bar C)),
  \qquad
  \qquad
  (\lfact1{\bar C},\lfact2{\bar C}).
  \]
The cycle~$\hcycle{\bar C}$ is weight-homogeneous of weight~$m$ and
all the maps in the diagram above preserve weight: the image
of~$\hcycle{\bar C}$ by the composition is zero because it is a
weight-homogeneous element of~$\kk(\cB\odot\Gamma_{m+1})$ of
weight~$m$, and thus $B\hcycle{\bar C}=0$.

Let us suppose that $\bar C=c_m\cdots c_1$ is the factorization
of~$\bar C$ as a product of arrows, so that $\lfact1{\bar C}=c_m$ and
$\lfact2{\bar C}=c_{m-1}\cdots c_1$. The image of~$(\lfact1{\bar
C},\lfact2{\bar C})$ by the map $\kk(\cB\odot\Gamma_{m-1})\to
C_{m-1}(A,A)$ in the diagram above is the elementary tensor
$c_m\otimes\cdots\otimes c_1$, and the image of this by the
map~$\rho_m^{-1}\circ B_{m-1}$ in the diagram is therefore
{ 
  \medmuskip=1mu\relax
  \begin{multline*}
          \sum_{i=0}^{m-1}
             \Biggl(
               (-1)^{(m-1)i}
               1\otimes(1\otimes c_{m-i}\otimes\cdots\otimes c_1
               \otimes c_m\otimes\cdots\otimes c_{m-i+1}\otimes1)
               \\
               - (-1)^{(m-1)(i-1)}
               c_{m-i+1}\otimes(1\otimes 1\otimes c_{m-i}
               \otimes\cdots\otimes c_1
               \otimes c_m\otimes\cdots\otimes c_{m-i+2}\otimes1)
             \Biggr).
  \end{multline*}
}
For each $i\in\{0,\dots,m-1\}$ the tensor
  \[
  1\otimes 1\otimes c_{m-i}
               \otimes\cdots\otimes c_1
               \otimes c_m\otimes\cdots\otimes c_{m-i+2}\otimes1
  \]
in~${\mathbf B}_mA$ is weight-homogeneous of weight~$m-1$, and therefore its
image by~$G_m$ in~$\cR_m$ is zero, while that of
  \[
  1\otimes c_{m-i}\otimes\cdots\otimes c_1
               \otimes c_m\otimes\cdots\otimes c_{m-i+1}\otimes1
  \]
is 
  \[
  1\otimes c_{m-i}\cdots c_1c_m\cdots c_{m-i+1}\otimes1
  = 1\otimes \rot^i(\bar C)\otimes 1
  \]
precisely because of the third property of the map~$G$ described in
Proposition~\ref{prop:comp:G}. It follows from this that the image
under the Connes map of the cycle~$(\lfact1{\bar C},\lfact2{\bar C})$ is
  \[
  \sum_{i=0}^{m-1}
    (-1)^{(m-1)i}\cdot (s(\rot^i\bar C),\rot^i(\bar C)),
  \]
which is exactly~$\hcycle{\bar C}$. This proves the second equality
in~\eqref{eq:bb:1}.

Finally, let $C$ be a cocomplete circuit in~$(Q,I)$ of period~$r$, to
which correspond a $0$-cycle and a $1$-cycle,
  \begin{equation} \label{eq:hcb}
  (\bar C,s(\bar C)),
  \qquad
  \qquad
  \hcycle{\bar C} 
        \coloneqq \sum_{i=0}^{r-1}(\rfact1{\rot^i(\bar C)},
                                   \rfact2{\rot^i(\bar C)}).
  \end{equation}
Let $\bar C=c_m\cdots c_1$ be the factorization of~$\bar C$ as a
product of arrows. The image of the cycle~$(\bar C,s(C))$ under the
composition $\kk(\cB\odot\Gamma_0)\to C_0(A,A)$ in the
diagram~\eqref{eq:bbb} is $\bar C$, and according to~\eqref{eq:B-0}
this is mapped by~$\rho_1^{-1}\circ B_0$ to 
  \[
  s(\bar C)\otimes(1\otimes \bar C\otimes1)
  + \bar C\otimes(1\otimes s(\bar C)\otimes1).
  \] 
The morphism $G_1$ maps $1\otimes s(\bar C)\otimes 1$, a weight-homogeneous
element of weight zero, to zero, because there are no other elements
of that weight in~$\cR_1$, so $\id_A\otimes G_1$ vanishes on~$\bar
C\otimes(1\otimes s(\bar C)\otimes1)$. On the other hand, the
description of~$G_1$ in Proposition~\ref{prop:comp:G} tells us that
  \[
  G_1(1\otimes\bar C\otimes 1)
        = \sum_{i=1}^m
          c_m\cdots c_{i+1}\otimes c_i\otimes c_{i-1}\cdots c_1.
  \]
It follows from this at once that the image of $\bar C,s(\bar C))$
under the composition of the morphisms in the diagram~\eqref{eq:bbb} is
  \begin{align*}
  \MoveEqLeft
  \sum_{i=1}^m(c_{i-1}\cdots c_1c_m\cdots c_{i+1},c_i) \\
    & = \sum_{i=1}^m(\rfact1{\rot^i(\bar C)},\rfact2{\rot^i(\bar C)})
      = \frac{m}{r}
        \sum_{i=0}^{r-1}(\rfact1{\rot^i(\bar C)},\rfact2{\rot^i(\bar C)})
      = \frac{m}{r}\hcycle{\bar C}.
  \end{align*}
This proves the second equality in~\eqref{eq:bb:2}.

If $i\in\{0,\dots,r-1\}$, then the image of $(\rfact1{\rot^i(\bar
C)},\rfact2{\rot^i(\bar C)})\in\kk(\cB\odot\Gamma_1)$ by the
composition that ends in~$C_1(A,A)$ in the diagram~\eqref{eq:bbb} is
$\rfact1{\rot^i(\bar C)}\otimes \rfact2{\rot^i(\bar C)}$, which in
turn is mapped by~$\rho_2^{-1 }\circ B_1$ to
  \begin{multline*}
  1\otimes
        (1\otimes\rfact1{\rot^i(\bar C)}
        \otimes\rfact2{\rot^i(\bar C)}\otimes1)
  - 1\otimes
        (1\otimes\rfact2{\rot^i(\bar C)}
        \otimes \rfact1{\rot^i(\bar C)}\otimes 1) \\
  + \rfact1{\rot^i(\bar C)}
        \otimes(1\otimes1\otimes \rfact2{\rot^i(\bar C)}\otimes1)
  - \rfact2{\rot^i(\bar C)}\otimes
        (1\otimes1\otimes \rfact1{\rot^i(\bar C)}\otimes1).
  \end{multline*}
The last property of the morphism~$G$ described in
Proposition~\ref{prop:comp:G} implies immediately that $\id_A\otimes G_2$
vanishes on these elements, and therefore that the image under~$B$ of the
cycle~$\hcycle{\bar C}$ in~\eqref{eq:hcb} is zero. This completes the
proof of the proposition.
\end{proof}

\section{The cyclic homology of quadratic monomial algebras}

As promised, we will use now the Connes boundary map that we
described in the previous section to compute the cyclic homology of our quadratic
monomial algebras. We use for this the Connes spectral sequence that
goes from Hochschild homology to cyclic homology, and start by
recalling what we need about it.

\bigskip

There is a first quadrant homologically indexed double complex
$\BC(A,A)$ of vector spaces with
  \[
  \BC_{p,q}(A,A) = \begin{cases*}
                  C_{q-p}(A,A) & if $0\leq p\leq q$; \\
                  0 & in any other case
                  \end{cases*}
  \]
with
\begin{itemize}

\item vertical differentials
  \(
  d^v_{p,q}:\BC_{p,q}(A,A)\to \BC_{p,q-1}(A,A)
  \)
given by the differential~$d_{q-p}:C_{q-p}(A,A)\to C_{q-p-1}(A,A)$ of the
Hochschild complex $C(A,A)$ and 

\item vertical differentials
  \(
  d^h_{p,q}:\BC_{p,q}(A,A)\to \BC_{p-1,q}(A,A)
  \)
given by the Connes boundary map $B:C_{q-p}(A,A)\to C_{q-p+1}(A,A)$.

\end{itemize}
We have drawn the bottom left corner of this complex in
Figure~\ref{fig:cyclic}. The homology of the total complex
of~$\BC(A,A)$ is canonically isomorphic, according to \cite[Theorem
2.1.8]{Loday:cyclic}, to the cyclic homology~$\HC_*(A)$ of the
algebra~$A$. The standard filtration by columns of this double
complex, on the other hand, gives rise to a spectral sequence ---
usually referred to as the \newterm{Connes spectral sequence} --- that
converges canonically to~$\HC_*(A)$ and has first page such that
  \[
  E^1_{p,q} 
    = \begin{cases*}
      \HH_{q-p}(A) & if $0\leq p\leq q$; \\
      0 & in any other case.
      \end{cases*}
  \]
Moreover, the differential on this page is precisely the map induced by the
Connes boundary map on Hochschild homology,
  \[
  \begin{tikzcd}
  E^1_{p,q} \arrow[r, "d^1_{p,q}"] \arrow[d, equal]
    & E^1_{p-1,q} \arrow[d, equal]
    \\
  \HH_{q-p}(A) \arrow[r, "B"]
    & \HH_{q-p+1}(A)
  \end{tikzcd}
  \]
All this is Theorem~1.9 in the paper~\cite{LQ} by D.\,Quillen and
\mbox{J.-L.}\,Loday. We will use this spectral sequence to compute the
cyclic homology of~$A$.

\begin{figure}
  \centering
  \begin{tikzcd}[
          every matrix/.append style={name=m},
          execute at end picture={
            \begin{scope}[commutative diagrams/.cd, every arrow, every label]
              \foreach \b [remember=\b as \a (initially 1)] in {2,3,4,5,6}
                \foreach \x in {1,2,3,4,5}
                  {
                  \ifnum\numexpr 7-\x>\a\relax
                    \draw[->] (m-\a-\x) -- (m-\b-\x);
                  \fi
                  }
              \foreach \b [remember=\b as \a (initially 1)] in {2,3,4,5,6}
                \foreach \x in {2,3,4,5}
                  {
                  \ifnum\numexpr 7-\x>\a\relax
                    \draw[<-] (m-\x-\a) -- (m-\x-\b);
                  \fi
                  }
            \end{scope}
            }
          ]
    \vdots & \vdots & \vdots & \vdots & \vdots \\
    A^{\otimes5} & A^{\otimes4} & A^{\otimes3} & A^{\otimes2} & A^{\otimes1} \\
    A^{\otimes4} & A^{\otimes3} & A^{\otimes2} & A^{\otimes1} & {} \\
    A^{\otimes3} & A^{\otimes2} & A^{\otimes1} & {}   & {} \\
    A^{\otimes2} & A^{\otimes1} & {}   & {}   & {} \\
    A^{\otimes1} & {}   & {}   & {}   & {}  
  \end{tikzcd}
\caption{The cyclic complex}
\label{fig:cyclic}
\end{figure}

\bigskip

We start by computing the second page of the spectral sequence. In
view of the obvious translational symmetry of the spectral sequence,
this amounts to the determination of the homology of the complex
  \[
  \begin{tikzcd}
  \HH_0(A) \arrow[r, "B_0"]
    & \HH_1(A) \arrow[r, "B_1"]
    & \HH_2(A) \arrow[r, "B_2"]
    & \HH_3(A) \arrow[r, "B_3"]
    & \cdots
  \end{tikzcd}
  \]
and of  the cokernels of its differentials. Indeed, if we write
$\H_\dR^p(A)$ for the homology of this cochain complex at~$\HH_p(A)$,
then the second page of the Connes spectral sequence corresponding
to~$A$ has
  \[
  E^2_{p,q} = 
    \begin{cases*}
    \H_\dR^{q-p}(A) & if $0<p\leq q$; \\
    \coker B_{q-1} & if $0=p\leq q$; \\ 
    0 & in any other case.
    \end{cases*}
  \]
At this point in our work the calculation of~$\H_\dR^*(A)$ is easy:

\begin{proposition}
Let $(Q,I)$ be a quadratic monomial presentation, and let $A$ the
algebra~$\kk Q/I$ that it presents. The graded vector
space~$\H_\dR^*(A)$ is freely spanned by the homology classes of the
following elements of~$\kk(\cB\odot\Gamma)$:
\begin{itemize}

\item for each vertex~$e$ in~$Q$, the $0$-cycle $(e,e)$,

\item for each complete circuit~$C$ in~$(Q,I)$ whose length~$m$ and
period~$r$ are such that $(-1)^{(m+1)r}=1$ and $m/r=0$ in~$\kk$, the
$m$-cycle $\hcycle{\bar C}$ and the $(m-1)$-cycle $(\lfact1{\bar
C},\lfact2{\bar C})$,

\item for each cocomplete circuit~$C$ in~$(Q,I)$ of length~$m$ and
period~$r$ such that $m/r=0$ in~$\kk$, the $1$-cycle $\hcycle{\bar C}$
and the $0$-cycle $(\bar C,s(\bar C))$.

\end{itemize}
In particular, if the characteristic of the field~$\kk$ is zero, then
we have $\H_\dR^0(A)\cong E$ and $\H_\dR^p(A)=0$ for all positive
integers~$p$.
\end{proposition}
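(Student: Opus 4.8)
The plan is to read off the cohomology of the complex $(\HH_*(A),B)$ directly from the two explicit descriptions already at hand: the basis of the graded vector space $\HH_*(A)$ recorded in Corollary~\ref{coro:homology:basis:tr}, and the values of the Connes map $B$ on the elements of that basis computed in Proposition~\ref{prop:B}. No further computation with chains is needed; the whole argument is a matter of organising the existing data.

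First I would observe that Proposition~\ref{prop:B} exhibits $B$ as a map sending each basis element of Corollary~\ref{coro:homology:basis:tr} either to zero or to a scalar multiple of another basis element. Concretely, $B$ annihilates every class $(e,e)$ and every class $\hcycle{\bar C}$, whereas for each complete circuit $C$ of length $m$ and period $r$ with $(-1)^{(m+1)r}=1$ it carries $(\lfact1{\bar C},\lfact2{\bar C})$ in degree $m-1$ to $\tfrac{m}{r}\hcycle{\bar C}$ in degree $m$, and for each cocomplete circuit $C$ of length $m$ and period $r$ it carries $(\bar C,s(\bar C))$ in degree $0$ to $\tfrac{m}{r}\hcycle{\bar C}$ in degree $1$. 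Since the basis elements attached to distinct circuits have disjoint supports in $\kk(\cB\odot\Gamma)$, each basis element participates in at most one of these nontrivial pairings.

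Consequently the cochain complex $(\HH_*(A),B)$ splits, with respect to this basis, as a direct sum of one-dimensional complexes, one for each vertex of~$Q$, concentrated in degree~$0$ and with zero differential, together with two-term complexes of the shape $\kk\xrightarrow{m/r}\kk$, one for each complete circuit appearing in the basis (in degrees $m-1$ and $m$) and one for each cocomplete circuit (in degrees $0$ and $1$). The cohomology is then computed summand by summand: each one-dimensional summand contributes its generator to $\H_\dR^*(A)$, while a two-term summand $\kk\xrightarrow{m/r}\kk$ is acyclic exactly when $m/r\neq0$ in~$\kk$ and otherwise contributes both of its generators. Collecting the survivors yields precisely the list in the statement: the classes $(e,e)$ always survive; the classes $\hcycle{\bar C}$ and $(\lfact1{\bar C},\lfact2{\bar C})$ of a complete circuit survive exactly when $m/r=0$ in~$\kk$; and the classes $\hcycle{\bar C}$ and $(\bar C,s(\bar C))$ of a cocomplete circuit survive exactly when $m/r=0$ in~$\kk$. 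For the final assertion, note that every circuit has positive length, so $m/r$ is a positive integer (by Lemma~\ref{lemma:primitive}, $r$ divides $m$) and hence invertible in any field of characteristic zero; thus all two-term summands are acyclic and the only surviving classes are the $(e,e)$, one per vertex, which lie in degree~$0$ and span a space isomorphic to~$E$, giving $\H_\dR^0(A)\cong E$ and $\H_\dR^p(A)=0$ for $p>0$.

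The one point requiring genuine care is the disjointness of the matching in the second paragraph: one must check that no basis element is the target of, or is mapped to, more than one other, so that the complex really does decompose into the one- and two-term pieces above. This is the only possible obstacle, and it is resolved by the observation that the elements indexed by different circuits have disjoint supports; once this bookkeeping is in place, the passage to cohomology is purely formal.
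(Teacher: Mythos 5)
Your proposal is correct and follows exactly the route the paper takes: the paper's proof simply declares the result immediate from Corollary~\ref{coro:homology:basis:tr} and Proposition~\ref{prop:B}, and your write-up is the detailed unpacking of that same observation, with the splitting into one-term and two-term complexes $\kk\xrightarrow{m/r}\kk$ being the implicit bookkeeping the paper leaves to the reader.
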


\begin{proof}
This is immediate given the information provided by
Corollary~\ref{coro:homology:basis:tr} about~$\HH_*(A)$ and
Proposition~\ref{prop:B} about the Connes boundary map~$B$.
\end{proof}

We have a similar description for the cokernel of the Connes boundary
map:

\begin{proposition}
Let $(Q,I)$ be a quadratic monomial presentation, and let $A$ the
algebra~$\kk Q/I$ that it presents. The cokernel of the map
$B:\HH_*(A)\to\HH_*(A)$ is freely spanned by the homology classes of
the following elements of~$\kk(\cB\odot\Gamma)$:
\begin{itemize}

\item for each vertex~$e$ in~$Q$, the $0$-cycle $(e,e)$,

\item for each complete circuit~$C$ in~$(Q,I)$ whose length~$m$ and
period~$r$ are such that $(-1)^{(m+1)r}=1$ in~$\kk$, the $(m-1)$-cycle
  \(
  (\lfact1{\bar C},\lfact2{\bar C}),
  \)
and, if additionally $m/r=0$ in~$\kk$, the $m$-cycle
  \(
  \hcycle{\bar C}
  \),

\item for each cocomplete circuit~$C$ in~$(Q,I)$ of length~$m$ and
period~$r$, the $0$-cycle
  \(
  (\bar C,s(\bar C))
  \)
and, if additionally $m/r=0$ in~$\kk$, the $1$-cycle
  \(
  \hcycle{\bar C} 
  \).

\end{itemize}
\end{proposition}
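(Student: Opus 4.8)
The plan is to read the cokernel directly off the two results already established: the explicit basis of $\HH_*(A)$ in Corollary~\ref{coro:homology:basis:tr} and the computation of the Connes map on that basis in Proposition~\ref{prop:B}. First I would note that Proposition~\ref{prop:B} shows $B$ to be triangular with respect to this basis: the classes of the $0$-cycles $(e,e)$, of the $m$-cycles $\hcycle{\bar C}$ attached to complete circuits, and of the $1$-cycles $\hcycle{\bar C}$ attached to cocomplete circuits all lie in $\ker B$, whereas the two families of ``companion'' generators satisfy $B(\lfact1{\bar C},\lfact2{\bar C}) = \tfrac{m}{r}\hcycle{\bar C}$ for a complete circuit $C$ and $B(\bar C,s(\bar C)) = \tfrac{m}{r}\hcycle{\bar C}$ for a cocomplete one.

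Next I would record that, since $r$ is the period of a circuit of length~$m$, Lemma~\ref{lemma:primitive} guarantees that $r$ divides $m$, so that $m/r$ is a positive integer whose image in $\kk$ vanishes precisely when the characteristic of~$\kk$ divides it. It follows that the image of $B$ is spanned by the cycles $\hcycle{\bar C}$ --- one per complete circuit satisfying $(-1)^{(m+1)r}=1$ and one per cocomplete circuit --- for which $m/r\neq0$ in~$\kk$. Because distinct circuits furnish distinct basis vectors of $\HH_*(A)$, and because no circuit is simultaneously complete and cocomplete, these spanning vectors are pairwise distinct basis elements; hence $\Im B$ is exactly the coordinate subspace of $\HH_*(A)$ spanned by this subfamily.

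The cokernel $\HH_*(A)/\Im B$ therefore has for basis the complementary subfamily of the basis of Corollary~\ref{coro:homology:basis:tr}: all the classes $(e,e)$, all the companions $(\lfact1{\bar C},\lfact2{\bar C})$ for complete circuits and $(\bar C,s(\bar C))$ for cocomplete circuits (none of which occur in $\Im B$, being of a different type than the $\hcycle{\bar C}$), together with exactly those $\hcycle{\bar C}$ that were \emph{not} absorbed into the image, that is, those with $m/r=0$ in~$\kk$. This is precisely the list in the statement.

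I do not expect any real obstacle: the argument is pure bookkeeping once the basis and the action of $B$ are available. The only point deserving care is the clean identification of $\Im B$ with a coordinate subspace, which hinges on observing that the nonzero values of $B$ are scalar multiples of \emph{pairwise distinct} basis cycles $\hcycle{\bar C}$, so that the contributions of different circuits neither cancel nor coincide.
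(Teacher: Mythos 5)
Your argument is correct and is exactly the paper's approach: the paper's proof consists of the single remark that the statement follows at once from Corollary~\ref{coro:homology:basis:tr} and Proposition~\ref{prop:B}, and your write-up simply makes the implicit bookkeeping explicit (the divisibility $r\mid m$, the identification of $\Im B$ with the coordinate subspace spanned by the $\hcycle{\bar C}$ having $m/r\neq0$ in~$\kk$, and the resulting complementary basis of the cokernel). No gap; nothing further is needed.
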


\begin{proof}
Again, this follows at once from
Corollary~\ref{coro:homology:basis:tr} and Proposition~\ref{prop:B}.
\end{proof}

These last two propositions taken together describe the second page of
the spectral sequence. The key fact that allows us to finish our 
calculation of cyclic homology is that this spectral sequence
degenerates at that point, as we shall presently see.

\begin{theorem}\label{thm:Connesspectral}
The Connes spectral sequence 
  \[
  E^1_{p,q} \cong\HH_{q-p}(A) \underset{p}{\Longrightarrow}
        \HC_*(A)
  \]
degenerates on its second page, and we therefore have isomorphisms
  \[
  \HC_m(A) = \coker B_m\oplus\bigoplus_{i\geq0}\H_\dR^{m-2i}(A).
  \]
\end{theorem}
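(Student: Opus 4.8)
The plan is to refine the column filtration of the cyclic bicomplex $\BC(A,A)$ by a grading that the \emph{entire} bicomplex respects, so that the Connes spectral sequence splits as a direct sum of subsequences indexed by the circuits in~$\C'$, and then to prove degeneration for each summand by an elementary degree count. The point is that the two-term nature of the Hochschild homology attached to each circuit (as computed in Theorem~\ref{thm:basishomology}) leaves no room for higher differentials.

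First I would grade the Hochschild complex $C(A,A)$ by~$\C'$. A basis tensor $a_0\otimes\cdots\otimes a_p$ of $C_p(A,A)=A\otimes_EA^{\otimes_E p}$ is nonzero only when the paths $a_0,\dots,a_p$ compose cyclically, so that the word read around the necklace is either a cycle in~$Q$ or a trivial path; in either case it lies in a unique element of~$\C'$, namely the circuit containing that cyclic word. This assigns a circuit to every basis tensor and yields a decomposition $C(A,A)=\bigoplus_{C\in\C'}C(A,A)_C$. The crucial check is that both differentials of the bicomplex preserve it. The Hochschild differential only multiplies adjacent tensor factors and rotates the last factor into the first (see~\eqref{eq:d-hoch}), operations that leave the underlying cyclic word unchanged up to rotation; and the Connes map~$B$ of~\eqref{eq:B-p} only inserts trivial paths~$1$ and cyclically permutes the remaining factors, which again does not change the circuit. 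Hence $\BC(A,A)=\bigoplus_{C\in\C'}\BC(A,A)_C$ as bicomplexes, and the Connes spectral sequence is the direct sum of the spectral sequences of these summands. This chain-level grading is moreover compatible, through the comparison maps~$F$ and~$G$, with the decomposition $\HH_*(A)=\bigoplus_{C\in\C'}H_*(\kk(\cB\odot\Gamma)_C)$ underlying Theorem~\ref{thm:basishomology} and Proposition~\ref{prop:B}, which is reassuring although not logically required.

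Then I would fix a circuit $C\in\C'$ and bound the support of its subsequence. By Corollary~\ref{coro:homology:basis:tr} the $C$-part of $\HH_*(A)$ is concentrated in at most two \emph{adjacent} homological degrees: degree~$0$ alone if $C$ is a vertex, degrees~$0$ and~$1$ if $C$ is cocomplete, and degrees~$m-1$ and~$m$ if $C$ is complete of length~$m$ (and it vanishes altogether unless $(-1)^{(m+1)r}=1$ in~$\kk$). Consequently the first page $E^1_{p,q}\cong\HH_{q-p}(A)_C$ of the $C$-subsequence is supported on at most two consecutive values of the homological degree $n\coloneqq q-p$, and the same holds on every later page, each being a subquotient of~$E^1$. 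Now $d^r$ sends $E^r_{p,q}$ to $E^r_{p-r,q+r-1}$ and therefore raises $n$ by $2r-1$. For $r\geq2$ we have $2r-1\geq3>1$, so $d^r$ maps the two-degree band into a band disjoint from it and must vanish. Thus each $C$-subsequence, and hence the whole Connes spectral sequence, degenerates at~$E^2$.

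Finally, since $E^2=E^\infty$ and we work over a field, the filtration on $\HC_*(A)$ splits and each $\HC_m(A)$ is the direct sum of the surviving $E^2$-entries of the relevant total degree. Reading these off from the description of the second page recalled before the theorem — the edge terms $E^2_{0,\bullet}=\coker B_\bullet$ together with the interior terms $E^2_{p,\bullet}=\H_\dR^{\bullet}(A)$ for $p\geq1$ — assembles, in each degree, the contribution $\coker B_m$ and the tower $\H_\dR^{m-2i}(A)$ with $i\geq0$, which is exactly the asserted decomposition. The main obstacle in the whole argument is the verification that the circuit grading descends to the entire bicomplex, and in particular that the Connes map~$B$ respects it; once that is secured, degeneration is forced by the purely formal observation that each circuit feeds only two neighbouring degrees while every higher differential jumps by at least three.
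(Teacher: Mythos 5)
Your proposal is correct and follows essentially the same route as the paper: the $\C'$-grading of the whole cyclic bicomplex, the observation that each circuit contributes to at most two adjacent homological degrees, and the vanishing of $d^r$ for $r\geq 2$ because it shifts $q-p$ by $2r-1\geq 3$. Your uniform "band of width two" phrasing merely repackages the paper's three-case check (complete, cocomplete, length zero) of the incompatibility of domain and codomain.
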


It is remarkable that this description of the cyclic homology of the
algebra~$A$ is entirely similar to that of the cyclic homology of a
smooth commutative algebra that is essentially of finite type and
defined over a field of characteristic zero. The calculation of the
cyclic homology of these algebras was done by Loday and Quillen
in~\cite{LQ}, and the results appear as Proposition~2.3.7 and
Theorem~3.4.12 in Loday's book~\cite{Loday:cyclic} --- of course,
smooth algebras are non-singular objects, while our quadratic monomial
algebras are very far from that in general. The cyclic homology of
quadratic monomial algebras has been determined by Emil Sk\"{o}ldberg
\cite{Skoldberg:cyclic}, by essentially the same method that we have
used here.

\begin{proof}
Just as in~\eqref{eq:cprime} at the beginning of this chapter we let
$\C$ be the set of all circuits in the quiver~$Q$, and put $\C'
\coloneqq \C \cup \{\{e\} : e\in Q_0 \}$. There is a $\C'$-grading on
the Hochschild complex $C(A,A)$: if $C$ is an element of~$\C'$, then
the $C$-component $C(A,A)_C$ of~$C(A,A)$ is the span of all elementary
tensors of the form
$\gamma_0\otimes\gamma_1\otimes\cdots\otimes\gamma_p$ with $p\in\NN_0$
and $\gamma_1$,~$\gamma_1$,~\dots,~$\gamma_p$ paths in~$\cB$ such that
the product $\gamma_1\cdots\gamma_p$ belongs to the circuit~$C$. It is
clear from the formula~\eqref{eq:d-hoch} that $C(A,A)_C$ is indeed a
subcomplex of~$C(A,A)$ for each $C\in\C'$. Moreover, the
formula~\eqref{eq:B-p} for the Connes boundary map shows that
$B:C(A,A)\to C(A,A)[1]$ is homogeneous with respect to this
$\C'$-grading, so that the entire cyclic complex~$\BC(A,A)$ acquires
in this way a $\C'$-grading. Of course, this grading induces one on
the Connes spectral sequence and, in particular, all  the differentials of
the spectral sequence preserve it.

In proving Theorem~\ref{thm:basishomology} we computed $\HH_*(A)$ as
the homology of the complex~$\kk(\cB\odot\Gamma)$, and to do that we
used the fact that this complex also has a $\C'$-grading
$\kk(\cB\odot\Gamma)=\bigoplus_{C\in\C'}\kk(\cB\odot\Gamma)_C$. Using
the definitions of the morphisms~$F$ and~$\rho$ we can see immediately
that the composition
  \[
  \begin{tikzcd}
  \kk(\cB\odot\Gamma) \arrow[r, equal]
    &[-1.5em] A\otimes_{A^e}\cR \arrow[r, "\id_A\otimes F"]
    &[2.5em] A\otimes_{A^e}BA \arrow[r, "\rho"]
    & C(A,A) 
  \end{tikzcd}
  \]
is $\C'$-homogeneous, and since it is a quasi-isomorphism, this allows
us to describe the $\C'$-homogeneous components of the homology
of~$C(A,A)$ --- remember that Lemmas~\ref{lemma:homology:complete},
\ref{lemma:homology:cocomplete} and~\ref{lemma:homology:neither}
describe the homology of the $\C'$-homogeneous components
of~$\kk(\cB\odot\Gamma)$. Essentially the information is contained in
the statement of Theorem~\ref{thm:basishomology}. What interests us about
this now is the following: for each $C\in\C'$:
\begin{itemize}

\item we have $\HH_0(A)_C\neq0$ only if $C$ is either of length~$0$, or a
complete circuit in~$(Q,I)$ of length~$1$, or a cocomplete circuit;

\item we have $\HH_1(A)_C\neq0$ only if $C$ is either a complete
circuit in~$(Q,I)$ of length~$1$ or~$2$, or a cocomplete circuit; 

\item when $m\geq2$, we have $\HH_m(A)_C\neq0$ only if $C$ is a
complete cycle in~$(Q,I)$ whose length is either~$m$ or~$m+1$.

\end{itemize}
Since the spectral sequence is one of~$\C'$-graded vector spaces and
we can compute its page~$E^1$ in terms of $\HH_*(A)$ even as a
$\C'$-graded vector space, we can deduce that for all $r$,~$p$,~$q$
with $r\geq2$ and $0\leq p\leq q$ and all $C\in\C'$ we have that
$(E^r_{p,q})_C\neq0$ only if one of the following three conditions
holds:
\begin{itemize}

\item either $p=q$ and $C$ has length~$0$, 

\item or $0\leq q-p\leq 1$ and $C$ is cocomplete, 

\item or $0\leq q-p$ and $C$ is complete of length $q-p$ or~$q-p+1$,

\end{itemize}
simply because $(E^r_{p,q})_C$ is an iterated subquotient
of~$\HH_{q-p}(A)_C$. As a consequence of this we have that whenever
$r\geq2$ and $0\leq p\leq q$ the differential
  \[
  d^r_{p,q}:(E^r_{p,q})_C\to(E^r_{p-r,q+r-1})_C
  \]
is zero. Indeed:
\begin{itemize}

\item suppose first that $C$ is a complete circuit of length~$m$. If
the domain of the map is non-zero, then $m$ is either~$q-p$
or~$q-p+1$, and if the codomain of the map is non-zero, then $m$ is
either $q-p+2r-1$ or~$q-p+2r$: these two conditions cannot hold
simultaneously, so one of the two spaces is zero;

\item suppose next that $C$ is a cocomplete circuit. If the domain of
the map is non-zero, then $q-p$ is~$0$ or~$1$, and if the codomain is
non-zero, then $q-p+2r-1$ is~$0$ or~$1$. Again, these two conditions
are not compatible, so one of the two spaces is zero;

\item suppose finally that $C$ has length zero. If the domain of the
map is non-zero, then $p=q$, and if the codomain of the map is
non-zero, then $p-r=q+r-1$, and we see once again that one of the two
must vanish.

\end{itemize}
We thus see that the spectral sequence indeed degenerates on its
second page, and the theorem follows at once.
\end{proof}

\chapter{The cup and cap products}
\label{chapter:cup}

In this section,  we fix a 
gentle presentation~$(Q,I)$
and the corresponding algebra $A\coloneqq\kk Q/I$,  with the intention
of making explicit the associative algebra structure on the Hochschild
cohomology~$\HH^*(A)$ of~$A$ given by the cup product, and the action
of~$\HH^*(A)$ on the homology~$\HH_*(A)$ given by the cap product.

\section{The cup product}
\label{sect:cup:calculation}

There are several ways to compute the cup product on~$\HH^*(A)$. As we
computed the cohomology itself using the Bardzell resolution~$\cR$ of~$A$,
we want to do the calculation of the cup product without involving another resolution in order  to avoid having to use comparison maps. The strategy is as
follows. The complex~$\cR\otimes_A\cR$ is a projective resolution of~$A$ as
an $A$-bimodule, with augmentation~$\eta:\cR\otimes_A\cR\to A$ given by the
composition
  \[
  \begin{tikzcd}
  \cR\otimes_A\cR \arrow[r, "\epsilon\otimes\epsilon"]
    & A\otimes_AA \arrow[r, "\mu"]
    & A
  \end{tikzcd}
  \]
with $\epsilon:\cR\to A$ the augmentation of~$\cR$ over~$A$ and $\mu$ the
canonical isomorphism induced by the multiplication of~$A$. There is
therefore a morphism $\Delta:\cR\to\cR\otimes_A\cR$ of complexes of
$A$-bimodules making the diagram
  \[
  \begin{tikzcd}
  \cR \arrow[r, "\Delta"] \arrow[d, swap, "\epsilon"]
    & \cR\otimes_A\cR \arrow[d, "\eta"]
    \\
  A \arrow[r, equal]
    & A
  \end{tikzcd}
  \]
commute. If now $p$,~$q\geq0$ and $\phi:\cR_p\to A$ and $\psi:\cR_q\to A$
are a $p$- and a $q$-cochain in the complex~$\Hom_{A^e}(\cR,A)$, then the
cup product $\phi\smile\psi:\cR_{p+q}\to A$ is a $(p+q)$-cochain in the same
complex which is the composition
  \[
  \begin{tikzcd}
  \cR_{p+q} \arrow[r, "\Delta"]
    & \cR_p\otimes_A\cR_q \arrow[r, "\phi\otimes\psi"]
    & A\otimes_AA \arrow[r, "\mu"]
    & A.
  \end{tikzcd}
  \]
One can check that in this way we turn $\Hom_{A^e}(\cR,A)$ into a
differential graded algebra, and that the algebra structure on its
cohomology, which is canonically isomorphic to
$\HH^*(A)=\Ext_{A^e}^*(A,A)$, is the one given by the Yoneda product.

To carry this out, we will use the morphism $\Delta:\cR\to\cR\otimes_A\cR$
of complexes of $A$-bimodules given, for $m\geq0$ and  $\gamma=c_m\cdots c_1\in\Gamma_m$, by
  \[
  \Delta(1\otimes\gamma\otimes 1)
        = \sum_{i=0}^m
          (1\otimes c_m\cdots c_{i+1}\otimes1)
          \otimes
          (1\otimes c_{i}\cdots c_1\otimes1).
  \]
Under our
standard identifications $\Hom_{A^e}(\cR_m,A)=\kk(\Gamma_m\parallel\cB)$ of
Chapter~\ref{chapter:cohomology}, the resulting multiplication in the
complex $\kk(\Gamma\parallel\cB)$ is as follows: if $m$,~$n\geq0$ and
$(\gamma,\alpha)\in\Gamma_m\parallel\cB$ and
$(\delta,\beta)\in\Gamma_n\parallel\cB$, then
  \[
  (\gamma,\alpha)\smile(\gamma',\alpha')
        = \begin{cases*}
          (\gamma\gamma',\alpha\alpha')
                & if $\gamma\gamma'\in\Gamma_{m+n}$ and $\alpha\alpha'\in\cB$;
                \\
          0
                & in any other case.
          \end{cases*}
  \]
Using this simple formula, our description of the cocycles and coboundaries
in the complex~$\kk(\Gamma\parallel\cB)$, and a non-negligible dose of
determination, we can compute the cup product of any two of the elements in
the basis of~$\HH^*(A)$ that we described in Theorem~\ref{thm:cohomology:basis} in
Chapter~\ref{sect:corollaries}. We record the results in 
Table~\ref{tbl:cup} and use the rest of this section to calculate its entries. 

We start with an observation that singles out an exceptional situation.

\begin{lemma}\label{lemma:exception}
Let $\gamma$ be a $\Gamma$-complete path in~$\Gamma$ and $\alpha$ a
$\cB$-maximal cycle in~$\cB$, and suppose that $s(\gamma)=s(\alpha)$. If
one of~$\gamma$ or~$\alpha$ has length~$1$, then the quiver has exactly one
vertex and one arrow. 
\end{lemma}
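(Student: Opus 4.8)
The plan is to reduce both cases to a single local configuration---a loop $b$ at the common vertex $v\coloneqq s(\gamma)=s(\alpha)$ with $b^{2}\in R$, together with a $\cB$-maximal cycle based at $v$---and then to read off from gentleness that $b$ is the only arrow incident to $v$; connectedness of $Q$ will then force $Q$ to consist of the single vertex $v$ and the single loop $b$.

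First I would record what the $\cB$-maximality of a cycle $\alpha=a_{n}\cdots a_{1}$ at $v$ says, writing $a_{1}$ for its first arrow and $a_{n}$ for its last. Because $\alpha$ cannot be prolonged on the target side, every arrow $d$ with $s(d)=v$ satisfies $d\,a_{n}\in R$ (otherwise $d\alpha$ would be a strictly longer element of $\cB$), and because it cannot be prolonged on the source side, every arrow $c$ with $t(c)=v$ satisfies $a_{1}c\in R$. Next comes the case split. If $\gamma$ has length $1$ then it is a loop $b$ at $v$, and completeness ($\gamma^{2}\in\Gamma$) forces $b^{2}\in R$; here $\alpha$ is the given $\cB$-maximal cycle. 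If instead $\alpha$ has length $1$ then it is a loop at $v$, and $\cB$-maximality forces $\alpha^{2}\in R$, since otherwise $\alpha$ would be a proper subpath of $\alpha^{2}\in\cB$; so I set $b\coloneqq\alpha$ and take $\alpha$ itself as the $\cB$-maximal cycle. In either case I am left with a loop $b$ at $v$ with $b^{2}\in R$ and a $\cB$-maximal cycle at $v$.

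The heart of the argument is then entirely local at $v$, and rests on the fact that the single relation $b^{2}=bb\in R$ is simultaneously a relation having $b$ as its first factor and one having $b$ as its second factor, while the gentle axiom~(\emph{c}) allows at most one relation of each of these two shapes at the arrow $b$. Applying target-side maximality with $d=b$ gives $b\,a_{n}\in R$; as $b^{2}\in R$ already occupies the unique slot for a relation with $b$ as second factor, axiom~(\emph{c}) forces $a_{n}=b$. Feeding this back, target-side maximality now reads $d\,b\in R$ for every arrow $d$ leaving $v$, and since $b^{2}\in R$ occupies the unique slot for a relation with $b$ as first factor, $b$ is the only arrow leaving $v$. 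In particular $a_{1}=b$, so source-side maximality gives $b\,c\in R$ for every arrow $c$ entering $v$; the only arrow $c$ with $b\,c\in R$ being $b$ itself, $b$ is also the only arrow entering $v$. Thus the loop $b$ is the unique arrow incident to $v$, no further vertex can be joined to $v$, and connectedness gives $Q_{0}=\{v\}$ and $Q_{1}=\{b\}$, as claimed.

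The step I expect to require the most care is this last, local one: one must keep straight which half of axiom~(\emph{c})---the bound on relations having $b$ as first factor versus as second factor---is being used, and match it correctly against target-side versus source-side extendability of the $\cB$-maximal cycle. The conceptual crux, which makes the whole thing work, is that the lone relation $b^{2}\in R$ saturates both slots that axiom~(\emph{c}) grants to the loop $b$, leaving no room for any additional arrow at $v$.
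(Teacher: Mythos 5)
Your proof is correct and uses essentially the same mechanism as the paper's: $\cB$-maximality of the cycle at the common basepoint forces relations $da_n,\,a_1c\in R$ for all incident arrows, the uniqueness clause of the gentleness axioms then collapses every arrow at that vertex onto a single loop $b$ with $b^2\in R$, and connectedness finishes. The only (harmless) difference is organizational: you first reduce both cases to the local configuration of a loop with $b^2\in R$, whereas the paper first matches the first and last arrows of $\gamma$ with those of $\alpha$; your write-up is in fact more detailed than the paper's rather terse argument.
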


\begin{proof}
The maximality of~$\alpha$ implies that $\alpha a$, $a\alpha \in I$ for
every arrow $a$. The gentleness of $(Q,I)$ implies that the first and
last arrow of~$\gamma$ are the first and last arrow of~$\alpha$. Since
one of the two paths has length one, we have $\alpha=\gamma\in
Q_1$ and $\alpha$ is the only arrow in~$Q$.
\end{proof}

\begin{table}
  \centering
  \setlength\extrarowheight{2pt}
  \setlength{\tabcolsep}{7pt}
  \setlength{\aboverulesep}{0pt}
  \setlength{\belowrulesep}{0pt}
  \colorlet{empty}{gray!40}
  \colorlet{max}{red!10}
  \colorlet{glue}{blue!15}
  \begin{tabular}{c@{\hskip1.3em}*{7}{c}}
  $\smile$
    & $(s(\alpha),\alpha)$
    & $\cycle{\alpha}$
    & $(c,c\delta)$
    & $(c,c)$
    & $(\gamma,\alpha)$
    & $\cycle{C}$
    & $(bC,b)$
    \\ \toprule
  $(s(\alpha),\alpha)$
    & \cellcolor{max} $0_{\text{max}}$
    & \cellcolor{max} $0_{\text{max}}$
    & \cellcolor{max} $0_{\text{max}}$
    & \cellcolor{max} $0_{\text{max}}$
    & 0 \eqref{it:cup:cos}
    & 0 \eqref{it:cup:sec}
    & \cellcolor{max} $0_{\text{max}}$
    \\ \midrule
 $\cycle{\alpha}$
    &  \cellcolor{empty}
    & \eqref{it:cup:cycle0:a}
    & \eqref{it:cup:cycle0:a}
    & \eqref{it:cup:cycle0:b}
    & \cellcolor{max} $0_{\text{max}}$
    & $0$ \eqref{it:cup:b}
    & $0$ \eqref{it:cup:b}
    \\ \midrule
 $(c,c\delta)$
   &  \cellcolor{empty}
   &  \cellcolor{empty}
   & \cellcolor{glue} $0_{\text{glue}}$
   & \cellcolor{glue} $0_{\text{glue}}$
   & \cellcolor{max} $0_{\text{max}}$
   & $0$  \eqref{it:cup:C:c}
   & \cellcolor{glue} $0_{\text{glue}}$
   \\ \midrule
  $(c,c)$
    & \cellcolor{empty}
    &  \cellcolor{empty}
    &  \cellcolor{empty}
    & \cellcolor{glue} $0_{\text{glue}}$
    & \cellcolor{max} $0_{\text{max}}$
    & \eqref{it:cup:C:d}
    & \cellcolor{glue} $0_{\text{glue}}$
    \\ \midrule
  $(\gamma,\alpha)$
    & \cellcolor{empty}
    &  \cellcolor{empty}
    &  \cellcolor{empty}
    & \cellcolor{empty}
    & \cellcolor{max} $0_{\text{max}}$
    & \cellcolor{max} $0_{\text{max}}$
    & \cellcolor{max} $0_{\text{max}}$
    \\ \midrule
  $\cycle{C}$
    & \cellcolor{empty}
    &  \cellcolor{empty}
    &  \cellcolor{empty}
    & \cellcolor{empty}
    & \cellcolor{empty}
    & \eqref{it:cup:C:e}
    & \eqref{it:cup:C:e}
    \\ \midrule
  $(bC,b)$
    & \cellcolor{empty}
    &  \cellcolor{empty}
    &  \cellcolor{empty}
    & \cellcolor{empty}
    & \cellcolor{empty}
    & \cellcolor{empty}
    & \cellcolor{glue} $0_{\text{glue}}$
    \\ \bottomrule
  \end{tabular}
\bigskip
\caption{The cup product of elements of our  basis of~$\HH^*(A)$, for $Q$
not a quiver with one vertex and one arrow.}
\label{tbl:cup}
\end{table}

Let us suppose that the quiver is not a loop and go through the entries of
Table~\ref{tbl:cup}. We will deal with that exceptional case later.
\begin{enumerate}[label=(\roman*), ref=\roman*]

\item The element~$\one$ is clearly the unit for the cup product.

\item The entries in the table marked $0_{\text{max}}$ correspond to
products of basis elements which vanish because one of the two factors is
the class of an element of~$\Gamma\parallel\cB$ that has either its first
component $\Gamma$-maximal or its second component $\cB$-maximal, and the
other factor is a linear combination of elements of~$\Gamma\parallel\cB$
all of which have in that same position paths of positive length. Then the
product is  zero.

\item The entries marked $0_{\text{glue}}$ correspond to products in which
the first factor is an element of~$\Gamma\parallel\cB$ whose two components
have positive length and start in the same arrow, and in which the second
factor is an element of~$\Gamma\parallel\cB$ whose components have positive
length and end in the same arrow.

\item  Let $\alpha$ be a cocomplete cycle in $\Crep(\cB)$ of period $r$.
\begin{enumerate}[label=(\alph*), ref=\theenumi.\alph*]

\item\label{it:cup:cycle0:b} Let $c$ be an arrow in~$Q_1\setminus T$. If
$c$ is one of the arrows in the cycle~$\alpha$, then there is exactly one
$i\in\{0,\dots,r-1\}$ such that $c$ is the first arrow in~$\rot^i(\alpha)$,
and then
  \[
  \cycle{\alpha} \smile (c,c) = (c,c\rot^i(\alpha))=(c,c\delta),
  \]
with $\delta=\rot^i(\alpha)$. If the cycle~$\alpha$ does not go through the
arrow~$c$, we have that 
  \[
  \cycle{\alpha} \smile (c,c) = 0.
  \]
        
\item\label{it:cup:cycle0:a} If $\delta$ is another element of
$\Crep(\cB)$, then either $\alpha$ and $\delta$ are powers of the same
primitive cycle, then $\alpha\delta$ is in $\Crep(\cB)$  and 
  \begin{align*}
  \cycle{\alpha}\smile\cycle{\delta} &= \cycle{\alpha\delta}, 
  &
  \cycle{\alpha}\smile (c,c\delta) &= (c,c\alpha\delta),
\intertext{with~$c$ the first arrow in~$\delta$, or they are not and}
  \cycle{\alpha}\smile\cycle{\delta} &= 0, 
  &
  \cycle{\alpha}\smile (c,c\delta) &= 0
  \end{align*}	

\end{enumerate}

\item\label{it:cup:cos} Let $\alpha$ be a $\cB$-maximal path and let
$(\gamma,\beta)\in\Gamma\parallel\cB$ be such that $\gamma$ is
$\Gamma$-maximal and $\gamma$ and~$\beta$ neither begin nor end with the
same arrow. If $s(\alpha)\neq t(\gamma)$, then 
\begin{equation}\label{eq:ss}
(s(\alpha),\alpha)\smile(\gamma,\beta) = 0.
\end{equation}
Suppose now that $s(\alpha)=t(\gamma)$, and let $a$ be the first arrow
in~$\alpha$ and $b$ the last one in~$\gamma$. If the length of~$\beta$
is~$0$, then because $\alpha$ is~$\cB$-maximal the path $ab$ has to be
in~$R$, and because $\gamma$ is $\Gamma$-maximal that path cannot be in~$R$:
this is a contradiction, and we see that $\beta$ necessarily has positive length.
Thus \eqref{eq:ss} also holds because $\alpha$ is~$\cB$-maximal.

\item Let $C\in\Crep(\Gamma)$ and let $m$ be the length and $r$ the
period of~$C$.
\begin{enumerate}[label=(\alph*), ref=\theenumi.\alph*]

\item\label{it:cup:sec} Let $\alpha$ be a $\cB$-maximal cycle. If
$i\in\{0,\dots,r-1\}$, then either we have that $s(\rot^i(C))\neq
s(\alpha)$ and 
  \[
  (s(\alpha),\alpha)\smile(\rot^i(C),s(\rot^i(C))) = 0,
  \]
or $s(\rot^i(C))=s(\alpha)$. Suppose that we are in this last case. If one
of $C$ or~$\alpha$ had length~$1$, then Lemma~\ref{lemma:exception} would
tell us that the quiver~$Q$ is the one we have excluded: both paths
therefore have length at least~$2$.
Let $a$ and $b$ be the first and last arrows
in~$\alpha$, and let $f$ and~$g$ be the first and last arrows
in~$\rot^i(C)$. Since $\alpha$ is $\cB$-maximal, we must have $ag\in R$,
and since $\rot^i(C)$ is a $\Gamma$-complete cycle, that $a=f$. Similarly,
the maximality of~$\alpha$ implies that $fb\in R$ and then $b=g$. 
As both $\alpha$ and~$C$ have length at least~$2$, there are
paths $\delta$ and~$\zeta$ such that $\alpha=b\zeta a$ and
$\rot^i(C)=b\delta a$, and
  \[
  (s(\alpha),\alpha)\smile(\rot^i(C),s(\rot^i(C))) 
        = (b\delta a,b\zeta a) \equiv 0,
  \]
as the last pair is the coboundary of~$(\delta a,\zeta a)$.
We thus conclude that, apart from the exceptional case, we have
  \[
  (s(\alpha),\alpha)\smile\cycle{C} \equiv 0.
  \]

\item\label{it:cup:b} Let $\alpha$ be a cocomplete cycle in $\Crep(\cB)$
and let $r'$ be its period.
If $s(\rot^i(C))\neq s(\rot^j(\alpha))$ for all $i$ and~$j$,
then we clearly have that 
  \begin{align*}
  \cycle{\alpha}\smile \cycle{C} &=0,
  & 
  \cycle{\alpha}\smile(bC,b)&= 0,
  \end{align*}
with~$b$ the first arrow in~$C$. Let us suppose that, on the contrary, there are
integers $i\in \{0,...,r-1\}$ and $j\in \{0,...,r'-1\}$ such that
  \(
  s(\rot^i(C))= s(\rot^j(\alpha)).
  \)
As the algebra is gentle, the only possibility is that~$\rot^i(C)$ and $\rot^j(\alpha)$ start with the same arrow~$a$. Then, 
taking into account that either $m$ is even or that characteristic of $\kk$ is 2, we have 
 \hspace{6em} \begin{flalign*}
  \cycle{\alpha}\smile \cycle{C} 
  &  =(-1)^{mi} \cdot(\rot^i(C), \rot^j(\alpha)) \\
     & \;\;\;\;\; +(-1)^{m(i+1)}\cdot(\rot^{i+1}(C), \rot^{j+1}(\alpha))\\
   & = (\rot^i(C), \rot^j(\alpha))  + (\rot^{i+1}(C), \rot^{j+1}(\alpha))\\ 
\shortintertext{and}\\
 \;\;\; \;\; \cycle{\alpha}\smile (bC,b)
&  \equiv (-1)^{m(i+1)}\cdot(a\rot^{i+1}(C), a\rot^{j+1}(\alpha))\\
 & = (a\rot^{i+1}(C), a\rot^{j+1}(\alpha))
  \end{flalign*}
are  coboundaries, so that the classes~$\cycle{\alpha}\smile \cycle{C}$
and~$\cycle{\alpha}\smile (bC,b)$ are zero in cohomology.

\item\label{it:cup:C:c}  Let $\delta$ be a cocomplete cycle in $\Crep(\cB)$
and $c$ its first arrow. Using  \eqref{it:cup:cycle0:b} and \eqref{it:cup:b}
we see that
  \[
  (c,c\delta)\smile \cycle{C}= (c,c)\smile \cycle{\delta} \smile \cycle{C}= 0.
  \]

\item\label{it:cup:C:d} Let now $c$ be an arrow in~$Q_1\setminus T$. If $c$
is one of the arrows in the cycle~$C$, then there is exactly one
$i\in\{0,\dots,r-1\}$ such that $c$ is the first arrow in~$\rot^i(C)$, and
then
  \[
  (c,c) \smile \cycle{C} = (-1)^{mi} \cdot(c\rot^i(C),c) \equiv (bC,b) ,
  \]
with $b$ the first arrow of~$C$, because since $C\in\Crep(\Gamma)$ either
$m$ is even or the characteristic of~$\kk$ is~$2$. On the other hand, if
the cycle~$C$ does not go through the arrow~$c$, we clearly have that
  \[
  (c,c) \smile \cycle{C} = 0.
  \]

\item\label{it:cup:C:e} If $D$ is another element of~$\Crep(\Gamma)$
then either $C$ and $D$ are powers of the same primitive cycle, so
that $CD\in\Crep(\Gamma)$ and 
  \begin{align*}
  \cycle{C}\smile\cycle{D} &= \cycle{CD}, 
  &
  (bC,b)\smile\cycle{D} &= (b,bCD),
\intertext{with~$b$ the first arrow in~$C$, or they are not and}
  \cycle{C}\smile\cycle{D} &= 0, 
  &
  (bC,b)\smile\cycle{D} &= 0.
  \end{align*}

\end{enumerate}
\end{enumerate}

Having gone through all the entries in Table~\ref{tbl:cup}, we can make the
following useful observation: a product of elements of our basis
of~$\HH^*(A)$ is either zero or an element of that
basis.

\begin{remark}\label{rem:strict-commutativity}
In general, if $\Lambda$ is an arbitrary algebra and the characteristic of
the ground field~$\kk$ is not~$2$, then with respect to the cup product
$\HH^*(\Lambda)$ is a \emph{strictly} graded-commutative algebra ---~it is
graded-commutative and, moreover, the square of an homogeneous element of
odd order is zero~--- but if the characteristic is~$2$ then
$\HH^*(\Lambda)$ may be only graded-commutative but not strictly so.  Our
calculations above show that we have an example of such a not strictly graded-commutative algebra in characteristic 2 whenever there is a $\Gamma$-complete cycle~$C$
in the presentation~$(Q,I)$ of odd length, then $\HH^*(A)$ is not strictly
graded-commutative: the square of~$\cycle{C}$, a non-zero class of
odd degree, is $\cycle{C^2}$, and this is not zero.
\end{remark}

We now consider the cases which
we excluded above.

\begin{remark}\label{rem:exception}
Let us suppose that the quiver $Q$ has exactly one vertex and one
arrow~$a$, so in particular the spanning tree~$T$ is empty.
If the presentation~$(Q,I)$ is \fd gentle, we have~$a^2\in R$, the
unique $\cB$-maximal path is~$a$, there are no $\Gamma$-maximal paths.
Depending on the characteristic of the ground field, we have two cases:
\begin{itemize}

\item If the characteristic of~$\kk$ is not~$2$, then
$\Crep(\Gamma)=\{a^{2l}:l\geq1\}$,
the vector space~$\HH^0(A)$ is freely spanned by~$\one$
and~$(s(a),a)$, and for each $k\geq1$ the vector space $\HH^k(A)$ is
freely spanned by~$(a^k,a)$ if $k$ is odd, and by $\cycle{a^k}$ if
$k$ is even. For all integers~$m$,~$n\geq1$ we have that
  \begin{align*}
  & (s(a),a)\smile(s(a),a) = 0,
  & 
  & (s(a),a)\smile \cycle{a^{2m}} = (a^{2m},a)
  \\
  & (s(a),a)\smile(a^{2n-1},a) =0 
  &
  & \cycle{a^{2m}} \smile \cycle{a^{2n}} 
              = \cycle{a^{2(m+n)}}
  \\
  & \cycle{a^{2m}}\smile(a^{2n-1},a) 
                = (a^{2(m+n)-1},a)
  &
  & (a^{2m-1},a) \smile (a^{2n-1},a) = 0.
  \end{align*}

\item If the characteristic of~$\kk$ is~$2$, then
$\Crep(\Gamma)=\{a^{l}:l\geq1\}$, the vector space~$\HH^0(A)$ is freely
spanned by~$\one$ and~$(s(a),a)$, and for each $m\geq1$ the vector
space~$\HH^m(A)$ by~$\cycle{a^m}$ and by $(a^m,a)$. For all integers
$m$,~$n\geq1$ we have that
  \begin{align*}
  &  (s(a),a)\smile(s(a),a) = 0, 
  && (s(a),a)\smile\cycle{a^m} = (a^m,a), \\
  &  (s(a),a)\smile(a^m,a) = 0, 
  && \cycle{a^m}\smile\cycle{a^n} = \cycle{a^{m+n}}, \\
  &  \cycle{a^m}\smile(a^n,a) = (a^{m+n},a), 
  && (a^m,a) \smile (a^n,a) = 0.
  \end{align*}
\end{itemize}
On the other hand, if the presentation $(Q,I)$ is  gentle and not \fd
gentle, then there are no relations,  there are no $\cB$-maximal paths and
the unique $\Gamma$-maximal path is~$a$. Then
$\Crep(\cB)=\{a^{l}:l\geq1\}$, 
\begin{itemize}

\item the vector space~$\HH^0(A)$ is freely
spanned by~$\one$ and the elements~$\cycle{a^k}$, one for each $k>0$;

\item the vector space~$\HH^1(A)$ is freely spanned by the pairs~$(a,a^k)$, one for each
$k\geq0$; 

\item for all $m>1$ we have $\HH^m(A)=0$. 

\end{itemize}
Now it is clear that
\begin{align*}
	& \cycle{\alpha^m}\smile  \cycle{\alpha^n}
                = \cycle{\alpha^{m+n}},
	&&  \cycle{\alpha^m}\smile (a,s(a))=(a,a^{m}),\\ 
	&  \cycle{\alpha^m}\smile (a,a^n)=(a,a^{m+n}),
	&& (a,a^n)\smile (a,a^m)= 0
\end{align*}
for all integers $m$,~$n\geq1$. 
\end{remark}

\section{A presentation for the cohomology algebra}
\label{sect:cup:presentation}

Our next task is to exhibit a presentation of the
algebra~$\HH^*(A)$. We let $\Crepprim(\Gamma)$ be the set of
those elements of~$\Crep(\Gamma)$ that are not proper powers of another
element of~$\Crep(\Gamma)$.  The elements of~$\Crepprim(\Gamma)$ are the
primitive elements of~$\Crep(\Gamma)$ when the characteristic of~$\kk$
is~$2$. But when the characteristic of $\kk$ is not~$2$, the elements of $\Crepprim(\Gamma)$ are the primitive
elements of~$\Crep(\Gamma)$ of even length together with the squares of the
primitive $\Gamma$-complete cycles of odd length  in~$(Q,I)$.
Similarly, we let $\Crepprim(\cB)$ denote the set of elements
of~$\Crep(\cB)$ that are not proper powers of another element
of~$\Crep(\cB)$ --- which is empty if the presentation~$(Q,I)$ is \fd gentle.

\bigskip

We start with a simple consequence of the  gentleness of our
presentation.

\begin{lemma}
The sets $\Crepprim(\Gamma)$ and $\Crepprim(\cB)$ are finite,
no two elements in the same set have an arrow in common, and 
  \begin{gather*}
  \Crep(\Gamma)=\{C^k:C\in\Crepprim(\Gamma),k\geq1\} \\
\shortintertext{and}
  \Crep(\cB)=\{\alpha^k:\alpha\in\Crepprim(\cB),k\geq1\}.
  \end{gather*}
\end{lemma}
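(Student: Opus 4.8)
The plan is to reduce everything to the explicit way in which the sets $\Crep(\cB)$ and $\Crep(\Gamma)$ were assembled: by fixing one representative for each \emph{primitive} cocomplete (resp.\ complete) circuit and then closing off under taking positive powers, with the caveat that for $\Crep(\Gamma)$ in characteristic different from~$2$ only the powers of even length survive. Writing $P_\cB$ and $P_\Gamma$ for the chosen sets of primitive representatives, I would first prove the geometric assertions (finiteness and arrow-disjointness) directly for $P_\cB$ and $P_\Gamma$, and then read off the two displayed equalities by elementary bookkeeping with the unique factorisation of a cycle as a power of a primitive one provided by Lemma~\ref{lemma:primitive}.

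For finiteness and arrow-disjointness the key is a rigidity property forced by gentleness. If $\alpha=a_m\cdots a_1$ is a cocomplete cycle then $\alpha^2\in\cB$, so every consecutive product $a_{i+1}a_i$ avoids~$I$; condition~(\emph{b}) of gentleness then says that each arrow has at most one legal successor and at most one legal predecessor inside such a cycle, so $\alpha$ is completely determined by any single one of its arrows. The same argument applies to a complete cycle $C=c_m\cdots c_1$, where now $C^2\in\Gamma$ forces each $c_{i+1}c_i$ to lie in~$R\subseteq I$ and condition~(\emph{c}) supplies the unique successor and predecessor. Consequently two cocomplete (resp.\ complete) cycles that share an arrow are rotations of one another, i.e.\ belong to the same circuit; distinct elements of $P_\cB$ (resp.\ $P_\Gamma$) therefore have no arrow in common, and since $Q$ has only finitely many arrows each set is finite. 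Passing to a power of a cycle does not change the set of arrows it traverses, so this arrow-disjointness is inherited by $\Crepprim(\cB)$ and $\Crepprim(\Gamma)$ no matter which powers end up being selected, which settles the first two assertions of the lemma.

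For the power decompositions I would argue that an element of $\Crep(\cB)$ fails to be a proper power of another element exactly when it is one of the chosen primitives: writing any candidate root as a power of a primitive representative and invoking the uniqueness in Lemma~\ref{lemma:primitive} contradicts primitivity unless the exponent is~$1$. This gives $\Crepprim(\cB)=P_\cB$ and hence $\Crep(\cB)=\{\alpha^k:\alpha\in\Crepprim(\cB),\,k\geq1\}$, with the arrow-disjointness above guaranteeing there are no coincidences among the powers. The analogous statement for $\Gamma$ is immediate when the characteristic is~$2$, since there $\Crep(\Gamma)=\Crep^\circ(\Gamma)$ and the same reasoning yields $\Crepprim(\Gamma)=P_\Gamma$.

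The one genuinely delicate point, and the step I expect to be the main obstacle, is the characteristic-$\neq2$ case for $\Gamma$, where only even-length powers belong to $\Crep(\Gamma)$. Here I must match the bookkeeping to the description recalled just before the lemma: a primitive complete cycle $C_0$ of \emph{even} length contributes all of its powers and is again primitive in $\Crep(\Gamma)$, whereas for a primitive complete cycle $C_0$ of \emph{odd} length only the even powers $C_0^{2j}$ lie in $\Crep(\Gamma)$ and the \emph{square} $C_0^2$ must play the role of the generator. The subtle verification is that $C_0^2$ is not a proper power of any element of $\Crep(\Gamma)$ --- the only candidate root would be $C_0$ itself, which is excluded from $\Crep(\Gamma)$ by its odd length --- while $C_0^{2j}=(C_0^2)^j$ for $j\geq2$ plainly is. Once this is checked, $\Crepprim(\Gamma)$ is exactly the primitive even-length representatives together with the squares of the odd-length ones, and taking all positive powers of these recovers precisely $\Crep(\Gamma)$, completing the proof.
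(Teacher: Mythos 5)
Your proof is correct and follows essentially the same route as the paper's: gentleness forces a unique successor and predecessor for each arrow inside a cocomplete (resp.\ complete) cycle, so two primitive such cycles sharing an arrow are conjugate, which yields both the arrow-disjointness and, by pigeonhole on the finitely many arrows, the finiteness. The paper dismisses the power decompositions as ``immediate''; your careful bookkeeping of the characteristic-$\neq2$ case for $\Crepprim(\Gamma)$ simply makes explicit what the paper had already recorded in the paragraph defining that set.
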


\begin{proof}
If the set~$\Crepprim(\Gamma)$ were infinite, then two of its elements
would start with the same arrow, and this is impossible because the
presentation~$(Q,I)$ is gentle. Similarly, if $C$ and~$D$ are two
distinct elements of~$\Crepprim(\Gamma)$ which share an arrow, then the
gentleness of the presentation implies that $C$ and~$D$ are
powers of conjugate primitive $\Gamma$-complete cycles, and this is a contradiction,
for $C$ and~$D$ are primitive and not conjugate. This proves the first two
claims of the lemma for $\Crepprim(\Gamma)$, and the third one is
immediate. By symmetry we obtain the result for $\Crepprim(\cB)$.
\end{proof}

Next, we use our calculation of the products of pairs of elements of the
basis of~$\HH^*(A)$ to exhibit a generating set for that cohomology as an
algebra:

\begin{proposition}\label{prop:G}
The set~$\mathscr{G}$ of cohomology
classes of the following cocycles of~$\kk(\Gamma\parallel\cB)$
is a generating set for the algebra~$\HH^*(A)$:
\begin{itemize}

\item The pairs $(s(\alpha),\alpha)$ with $\alpha$ a $\cB$-maximal cycle
in~$(Q,I)$.

\item The sums $\cycle{\alpha}$ with $\alpha\in\Crepprim(\cB)$.

\item The pairs $(c,c)$ with $c$ an arrow in the complement of the spanning
tree~$T$.

\item The pairs $(\gamma,\alpha)$ with $\gamma$ a $\Gamma$-maximal element
of~$\Gamma$ and $\gamma$ and~$\alpha$ neither beginning or ending with the
same arrow.

\item The sums $\cycle{C}$ with $C\in\Crepprim(\Gamma)$.
\end{itemize}
\end{proposition}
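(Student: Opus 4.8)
The plan is to combine the two tools already in place: Theorem~\ref{thm:cohomology:basis} furnishes an explicit homogeneous basis of~$\HH^*(A)$, and the calculations recorded in Table~\ref{tbl:cup}---supplemented by Remark~\ref{rem:exception} for the quiver with a single vertex and a single loop---show that the cup product of any two basis elements is either zero or again a basis element. It follows that the subalgebra generated by~$\mathscr{G}$ is precisely the span of those basis elements that arise as iterated cup products of elements of~$\mathscr{G}$, so the whole task reduces to exhibiting each of the eight families \ref{gen:one}--\ref{gen:Gamma-plus} of Theorem~\ref{thm:cohomology:basis} as such a product, with the empty product accounting for the unit.

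Several families need no work: \ref{gen:one} is the unit~$\one$, and the pairs in \ref{gen:B-max}, \ref{gen:fundamental} and \ref{gen:clean} already belong to~$\mathscr{G}$. The two families of sums are obtained as cup powers. By the lemma preceding the proposition, every $\alpha\in\Crep(\cB)$ is a power~$\beta^k$ of a unique $\beta\in\Crepprim(\cB)$, and item~\ref{it:cup:cycle0:a} gives $\cycle{\beta^k}=\cycle{\beta}^{\smile k}$; this settles \ref{gen:B-sum}. Similarly every complete circuit underlying \ref{gen:Gamma-sum} is a power of an element of~$\Crepprim(\Gamma)$---one passes to the square of the underlying primitive cycle when it has odd length, which is exactly what the parity convention defining~$\C(\Gamma)$ allows---so item~\ref{it:cup:C:e} presents each $\cycle{C}$ as a cup power of a generator $\cycle{D}$.

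The families \ref{gen:B-plus} and \ref{gen:Gamma-plus} of ``glued'' pairs $(c,c\alpha)$ and $(bC,b)$ are where the real difficulty sits. By items~\ref{it:cup:cycle0:b} and~\ref{it:cup:C:d} one recovers such a pair as a product $\cycle{\alpha}\smile(c,c)$ or $(c,c)\smile\cycle{C}$, but only when the diagonal class $(c,c)$ is a generator, that is when $c\in Q_1\setminus T$; there is no reason for the distinguished first arrow of the chosen representative cycle to avoid the spanning tree~$T$. The key to getting around this is that $T$ is acyclic, so every cocomplete or complete cycle must use at least one arrow $c\in Q_1\setminus T$, and multiplying by that generator yields the pair attached to the rotation $\rot^i(\alpha)$ (respectively $\rot^i(C)$) that begins with~$c$. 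One then appeals to the degree-one and degree-$m$ coboundary relations isolated in the proofs of Propositions~\ref{prop:hh:1} and~\ref{prop:hh:m}---the statement that all rotations of such a glued pair are cohomologous---to identify the product with the basis element indexed by the chosen representative; for \ref{gen:Gamma-plus} this identification is already carried out inside item~\ref{it:cup:C:d}. I expect this reconciliation of the tree- and representative-dependence, carried out uniformly across the characteristic-$2$ and characteristic-not-$2$ parity cases, to be the only genuinely delicate step; once it is in place every basis element has been written through~$\mathscr{G}$ and the proposition follows.
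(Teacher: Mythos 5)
Your proposal is correct and follows essentially the same route as the paper: reduce to showing each basis element of Theorem~\ref{thm:cohomology:basis} lies in the subalgebra generated by~$\mathscr{G}$, obtain the sums $\cycle{\alpha^k}$ and $\cycle{C^k}$ as cup powers of primitive generators, and recover the glued pairs $(c,c\alpha)$ and $(bC,b)$ by multiplying by a diagonal class $(c,c)$ with $c$ an arrow of the cycle lying outside the spanning tree, identifying the result with the chosen representative via the coboundary relations already recorded in item~\ref{it:cup:C:d}. The only difference is one of emphasis --- you spell out the rotation/representative reconciliation that the paper's proof leaves implicit --- but the argument is the same.
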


\begin{proof}
To show that the set~$\mathscr{G}$ generates the algebra~$\HH^*(A)$ it is
enough to show that each element in the basis described at the beginning of
Section~\ref{sect:corollaries} is generated by it. This is obvious
for the basis elements in~\ref{gen:one}, \ref{gen:B-max},
\ref{gen:fundamental}, and \ref{gen:clean}. 
If $C$ is an element of~$\Crep(\Gamma)$, then there is a
$D\in\Crepprim(\Gamma)$ and an integer~$k\geq1$ such that $C=D^k$, so that
$\cycle{C}=\cycle{D}^{\smile k}$. On the other hand, since~$C$ is an
oriented cycle in the quiver~$Q$, there is an arrow~$c\in Q_1\setminus T$
that appears in~$C$, and then $(c,c)\smile\cycle{D^k}=(bC,b)$, with $b$ the
first arrow in~$D$. Similarly, we can obtain the elements
in~\ref{gen:B-sum} and~\ref{gen:B-plus} from cocomplete cyles~$\alpha\in
\Crepprim(\cB)$ and arrows~$c\in Q_1\setminus T$.
\end{proof}

\begin{remark}
The set~$\mathscr{G}$ of Proposition~\ref{prop:G} generates~$\HH^*(A)$
minimally except when  
  \begin{equation}\label{eq:exception}
  \claim[0.75]{the quiver $Q$ has one vertex and one loop $a$, and either
  $a^2\notin I$, or $a^2\in I$ and the characteristic of the ground field
  is~$2$.}
  \end{equation}
This can be checked by inspection: when we are not in the situation
of~\eqref{eq:exception},
none of the elements in~$\mathscr{G}$ is a linear combination of products
of others. In the exceptional case~\eqref{eq:exception}, on the other hand, 
we have that $(s(a),a)\smile (a,s(a))=(a,a)$, so that the generator~$(a,a)$
listed in the proposition is not really needed.
\end{remark}

Finally, the last step is to write down a sufficient set of relations that 
present the cohomology algebra. In doing that we will make use of the
following observation:

\begin{lemma}\label{lemma:kernel}\pushQED{\qed}
Suppose that $f:\Lambda\to\Omega$ is a morphism of algebras, and that $B$
and~$B'$ are bases of~$\Lambda$ and~$\Omega$, respectively. If $f$ maps
each element of~$B$ to scalar multiple of an element of~$B'$, then the
kernel $I\coloneqq\ker f$ is the linear span of its subset 
  \begin{equation}\label{eq:kernel}
  \{x-ty:x,y\in B,\,x\neq y,\,t\in\kk\} \cap I. 
  \end{equation}
\end{lemma}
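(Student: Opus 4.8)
The plan is to analyze $f$ through its effect on the basis $B$. For each $x\in B$ I write $f(x)=\lambda_x\,b_x$ with $\lambda_x\in\kk$ and $b_x\in B'$, which is precisely the hypothesis. I would split $B$ into the set $B_0$ of those $x$ with $\lambda_x=0$ and its complement $B_1$, and on $B_1$ I would further group the elements according to which element of $B'$ they are sent to, setting $S_{b'}\coloneqq\{x\in B_1: b_x=b'\}$ for each $b'\in B'$.

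First I would record two families of elements of $\ker f$ that lie in the set~\eqref{eq:kernel}. If $x\in B_0$, then $x=x-0\cdot y\in\ker f$ for any $y\neq x$, so $x$ itself belongs to the set. If $x,y\in B_1$ satisfy $b_x=b_y$, then $f(\lambda_y x-\lambda_x y)=0$, and dividing by $\lambda_y\neq0$ shows that $x-(\lambda_x/\lambda_y)\,y$ lies in $\ker f$, hence in the set~\eqref{eq:kernel}. These are the generators I will use.

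The core step is to show that an arbitrary $z\in\ker f$ is a linear combination of such elements. Writing $z=\sum_{x\in B}c_x\,x$, the equality $f(z)=0$ together with the linear independence of $B'$ gives, for every $b'\in B'$, the single scalar relation $\sum_{x\in S_{b'}}c_x\lambda_x=0$. Inside each non-empty fibre $S_{b'}$ I would fix a reference element $x_0$ and rewrite
\[
\sum_{x\in S_{b'}}c_x\,x
  =\sum_{\substack{x\in S_{b'}\\ x\neq x_0}}c_x\Bigl(x-\tfrac{\lambda_x}{\lambda_{x_0}}\,x_0\Bigr)
   +\frac{1}{\lambda_{x_0}}\Bigl(\sum_{x\in S_{b'}}c_x\lambda_x\Bigr)x_0 ;
\]
the coefficient of $x_0$ vanishes \emph{precisely} because of the fibre relation, so this fibre contributes only a combination of the generators of the second family. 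Adding the contribution $\sum_{x\in B_0}c_x\,x$ of the first family then exhibits $z$ in the span of~\eqref{eq:kernel}, which is what we want; the reverse inclusion is immediate, since every generator lies in $\ker f$ by construction.

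The only genuinely delicate point is the bookkeeping around degenerate fibres: a singleton fibre $S_{b'}=\{x_0\}$ forces $c_{x_0}\lambda_{x_0}=0$ and hence $c_{x_0}=0$, so it contributes nothing and needs no generator, while the first family requires at least one element $y\neq x$ in order to write $x=x-0\cdot y$. Both are harmless as long as $\dim\Lambda\geq2$, which holds in every case where the lemma is applied. I expect this edge-case verification, rather than the main computation, to be the part demanding the most care.
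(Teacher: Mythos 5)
Your proof is correct, and the central algebraic identity is the same one the paper uses (rewriting a relation $\sum_x c_x\lambda_x=0$ inside a fibre as a combination of differences $x-\tfrac{\lambda_x}{\lambda_{x_0}}x_0$), but the logical organization is genuinely different. The paper argues by contradiction: it takes a minimal-length counterexample $z=\lambda_1b_1+\cdots+\lambda_nb_n$, observes that minimality forces $n\geq3$ and forces all the $b_i$ to map to non-zero multiples of a \emph{single} element $b'\in B'$, and only then performs the telescoping rewrite against $b_1$. You instead give a direct constructive decomposition: you partition $B$ into the kernel part $B_0$ and the fibres $S_{b'}$ over $B'$, note that linear independence of $B'$ splits the single condition $f(z)=0$ into one scalar relation per fibre, and then exhibit $z$ explicitly as a sum of generators fibre by fibre. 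Your route buys an explicit expression for $z$ in terms of the generators and avoids the minimality bookkeeping; the paper's route avoids having to discuss degenerate fibres and the set $B_0$ separately, since minimality silently disposes of them. Your remark on the edge case is apt: both arguments implicitly use that for $x\in B\cap\ker f$ one can write $x=x-0\cdot y$ with some $y\neq x$, which requires $\lvert B\rvert\geq2$; this is harmless here because $f$ is a unital algebra morphism, so a one-dimensional $\Lambda$ has $\ker f=0$.
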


\begin{proof}
Suppose that the map~$f$ satisfies the condition in the lemma and, to reach
a contradiction, that there is an $n\in\NN$ such that we can find pairwise
different elements $b_1$,~\dots,~$b_n$ of~$B$ and non-zero scalars
$\lambda_1$,~\dots,~$\lambda_n\in\kk$ so that the linear
combination $z\coloneqq\lambda_1b_1+\cdots+\lambda_nb_n$ is in the kernel
of~$f$ and not in the span of the set~\eqref{eq:kernel}. Without loss of
generality, we can assume moreover that $n$ is minimal with respect to that
property. In view of the form of the set~\eqref{eq:kernel} we then have
that $n\geq3$. On the other hand, the hypothesis on~$f$ and the minimality
of~$n$ imply that there is an element $b'\in B'$ and non-zero scalars
$\mu_1$,~\dots,~$\mu_n$ such that $f(b_i)=\mu_ib'$ for all
$i\in\{1,\dots,n\}$. As
  \[
  0=f(z)=\left(\sum_{i=1}^n\lambda_i\mu_i\right)b'
  \]
and $b'\neq0$, we have that
  \[
  z = \sum_{i=1}^n\lambda_ib_i 
        - \mu_1^{-1}\left(\sum_{i=1}^n\lambda_i\mu_i\right)b_1
    = \sum_{i=2}^n\lambda_i(b_i-\mu_1^{-1}\mu_ib_1),
  \]
and this is a contradiction, since for each $i\in\{2,\dots,n\}$ the element
$b_i-\mu_1^{-1}\mu_ib_1$ belongs to the set~\eqref{eq:kernel}. This proves
the lemma.
\end{proof}

\begin{theorem}\label{thm:algebra}
Let $(Q,I)$ be a gentle presentation and suppose that either the
quiver~$Q$ is not one with one vertex and one arrow whose square is in~$I$
or that the characteristic of~$\kk$ is not~$2$, and let $A\coloneqq\kk Q/I$
be the algebra it presents. The cohomology algebra $\HH^*(A)$ is the
quotient of the free
graded-commutative algebra generated by the set~$\G$ of
Proposition~\ref{prop:G} by the ideal generated by the following elements:
\begin{itemize}

\item $u\smile v$, one for  each choice of~$u$ and~$v$ in~$\G$ except those
in which 
\begin{itemize}

\item $u=v=\cycle{C}$ for some~$C\in\Crepprim(\Gamma)$,

\item $u=v=\cycle{\alpha}$ for some~$\alpha\in\Crepprim(\cB)$, 

\item $u$ and~$v$ are, in some order, $\cycle{C}$ and $(c,c)$ with
$C\in\Crepprim(\Gamma)$ and $c$ an arrow in~$Q_1\setminus T$ through which
$C$~passes.
 
\item $u$ and~$v$ are, in some order, $\cycle{\alpha}$ and $(c,c)$ with
$\alpha\in\Crepprim(\cB)$ and $c$ an arrow in~$Q_1\setminus T$ through
which $\alpha$~passes.

\end{itemize}

\item $(c,c)\smile\cycle{C}-(d,d)\smile\cycle{C}$, one for each choice of two
\emph{distinct} arrows~$c$ and~$d$ in~$Q_1\setminus T$ and of a complete
cycle~$C$ in~$\Crepprim(\Gamma)$ that passes both through~$c$ and through~$d$.

\item $(c,c)\smile\cycle{\alpha}-(d,d)\smile\cycle{\alpha}$, one for each
choice of two \emph{distinct} arrows~$c$ and~$d$ in~$Q_1\setminus T$ and of
a cocomplete cycle~$\alpha$ in~$\Crepprim(\cB)$ that passes both through~$c$
and through~$d$.

\end{itemize}
\end{theorem}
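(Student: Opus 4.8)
The plan is to present $\HH^*(A)$ as a quotient of the free graded-commutative algebra $F$ on the generating set $\G$ and then to pin down the kernel. By Proposition~\ref{prop:G} the set $\G$ generates $\HH^*(A)$, so there is a surjective morphism of graded algebras $f\colon F\to\HH^*(A)$ sending each generator to its own cohomology class, and it remains only to show that $\ker f$ equals the ideal $J$ generated by the relations listed in the statement. The main tool will be Lemma~\ref{lemma:kernel}, applied with $B$ the monomial basis of $F$ and $B'$ the basis of $\HH^*(A)$ exhibited in Theorem~\ref{thm:cohomology:basis}; to invoke it I must first verify its hypothesis, namely that $f$ carries every monomial of $F$ to a scalar multiple of a basis element of $\HH^*(A)$, where the zero multiple is allowed.

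First I would check this hypothesis by classifying the monomials of $F$. From Table~\ref{tbl:cup} and items~\eqref{it:cup:cycle0:a}--\eqref{it:cup:C:e}, the product of two generators is always either zero or again a basis element, and the nonzero products are exactly the four families excepted in the first bullet of the statement: $\cycle{C}\smile\cycle{C}$, $\cycle{\alpha}\smile\cycle{\alpha}$, $\cycle{C}\smile(c,c)$ with $c$ on $C$, and $\cycle{\alpha}\smile(c,c)$ with $c$ on $\alpha$. From this one reads off that a monomial with at least two factors can be nonzero only if none of its factors is $\cB$-maximal or $\Gamma$-maximal, it contains at most one factor of the form $(c,c)$, and its $\cycle{}$-factors are powers of a single primitive $\cycle{C}$ or of a single primitive $\cycle{\alpha}$ (never both, since $\cycle{\alpha}\smile\cycle{C}=0$). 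Hence every nonzero monomial is, up to sign, either a single generator or one of $\cycle{C}^{\smile k}$, $\cycle{C}^{\smile k}\smile(c,c)$, $\cycle{\alpha}^{\smile k}$, $\cycle{\alpha}^{\smile k}\smile(c,c)$, which map to basis elements of types \ref{gen:Gamma-sum}, \ref{gen:Gamma-plus}, \ref{gen:B-sum} and \ref{gen:B-plus} respectively. This establishes the hypothesis of Lemma~\ref{lemma:kernel}, and incidentally shows that the only basis elements admitting more than one monomial preimage are those of types \ref{gen:Gamma-plus} and \ref{gen:B-plus}.

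Next I would prove the two inclusions. For $J\subseteq\ker f$, the first-bullet elements $u\smile v$ are precisely the \emph{zero} products of pairs of generators, so $f$ kills them; for the second bullet I would use item~\eqref{it:cup:C:d}, which gives $(c,c)\smile\cycle{C}\equiv(bC,b)$ independently of the arrow $c$ on $C$, and for the third bullet its cocomplete counterpart, where item~\eqref{it:cup:cycle0:b} together with the coboundary relations $d^0(s(\epsilon),\epsilon)=(b,b\epsilon)-(a,a\rot(\epsilon))$ from the proof of Proposition~\ref{prop:hh:1} show that $(c,c)\smile\cycle{\alpha}$ is likewise independent of $c$ in cohomology, so the displayed differences lie in $\ker f$. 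For $\ker f\subseteq J$, Lemma~\ref{lemma:kernel} reduces the problem to showing that every element $x-ty\in\ker f$ with $x\neq y$ monomials lies in $J$. If $f(x)=f(y)=0$ then each of $x,y$ is a zero monomial, hence contains a pair of generators whose product is a first-bullet relation, so $x,y\in J$ and therefore $x-ty\in J$. Otherwise $f(x)$ and $f(y)$ are nonzero multiples of one basis element, which by the classification is of type \ref{gen:Gamma-plus} or \ref{gen:B-plus}; then $x$ and $y$ are $\cycle{C}^{\smile k}\smile(c,c)$ and $\cycle{C}^{\smile k}\smile(d,d)$ (respectively $\cycle{\alpha}^{\smile k}\smile(c,c)$ and $\cycle{\alpha}^{\smile k}\smile(d,d)$) for the same power $k$ and the same primitive, and, moving factors past each other by graded-commutativity, $x-ty$ is the product of the second (respectively third) bullet relation with $\cycle{C}^{\smile(k-1)}$ (respectively $\cycle{\alpha}^{\smile(k-1)}$), hence lies in $J$.

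The step I expect to be most delicate is the monomial classification together with the sign bookkeeping underlying the collision relations: one must check that the signs $(-1)^{mi}$ occurring in~\eqref{it:cup:C:d} and in the definition of $\cycle{C}$ combine so that $(c,c)\smile\cycle{C}$ is genuinely independent of $c$, and that the chain of coboundaries linking the rotations $\rot^i(\alpha)$ makes the cocomplete collision relations from~\eqref{it:cup:cycle0:b} hold in cohomology and not merely up to rotation. Finally, the hypothesis excluding the one-vertex one-loop presentation with $a^2\in I$ in characteristic~$2$ is exactly what guarantees, through Remark~\ref{rem:exception} and the remark following Proposition~\ref{prop:G}, that $\G$ is a minimal generating set and that no extra relation such as $(a,a)=(s(a),a)\smile(a,s(a))$ intervenes.
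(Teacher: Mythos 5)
Your proposal is correct and follows essentially the same route as the paper's own proof: a surjection from the free graded-commutative algebra on $\G$, verification via the product table that monomials map to basis elements or zero so that Lemma~\ref{lemma:kernel} applies, and the resulting reduction of the kernel to the listed quadratic monomials and binomial collision relations. The only point to add is that the allowed one-vertex--one-loop presentations (with $a^2\in I$ and $\charact\kk\neq2$, or with $I=0$) fall outside Table~\ref{tbl:cup} and must be checked by hand via Remark~\ref{rem:exception}, as the paper does at the outset.
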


Of course, the elements of~$\G$ have each a cohomological degree, and this is important
here in order to determine the commutation relations implicit in this
presentation. 

\begin{proof}%
\def\H{\mathscr{H}}%
\def\M{\mathscr{M}}%
\def\I{\mathscr{I}}%
If the quiver~$Q$ has one vertex and one arrow $a$, then the hypothesis of
the theorem implies that either that  the characteristic of~$\kk$ is not~$2$
or~$I=0$, and we can check in both cases the claim of the theorem by hand
using Remark~\ref{rem:exception}. We will therefore assume in what remains
of the proof that the quiver is not of that form. 

Let $\H$ be the free graded-commutative algebra generated by the set~$\G$.
We fix an arbitrary total order~$\preceq$ on the set~$\G$ such that the
elements of the form~$(c,c)$, with~$c\in Q_1\setminus T$, are smaller than
all others, and write $\M$ for the set of elements of~$\H$ obtained as
products of zero or more elements of the set~$\G$ in which the factors are
non-decreasing with respect to the order~$\preceq$ and in which, if the
characteristic of~$\kk$ is not~$2$, no element of odd degree appears more
than once. This set~$\M$ is a basis for the algebra~$\H$; we will refer to
its elements as \newterm{monomials}.

As the algebra~$\HH^*(A)$ is graded-commutative, there is a unique morphism
of graded algebras $\pi:\H\to\HH^*(A)$ mapping each element
of~$\G$ to itself, and it is surjective because the set~$\G$
generates~$\HH^*(A)$ as an algebra. Let $\I$ be its kernel.

The set~$\G$ is contained in our basis of~$\HH^*(A)$, and our calculations
show that this basis has the property that the product of any two of its
elements is either zero or  an element of that
basis. This tells us that the image under~$\pi$ of an element of the
basis~$\M$ of~$\H$ is either zero or  an
element of our basis of~$\HH^*(A)$, and therefore the map~$\pi$ falls under
the hypothesis of Lemma~\ref{lemma:kernel} and the
ideal~$\I$ is the span of its elements that are linear combinations of at
most two elements of~$\M$.
\begin{itemize}

\item Let $w$ be an element of~$\M$, so that there are an integer~$n\geq0$
and elements $u_1$,~\dots,~$u_n$ of~$\G$ such that $u_1\preceq\cdots\preceq
u_n$ and $w=u_1\cdots u_n$. Suppose that $w$
is not divisible by any of the quadratic monomials described in the first
bullet point of the theorem. If $n<2$ then clearly $\pi(w)\neq0$. Suppose that
instead $n\geq2$. It is easy to see that there is then a
cycle~$D\in\Crepprim(\Gamma)\cup\Crepprim(\cB)$ such that all the factors
in~$w$ except at most one are equal to~$\cycle{D}$, and that if not all of
them are equal to that then there is an arrow~$c\in Q_1\setminus T$ through
which the cycle~$D$ passes such that the remaining factor is equal
to~$(c,c)$. The image of~$w$ under~$\pi$ is thus either $\cycle{D}^{\smile
n}$ or $(c,c)\smile\cycle{D}^{\smile(n-1)}$, which we know to be non-zero.
As all the quadratic monomials listed in the theorem are certainly in~$\I$,
we can conclude with all this that the elements of~$\M$ that belong to~$\I$
are precisely those divisible by those quadratic monomials.

\item Next, let $z$ be an element of the ideal~$\I$ that is a linear
combination of two different elements~$x$ and~$y$ of~$\M$ and such that
neither of those two monomials is itself in~$\I$. Those two monomials have
the same image under the map~$\pi$ but are different: according to our
discussion in the previous point this is only possible if there is a cycle
$D\in\Crepprim(\Gamma)\cup\Crepprim(\cB)$ and two different arrows~$c$
and~$d$ that appear both in~$D$ such that the monomials~$x$ and~$y$ are
equal  to~$(c,c)\cdot\cycle{D}^n$ and
$(d,d)\cdot\cycle{D}^n$ for some positive integer~$n$. The difference
$z'\coloneqq(c,c)\cdot\cycle{D}^n-(d,d)\cdot\cycle{D}^n$ is in the
ideal~$\I$: as neither~$x$ nor~$y$ are in that ideal, we see that $z$ is a
scalar multiple of~$z'$.  
We see that $z'$ is divisible
by~$(c,c)\cdot\cycle{D}-(d,d)\cdot\cycle{D}$, and this is one of the
elements listed in the second and third bullet points of the theorem.

\end{itemize}
Putting everything together, we can easily conclude that the elements given in
the theorem indeed generate the kernel~$\I$ of the map~$\pi$, and this
proves the claim of the theorem.
\end{proof}

The presentation for the algebra~$\HH^*(A)$ given by this theorem is a
monomial quadratic presentation exactly when we can pick the spanning
tree~$T$ so that the following condition is satisfied:
 \begin{equation}\tag{$\star$}\label{eq:star}
 \claim{every element of~$\Crepprim(\Gamma)$ and $\Crepprim(\cB)$ passes
 through exactly one element of~$Q_1\setminus T$.}
 \end{equation} 
In this case, the second and third bullet points of the statement of
the theorem do not give any elements. However, this cannot be done in general. For
example, assuming the characteristic of the ground field is not~$2$,
in the gentle presentation of Figure~\vref{fig:example:no-star},
the complement of every spanning tree has four arrows, there are two
circuits in~$\Crepprim(\Gamma)$, conjugated to the
cycles~$(cba)^2$ and~$(fed)^2$, and these two circuits partition the set of
arrows of the quiver, so that whatever the choice of the spanning tree~$T$
there is an element on~$\Cprim(\Gamma)$ that involves two arrows from the
complement of~$T$. Worse, in this example there is one element
in~$\Crepprim(\cB)$, the circuit of the cycle $fbdcea$, and obviously it
passes through all four arrows of the complement of every spanning tree
in~$Q$.

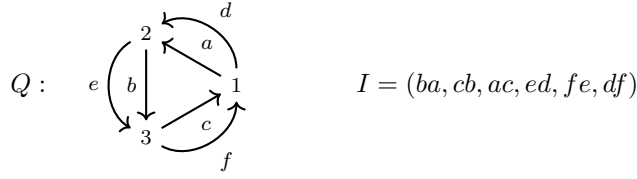
\begin{figure}
  \[
  Q:\quad
  \begin{tikzpicture}[auto, thick, font=\footnotesize,
                      scale=0.8, baseline=(1.base)]
  \node (1) at (0:1) {$1$};
  \node (2) at (120:1) {$2$};
  \node (3) at (240:1) {$3$};
  
  \draw [->](1)  to node[swap] {$a$} (2);
  \draw [->](2)  to node[swap] {$b$} (3);
  \draw [->](3)  to node[swap] {$c$} (1);
  \draw [->](1)  to[out=0+90, in=120-90]   node[swap] {$d$} (2);
  \draw [->](2)  to[out=120+90, in=240-90] node[swap] {$e$} (3);
  \draw [->](3)  to[out=240+90, in=0-90]   node[swap] {$f$} (1);
  
  \end{tikzpicture}
  \qquad\qquad
  I = (ba,cb,ac,ed,fe,df)
  \]
\caption{A presentation in which we cannot find a spanning tree satisfying
condition~\eqref{eq:star} of page~\pageref{eq:star}.}
\label{fig:example:no-star}
\end{figure}

\begin{remark}
In Theorem~\ref{thm:algebra} there is exactly one case excluded by the
hypothesis on the presentation: that in which the quiver~$Q$ has one vertex
and one arrow~$a$, the square of~$a$ is in~$I$, and the characteristic
of~$\kk$ is equal to~$2$. Using the information given by
Remark~\ref{rem:exception} we can see that in that situation the cohomology
algebra~$\HH^*(A)$ is freely generated as a graded-commutative algebra by
the classes of the elements~$(s(a),a)$ and $(a,s(a))$ of degrees~$0$
and~$1$, subject only to the relation $(s(a),a)\smile(s(a),a)=0$. 
\end{remark}

\section{The cap product}
\label{sect:cup:cap}

The cap product turns the Hochschild homology $\HH_*(A)$ of an algebra into
a right module over the Hochschild cohomology algebra~$\HH^*(A)$. It is
usually constructed in terms of the bar resolution of the algebra, as in
S.\,Witherspoon's book \cite[\S1.5]{Witherspoon},  but in fact it can be
computed using any bimodule projective resolution --- this is indicated 
in the book of H. Cartan and S. Eilenberg \cite[Chapter~XI, Exercise 2]{C-E} and spelt out, with a different
choice of signs, in~\cite{Armenta}. We will recall the
details for the case of our algebras.

We fix a  gentle presentation~$(Q,I)$, write $A\coloneqq\kk Q/I$,
and let $\epsilon:\cR\to A$ be the Bardzell projective resolution of~$A$ as
an~$A$-bimodule that we described in Chapter~\ref{chapter:algebras}. In
Section~\ref{sect:cup:calculation} we exhibited a morphism of
complexes~$\Delta:\cR\to\cR\otimes_A\cR$ over the identity of~$A$. If
$M$ and~$N$ are two $A$-bimodules, then the Hochschild homology $\H_*(A,M)$ with
coefficients in~$M$ and the Hochschild cohomology~$\H^*(A,N)$ with values
in~$N$ are the homology and the cohomology of the complexes
$M\otimes_{A^e}\cR$ and $\Hom_{A^e}(\cR,M)$, respectively, and the cap
product
  \begin{equation} \label{eq:frown}
  \mathord\frown:\H_*(A,M)\otimes\H^*(A,N)\to\H_*(A,M\otimes_AN)
  \end{equation}
is induced by the composition of maps of complexes depicted in
Figure~\vref{fig:cap} --- which we will simply write also~$\frown$. The map
  \[
  \ev:\cR\otimes_A\cR\otimes\Hom_{A^e}(\cR,N)\to N\otimes_A\cR
  \]
appearing there is such that
  \[
  \ev(x\otimes y\otimes f) = (-1)^{qr+pr}f(x)\otimes y
  \]
whenever $x\in\cR_p$, $y\in\cR_q$ and $f\in\Hom_{A^e}(\cR_r,N)$. Because everything is homogeneous, 
we have $f(x)=0$ if $p\neq r$.

\begin{figure}[t]
  \begin{tikzcd}
  M\otimes_{A^e}\cR\otimes\Hom_{A^e}(\cR,N)
        \arrow[d, "\id\otimes\Delta\otimes\id"]
        \\
  M\otimes_{A^e}(\cR\otimes_A\cR)\otimes\Hom_{A^e}(\cR,N)
        \arrow[d, "\id\otimes\ev"]
        \\
  M\otimes_{A^e}(N\otimes_A\cR)
        \arrow[d, equal]
        \\
  (M\otimes_AN)\otimes_{A^e}\cR
  \end{tikzcd}
\caption{The morphism of complexes that induces the cap product.}
\label{fig:cap}
\end{figure}

Specializing $M$ and $N$ to~$A$, and replacing the complexes
$\Hom_{A^e}(\cR,A)$ and $A\otimes_{A^e} \cR$ by the isomorphic
complexes~$\kk(\Gamma\parallel\cB)$ and~$\kk(\cB\odot\Gamma)$ of
Chapters~\ref{chapter:cohomology} and~\ref{chapter:homology}, with which we
computed~$\HH^*(A)$ and~$\HH_*(A)$, respectively, this becomes the map
  \[
  \mathord\frown:
  \kk(\cB\odot\Gamma)\otimes\kk(\Gamma\parallel\cB)
  \to\kk(\cB\odot\Gamma)
  \]
such that whenever $(a,x)\in\cB\odot\Gamma_p$ and
$(y,b)\in\Gamma_q\parallel\cB$ we have
  \begin{equation}\label{eq:frown:pairs}
  (a,x)\frown(y,b)
    = \begin{cases*}
      (-1)^{(p+q)q}\cdot(ab,z)
        & if $x$ factorizes as $yz$; \\
      0 & in any other case.
      \end{cases*}
  \end{equation}

To determine the whole cap product~$\frown$ of~\eqref{eq:frown} it is
enough to compute the map $(\place)\frown g:\HH_*(A)\to\HH_*(A)$ for each
element $g$ of the generating set~$\mathscr{G}$ of the cohomology
algebra~$\HH^*(A)$ that we described in Proposition~\ref{prop:G}, because
it makes~$\HH_*(A)$ into a right $\HH^*(A)$-module. We do this in the next
few lemmas.

\bigskip

We start by considering cap products between the `acyclic' part
of $\HH^*(A)$ and $\HH_*(A)$. Dealing with $\HH^0(A)$ is very easy: 

\begin{lemma}\label{lemma:cap:1}
Suppose that the quiver~$Q$ is not one with one vertex and one arrow.
If~$\alpha$ is a $\cB$-maximal cycle in~$(Q,I)$, then
$\HH_*(A)\frown(s(\alpha),\alpha)=0$.
\end{lemma}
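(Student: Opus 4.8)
The plan is to argue entirely at the chain level in the complex $\kk(\cB\odot\Gamma)$ that computes $\HH_*(A)$, exploiting the fact that $(s(\alpha),\alpha)$ is a $0$-cocycle (by Proposition~\ref{prop:hh:0}, since a $\cB$-maximal cycle is in particular a $\cB$-maximal path), so that capping with it is a well-defined chain map of homological degree~$0$, together with the explicit formula~\eqref{eq:frown:pairs}. First I would determine when a basis element $(a,x)\in\cB\odot\Gamma_p$ can have a non-zero cap product with $(s(\alpha),\alpha)$. Because $(s(\alpha),\alpha)$ has first component of length~$0$, formula~\eqref{eq:frown:pairs} gives $(a,x)\frown(s(\alpha),\alpha)=(a\alpha,x)$ when $t(x)=s(\alpha)$ and $0$ otherwise, and $(a\alpha,x)$ vanishes unless the path $a\alpha$ is non-zero in~$A$, i.e. unless $a\alpha\in\cB$. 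Here the $\cB$-maximality of~$\alpha$ is decisive: if $a$ had positive length, then $\alpha$ would be a proper subpath of $a\alpha\in\cB$, which is impossible. Hence the only surviving contributions come from chains $(e_v,x)$ with $v=s(\alpha)$ and $x$ a cycle based at~$v$, and for these $(e_v,x)\frown(s(\alpha),\alpha)=(\alpha,x)$.

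Next I would locate these images within the circuit decomposition~\eqref{eq:hom:desc}, $\kk(\cB\odot\Gamma)=\bigoplus_{C\in\C'}\kk(\cB\odot\Gamma)_C$. The pair $(\alpha,x)$ lies in the summand indexed by the circuit of the cycle $\alpha x$, and I claim every such circuit is \emph{neither complete nor cocomplete}. It is not cocomplete because $(\alpha x)^2$ contains $\alpha$ as a proper subpath, so it cannot belong to~$\cB$ without contradicting the maximality of~$\alpha$; and it is not complete because, as $\alpha\in\cB$ has length at least~$2$, it contains a length-$2$ subpath lying outside~$R$, and this subpath occurs inside $(\alpha x)^2$ and prevents the latter from lying in~$\Gamma$. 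Consequently, by Lemma~\ref{lemma:homology:neither}, each of these summands is an exact subcomplex. The cap map $(\place)\frown(s(\alpha),\alpha)$ therefore factors through the acyclic subcomplex $\bigoplus\kk(\cB\odot\Gamma)_C$, with $C$ ranging over the neither-complete-nor-cocomplete circuits; since a chain map factoring through an acyclic complex induces the zero map on homology, this yields $\HH_*(A)\frown(s(\alpha),\alpha)=0$.

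The main obstacle, and the only place where the hypothesis on~$Q$ enters, is the claim that a $\cB$-maximal cycle has length at least~$2$, i.e. that there is no $\cB$-maximal loop. I would prove this using gentleness: if $a$ were a $\cB$-maximal loop at a vertex~$v$, then $a^2\notin\cB$ forces $a^2\in R$, so $a$ is an arrow with $t(a)=s(a)$ and $a^2\in I$; condition~(c) in the definition of a gentle presentation then forbids any second out-arrow $b'$ at~$v$ with $b'a\in I$, while maximality of~$a$ excludes $b'a\in\cB$, so $a$ must be the unique arrow leaving~$v$. The symmetric argument shows $a$ is the unique arrow entering~$v$, whence $\{v\}$ together with~$a$ forms a connected component of~$Q$; as $Q$ is connected and is not the one-vertex, one-arrow quiver excluded in the statement, this is absurd. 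With the length bound secured, the completeness part of the circuit argument above goes through uniformly, completing the proof.
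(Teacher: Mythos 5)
Your proof is correct and follows essentially the same route as the paper's: you use the explicit formula~\eqref{eq:frown:pairs} to see that $\cB$-maximality forces the $\cB$-component of any surviving chain to be trivial, observe that the image $(\alpha,x)$ lands in summands of the decomposition~\eqref{eq:hom:desc} indexed by circuits that are neither complete nor cocomplete, and conclude by the exactness of those summands from Lemma~\ref{lemma:homology:neither}. The only differences are cosmetic --- you treat the completeness/cocompleteness claim uniformly via $(\alpha x)^2$ where the paper splits into the cases $|x|=0$ and $|x|>0$, and you spell out the length-at-least-two argument that the paper merely asserts.
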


We will see that the conclusion does not hold if $Q$ is just a loop.

\begin{proof}
Note that $\alpha$ has length at least~$2$: if it had length~$1$, then its
$\cB$-maximality, the gentleness of~$(Q,I)$ and the connectedness
of~$Q$ would imply that $Q$ has one vertex and one arrow, contradicting
the hypothesis on the quiver.

Let $(\beta,\delta)\in \cB\odot\Gamma$ and suppose that $(\beta,\delta)
\frown (s(\alpha),\alpha)\neq0$. From \eqref{eq:frown:pairs} we see that
$\delta=s(\alpha)\delta$ and that $\beta\alpha\in\cB$: as $\alpha$ is
$\cB$-maximal, the path~$\beta$ has length~$0$. We thus have
$(\beta,\delta) \frown (s(\alpha),\alpha)=(\alpha,\delta)$. Let $C$ be the
circuit that contains~$\alpha\delta$. 
\begin{itemize}

\item If $\delta$ has length zero, then $\alpha\delta=\alpha$
and since $\alpha$ is a $\cB$-maximal cycle belonging to~$\cB$ of length at
least~$2$, we see that the circuit~$C$ is neither complete not cocomplete.

\item If $\delta$ has positive length, then the path $\alpha\delta$ is not
in~$\cB$ because $\alpha$ is $\cB$-maximal, and is not in~$\Gamma$ because
$\alpha$ has length at least~$2$. It follows that also in this case the
circuit~$C$ is neither complete not cocomplete.

\end{itemize}
We thus see that $\kk(\cB\odot\Gamma)\frown(s(\alpha),\alpha)$ is contained
in the direct sum $\bigoplus_D\kk(\cB\odot\Gamma)_D$, with $D$ running over
the circuits that are neither complete not cocomplete, and we know this
complex is exact from Lemma~\ref{lemma:homology:neither}. The
claim of the lemma follows from this.
\end{proof}

Next we consider the cap products between the `acyclic' part of
$\HH^*(A)$ and $\HH_*(A)$, for positive cohomological degrees.

\begin{lemma}\label{lemma:cap:2}
Suppose that $Q$ is not a quiver with one vertex and one arrow. If
$(\gamma,\alpha)$ is an element of~$\Gamma\parallel\cB$ such that $\gamma$ a
$\Gamma$-maximal element of~$\Gamma$ of positive length and $\gamma$
and~$\alpha$ neither begin nor end with the same arrow, then
  \[
  \HH_*(A)\frown(\gamma,\alpha)=0.
  \]
\end{lemma}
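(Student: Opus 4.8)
The plan is to compute the cap product directly on the explicit basis of $\HH_*(A)$ furnished by Corollary~\ref{coro:homology:basis:tr}, exploiting the maximality of $\gamma$ to control which basis elements can interact with $(\gamma,\alpha)$. Write $q\coloneqq\abs{\gamma}$, the cohomological degree of $(\gamma,\alpha)$. Capping with the cocycle $(\gamma,\alpha)$ is a chain map on $\kk(\cB\odot\Gamma)$, so it descends to $\HH_*(A)$, and by the formula~\eqref{eq:frown:pairs} a pair $(\mu,\delta)$ occurring in a chain contributes a non-zero term $(\mu,\delta)\frown(\gamma,\alpha)$ only when $\delta$ factorizes as $\gamma z$ with $z\in\Gamma$. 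Since $\gamma$ is $\Gamma$-maximal and $\delta\in\Gamma$, such a $z$ must be trivial, so $\delta=\gamma$; hence the cap product is supported in homological degree exactly $q$, and on a chain it only sees the pairs whose second entry is literally $\gamma$, producing $\pm(\mu\alpha,s(\gamma))$ when $\mu\alpha\in\cB$. It therefore suffices to show that every basis cycle of Corollary~\ref{coro:homology:basis:tr} of homological degree $q$ caps to zero already at the level of chains.

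For $q\geq2$ this is immediate. The degree-$q$ basis cycles are then of two kinds: the sums $\hcycle{\bar C}$ attached to complete circuits of length~$q$, whose second entries are the rotations $\rot^i(\bar C)$, that is, complete cycles; and the pairs $(\lfact1{\bar C},\lfact2{\bar C})$ attached to complete circuits of length $q+1$, whose second entry $\lfact2{\bar C}$ is a proper subpath of a complete cycle. Neither second entry can equal $\gamma$: a $\Gamma$-maximal path is never a complete cycle, because a complete cycle $C$ satisfies $C^2\in\Gamma$ and hence is a proper subpath of an element of~$\Gamma$; and $\lfact2{\bar C}$, being a proper subpath of $\bar C\in\Gamma$, is not $\Gamma$-maximal. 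Thus no surviving term occurs and each such cycle caps to~$0$, whence $\HH_q(A)\frown(\gamma,\alpha)=0$.

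The remaining, and genuinely delicate, case is $q=1$, where $\gamma$ is a $\Gamma$-maximal arrow and cocomplete circuits and loops also contribute to $\HH_1$ through pairs whose second entry is an arrow that could a priori be~$\gamma$. If $a$ is a loop with $a^2\in I$, then $a=\gamma$ would force $\gamma$ to extend to $a^2\in\Gamma$, contradicting maximality, so that contribution drops. If $C$ is a cocomplete circuit some rotation of which begins with $\gamma$, say $\rot^i(\bar C)=\mu\gamma$ with $\mu$ of positive length, the cap product yields $\pm(\mu\alpha,s(\gamma))$, and here the hypothesis that $\gamma$ and $\alpha$ do not end with the same arrow is decisive: at the vertex $t(\gamma)$ the arrows $\gamma$ and the last arrow of~$\alpha$ are distinct and both precede the first arrow $k$ of~$\mu$, and since $\rot^i(\bar C)=\mu\gamma$ lies in $\cB$, gentleness forces the composite of $k$ with the last arrow of~$\alpha$ into~$I$, so that $\mu\alpha\notin\cB$ and the term vanishes. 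Finally, the degenerate possibility that $\mu$ is trivial, i.e. that $\gamma$ is itself a cocomplete loop, cannot occur: such a loop, being $\Gamma$-maximal, would force~$Q$ to be the one vertex, one arrow quiver excluded in the statement. I expect this low-degree cocomplete analysis --- pinning down that $\mu\alpha$ leaves $\cB$ and discarding the degenerate loops --- to be the main obstacle, the higher-degree cases being forced immediately by the $\Gamma$-maximality of~$\gamma$.
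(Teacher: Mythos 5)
Your argument is correct and, after the common first step, diverges from the paper's in an instructive way. Both proofs use the $\Gamma$-maximality of~$\gamma$ to force the $\Gamma$-component of any contributing pair in $\cB\odot\Gamma$ to be exactly~$\gamma$, so that only homological degree $q=\abs{\gamma}$ can survive. The paper then observes that $\alpha$ is automatically $\cB$-maximal (a small gentleness argument it leaves implicit), so the $\cB$-component must also be trivial; this reduces everything to the single pair $(s(\gamma),\gamma)$ with output $(\alpha,s(\alpha))$, which is then shown to be null-homologous, or, when $\alpha$ is trivial, the input pair is shown not to occur in any cycle. You never invoke $\cB$-maximality of~$\alpha$: you instead run through the basis of $\HH_q(A)$ from Corollary~\ref{coro:homology:basis:tr} and show each basis cycle caps to zero already at the chain level, the only non-trivial point being the degree-one cocomplete case, where your gentleness argument that $kd\in I$ (with $k$ the first arrow of~$\mu$ and $d$ the last arrow of~$\alpha$, $d\neq\gamma$) kills $\mu\alpha$ --- this is exactly where the hypothesis that $\gamma$ and~$\alpha$ do not end with the same arrow enters, a point the paper's route bypasses entirely. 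Your version is slightly longer but more self-contained and yields the stronger chain-level vanishing on basis elements. One corner to patch: in the degree-one cocomplete analysis you speak of ``the last arrow of~$\alpha$'', which presupposes that $\alpha$ has positive length; when $\alpha$ is trivial, $\gamma$ is a $\Gamma$-maximal loop, and the same argument you give for the ``$\mu$ trivial'' case shows that $Q$ is the excluded one-vertex, one-arrow quiver, so this case is vacuous --- but you should say so explicitly.
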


As in the previous lemma, the hypothesis on the quiver here
is necessary for the conclusion to hold.

\begin{proof}
We claim that
  \[
  \claim{if there is a pair~$(\beta,\delta)$ in $\cB\odot\Gamma$ such that
  $(\beta,\delta)\frown(\gamma,\alpha)$ is not zero, then
  $s(\gamma)=t(\gamma)=s(\alpha)=t(\alpha)$, the unique such pair is
  $(s(\gamma),\gamma)$, and that cap product is equal
  to~$(\alpha,s(\alpha))$.}
  \]
Indeed, let $(\beta,\delta)$ be an element of~$\cB\odot\Gamma$ such that
$(\beta,\delta)\frown(\gamma,\alpha)\neq0$. There is then a path~$\eta$
such that $\delta=\gamma\eta$ and $\beta\alpha\in\cB$. As $\gamma$ is
$\Gamma$-maximal, $\eta$ has length zero and $\delta=\gamma$. Similarly, as
the path~$\alpha$ is $\cB$-maximal, $\beta$ also has length zero.
Now it is clear from our formula~\eqref{eq:frown:pairs} that
$(\beta,\delta)\frown(\gamma,\alpha)=(\alpha,s(\alpha))$ which proves the claim.

Suppose that $\alpha$ has positive length. 
As the path $\alpha$ is $\cB$-maximal, it is not a cocomplete cycle.
It follows then from Theorem~\ref{thm:basishomology} that the only way
for the homology class of the $0$-cycle $(\alpha,s(\alpha))$ to be non-zero
is that $\alpha$ be a loop with $\alpha^2\in I$. Since $\alpha$ is
$\cB$-maximal and the quiver is not just a loop, this cannot occur.

Suppose now that $\alpha$ has length zero. The path~$\gamma$ is then a
cycle and, since it has positive length and is $\Gamma$-maximal, it is not
a complete cycle. The pair $(s(\gamma),\gamma)$ does not appear with
non-zero coefficient in any of the elements of the basis of~$\HH_*(A)$
given in Theorem~\ref{thm:basishomology}, and this implies at once that
$\HH_*(A)\frown(\gamma,\alpha)=0$ also in this case.
\end{proof}

The next set of cap products we consider is that of those
between the `diagonal' part of $\HH^{1}(A)$ and $\HH_*(A)$.

\begin{lemma}\label{lemma:cap:3}
Let $c$ be an arrow in the complement of the spanning tree~$T$.
\begin{thmlist}

\item If $C$ is a cocomplete cycle in~$(Q,I)$ in which the arrow~$c$ 
appears, then 
  \[
  \hcycle{\bar C}\frown(c,c) = (\bar C,s(\bar C)).
  \]
  where $\bar C$ is the rotation of $C$ such that $s(\bar C) = t(c)$.

\item If $C$ is a complete cycle in~$(Q,I)$ whose length~$m$ and period~$r$
are such that $(-1)^{(m+1)r}=1$ in~$\kk$ and in which the arrow~$c$
appears, then
  \[
  \hcycle{\bar C}\frown(c,c) = (-1)^{m+1}\cdot(\lfact1{\bar C},\lfact2{\bar C}) = (-1)^{m+1}\cdot(c,\lfact2{\bar C}).
  \]
where $\bar C$ is the rotation of $C$ such that $t(\bar C) = t(c)$.

\item If $u$ is any of the elements of the basis of~$\HH_*(A)$ described in
Theorem~\ref{thm:basishomology} that is not of the form~$\hcycle{\bar C}$
for some complete or cocomplete cycle~$C$ in~$(Q,I)$, then 
  \[
  u\frown(c,c) = 0.
  \]

\end{thmlist}
\end{lemma}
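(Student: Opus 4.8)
The plan is to compute all three statements directly from the chain-level formula~\eqref{eq:frown:pairs} for the cap product, applied to the explicit basis of~$\HH_*(A)$ furnished by Theorem~\ref{thm:basishomology} and Corollary~\ref{coro:homology:basis:tr}. The class we cap against is $(c,c)\in\Gamma_1\parallel\cB$, so in the notation of~\eqref{eq:frown:pairs} we always have $q=1$ and $y=b=c$. Thus a summand $(a,x)$ of a homology generator survives the cap exactly when $x$ factorizes as $cz$ --- that is, when $c$ is the last (leftmost) arrow of~$x$ --- and in that case it contributes $(-1)^{p+1}\cdot(ac,z)$, where $z$ is $x$ with this last arrow removed and $p=\abs{x}$. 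The one combinatorial input needed is that, by gentleness, a primitive complete or cocomplete cycle traverses each of its arrows exactly once, so among the rotations $\rot^0(\bar C),\dots,\rot^{r-1}(\bar C)$ there is a unique one carrying~$c$ in any prescribed position; this is the same fact already used in~\eqref{it:cup:cycle0:b}.

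For statement~\thmitem{2} I would pick the rotation $\bar C$ with last arrow~$c$, so that $\lfact1{\bar C}=c$ and, $\bar C$ being a cycle, $s(\bar C)=t(\bar C)=t(c)$. In $\hcycle{\bar C}=\sum_{i=0}^{r-1}(-1)^{(m+1)i}(s(\rot^i(\bar C)),\rot^i(\bar C))$ only the $i=0$ term, of coefficient~$1$, has $c$ as the last arrow of its $\Gamma$-component, so the cap collapses to $(-1)^{m+1}(ac,z)$ with $a=s(\bar C)=t(c)$ and $z=\lfact2{\bar C}$; since $t(c)\cdot c=c$ this is exactly $(-1)^{m+1}(c,\lfact2{\bar C})=(-1)^{m+1}(\lfact1{\bar C},\lfact2{\bar C})$, as claimed. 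For statement~\thmitem{1} the generator is $\hcycle{\bar C}=\sum_{i=0}^{r-1}(\rfact1{\rot^i(\bar C)},\rfact2{\rot^i(\bar C)})$, whose $\Gamma$-components are single arrows; the unique surviving term is the one whose arrow is~$c$, i.e.\ whose rotation has first arrow~$c$, and since here $p=1$ the sign $(-1)^{p+1}$ is~$+1$, so the cap equals the $0$-cycle $(\rot^{i}(\bar C),s(c))$ for that rotation. By Lemma~\ref{lemma:homology:cocomplete}~\thmitem{1} this represents the class $(\bar C,s(\bar C))$ independently of the rotation chosen, in particular the one with $s(\bar C)=t(c)$.

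For statement~\thmitem{3} I would argue uniformly rather than case by case. The formula~\eqref{eq:frown:pairs} shows that when $(a,x)\frown(c,c)=\pm(ac,z)$ is nonzero its underlying cycle $acz$ equals $ax$, so $\place\frown(c,c)$ preserves each summand $\kk(\cB\odot\Gamma)_C$ of the decomposition $\kk(\cB\odot\Gamma)=\bigoplus_{C\in\C'}\kk(\cB\odot\Gamma)_C$ used in Theorem~\ref{thm:basishomology}, and hence induces on each $H_*(\kk(\cB\odot\Gamma)_C)$ a map lowering homological degree by one. The generators not of the form $\hcycle{\bar C}$ are precisely the vertex $0$-cycles $(e,e)$, the cocomplete $0$-cycles $(\bar C,s(\bar C))$, and the cycles $(\lfact1{\bar C},\lfact2{\bar C})$ attached to complete circuits (the loop generators $(a,s(a))$ being the case of a complete circuit of length~$1$). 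By Lemmas~\ref{lemma:homology:cocomplete} and~\ref{lemma:homology:complete} the homology of $\kk(\cB\odot\Gamma)_C$ lies in degrees $\{0,1\}$ for a cocomplete circuit, in degrees $\{l-1,l\}$ for a complete circuit of length~$l$, and only in degree~$0$ for a vertex circuit. In each case the generator in question sits in the lowest such degree, so capping with $(c,c)$ sends its class into a degree where $H_*(\kk(\cB\odot\Gamma)_C)$ vanishes, forcing the result to be zero. (For the $0$-cycles this is also immediate from~\eqref{eq:frown:pairs}, since their $\Gamma$-component has length~$0$ and admits no factorization through~$c$.)

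The computations for \thmitem{1} and~\thmitem{2} are mechanical once the conventions are fixed; the only delicate point is the bookkeeping of signs and basepoints --- choosing the rotation of~$C$ that places~$c$ in the correct slot so that the coefficient $(-1)^{(m+1)i}$ in $\hcycle{\bar C}$ does not interfere and the prescribed source/target conditions hold --- together with the appeal to Lemma~\ref{lemma:homology:cocomplete} to see that in the cocomplete case the answer is representative-independent. The genuinely clean step, and the one I would emphasise, is the circuit-grading argument for~\thmitem{3}: recognising that $\place\frown(c,c)$ is homogeneous for the $\C'$-grading reduces the vanishing to the degree support of the per-circuit homology already computed in Lemmas~\ref{lemma:homology:cocomplete} and~\ref{lemma:homology:complete}, bypassing any explicit chain manipulation.
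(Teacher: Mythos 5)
Your proposal is correct. Parts \thmitem{1} and \thmitem{2} follow the paper's own computation almost verbatim: both reduce everything to the chain-level formula~\eqref{eq:frown:pairs}, both use the fact that a primitive (co)complete cycle passes through each of its arrows exactly once to isolate the unique surviving rotation, and both invoke Lemma~\ref{lemma:homology:cocomplete} to identify the resulting $0$-cycle with $(\bar C,s(\bar C))$ up to homology; your only deviation is the sensible one of normalising $\bar C$ so that $c$ sits in the relevant slot at $i=0$, which spares you the factor $(-1)^{(m+1)j}$ that the paper has to carry through and then cancel. Part \thmitem{3} is where you genuinely diverge: the paper disposes of the remaining basis elements at the chain level (the degree-zero ones trivially, and $(\lfact1{\bar C},\lfact2{\bar C})\frown(c,c)$ because the would-be output $(\lfact1{\bar C}\,c,\eta)$ has $\lfact1{\bar C}\,c\in I$, so it is not an element of $\cB\odot\Gamma$ and the product vanishes identically), whereas you observe that $\place\frown(c,c)$ preserves the $\C'$-grading and lowers homological degree by one, and then read off from Lemmas~\ref{lemma:homology:cocomplete} and~\ref{lemma:homology:complete} that each such generator sits in the bottom degree of the homology of its summand $\kk(\cB\odot\Gamma)_C$, so its image lands in a degree where that homology vanishes. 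Your version proves vanishing only of the homology class rather than of the chain itself, but that is all the lemma asserts, and it has the advantage of being uniform across the cases and of recycling the per-circuit homology computations instead of redoing a combinatorial check.
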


\begin{proof}
If $(\alpha,\gamma)$ is an element of~$\cB\odot\Gamma_m$, then
  \[
  (\alpha,\gamma)\frown(c,c) 
        = \begin{cases*}
          (-1)^{m+1}\cdot(\alpha c,\eta)
                & if $\gamma$ factorizes as $c\eta$; \\
          0 & if not.
          \end{cases*}
  \]
Let $C$ be a cocomplete cycle in~$(Q,I)$, let $r$ be 
its period, so that
  \[
  \hcycle{C} 
        = \sum_{i=0}^{r-1}(\rfact1{\rot^i( C)},
                                   \rfact2{\rot^i( C)})
        \in\kk(\cB\odot\Gamma_1).
  \]
If the arrow $c$ does not appear in~$C$ then~$\hcycle{
C}\frown(c,c)=0$. Let us suppose, then, that it does appear. In that
case because of the  gentleness of~$(Q,I)$,
there is exactly one $j\in\{0,\dots,r-1\}$ such that $\rfact2{\rot^j(
C)}$ is~$c$, and therefore
  \begin{align*}
  \hcycle{ C} \frown (c,c)
       &= (\rfact1{\rot^j( C)}, \rfact2{\rot^j( C)}) \frown (c,c) 
        = (\rot^j( C),s(\rot^j( C)))
  \end{align*}
and we have shown in Lemma~\ref{lemma:homology:cocomplete} that this
$0$-cycle is homologous to~$( C,s( C))$.

Let now $C$ be a complete cycle in~$(Q,I)$, let $m$ and~$r$ be its
length and its period, respectively, and let us suppose that
$(-1)^{(m+1)r}=1$ in~$\kk$, so that to~$C$ corresponds the $m$-cycle in homology
  \[
  \hcycle{ C} =
  \sum_{i=0}^{r-1}(-1)^{(m+1)i}\cdot
        \bigl(s(\rot^i( C)), \rot^i(C)).
  \]
As in the previous case, if the arrow~$c$ does not appear in the
cycle~$ C$, we clearly have that $\hcycle{C}\frown(c,c)=0$. Let us
suppose it does. As~the
presentation~$(Q,I)$ is gentle, there is a unique
$j\in\{0,\dots,r-1\}$ such that the last arrow of~$\rot^j(C)$ is~$c$,
and then 
  \[
  \hcycle{ C}\frown(c,c)
        = (-1)^{(m+1)j+(m+1)}\cdot(\lfact1{\rot^j( C)},
                              \lfact2{\rot^j(C)}).
  \]
by Lemma~\ref{lemma:homology:complete}. Furthermore, we have 
  \[
  (-1)^{(m+1)j+(m+1)+j(m+1)}\cdot(\lfact1{ C}, \lfact2{ C})
   =
  (-1)^{m+1}\cdot(\lfact1{ C}, \lfact2{ C}).
  \]

The remaining elements of the basis of~$\HH_*(A)$ given in
Theorem~\ref{thm:basishomology} are either of degree zero, and their cap
product with~$(c,c)$ vanishes trivially, or of the form
$(\lfact1{\bar C},\lfact2{\bar C})$ with $C$ a complete circuit of length
at least~$2$, and for them we have $(\lfact1{\bar C},\lfact2{\bar
C})\frown(c,c)=0$.
\end{proof}

Now we have to deal with the part of $\HH^*(A)$ that
corresponds to cocomplete cycles. We start with the cycles in $\HH^0(A)$,
which, we recall, only exist when the algebra~$A$ is infinite dimensional.

\begin{lemma}\label{lemma:cap:4}
Let $\alpha$ be an element of~$\Crepprim(\cB)$. 
\begin{thmlist}

\item The only elements~$u$ of the basis of~$\HH_0(A)$ described in
Theorem~\ref{thm:basishomology} such that $u\frown\cycle{\alpha}\neq0$
in~$\HH_0(A)$ are 
\begin{itemize}

\item those of the form $(e,e)$ with $e$ a vertex of~$Q$ that is equal to
$s(\rot^j(\alpha))$ for some $j\in\NN_0$, for which we have
  \[
  (e,e)\frown\cycle{\alpha} = (\alpha,s(\alpha)),
  \]

\item and those of the form $(\alpha^k,s(\alpha^k))$ with $k\in\NN$, for
which we have that
  \[
  (\alpha^k,s(\alpha^k))\frown\cycle{\alpha} = (\alpha^{k+1},s(\alpha^{k+1})).
  \]

\end{itemize}

\item The only elements~$u$ of the basis of~$\HH_1(A)$ described in that
theorem such that $u\frown\cycle{\alpha}\neq0$ in~$\HH_1(A)$ are those of
the form $\hcycle{\alpha^k}$ with $k\in\NN$, and we have that
  \[
  \hcycle{\alpha^k}\frown\cycle{\alpha} = \hcycle{\alpha^{k+1}}.
  \]

\item If $m\geq2$, then $\HH_m(A)\frown\cycle{\alpha} = 0$.

\end{thmlist}
\end{lemma}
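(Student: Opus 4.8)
The plan is to compute each cap product directly using the explicit formula~\eqref{eq:frown:pairs} for the pairing on the chain level, and then identify the resulting homology classes via the basis description of Theorem~\ref{thm:basishomology}. Since $\cycle{\alpha}$ is represented by the $0$-cocycle $\sum_{i=0}^{r-1}(s(\rot^i(\alpha)),\rot^i(\alpha))$ with $r$ the period of $\alpha$, and since it lives in cohomological degree $0$, capping with it lowers homological degree by $0$; so I need to understand how $(\place)\frown\cycle{\alpha}$ acts within each fixed degree. The main tool is that $\cycle{\alpha}$ is supported on the circuit of $\alpha$, which is cocomplete, so by the direct sum decomposition~\eqref{eq:hom:desc} the cap product sends the summand $\kk(\cB\odot\Gamma)_C$ into $\kk(\cB\odot\Gamma)_{C'}$ where $C'$ is the circuit containing the concatenated path; I will track which circuits can arise.

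First I would treat part~\thmitem{1}. For a basis $0$-cycle $(e,e)$ with $e$ a vertex, the formula gives $(e,e)\frown(s(\rot^j(\alpha)),\rot^j(\alpha))$, which is nonzero exactly when $e=s(\rot^j(\alpha))$, in which case the factorization condition forces the result to be $(\rot^j(\alpha),s(\rot^j(\alpha)))$; summing over $j$ and using that $\rot^j(\alpha)$ for $j=0,\dots,r-1$ are the distinct rotations, at most one term survives for a fixed $e$, yielding $(\alpha,s(\alpha))$ up to the homology identification of Lemma~\ref{lemma:homology:cocomplete}\thmitem{1}, which says all the $(\rot^j(\alpha),s(\rot^j(\alpha)))$ are homologous. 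For the basis $0$-cycle $(\alpha^k,s(\alpha^k))$, the path $\alpha^k$ is a power of $\alpha$, and pairing with $(s(\rot^i(\alpha)),\rot^i(\alpha))$ requires $\alpha^k$ to factorize as $\rot^i(\alpha)\cdot z$; the only consistent choice is $i=0$ with $z$ such that $\alpha^k = \alpha z$, giving $z=\alpha^{k-1}$ and hence $(\alpha^k\cdot s(\alpha^k),\alpha^{k-1}\cdot{\rm stuff})$—here I must carefully check that the concatenation $\alpha^k\cdot(\text{trivial at }s(\alpha^k))$ produces exactly $(\alpha^{k+1},s(\alpha^{k+1}))$, using that $s(\alpha)=s(\alpha^k)=t(\alpha)$. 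The loop and remaining $0$-cycles (those $(\bar C,s(\bar C))$ for a \emph{different} cocomplete circuit, or vertices not on $\alpha$) give zero because the concatenation lands in a circuit different from that of $\alpha$, or the factorization condition fails.

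For part~\thmitem{2}, the basis of $\HH_1(A)$ consists of the sums $\hcycle{\bar C}$ for cocomplete circuits, the loop classes $\hcycle{a}=(s(a),a)$, and the classes $(\lfact1{\bar C},\lfact2{\bar C})$ for complete circuits of length $2$. Capping $\hcycle{\alpha^k}=\sum_i(\rfact1{\rot^i(\alpha^k)},\rfact2{\rot^i(\alpha^k)})$ with $\cycle{\alpha}$: in each summand $\rfact2{\rot^i(\alpha^k)}$ is a single arrow and I pair it against the appropriate rotation of $\alpha$, and the factorization condition~\eqref{eq:frown:pairs} forces the matching; I expect the output to reassemble, after reindexing modulo the period of $\alpha^{k+1}$, into exactly $\hcycle{\alpha^{k+1}}$. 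The classes associated to circuits other than that of $\alpha$, and the complete-circuit classes, vanish because either the supporting circuit differs or the degree/factorization constraints are incompatible with $\cycle{\alpha}$ being supported on a cocomplete circuit. Part~\thmitem{3} is the cleanest: for $m\geq2$ the basis of $\HH_m(A)$ consists entirely of classes $\hcycle{\bar C}$ and $(\lfact1{\bar C},\lfact2{\bar C})$ coming from \emph{complete} circuits, whereas $\cycle{\alpha}$ is supported on a cocomplete circuit; since capping preserves the underlying concatenated path's circuit and a cocomplete circuit cannot coincide with a complete one, every such cap product lands in a summand that is either zero or of the wrong type, giving $\HH_m(A)\frown\cycle{\alpha}=0$.

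The main obstacle I anticipate is the careful bookkeeping in part~\thmitem{2}: verifying that the sum over rotations of $\alpha^k$, after applying~\eqref{eq:frown:pairs} term by term and matching each arrow $\rfact2{\rot^i(\alpha^k)}$ with the correct rotation of $\alpha$, genuinely telescopes into the single class $\hcycle{\alpha^{k+1}}$ rather than a multiple or a permuted sum. This requires tracking the relationship between the period $r$ of $\alpha$ and the period of $\alpha^k$ (which is again $r$, since $\alpha\in\Crepprim(\cB)$ is primitive), and confirming the indices align modulo $r$; the sign is trivial here since $\cycle{\alpha}$ has degree $0$. Everything else reduces to the observation that $\cycle{\alpha}$ lives in the cocomplete summand and the cap product respects the circuit decomposition, which confines the nonzero contributions to powers of $\alpha$.
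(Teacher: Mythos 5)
Your strategy is the same as the paper's: compute at the chain level with formula~\eqref{eq:frown:pairs}, use the decomposition~\eqref{eq:hom:desc}, and kill everything that lands in a summand indexed by a circuit that is neither complete nor cocomplete via Lemma~\ref{lemma:homology:neither}. The computations of the nonzero products in parts \thmitem{1} and \thmitem{2} are correct in outline, and the bookkeeping you worry about in part \thmitem{2} does work out, since $\alpha$ primitive forces $\alpha^k$ and $\alpha^{k+1}$ to have the same period $r=\abs{\alpha}$.

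The genuine gap is in the vanishing arguments, concentrated in part \thmitem{3}. The cap product of a pair $(\beta,\delta)$ occurring in $u$ with a term $(s(\rot^j(\alpha)),\rot^j(\alpha))$ of $\cycle{\alpha}$ lands in the summand indexed by the circuit of the \emph{concatenation} $\beta\rot^j(\alpha)\delta$ --- a new circuit containing arrows of both $C$ and $\alpha$ --- so ``a cocomplete circuit cannot coincide with a complete one'' is not the relevant statement: what you must show is that this new circuit is neither complete nor cocomplete (or that the chain-level product vanishes). When $\alpha$ has length at least $2$ this is automatic, since $\alpha$ contributes a cyclic adjacency not in $R$ and $\delta$, a length-$\geq2$ piece of a complete cycle, contributes one in $R$. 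But when $\alpha$ is a cocomplete \emph{loop} it contributes no internal adjacency, and the concatenation $\rot^j(\alpha)\cdot\rot^i(\bar C)$ could a priori be a complete cycle $P$ of length $m+1$; in that case the resulting pair $(\alpha,\rot^i(\bar C))=(\lfact1{P},\lfact2{P})$ is precisely the generator of $H_m$ of that summand, a nonzero basis class rather than something ``of the wrong type''. This configuration is not vacuous: in the gentle presentation with one vertex, two loops $a,b$ and $I=(ab,ba)$, the loop $a$ is cocomplete, $C=ba$ is complete with $s(C)=s(a)$, and $\hcycle{\bar C}\frown\cycle{a}=(a,ba)-(a,ab)$ is a nonzero chain; one must verify that the circuit of $aba$ fails to be complete (because the cyclic closure produces the adjacency $a\cdot a\notin R$) before concluding it is a boundary. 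Closing this case in general requires the gentleness argument the paper supplies: a complete cycle and a cocomplete loop $\alpha$ based at the same vertex force $\alpha$ to be the first or the last arrow of that cycle, which places $\alpha^2\notin R$ among the cyclic adjacencies of the concatenation and prevents its circuit from being complete. The same kind of explicit gentleness step, rather than a pure type count, is also needed to dispose of the loop classes $\hcycle{a}=(s(a),a)$ and of the classes $(\bar D,s(\bar D))$ with $D$ a cocomplete circuit not conjugate to a power of $\alpha$ in parts \thmitem{1} and \thmitem{2}.
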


\begin{proof}
As $\alpha$ is primitive, its length and its period coincide and are
positive, 
and if their value is~$r$ then
  \[
  \cycle{\alpha}
        \coloneqq
        \sum_{i=0}^{r-1}(s(\rot^i(\alpha)),\rot^i(\alpha)).
  \]
Let $u$ be one of the cycles in homology listed in Theorem~\ref{thm:basishomology} and
let us suppose that the class of~$u\frown\cycle{\alpha}$ is not zero
in~$\HH_*(A)$. There is then a pair $(\beta,\delta)$ in~$\cB\odot\Gamma$
that appears in~$u$ with non-zero coefficient and such that $(\beta,\delta)
\frown\cycle{\alpha}\neq0$. This implies that there exists an index
$j\in\{0,\dots,r-1\}$ such that
  \(
  (\beta,\delta) \frown(s(\rot^j(\alpha)),\rot^j(\alpha))\neq0
  \), 
and this in turn implies that $\delta=s(\rot^j(\alpha))\delta$, that
$\beta\rot^j(\alpha)\in\cB$, and thus that
  \[
  (\beta,\delta)\frown(s(\rot^j(\alpha)),\rot^j(\alpha))
        = (\beta\rot^j(\alpha),\delta).
  \]
As the class of the cycle $u\frown\cycle{\alpha}$ is not zero
in~$\HH_*(A)$, we can suppose that we chose the pair~$(\beta,\delta)$ above
so that the circuit that contains $\beta\rot^j(\alpha)\delta$ is either
complete or cocomplete --- this follows from
Lemma~\ref{lemma:homology:neither}. 

Let us suppose first that 
  \begin{equation}\label{eq:zap:0}
  \claim[-0.8]{either $\alpha$ has length at least $2$ or $\beta$ has positive length}
  \end{equation}
and show that in that situation
  \begin{equation}\label{eq:zap:a}
  \claim[-0.8]{the circuit that contains the cycle $\beta\rot^j(\alpha)\delta$
  is cocomplete.}
  \end{equation}
If $\alpha$ has length at least two, this is obvious. If instead
$\alpha$ has length~$1$ and $\beta$ has positive length, then we have that
$r=1$, that $j=0$ and that $\beta\rot^j(\alpha)\delta$ is in fact
$\beta\alpha\delta$: as both~$\alpha$ and~$\beta$ have positive length and
$\beta\alpha=\beta\rot^j(\alpha)\in\cB$, the statement \eqref{eq:zap:a} is
also true.

We thus see that under the hypothesis~\eqref{eq:zap:0} the cycle
$u\frown\cycle{\alpha}$, when written according to the decomposition 
  \[
  \kk(\cB\odot\Gamma) = \bigoplus_{C\in\C'}\kk(\cB\odot\Gamma)_C
  \]
of~\eqref{eq:hom:desc} in Chapter~\ref{chapter:homology}, only has components
that are non-zero in those direct summands that correspond to elements~$C$
of~$\C'$ that are either cocomplete or neither complete nor cocomplete which may occur if $\beta$ has length 0 and $\alpha$ has length one.
Since the cycle $u\frown\cycle{\alpha}$ is also non-homologous to zero, it
follows from Lemmas~\ref{lemma:homology:cocomplete}
and~\ref{lemma:homology:neither} that it has degree~$0$ or~$1$ in homology.
Since~$\cycle{\alpha}$  has degree~$0$ in~$\HH^0(A)$, we thus see that $u$
itself has degree~$0$ or~$1$ in homology.

Now we have three cases to consider.
\begin{itemize}

\item Suppose that the length of~$\delta$ is~$0$ and that $\beta$ has
positive length. According to~\eqref{eq:zap:a} the cycle
$\beta\rot^j(\alpha)$ is cocomplete, so gentleness implies that
$\beta$ is a power of~$\rot^j(\alpha)$: there is an $l\in\NN$ such that
$\beta=\rot^j(\alpha^l)$. According to the list of
Theorem~\ref{thm:basishomology} we have that $u$ is homologous --- since a
pair such as $(\beta,\delta)$ cannot appear in any other element in that
list --- to the class of $(\alpha^l,s(\alpha))$, and using this we can
compute immediately that $u\frown\cycle{\alpha}$ is homologous to the class
of $(\alpha^{l+1},s(\alpha))$.

\item Suppose next that the lengths of~$\delta$ and of~$\beta$ are both
zero, so that in fact $u=(e,e)$ with $e$ the vertex~$s(\delta)$. In this
case we have that
  \[
  u\frown\cycle{\alpha}=(\rot^j(\alpha),s(\rot^j(\alpha)),
  \]
and this last pair is homologous to~$(\alpha,s(\alpha))$.

\item Suppose finally that the length of~$\delta$ is~$1$. The 
gentleness of the presentation implies at once that the
cycle~$\beta\rot^j(\alpha)\delta$, which is cocomplete by~\eqref{eq:zap:a},
is a power of~$\rot^{j+1}(\alpha)$, since $\alpha$ is a cocomplete and
primitive cycle in the quiver. It follows from this that there is a
positive integer~$k$ such that $\delta\beta=\rot^j(\alpha^k)$. The
circuit~$C$ that contains~$\delta\beta$ is thus the one that
contains~$\alpha^k$, and in view of Theorem~\ref{thm:basishomology} we have
that $u$ is necessarily the cocycle
  \[
  \hcycle{\bar C} 
        \coloneqq \sum_{i=0}^{r-1}(\rfact1{\rot^i(\bar C)},
                                   \rfact2{\rot^i(\bar C)})
        \in\kk(\cB\odot\Gamma_1)
  \]
and that if $D$ is the circuit that contains~$\alpha^{k+1}$ then
  \[
  u\frown\cycle{\alpha} = \hcycle{\bar D}.
  \]

\end{itemize}

We have now to consider the possibility in which the
hypothesis~\eqref{eq:zap:0} does not hold, so that $\alpha$ has length~$1$
and that $\beta$ has length~$0$. The period~$r$ of~$\alpha$ is of
course~$1$, and we simply have that $\cycle{\alpha}=(s(\alpha),\alpha)$.
As the pair $(\beta,\delta)$ appears in~$u$, and $u$ is one of the cycles
listed in Theorem~\ref{thm:basishomology}, inspecting the list given there
tells us that one of the following possibilities occurs:
\begin{itemize}

\item There is a vertex~$e$ in~$Q$ such that $u=(e,e)$.

\item There is a loop~$a$ in~$Q$ such that $a^2\not\in I$ such that
$u=(s(a),a)$.

\item There is a complete cyle~$C$ in~$(Q,I)$ whose period~$r$ and
length~$m$ are such that $(-1)^{(m+1)r}=1$ in~$\kk$, and $u$ is the cycle
  \[
  \hcycle{\bar C} =
  \sum_{i=0}^{r-1}(-1)^{(l+1)i}\cdot
        \bigl(s(\rot^i(\bar C)), \rot^i(\bar C)).
  \]

\end{itemize}

In the first case we have that 
  \[
  u\frown\cycle{\alpha}
        = (e,e)\frown(s(\alpha),\alpha)
        = \begin{cases*}
          (\alpha,s(\alpha)) & if $e=s(\alpha)$; \\
          0 & if not.
          \end{cases*}
  \]
In case $e=s(\alpha)$ the class of the resulting cycle $(\alpha,s(\alpha))$ is
non-zero in~$\HH_0(A)$ and belongs to the basis of that space with which we
are working --- as $\alpha$ has length~$1$, we  choose a representative for the
circuit that contains the cycle~$\alpha$ as $\alpha$  itself.

\smallskip

In the second case, we have
  \[
  u\frown\cycle{\alpha}
        = (s(a),a)\frown(s(\alpha),\alpha)
        = \begin{cases*}
          (\alpha,a) & if $s(a)=s(\alpha)$; \\
          0 & if not.
          \end{cases*}
  \]
If $\alpha$ and~$a$ are two different loops, then $(\alpha,a)$ is a
coboundary, and if they coincide then $(\alpha,a)$ is the basis element
$\hcycle{\overline{a^2}}$ of~$\HH_1(A)$.

Finally, suppose that we are in the third case. If $i\in\{0,\dots,r-1\}$ is
such that $s(\rot^i(\bar C))=s(\alpha)$, then gentleness implies that
$C$ has length at least~$2$, for $C$ is complete and~$\alpha$ cocomplete,
and then that $\alpha$ is both the first and the last arrow in~$\bar C$:
this is a contradiction. As
  \[
  (s(\rot^i(\bar C)),\rot^i(\bar C))\frown(s(\alpha),\alpha)
    = \begin{cases*}
      (\alpha,\rot^i(\bar C)) & if $s(\rot^i(\bar C))=s(\alpha)$; \\
      0 & if not;
      \end{cases*}
  \]
we see that $u\frown(s(\alpha),\alpha)=0$ even before passing to homology.
\end{proof}

The last step in our calculation of the cap product is  dealing with products between the classes in~$\HH^*(A)$ corresponding to
complete cycles and $\HH_*(A)$: as usual, it is when working with complete
cycles that most of the action occurs. Our final three lemmas do this.

\begin{lemma}\label{lemma:cap:5}
Let $C$ be an element of~$\Cprim(\Gamma)$ and let $D$ be a complete circuit
in~$(Q,I)$ whose length~$n$ and period~$t$ are such that $(-1)^{(n+1)t}=1$
in~$\kk$. 
\begin{thmlist}

\item If the circuits $C$ and $D$ are not powers of the same primitive
complete circuit, then $\hcycle{\bar D}\frown\cycle{\bar C}=0$.

\item If instead there exists a primitive circuit~$E$ and non-negative
integers~$w$ and~$k$ such that $\bar C=\bar E^w$ and $\bar D=\bar E^k$,
then
  \[
  \hcycle{\bar D}\frown\cycle{\bar C}
    = \begin{dcases*}
      0 
        & if $k<w$; 
        \\
      \sum_{i=0}^{r-1}(-1)^i\cdot(s(\rot^i(\bar E)),s(\rot^i(\bar E)))
        & if $k=w$; 
        \\
      \hcycle{\bar E^{k-w}}
        & if $k>w$.
      \end{dcases*}
  \]

\end{thmlist}
\end{lemma}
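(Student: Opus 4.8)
The plan is to compute $\hcycle{\bar D}\frown\cycle{\bar C}$ directly from the bilinear formula~\eqref{eq:frown:pairs}, feeding in the explicit expression for the homology class $\hcycle{\bar D}$ from Theorem~\ref{thm:basishomology} and for the cohomology class $\cycle{\bar C}$ from Theorem~\ref{thm:cohomology:basis}. Writing $\rho$ for the length (and period) of the primitive complete cycle $E$ — so that $\bar C=\bar E^w$ has length $w\rho$, $\bar D=\bar E^k$ has length $n\coloneqq k\rho$, and both have period $\rho$ — the product expands as a double sum
\[
\hcycle{\bar D}\frown\cycle{\bar C}
 = \sum_{i=0}^{\rho-1}\sum_{j=0}^{\rho-1}
   (-1)^{(n+1)i}\,
   \bigl(s(\rot^i(\bar D)),\rot^i(\bar D)\bigr)
   \frown
   \bigl(\rot^j(\bar C),s(\rot^j(\bar C))\bigr).
\]
By~\eqref{eq:frown:pairs} the $(i,j)$ summand is nonzero exactly when $\rot^i(\bar D)$ factorizes as $\rot^j(\bar C)\,z$ for some path $z$, in which case it contributes $\pm\,(s(\rot^i(\bar D)),z)$. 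Thus everything reduces to determining, for each $i$, which indices $j$ and factors $z$ actually occur.

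For part \thmitem{1} I would argue by contradiction: a surviving summand would make the complete cycle $\rot^j(\bar C)$, of positive length $w\rho\geq1$, a left factor of the complete cycle $\rot^i(\bar D)$, so the two cycles share at least one arrow. But a complete cycle is determined by any one of its arrows: by the gentleness conditions, given an arrow $a$ there is at most one $b$ with $ba\in R$ and at most one $c$ with $ac\in R$, so a complete cycle extends uniquely in both directions from any of its arrows. Hence $\bar C$ and $\bar D$ would be rotations of powers of a common primitive complete cycle, against the hypothesis. Therefore every summand vanishes and $\hcycle{\bar D}\frown\cycle{\bar C}=0$.

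For part \thmitem{2} I would split on the comparison of $k$ and $w$. When $k<w$ the product lands in homological degree $n-w\rho=(k-w)\rho<0$, so it is automatically zero. When $k=w$ the lengths of $\rot^i(\bar D)$ and $\rot^j(\bar C)$ coincide, so $z$ is trivial and a summand survives only when $\rot^i(\bar D)=\rot^j(\bar C)$; as $\bar D$ and $\bar C$ are then the same cycle of period $\rho$, this forces $j=i$, leaving one term $\bigl(s(\rot^i(\bar E)),s(\rot^i(\bar E))\bigr)$ per $i$, whose sign collapses to $(-1)^i$ once one uses that $w\rho$ is even or the characteristic is $2$; this produces the stated $0$-cycle. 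When $k>w$, for each $i$ there is a unique $j$ for which the leftmost $w\rho$ arrows of $\rot^i(\bar D)$ form $\rot^j(\bar C)$, and the complementary right factor $z$ is the corresponding rotation of $\bar E^{\,k-w}$; as $i$ runs over $\{0,\dots,\rho-1\}$ these factorizations sweep out exactly the $\rho$ rotations appearing in $\hcycle{\bar E^{\,k-w}}$, giving $\hcycle{\bar D}\frown\cycle{\bar C}=\hcycle{\bar E^{\,k-w}}$ once the signs are matched.

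The routine but delicate part — and the step I expect to be the main obstacle — is the index-and-sign bookkeeping in the case $k>w$: one must pin down the unique matching index $j=j(i)$, check that the vertices in the concatenation $s(\rot^i(\bar D))\,s(\rot^j(\bar C))$ agree so the output pair does not collapse, and reconcile the three sign contributions (the coefficient $(-1)^{(n+1)i}$ from $\hcycle{\bar D}$, the Koszul sign $(-1)^{(p+q)q}$ from~\eqref{eq:frown:pairs}, and the coefficient in $\cycle{\bar C}$) with the sign $(-1)^{((k-w)\rho+1)i}$ that defines $\hcycle{\bar E^{\,k-w}}$. Along the way I would record that $\bar E^{\,k-w}$ itself satisfies the parity/characteristic condition $(-1)^{((k-w)\rho+1)\rho}=1$ making $\hcycle{\bar E^{\,k-w}}$ a genuine basis element, which follows from the conditions imposed on $C$ and $D$ together with $\rho^2\equiv\rho\pmod 2$.
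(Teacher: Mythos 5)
Your proposal follows essentially the same route as the paper's proof: expand the double sum using the explicit formula~\eqref{eq:frown:pairs}, observe that a nonzero summand forces $\rot^j(\bar D)$ to factor through a rotation of $\bar C$ (which by gentleness pins down a unique diagonal matching of indices and rules out part~\thmitem{1}), split part~\thmitem{2} on $k$ versus $w$, and finish with the sign bookkeeping $(-1)^{im+(n+1)i+(n+m)m}=(-1)^{((k-w)r+1)i}$ together with the check that $(-1)^{((k-w)r+1)r}=1$ so the answer is the genuine basis element $\hcycle{\bar E^{\,k-w}}$. The steps you flag as remaining are exactly the computations the paper carries out, and nothing in your outline would fail.
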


\begin{proof}
Let $m$ and~$r$ be the length and the period of the circuit~$C$; since $C$
is in~$\Cprim(\Gamma)$, either $m$ is even or the characteristic of~$\kk$
is~$2$. There is a complete primitive circuit~$E$ of length~$r$ and an
integer~$w$ such that $\bar C=\bar E^w$, and $w=2$ if $r$ is odd and the
characteristic of~$\kk$ is not~$2$, and $w=1$ in any other case.
The $m$-cocycle $\cycle{\bar C}$ is the sum
  \[
  \cycle{\bar C} = \sum_{i=0}^{r-1}
        (-1)^{im}\cdot(\rot^i(\bar C),s(\rot^i(\bar C)))
        \in \kk(\Gamma_{m}\parallel\cB).
  \]
Associated to the complete circuit~$D$ is the $n$-cycle 
  \[
  \hcycle{\bar D} = 
    \sum_{i=0}^{t-1}(-1)^{(n+1)i}\cdot
          \bigl(s(\rot^i(\bar D)), \rot^i(\bar D))
  \]
in the complex~$\kk(\cB\odot\Gamma)$, whose homology class is one of the
elements of the basis of~$\HH_n(A)$ described in
Theorem~\ref{thm:basishomology}. According to 
formula~\eqref{eq:frown:pairs} for the cap product, if
$i\in\{0,\dots,r-1\}$ and $j\in\{0,\dots,s-1\}$, then
  \begin{multline*}
  \hskip4em
  (s(\rot^j(\bar D)),\rot^j(\bar D))
  \frown(\rot^i(\bar C),s(\rot^i(\bar C))) \\
    = \begin{cases*}
       (-1)^{(n+m)m}\cdot(s(\rot^i(\bar C),\eta))
        & if $\rot^j(\bar D)$ factors as $\rot^i(\bar C)\eta$; \\
      0 & in any other case.
      \end{cases*}
  \end{multline*}
If $\bar D$ is not a power of the primitive cycle~$\bar E$, then for no
choice of~$i$ and~$j$ the path~$\rot^j(\bar D)$ factors as $\rot^i(\bar
C)\eta$, and this implies at once that $\hcycle{\bar D}\frown\cycle{\bar
C}=0$.

Let us suppose that instead there is an integer~$k\in\NN$ such that $\bar
D=\bar E^k$. The periods~$r$ and~$t$ of~$C$ and~$D$ are then equal, and
their definition implies that whenever $i$,~$j\in\{0,\dots,r-1\}$ we have
that
  \[
  \claim[.75]{$\rot^j(\bar D)$ factors as $\rot^i(\bar C)\eta$ if and only
  if $j=i$ and $k\geq w$, and when that is the case we have that either
  $k>w$ and $\eta=\rot^i(\bar E^{k-w})$, or $k=w$ and $\eta=s(\rot^i(\bar
  E))$.}
  \]
It follows from this that
{\small
  \[
  \hcycle{\bar D}\frown\cycle{\bar C}
        = \begin{dcases*}
          0 
                & if $k<w$; 
                \\
          \sum_{i=0}^{r-1}
          (-1)^{im +(n+1)i+(n+m)m}\cdot
          (s(\rot^i(\bar E)),s(\rot^i(\bar E))),
                & if $k=w$; 
                \\
          \sum_{i=0}^{r-1}
          (-1)^{im+(n+1)i+(n+m)m}\cdot
          (s(\rot^i(\bar E^{k-w})),\rot^i(\bar E^{k-w})),
                & if $k>w$.
          \end{dcases*}
  \]
  }
Since $m=wr$ and $n=kr$, one can check at once that for all
$i\in\{0,\dots,r-1\}$ the equality 
  \begin{align*}  (-1)^{(n+1)i+(n+m)m} 
  &=(-1)^{im+(n+1)i+(n+m)m}  \\
  &= (-1)^{((k-w)r+1)i+(n+m)m} \\
  &= (-1)^{((k-w)r+1)i} 
  \end{align*}
holds in~$\kk$, because either $m$ is even or the characteristic of the
field~$\kk$ is two. 

We are left with considering two cases.
If $k=w$, then what we have is that
  \begin{equation}\label{eq:sigh:1}
  \hcycle{\bar D}\frown\cycle{\bar C}
        = \sum_{i=0}^{r-1}
          (-1)^{i}\cdot
          (s(\rot^i(\bar E)),s(\rot^i(\bar E))),
  \end{equation}
the alternating sum of the vertices through which the cycle~$\bar E$ passes
where the homology class of each  is an element of the basis
of~$\HH_0(A)$ given by Theorem~\ref{thm:basishomology}.
If instead $k>w$, then the cap product we are plodding for is
  \begin{equation}\label{eq:sigh:2}
  \hcycle{\bar D}\frown\cycle{\bar C}
        = \sum_{i=0}^{r-1}
          (-1)^{((k-w)r+1)i+(n+m)m}\cdot
          (s(\rot^i(\bar E^{k-w})),\rot^i(\bar E^{k-w})).
  \end{equation}
The complete cycle $\bar E^{k-w}$ that appears here has length~$(k-w)r$
and period~$r$, and in~$\kk$ we have that 
  \[
  (-1)^{((k-w)r+1)r} = (-1)^{(n+1)t-mr} = 1,
  \]
so that the sum~\eqref{eq:sigh:2} above is precisely the $(k-w)r$-cycle
$\hcycle{\bar E^{k-w}}$ from the list of
Theorem~\ref{thm:basishomology}.
With this we have proved all the claims of the lemma.
\end{proof}

\begin{lemma}\label{lemma:cap:6}
Let $C$ be an element of~$\Cprim(\Gamma)$, let $D$ be a complete circuit in
the gentle presentation~$(Q,I)$ whose length~$n+1$ and period~$t$ are such
that $(-1)^{nt}=1$ in~$\kk$, and let
  \[
  (\lfact1{\bar D},\lfact2{\bar D})
  \]
be the $n$-cycle in the complex~$\kk(\cB\odot\Gamma)$ corresponding to~$D$
in the list of Theorem~\ref{thm:basishomology}.
\begin{thmlist}

\item If the circuits $C$ and $D$ are not powers of the same primitive
complete circuit, then 
  \(
  (\lfact1{\bar D},\lfact2{\bar D}) \frown\cycle{\bar C}=0
  \).

\item If instead there exists a primitive circuit~$E$ and non-negative
integers~$w$ and~$k$ such that $\bar C=\bar E^w$ and $\bar D=\bar E^k$,
then 
  \[
  (\lfact1{\bar D},\lfact2{\bar D})\frown\cycle{\bar C}
    = \begin{dcases*}
      0 
        & if $k\leq w$; 
        \\
      (\lfact1{(\bar E^{k-w})},\lfact2{(\bar E^{k-w})})
        & if $k>w$.
      \end{dcases*}
  \]

\end{thmlist}
\end{lemma}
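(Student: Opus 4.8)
The plan is to unwind the cap product at the chain level using the explicit formula~\eqref{eq:frown:pairs} and reduce everything to a combinatorial question about factorizations of $\lfact2{\bar D}$. Let $m$ and $r$ be the length and period of $C$; since $C\in\Cprim(\Gamma)$, either $m$ is even or $\charact\kk=2$. The relevant cocycle is $\cycle{\bar C}=\sum_{i=0}^{r-1}(-1)^{im}\cdot(\rot^i(\bar C),s(\rot^i(\bar C)))\in\kk(\Gamma_m\parallel\cB)$, and the homology element $(\lfact1{\bar D},\lfact2{\bar D})$ lies in $\kk(\cB\odot\Gamma_n)$, with $\bar D$ of length $n+1$ so that $\abs{\lfact2{\bar D}}=n$. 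By~\eqref{eq:frown:pairs} the $i$-th summand of $\cycle{\bar C}$ contributes to $(\lfact1{\bar D},\lfact2{\bar D})\frown\cycle{\bar C}$ precisely when $\lfact2{\bar D}$ factors as $\rot^i(\bar C)\cdot z$ for some path $z$, and then its contribution is $(-1)^{im+(n+m)m}\cdot(\lfact1{\bar D},z)$. The whole computation therefore reduces to deciding, for each $i$, whether $\rot^i(\bar C)$ occurs as the left factor $y$ in a factorization $\lfact2{\bar D}=yz$.

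First I would dispose of the vanishing cases. For part~\thmitem{1}, if some summand survived then $\rot^i(\bar C)$, a path of positive length $m$, would be a contiguous subpath of $\bar D$; hence $C$ and $D$ would share an arrow, and the gentleness of $(Q,I)$ --- exactly as in the lemma showing that distinct primitive complete circuits have no arrow in common --- would force $C$ and $D$ to be powers of the same primitive complete circuit, against the hypothesis. Thus the cap product is zero. The same factorization criterion settles the case $k\le w$ of part~\thmitem{2} for length reasons: there $\lfact2{\bar D}$ has length $kr-1$ while each $\rot^i(\bar C)$ has length $wr$, and $kr-1\ge wr$ fails when $k\le w$, so no summand can contribute.

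For the substantive case $k>w$ of part~\thmitem{2}, I would write $\bar E=e_r\cdots e_1$, so that $\bar D=\bar E^{k}$ has $\lfact1{\bar D}=e_r$ and $\lfact2{\bar D}=e_{r-1}\cdots e_1(e_r\cdots e_1)^{k-1}$. Comparing with $\rot^i(\bar E^{w})$ by means of the rotation formula $\rot(c_m\cdots c_1)=c_{m-1}\cdots c_1c_m$, one checks that the leftmost block of $wr$ arrows of $\lfact2{\bar D}$ equals $\rot^1(\bar E^{w})$, and that this is the \emph{only} index $i$ for which a factorization exists --- uniqueness being immediate since the rotations $\rot^0(\bar E^w),\dots,\rot^{r-1}(\bar E^w)$ are pairwise distinct because $E$ is primitive. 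The complementary factor is then $z=e_{r-1}\cdots e_1(e_r\cdots e_1)^{k-w-1}=\lfact2{(\bar E^{k-w})}$, and since $\lfact1{\bar D}=e_r=\lfact1{(\bar E^{k-w})}$ the surviving contribution is $(\lfact1{(\bar E^{k-w})},\lfact2{(\bar E^{k-w})})$ up to the sign $(-1)^{m+(n+m)m}=(-1)^{m(n+m+1)}$, which equals $+1$ because either $m$ is even or $\charact\kk=2$. This is exactly the claimed cycle (one of the basis elements of Theorem~\ref{thm:basishomology}).

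The routine-but-delicate heart of the argument is the bookkeeping in the last paragraph: tracking the precise arrow-words of $\lfact2{\bar D}$ and of the rotations $\rot^i(\bar E^w)$, pinning down that only $i=1$ produces a factorization, and confirming that the leftover $z$ is precisely $\lfact2{(\bar E^{k-w})}$ with matching leading arrow $e_r$. I expect the main obstacle to be this index-and-sign accounting rather than any conceptual difficulty, the conceptual input --- gentleness forcing complete cycles through a common arrow to be conjugate, and the parity afforded by $C\in\Cprim(\Gamma)$ --- being already at hand, much as in the proof of Lemma~\ref{lemma:cap:5}.
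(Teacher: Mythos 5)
Your proposal is correct and follows essentially the same route as the paper's proof: both unwind the cap product through the chain-level formula~\eqref{eq:frown:pairs}, dispose of the case $k\leq w$ for degree/length reasons, and reduce part~\thmitem{1} to the fact that complete cycles sharing an arrow are, by gentleness, powers of conjugate primitive complete cycles. The only divergence is in how the unique contributing index is pinned down: the paper deduces $i\equiv 1\pmod r$ from the normalization of the representatives in~$\Crep^\circ(\Gamma)$, whereas you read it off directly from the letters of $\lfact2{\bar D}=\lfact2{(\bar E^k)}$ --- a harmless variation that yields the same remainder $\lfact2{(\bar E^{k-w})}$ and the same sign~$+1$.
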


\begin{proof}
Let $m$ and~$r$ be the length and the period of the circuit~$C$, and let
$E$ be the primitive complete circuit of length~$r$ such that there is a
positive integer~$w$ with $\bar C=\bar E^w$. Since $C$ is
in~$\Cprim(\Gamma)$, we have that $w=2$ if $r$ is odd and the
characteristic of~$\kk$ is not~$2$, and that $w=1$ in any other case.
As in Theorem~\ref{thm:basishomology}, we consider the $m$-cocycle 
  \[
  \cycle{\bar C} = \sum_{i=0}^{r-1}
       (-1)^{im}\cdot (\rot^i(\bar C),s(\rot^i(\bar C)))
        \in \kk(\Gamma_{m}\parallel\cB).
  \]
On the other hand, the 
$n$-cycle $(\lfact1{\bar D},\lfact2{\bar D})$ corresponds to the complete circuit $D$.

Let us suppose there is an element~$i$ of~$\{0,\dots,r-1\}$ such that
the cycle
  \begin{equation}\label{eq:sigh:3}
  (\lfact1{\bar D},\lfact2{\bar D})\frown(\rot^i(\bar C),s(\rot^i(\bar C)))
  \end{equation}
is not zero in~$\kk(\cB\odot\Gamma)$. According to our
formula~\eqref{eq:frown:pairs} for the cap product, the path $\lfact2{\bar
D}$ then factorizes as $\rot^i(\bar C)\eta$ and we have that
$s(\lfact1{\bar D})=s(\rot^i(\bar C))$. It follows from this that
  \begin{equation}\label{eq:sigh:4}
  \rot(\bar D) 
        = \lfact2{\bar D}\lfact1{\bar D}
        = \rot^i(\bar C)\eta\lfact1{\bar D}
        = \rot^i(\bar E^w)\eta\lfact1{\bar D}
  \end{equation}
and, since the circuit~$E$ is primitive and the first and last members of
this chain of equalities are complete cycles, we see that there is a positive
integer~$k$ such that $k\geq w$ and $\rot(\bar D)=\rot^i(\bar E^{k})$ and that the
length and the period of the circuit~$D$ are~$kr$ and~$r$, respectively.

Notice that $\bar D=\rot^{i-1}(\bar E^k)$. As $\bar E^k$ is an element
of~$\overline{\C^\circ}(\Gamma)$ and it is conjugate to $\rot^{i-1}(\bar
E^k)$, which is also an element of~$\overline{\C^\circ}(\Gamma)$, we must
have that $\rot^{i-1}(\bar E^k)=\bar E^k$: this tells us that necessarily
  \begin{equation}\label{eq:sigh:4.5}
  \claim{either $i=1$ and $r>1$ or $i=0$ and $r=1$.}
  \end{equation}
In particular, of course, there is exactly one element~$i$ in~$\{0,\dots,r-1\}$ such
that the product~\eqref{eq:sigh:3} is non-zero, and therefore
  \begin{align}
  (\lfact1{\bar D},\lfact2{\bar D})\frown\cycle{\bar C}
       &= (-1)^{im}\cdot
          (\lfact1{\bar D},\lfact2{\bar D})
          \frown
          (\rot^i(\bar C),s(\rot^i(\bar C)))
          \notag
          \\
       &= (-1)^{im}\cdot
          (\lfact1{\bar D},\eta) 
       = 
          (\lfact1{\bar D},\eta)
          \label{eq:sigh:5}
  \end{align}

If $k$ were equal to~$w$, then the cycle~$(\lfact1{\bar D},\lfact2{\bar
D})$ would have degree~$wr-1$ and, as the cocycle $\cycle{\bar C}$ has
degree~$wr$, the product in~\eqref{eq:sigh:3} ought to be zero simply
because it has negative degree. As it is not zero, we see that
$k>w$ and then, in view of the equality~\eqref{eq:sigh:4}, we have that
$\rot^i(\bar E^{k-w})=\eta\lfact1{\bar D}$. This implies that 
$\rot^{i-1}(\bar E^{k-w})=\lfact1{\bar D}\eta$ and therefore that
  \[
  \lfact1{\bar D} = \lfact1{\rot^{i-1}(\bar E^{k-w})},
  \qquad
  \eta = \lfact2{\rot^{i-1}(\bar E^{k-w})}.
  \]
Using this in~\eqref{eq:sigh:5} and recalling~\eqref{eq:sigh:4.5} we can conclude that
  \begin{align*}
  (\lfact1{\bar D},\lfact2{\bar D})\frown\cycle{\bar C}
       &= (-1)^{im}\cdot
          (\lfact1{\rot^{i-1}(\bar E^{k-w})},\lfact2{\rot^{i-1}(\bar E^{k-w})})
          \\
       &= \begin{cases*}
          (\lfact1{(\bar E^{k-w})},\lfact2{(\bar E^{k-w})})
                & if $r=1$; 
                \\
          (-1)^{m}\cdot
          (\lfact1{(\bar E^{k-w})},\lfact2{(\bar E^{k-w})})
                & if $r>1$.
          \end{cases*}
  \end{align*}
Moreover, since $m$ es even if the characteristic of~$\kk$ is not~$2$, this
is simply
  \[
  (\lfact1{(\bar E^{k-w})},\lfact2{(\bar E^{k-w})})
  \]
independently of the value of~$r$.
\end{proof}

The very final piece of our calculation is very simple and
confronts us with a new exceptional gentle presentation:

\begin{lemma}\label{lemma:cap:7}
Let us suppose that the presentation~$(Q,I)$ is \emph{not} isomorphic to  
the one with
  \[
  Q =
  \begin{tikzpicture}[
        auto, 
        thick, 
        font=\footnotesize,
        scale=0.8, 
        baseline=(1.base),
        ]
  \node (1) at (0,0) {$1$};
  \draw [->] (1) edge[out=180-45, in=180+45, loop, swap] node {$a$} (1);
  \draw [->] (1) edge[out=0-45, in=0+45, loop, swap] node {$b$} (1);
  \pgfresetboundingbox
  \useasboundingbox (-2,-1) rectangle (2,1);
  \end{tikzpicture}
  \qquad
  \qquad
  \qquad
  I = \langle a^2, b^2\rangle
  \]
If $C$ is an element of~$\Cprim(\Gamma)$ and $D$ is a cocomplete
circuit in the gentle presentation~$(Q,I)$, then 
  \(
  \hcycle{\bar D}\frown\cycle{\bar C} = 0
  \)
in the complex~$\kk(\cB\odot\Gamma)$ and, \emph{a fortiori}, also
in its homology~$\HH_*(A)$.
\end{lemma}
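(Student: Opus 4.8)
The plan is to exploit the fact that $\hcycle{\bar D}$ is a homology class of degree $1$ while $\cycle{\bar C}$ is a cohomology class of degree $m$, where $m$ is the length of the complete cycle $C$, so that the cap product $\hcycle{\bar D}\frown\cycle{\bar C}$ lands in $\HH_{1-m}(A)$. First I would dispose of the generic case $m\geq2$: here $1-m<0$, and since the complex $\kk(\cB\odot\Gamma)$ is concentrated in non-negative degrees the product is already zero at the level of chains. Concretely this is visible from the cap product formula~\eqref{eq:frown:pairs}, because every pair appearing in $\hcycle{\bar D}$ has its $\Gamma$-component $\rfact2{\rot^j(\bar D)}$ of length $1$, which cannot be factored through the length-$m$ path $\rot^i(\bar C)$ occurring in $\cycle{\bar C}$.

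This leaves the single delicate case $m=1$. Since $C$ belongs to $\C(\Gamma)$ and has odd length, the characteristic of $\kk$ must be $2$ and $C=b_0$ is a loop with $b_0^2\in R$; moreover $\cycle{\bar C}=(b_0,s(b_0))$. Applying~\eqref{eq:frown:pairs} I would compute that
\[
\hcycle{\bar D}\frown(b_0,s(b_0))
  =\sum_{j}(\rfact1{\rot^j(\bar D)},\,s(b_0)),
\]
the sum running over those rotations $\rot^j(\bar D)$ whose first arrow is $b_0$. For each such $j$ write $\rho_j\coloneqq\rfact1{\rot^j(\bar D)}$, the cycle obtained from $\rot^j(\bar D)$ by deleting the initial $b_0$. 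The key point will then be to show that, away from the excluded presentation, the circuit of each $\rho_j$ is neither complete nor cocomplete; granting this, the decomposition~\eqref{eq:hom:desc} together with Lemma~\ref{lemma:homology:neither} shows that $(\rho_j,s(b_0))$ is a boundary, so the cap product vanishes in $\HH_*(A)$. I should be candid that at the level of chains the product need not be the zero element once $m=1$, so the operative content of the lemma in this case is the homology vanishing claimed \emph{a fortiori}.

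The hard part is the local analysis at the vertex $v\coloneqq s(b_0)$. The deleted $b_0$ is flanked inside the cocomplete cycle $\bar D$ by the unique `other' incoming arrow $\gamma$ and the unique `other' outgoing arrow $\beta$ at $v$, the pairs $b_0\gamma$ and $\beta b_0$ being forced to avoid $R$ because $b_0^2\in R$. Gentleness then forces $\beta\gamma\in R$: otherwise $\beta$ would admit the two distinct arrows $b_0$ and $\gamma$ with $\beta b_0\notin I$ and $\beta\gamma\notin I$, contradicting part~(\emph{b}) of the definition of a gentle presentation. Consequently $\rho_j$ begins with $\beta$ and ends with $\gamma$, so the wrap-around pair of $\rho_j^{2}$ is $\beta\gamma\in R$ and $\rho_j$ is not cocomplete; and whenever $\rho_j$ has length at least $2$ it retains a consecutive pair of $\bar D$, which lies outside $R$, so $\rho_j$ is not complete either.

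I would finish by ruling out the only escape, $\rho_j$ of length $1$: this forces $\bar D=\beta b_0$ with $\beta$ a second loop at $v$, and gentleness (admissibility or condition~(\emph{b}) applied to $\beta$) then forces $\beta^2\in R$; since $v$ now has two loops and hence no further incident arrows, connectedness of $Q$ collapses the quiver to the two-loop presentation excluded in the hypothesis. This dichotomy is exactly what the exceptional case records: there $\rho_j$ is the complete loop, $(\rho_j,s(b_0))$ is a nonzero class of $\HH_0(A)$ by Lemma~\ref{lemma:homology:complete}, and the conclusion genuinely fails. Thus the main obstacle is not the homological bookkeeping but this combinatorial verification that the deleted loop always produces a circuit of the `neither' type, together with pinning down precisely which presentation is the unavoidable exception.
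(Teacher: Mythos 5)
Your proof is correct and takes essentially the same route as the paper's: the cap-product formula forces the length of $C$ to be one, the product becomes the $0$-chain obtained by deleting the initial loop from the relevant rotation of $\bar D$, and the decomposition~\eqref{eq:hom:desc} together with Lemma~\ref{lemma:homology:neither} shows this chain is a boundary unless the deleted-loop cycle is itself a complete loop, which forces the excluded two-loop presentation (the paper organises the same combinatorics by ruling out, in turn, length zero, cocomplete, and complete of length greater than one, rather than by your direct observation that gentleness puts the wrap-around pair of the deleted-loop cycle into $R$). Your caveat about the chain level is also well founded: when $C$ has length one the product can be a nonzero boundary, so only the vanishing in homology — which is what the paper's own argument actually establishes and what is used in Theorem~\ref{thm:cap} — holds in general.
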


\begin{proof}
Let $m$ and~$r$ be the length and the period of the circuit~$C$, and,
as in Proposition~\ref{prop:hh:m} and Lemma~\ref{lemma:sum}, let
  \[
  \cycle{\bar C} = \sum_{i=0}^{r-1}
        (-1)^{im}\cdot(\rot^i(\bar C),s(\rot^i(\bar C)))
        \in \kk(\Gamma_{m}\parallel\cB)
  \]
be the $m$-cocycle associated to~$C$. Let, on the other hand, $t$ be the
period of the complete circuit~$D$ and let
  \[
  \hcycle{\bar D} 
        \coloneqq \sum_{j=0}^{t-1}(\rfact1{\rot^j(\bar D)},
                                   \rfact2{\rot^j(\bar D)})
        \in\kk(\cB\odot\Gamma_1),
  \]
be the $1$-cycle in the complex~$\kk(\cB\odot\Gamma)$ described in
Theorem~\ref{thm:basishomology}. Let us suppose that the product
$\hcycle{\bar D}\frown\cycle{\bar C}$ is not zero in~$\HH_{m-1}(A)$. In
particular, there are indices
$i\in\{0,\dots,r-1\}$ and
$j\in\{0,\dots,t-1\}$ such that  \[
  (\rfact1{\rot^j(\bar D)},\rfact2{\rot^j(\bar D)})
  \frown (\rot^i(\bar C),s(\rot^i(\bar C)))
  \]
is a non-zero element of~$\kk(\cB\odot\Gamma)$. According to our
formula~\eqref{eq:frown:pairs}, this implies that the
path~$\rfact2{\rot^j(\bar D)}$ factorizes as $\rot^i(\bar C)\eta$ for some
path~$\eta$ and, since $\rfact2{\rot^j(\bar D)}$ has length~$1$ and $\bar
C$ has positive length, this tells us that $\bar C$ itself has length~$1$,
so $m=1$, $r=1$, $i=0$, $C=\rfact2{\rot^j(\bar D)}$ and, of course, that
$\eta=s(\rfact2{\rot^j(\bar D)})=s(\rot^j(\bar D))$. 

As the number $t$ is the period of~$\bar D$, which is a cocomplete cycle,
using the gentleness of~$(Q,I)$ we can see that the $t$ arrows
$\rfact2{\rot^0(\bar D)}$,~\dots,~$\rfact2{\rot^{t-1}(\bar D)}$ are
pairwise different, so that in fact the index~$j$ is uniquely determined.
It follows from all this that in fact
  \[
  \hcycle{\bar D}\frown\cycle{\bar C}
        = (\rfact1{\rot^j(\bar D)},\rfact2{\rot^j(\bar D)})
          \frown(\bar C,s(\bar C))
        = (\rfact1{\rot^j(\bar D)},s(\rot^j(\bar D))).
  \]
This last $0$-chain is homogeneous with respect to the direct sum
decomposition~\eqref{eq:hom:desc} of the complex~$\kk(\cB\odot\Gamma)$ that
we described in Chapter~\ref{chapter:homology}, and is in the direct
summand corresponding to the element of~$\C'$ that contains
$\rfact1{\rot^j(\bar D)}$, and therefore, according to
Lemma~\ref{lemma:homology:neither}, it is a coboundary unless either 
$\rfact1{\rot^j(\bar D)}$ has length zero or it has positive length and is 
complete or cocomplete. Let us analize the first two of these possibilities.
\begin{itemize}

\item The path $\rfact1{\rot^j(\bar D)}$ cannot be of length zero, for then
$\bar D$, a cocomplete cycle, would coincide with~$\bar C$, a complete
cycle.

\item Suppose that $\rfact1{\rot^j(\bar D)}$ has positive length and that
it is a cocomplete cycle. As $\rot^j(\bar D)=\rfact1{\rot^j(\bar
D)}\rfact2{\rot^j(\bar D)}$ is also a cocomplete cycle, the
gentleness of~$(Q,I)$ allows us then to conclude that $\rot^j(\bar D)$ is a
power of~$\rfact2{\rot^j(\bar D)}$. As $\bar C=\rfact2{\rot^j(\bar D)}$, in
this situation we have that $\bar D$, a cocomplete cycle, is a power
of~$\bar C$, a complete cycle: this cannot happen.

\end{itemize}
We thus see that $\rfact1{\rot^j(\bar D)}$ necessarily has positive length
and is a complete cycle. As it is a factor of~$\rot^j(\bar D)$, which is
also a cocomplete cycle, the length of $\rfact1{\rot^j(\bar D)}$ is
exactly~$1$. We thus see that $a=\rfact1{rot^j(\bar D)}$ and
$b=\rfact2{rot^j(\bar D)}$ are two different loops such that $a^2\in I$,
$b^2\in I$, $ab\not\in I$, $ba\not\in I$, $\bar C=a$ and $\rot^j(\bar D)=ab$.
The gentleness of~$(Q,I)$ and the connectedness of~$Q$ therefore
imply that $s(a)$ is the unique vertex of the quiver~$Q$, that~$a$
and~$b$ are its only two arrows, and that the ideal~$I$ is generated by
their squares: in other words, the presentation~$(Q,I)$ is isomorphic to
the one described in the statement of the lemma. The claim of the latter is
thus true.
\end{proof}

We can summarize our findings as follows:

\begin{theorem}\label{thm:cap}
Let $(Q,I)$ be a gentle presentation in which $Q$ has more than one
vertex, and let $T$ be a spanning tree of~$Q$. If $u$ is an element of the
basis of~$\HH^*(A)$ described at the beginning of
Section~\ref{sect:corollaries} that belongs to the generating set~$\G$ of
Proposition~\ref{prop:G} and $v$ one of the basis elements of~$\HH_*(A)$ described
in Theorem~\ref{thm:basishomology} such that the cap product $v\frown u$ is
not zero in~$\HH_*(A)$, then one of the following statements holds.
\begin{thmlist}

\item\label{it:cap:1} There is an arrow~$c$ in~$Q_1\setminus T$ and
a cocomplete  circuit~$C$ in which the arrow~$c$ appears such that 
  \[
  u=(c,c),
  \qquad
  v=\hcycle{\bar C},
  \qquad
  v\frown u = (\bar C,s(\bar C)).
  \]

\item\label{it:cap:2} There is an arrow~$c$ in~$Q_1\setminus T$ and
a complete  circuit~$C$ in which~$c$ appears and whose length~$m$ and
period~$r$ are such that $(-1)^{(m+1)r}=1$ in~$\kk$ such that 
  \[
  u=(c,c),
  \qquad 
  v=\hcycle{\bar C},
  \qquad
  v\frown u = (-1)^{m+1}(\lfact1{\bar C},\lfact2{\bar C}).
  \]

\item\label{it:cap:3} There is an element~$\alpha$ of~$\Crepprim(\cB)$ and a vertex~$e$
in~$Q$ through which the cycle~$\alpha$ passes such that 
  \[
  u=\cycle{\alpha},
  \qquad 
  v=(e,e),
  \qquad
  v\frown u = (\alpha,s(\alpha)).
  \]

\item\label{it:cap:4} There is an element~$\alpha$ of~$\Crepprim(\cB)$ and a positive
integer~$k\in\NN$ such that 
  \[
  u=\cycle{\alpha},
  \qquad 
  v=(\alpha^k,s(\alpha^k)),
  \qquad
  v\frown u = (\alpha^{k+1},s(\alpha^{k+1})).
  \]

\item\label{it:cap:5} There is an element~$\alpha$ of~$\Crepprim(\cB)$ and a positive
integer~$k\in\NN$ such that 
  \[
  u=\cycle{\alpha},
  \qquad
  v=\hcycle{\alpha^k},
  \qquad
  v\frown u = \hcycle{\alpha^{k+1}}.
  \]

\item\label{it:cap:6} There is an element~$C$ in the set~$\Cprim(\Gamma)$, a complete
circuit~$D$ whose length~$n$ and period~$r$ are such that  $(-1)^{(n+1)r}=1$
in~$\kk$, a primitive circuit~$E$, and non-negative integers~$w$ and~$k$
such that 
  \[
  k\geq w, 
  \quad
  \bar C=\bar E^w, 
  \quad
  \bar D=\bar E^k, 
  \quad
  u = \cycle{\bar C},
  \quad
  v = \hcycle{\bar D},
  \]
and we have that
  \[
  v\frown u
    = \begin{dcases*}
      \sum_{i=0}^{r-1}(-1)^i\cdot(s(\rot^i(\bar E)),s(\rot^i(\bar E)))
        & if $k=w$; 
        \\
      \hcycle{\bar E^{k-w}}
        & if $k>w$.
      \end{dcases*}
  \]

\item\label{it:cap:7} There is an element~$C$ in the set~$\Cprim(\Gamma)$, a complete
circuit~$D$ whose length~$n+1$ and period~$t$ are such that $(-1)^{nt}=1$
in~$\kk$, a primitive circuit~$E$, and non-negative
integers~$w$ and~$k$ such that 
  \[
  k>w, 
  \quad
  \bar C=\bar E^w, 
  \quad
  \bar D=\bar E^k,
  \quad
u=\cycle{\bar D}, 
  \quad
  v=(\lfact1{\bar D},\lfact2{\bar D})
  \]
and we have that
  \[
  v\frown u
    = (\lfact1{(\bar E^{k-w})},\lfact2{(\bar E^{k-w})}).
  \]

\end{thmlist}
\end{theorem}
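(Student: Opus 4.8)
The plan is to assemble the statement directly from the case-by-case computations carried out in Lemmas~\ref{lemma:cap:1}--\ref{lemma:cap:7}, since every generator $u\in\G$ of Proposition~\ref{prop:G} belongs to exactly one of the five families listed there, and each of those families has already been paired against the entire homology basis of Theorem~\ref{thm:basishomology} in one of those lemmas. The whole proof is therefore organizational: go through the five types of $u$, invoke the matching lemma, and read off that the nonzero outcomes are precisely the seven listed cases. Because the hypothesis guarantees that $Q$ has more than one vertex, every exceptional presentation appearing in Lemmas~\ref{lemma:cap:1}, \ref{lemma:cap:2} and~\ref{lemma:cap:7} (each of which involves a single-vertex quiver) is automatically excluded, so those lemmas may be applied in their clean form.

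First I would dispose of the two families of generators all of whose cap products vanish. If $u=(s(\alpha),\alpha)$ with $\alpha$ a $\cB$-maximal cycle, then Lemma~\ref{lemma:cap:1} gives $\HH_*(A)\frown u=0$; and if $u=(\gamma,\alpha)$ is a clean $\Gamma$-maximal generator, then Lemma~\ref{lemma:cap:2} gives $\HH_*(A)\frown u=0$. Here I would observe that a $\Gamma$-maximal element of~$\Gamma$ necessarily has positive length, since a vertex cannot be $\Gamma$-maximal when $Q$ is connected with at least two vertices (it is always a proper subpath of an adjacent arrow), so the positive-length hypothesis of Lemma~\ref{lemma:cap:2} is met. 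Neither family contributes any nonzero product, which is why they do not appear among the seven cases.

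Next I would treat the three contributing families. For $u=(c,c)$ with $c\in Q_1\setminus T$, Lemma~\ref{lemma:cap:3} shows that the only basis elements $v$ with $v\frown u\neq0$ are the classes $\hcycle{\bar C}$ attached to a cocomplete circuit through~$c$ (case~\ref{it:cap:1}) or to a complete circuit through~$c$ satisfying $(-1)^{(m+1)r}=1$ (case~\ref{it:cap:2}). For $u=\cycle{\alpha}$ with $\alpha\in\Crepprim(\cB)$, Lemma~\ref{lemma:cap:4} enumerates the nonzero products as exactly cases~\ref{it:cap:3}, \ref{it:cap:4} and~\ref{it:cap:5}, and records $\HH_m(A)\frown\cycle{\alpha}=0$ for $m\geq2$. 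Finally, for $u=\cycle{C}$ with $C\in\Crepprim(\Gamma)$, viewed as the chosen representative of its primitive complete circuit in~$\Cprim(\Gamma)$, Lemmas~\ref{lemma:cap:5} and~\ref{lemma:cap:6} compute $\hcycle{\bar D}\frown\cycle{\bar C}$ and $(\lfact1{\bar D},\lfact2{\bar D})\frown\cycle{\bar C}$ for complete circuits~$D$, giving cases~\ref{it:cap:6} and~\ref{it:cap:7}, while Lemma~\ref{lemma:cap:7} shows that capping $\cycle{C}$ against a cocomplete $\hcycle{\bar D}$ yields zero.

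The one combination the lemmas do not address head-on is $\cycle{C}$ capped against the degree-zero basis elements of $\HH_*(A)$ --- the pairs $(e,e)$, the cocomplete $(\bar C',s(\bar C'))$, and the loop classes $(a,s(a))$ --- but these all vanish for a trivial reason: $\cycle{C}$ has positive cohomological degree equal to the length of~$C$, so the cap product lands in a negative homological degree and is therefore zero. I would also flag that the loop classes $\hcycle{a}=(s(a),a)$ in $\HH_1(A)$ with $a^2\in I$ are not an exception to the complete-circuit analysis, since such a loop is a complete cycle of length one and $(s(a),a)=\hcycle{\bar D}$ for that circuit, hence already subsumed under Lemma~\ref{lemma:cap:5}. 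Putting all of this together shows that every nonzero product $v\frown u$ with $u\in\G$ lies in exactly one of the seven situations. The step demanding the most care is precisely this exhaustiveness check: confirming that the five generator families and the homology basis of Theorem~\ref{thm:basishomology} are completely paired by the lemmas, with no overlooked combination and with the exceptional single-vertex presentations correctly ruled out by the standing hypothesis on~$Q$.
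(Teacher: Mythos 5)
Your proposal is correct and follows exactly the route the paper takes: the paper's own proof of Theorem~\ref{thm:cap} consists of the single sentence that the statement is the information contained in Lemmas~\ref{lemma:cap:1}--\ref{lemma:cap:7}, and your write-up simply makes explicit the bookkeeping behind that assembly (the exclusion of the single-vertex exceptional presentations by the hypothesis on~$Q$, the positivity of the length of a $\Gamma$-maximal path, the degree argument for capping $\cycle{C}$ against $\HH_0(A)$, and the identification of the loop classes with $\hcycle{\bar D}$ for a complete circuit of length one). These checks are accurate and consistent with the lemmas, so nothing further is needed.
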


\begin{proof}
This is the information that is contained in Lemmas~\ref{lemma:cap:1},
\ref{lemma:cap:2}, \ref{lemma:cap:3}, \ref{lemma:cap:4}, \ref{lemma:cap:5},
\ref{lemma:cap:6}, and \ref{lemma:cap:7}.
\end{proof}

\begin{remark}\label{rem:exception-homology}
Let us suppose that $Q$ has exactly one vertex and one arrow a. When the presentation $(Q,I)$ is \fd gentle, we have $a^2\in R$ and depending of the ground field there are two cases. 
\begin{itemize}
	\item If the characteristic of~$\kk$ is not~$2$, by  Corollary~\ref{coro:homology:basis:tr} we know that the graded vector space~$\HH_*(A)$ is freely spanned by the homology class of~$(s(a),s(a))$, and for each $m\geq0$ the classes of $\hcycle{a^m}$ if $m$ is odd and by $(a, a^m)$ if $m$ is even. According to Remark~\ref{rem:exception} the vector space~$\HH^0(A)$ is freely spanned by~$\one$ and~$(s(a),a)$, and for each $n\geq1$ the vector space $\HH^n(A)$ is freely spanned by~$(a^n,a)$ if $n$ is odd, and by $\cycle{a^n}$ if $n$ is even.
	
	Using these bases and Equation \ref{eq:frown:pairs}, we obtain for all integers $m\geq 0$ and $n \geq 1$:
	


\begin{table}[H]
	\centering
	\setlength\extrarowheight{2pt}
	\setlength{\tabcolsep}{7pt}
	\setlength{\aboverulesep}{0pt}
	\setlength{\belowrulesep}{0pt}
	\begin{tabular}{c@{\hskip1.3em}*{3}{c}}
		$\frown$
		& $(s(a),a)$
		& $(a^{n},a) $
		& $\cycle{a^{n}}$
		\\ \toprule
		$(s(a),s(a))$
		& $ (a,s(a))$
		& $0$
		& $0$
		
		\\ \midrule
		$\hcycle{a^m}$
		& $0$
		& $(a,a^{m-n})$
		& $\hcycle{a^{m-n}}$
		
		\\ \midrule
		$(a,a^m)$
		& $0$
		& $0$
		& $(a,a^{m-n})$
		\\ \bottomrule
	\end{tabular}
	\bigskip
	\caption{}
	\label{tbl:x}
\end{table}
	
	\item If the characteristic of~$\kk$ is~$2$, by  Corollary~\ref{coro:homology:basis:tr} \sloppy the graded vector space~$\HH_*(A)$ is freely spanned by the homology class of~$(s(a),s(a))$, and, for each $m\geq1$, the classes of $\hcycle{a^m}$ and, for $m \geq0$ the classes of $(a, a^{m})$. On the other hand, by   Remark~\ref{rem:exception}  the vector space~$\HH^0(A)$ is freely spanned by~$\one$ and~$(s(a),a)$. For  each $n\geq1$ the vector space~$\HH^n(A)$ is freely spanned by~$\cycle{a^n}$ and by $(a^n,a)$. For all integers $m\geq n\geq 1$ we have that

\begin{table}[H]
	\centering
	\setlength\extrarowheight{2pt}
	\setlength{\tabcolsep}{7pt}
	\setlength{\aboverulesep}{0pt}
	\setlength{\belowrulesep}{0pt}
	\begin{tabular}{c@{\hskip1.3em}*{3}{c}}
		$\frown$
		& $(s(a),a)$
		& $(a^{n},a) $
		& $\cycle{a^{n}}$
		\\ \toprule
		$(s(a),s(a))$
		& $ (a,s(a))$
		& $0$
		& $0$
		
		\\ \toprule
		$(a,s(a))$
		& $0$
		& $0$
		& $0$
		
		\\ \midrule
		$\hcycle{a^m}$
		& $(a,a^m)$
		& $(a,a^{m-n})$
		& $\hcycle{a^{m-n}}$
		
		\\ \midrule
		$(a,a^m)$
		& $0$
		& $0$
		& $(a,a^{m-n})$
		\\ \bottomrule
	\end{tabular}
	\bigskip
	\caption{}
	\label{tbl:xx}
\end{table}

\end{itemize}

Finally, if the representation $(Q, I)$ is not \fd gentle, then there are no relations; the graded vector space $\HH_*(A)$ is freely spanned by the homology classes of $(s(a),s(a))$,  the elements $(a^m, s(a))$ and  $\hcycle{a^{m+1}}$ one for each $m\geq 1$. Using  Remark~\ref{rem:exception}  the vector space~$\HH^0(A)$ is freely spanned by~$\one$ and the elements~$\cycle{a^n}$, one for each $n>0$, the vector space~$\HH^1(A)$ is freely spanned by the pairs~$(a,a^n)$, one for each $n\geq0$ and  for all $n>1$ we have $\HH^n(A)=0$. For all $m\geq1$ and for appropriate values of $n$ we obtain 

\begin{table}[H]
	\centering
	\setlength\extrarowheight{2pt}
	\setlength{\tabcolsep}{7pt}
	\setlength{\aboverulesep}{0pt}
	\setlength{\belowrulesep}{0pt}
	\begin{tabular}{c@{\hskip1.3em}*{3}{c}}
		$\frown$
		& $\cycle{a^n}$
		& $(a,a^n)$
		\\ \toprule
		$(s(a),s(a))$
		& $(a^n,s(a))$
		& $0$
		
		\\ \midrule
		$(a^m,s(a))$
		& $(a^{m+n},s(a))$
		& $0$
		
		\\ \midrule
		$\hcycle{a^{m+1}}$
		& $\hcycle{a^{m+n+1}}$
		& $(a^{m+n},s(a))$
		\\ \bottomrule
	\end{tabular}
	\bigskip
	\caption{}
	\label{tbl:y}
\end{table}

\end{remark}

In the following description, we assume that the reader is familiar with the length and characteristic restrictions for each of the elements in both homology and cohomology.

\begin{remark}
	This computation concludes the omitted case in Lemma \ref{lemma:cap:7}. Let us suppose that Q has two loops, namely, $a$ and $b$, which coincide at the same vertex. Let I be the ideal generated by the relation $x^2$ where $x\in \{a,b\}$. Using Corollary~\ref{coro:homology:basis:tr} the graded vector space~$\HH_*(A)$ is freely spanned by the homology classes of~$(s(a),s(a))$, $(x,s(a))$, $\hcycle{(ba)^n}$, $((ba)^n,s(a))$, $\hcycle{x^n}$ and $(x,x^{n})$ for each $n\geq1$; notice that the last two elements appear under certain conditions of length and characteristic, but if $n=1$ the last element only appears if the characteristic is 2. On the other hand, the graded vector space ~$\HH^*(A)$ is freely spanned by the cohomology classes $\one$, $\cycle{(ab)^m}$, $(a,a(ba)^m)$, $(x,x)$, $\cycle{x^m}$, $(x^m,x)$ for all $m\geq1$, it is worth mentioning that the characteristic, as well as the length, influence the last two types of elements. For appropriate values $n$ and $m$ we obtain:
	
	{\scriptsize 
	\begin{table}[H]
		\centering
		\setlength\extrarowheight{2pt}
		\setlength{\tabcolsep}{7pt}
		\setlength{\aboverulesep}{0pt}
		\setlength{\belowrulesep}{0pt}
		\begin{tabular}{c@{\hskip1.3em}*{8}{c}}
			$\frown$
			& $\cycle{(ab)^m}$
			& $(a, a(ba)^m)$
			& $(x,x)$
			& $\cycle{x^m}$
			& $(x^m,x)$
			
			\\ \toprule
			$(s(a),s(a))$
			& $2((ba)^n,s(a))$
			& 0
			& 0
			& 0
			& 0
			
			\\ \toprule
			$((ba)^n,s(a))$
			& $((ba)^{n+m},s(a))$
			& 0
			& 0
			& 0
			& 0
			
			\\ \midrule
			$(x,s(x))$
			& 0
			& 0
			& 0
			& 0
			& 0
			
			\\ \midrule
			$\hcycle{(ab)^{n}}$
			& $\hcycle{(ab)^{n+m}}$
			& $((ab)^{n+m},s(a))$
			& $((ab)^{n},s(a))$
			& 0
			& 0
			
			\\ \midrule
			$\hcycle{x^n}$
			& 0
			& 0
			& $(x,x^{n-1})$
			& $(s(a),x^{n-m})$
			& $(x,x^{n-m})$
			
			\\ \midrule
			$(x,x^{n})$
			& 0
			& 0
			& 0
			& $(x,x^{n-m})$
			& $0$
			
			\\ \bottomrule
		\end{tabular}
		\bigskip
		\caption{}
		\label{tbl:dobleloop}
	\end{table}
}
\end{remark}

\section{Some consequences}
\label{sect:cup:consequences}

In this section, we fix a  gentle presentation~$(Q,I)$, set
$A\coloneqq\kk Q/I$, and explore some consequences of our calculation of
the algebra structure on Hochschild cohomology $\HH^*(A)$ of~$A$. 

\bigskip

First we consider  the graded Jacobson radical of the
algebra~$\HH^*(A)$, that is, the intersection of its maximal left
homogeneous ideals.

\begin{lemma}\label{lemma:radical}
The graded Jacobson radical $\rad\HH^*(A)$ of the algebra
$\HH^*(A)$ is the subspace spanned by 
\begin{itemize}

\item the positive part~$\HH^+(A)$ and 

\item the pairs~$(s(\alpha),\alpha)\in\HH^0(A)$ with $\alpha$ a
$\cB$-maximal cycle in~$(Q,I)$. 

\end{itemize}
\end{lemma}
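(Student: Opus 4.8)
The plan is to exploit the nonnegative grading of $R:=\HH^*(A)$ by cohomological degree to reduce the computation of the graded radical to an ordinary radical computation in degree zero, and then to read that off from the algebra structure of $\HH^0(A)$. Since $R=\bigoplus_{m\geq0}\HH^m(A)$ is graded-commutative, homogeneous left ideals are automatically two-sided, so $\rad\HH^*(A)$ is the intersection of the maximal homogeneous ideals, i.e. the graded Jacobson radical. First I would record the standard fact that for a nonnegatively graded ring the positive part is contained in that radical: if $\mathfrak m$ is a maximal homogeneous left ideal then $S:=R/\mathfrak m$ is a graded-simple module living in degrees $\geq0$; picking a nonzero homogeneous $s\in S$ of minimal degree, we have $Rs=S$ by simplicity and $R_+R\subseteq R_+$, so $R_+S=R_+(Rs)\subseteq R_+s$ lies in strictly higher degrees, whence the graded submodule $R_+S$ is proper and therefore zero, giving $R_+\subseteq\mathfrak m$. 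Thus $\HH^+(A)\subseteq\rad\HH^*(A)$.

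Because $\HH^+(A)$ is a graded ideal contained in the graded radical, the projection $\pi\colon R\to R/\HH^+(A)=\HH^0(A)$ satisfies $\rad^{\mathrm g}(R)/\HH^+(A)=\rad^{\mathrm g}(\HH^0(A))$, and since $\HH^0(A)$ carries the trivial grading its graded radical coincides with the ordinary Jacobson radical. Hence $\rad\HH^*(A)=\pi^{-1}\bigl(\rad(\HH^0(A))\bigr)=\HH^+(A)\oplus\rad(\HH^0(A))$ as subspaces, and it remains only to show that $\rad(\HH^0(A))$ equals the span $M$ of the classes $(s(\alpha),\alpha)$ with $\alpha$ a $\cB$-maximal cycle. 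By Theorem~\ref{thm:cohomology:basis} we have $\HH^0(A)=\kk\one\oplus M\oplus C_0$, where $C_0$ is spanned by the classes $\cycle{\alpha}$ with $\alpha\in\Crep(\cB)$. The degree-zero entries of Table~\ref{tbl:cup} tell us that any product having a $\cB$-maximal factor vanishes, while $\cycle{\alpha}\smile\cycle{\delta}$ equals $\cycle{\alpha\delta}$ when $\alpha,\delta$ are powers of a common primitive cocomplete cycle and is $0$ otherwise (see~\eqref{it:cup:cycle0:a}). In particular $M$ is an ideal with $M\smile M=M\smile C_0=0$, so $M$ is a square-zero, hence nil, ideal and $M\subseteq\rad(\HH^0(A))$.

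The reverse containment amounts to showing that the cocomplete-cycle part contributes nothing. Here I would identify the quotient $B:=\HH^0(A)/M=\kk\one\oplus C_0$: indexing the primitive cocomplete circuits by $\Crepprim(\cB)=\{P_1,\dots,P_s\}$, which is finite and pairwise arrow-disjoint by the Lemma preceding Proposition~\ref{prop:G}, the assignment $x_i\mapsto\cycle{P_i}$ yields an isomorphism $\kk[x_1,\dots,x_s]/(x_ix_j:i\neq j)\cong B$, using $\cycle{P_i^a}\smile\cycle{P_i^b}=\cycle{P_i^{a+b}}$ and $\cycle{P_i}\smile\cycle{P_j}=0$ for $i\neq j$. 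This $B$ is a finitely generated commutative $\kk$-algebra, hence a Jacobson ring, and it is reduced (map to each factor $\kk[x_i]$, which is a domain, to see the nilradical is zero), so $\rad(B)=0$. Since $M\subseteq\rad(\HH^0(A))$ we get $\rad(\HH^0(A))/M=\rad(B)=0$, i.e. $\rad(\HH^0(A))=M$, and therefore $\rad\HH^*(A)=\HH^+(A)\oplus M$, as claimed.

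The only genuinely delicate step is this last one, $\rad(B)=0$, which certifies that no linear combination of cocomplete-cycle classes lies in the radical; it rests precisely on $B$ being a reduced finitely generated algebra. For the two degenerate presentations with a single vertex and a single loop I would not use the reduction above but instead read $\HH^0(A)$ directly from Remark~\ref{rem:exception}: in each such case exactly one of $M$ and $C_0$ is zero, the quotient $B$ is $\kk$ or a reduced polynomial-type algebra, and the same conclusion $\rad(\HH^0(A))=M$ holds, so the statement of the lemma is uniform.
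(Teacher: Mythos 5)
Your proof is correct and follows essentially the same route as the paper: both arguments place $\HH^+(A)$ in the graded radical, identify $\rad\HH^*(A)$ as the preimage of $\rad\HH^0(A)$, and then use that $\HH^0(A)$ is a finitely generated commutative algebra (so its Jacobson radical is its nilradical) together with the degree-zero cup products to pin that radical down as the span of the $\cB$-maximal classes. Your passage to the reduced quotient $B\cong\kk[x_1,\dots,x_s]/(x_ix_j:i\neq j)$ and your separate treatment of the one-loop presentations are just slightly more explicit packagings of the same computation.
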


\begin{proof}
Since the algebra~$\HH^*(A)$ is non-negatively graded, the sum
of any of its proper graded ideals with~$\HH^0(A)$ is also a proper graded
ideal: this tells us that the subspace~$\HH^+(A)$ is contained in the
radical~$J$ and that $J$~is in fact the preimage of the Jacobson radical of
the algebra~$\HH^0(A)$, isomorphic to  the center of~$A$, by the
projection~$\HH^*(A)\to\HH^*(A)/\HH^+(A)=\HH^0(A)$. It follows from
Theorem~\ref{thm:algebra} that~$\HH^0(A)$ is the commutative algebra freely
generated by 
\begin{itemize}

\item the pairs~$(s(\alpha),\alpha)$ with $\alpha$ a $\cB$-maximal cycle
in~$(Q,I)$, and 

\item the sums~$\cycle{\alpha}$ with $\alpha\in\Crepprim(\cB)$.

\end{itemize}
subject to the following relations: the product of any two generators is
zero and the squares of the generators in the first group are all zero. Now,
$\HH^0(A)$ is a finitely generated and commutative algebra over a field, so
its Jacobson radical coincides with its nilradical: one sees at once using
this that its radical is the ideal generated by the generators in the first
group, and the lemma follows from this.
\end{proof}

\begin{corollary}\label{coro:top-algebra}
Let $n$ be the number of primitive cocomplete cycles in the  gentle
presentation~$(Q,I)$. The algebra $\HH^*(A)/\rad\HH^*(A)$ is isomorphic to
the quotient of the polynomial algebra~$\kk[x_1,\dots,x_n]$ by the
quadratic monomial ideal
  \[
  (x_ix_j:1\leq i<j\leq n).
  \]
In particular, the number~$n$ is a derived invariant of the algebra $A$:
two  gentle presentations giving derived equivalent algebras have
the same number of primitive cocomplete cycles.
\end{corollary}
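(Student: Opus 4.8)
The plan is to push everything down to degree zero and then read off the quotient algebra from the cup products already tabulated. First I would invoke Lemma~\ref{lemma:radical}: since $\rad\HH^*(A)$ contains the entire positive part $\HH^+(A)$, the graded quotient $\HH^*(A)/\rad\HH^*(A)$ is represented by degree-zero classes alone, so it is canonically isomorphic (as a $\kk$-algebra) to $\HH^0(A)/R_0$, where $R_0\coloneqq\HH^0(A)\cap\rad\HH^*(A)$ is the span of the pairs $(s(\alpha),\alpha)$ with $\alpha$ a $\cB$-maximal cycle. As observed in the proof of that lemma, these pairs annihilate every generator of $\HH^0(A)$, so their span is already an ideal, equal to $\rad\HH^0(A)$; hence $\HH^*(A)/\rad\HH^*(A)\cong\HH^0(A)/\rad\HH^0(A)$.

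Next I would describe the surviving algebra explicitly. By Theorem~\ref{thm:algebra} restricted to degree zero, together with the products computed in Section~\ref{sect:cup:calculation}, the algebra $\HH^0(A)$ is generated by the $\cB$-maximal pairs and the classes $\cycle{\alpha}$ for $\alpha\in\Crepprim(\cB)$, and among the latter one has $\cycle{\alpha}\smile\cycle{\alpha}=\cycle{\alpha^2}$ while $\cycle{\alpha}\smile\cycle{\beta}=0$ for distinct $\alpha,\beta\in\Crepprim(\cB)$. Writing $\Crepprim(\cB)=\{\alpha_1,\dots,\alpha_n\}$ with $n\coloneqq\abs{\Crepprim(\cB)}$ the number of primitive cocomplete circuits, I would define $\phi\colon\kk[x_1,\dots,x_n]\to\HH^0(A)/\rad\HH^0(A)$ by $x_i\mapsto\overline{\cycle{\alpha_i}}$. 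It is surjective because the $\cB$-maximal generators vanish in the quotient, its kernel contains every $x_ix_j$ with $i\neq j$, and the induced map out of $\kk[x_1,\dots,x_n]/(x_ix_j:i<j)$ carries the monomial basis $\{1\}\cup\{x_i^k:k\geq1\}$ to the classes $\{\overline{\one}\}\cup\{\overline{\cycle{\alpha_i^k}}\}$. These are images of distinct basis vectors of $\HH^0(A)$ from Proposition~\ref{prop:hh:0} that lie outside $\rad\HH^0(A)$, hence are linearly independent; so the induced map is an isomorphism, proving the first assertion.

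For the derived invariance I would appeal to the theorem quoted at the start of the paper: the cup-product algebra $\HH^*(A)$ is invariant under derived equivalence, and any isomorphism of graded algebras sends maximal homogeneous left ideals to such ideals, hence carries the graded radical to the graded radical. Therefore $\HH^*(A)/\rad\HH^*(A)$, and with it the algebra $\kk[x_1,\dots,x_n]/(x_ix_j:i<j)$, is preserved. Finally I would recover $n$ intrinsically from this algebra: it is reduced, and $\Spec$ of it is a union of $n$ coordinate lines through the origin, so $n$ equals the number of its minimal primes, equivalently $n=\dim_\kk\m/\m^2$ for the augmentation ideal $\m=(\bar x_1,\dots,\bar x_n)$; either invariant is preserved by algebra isomorphism, giving the equality of the counts for derived equivalent gentle algebras.

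The main obstacle I expect is purely bookkeeping, namely making the reduction $\HH^*(A)/\rad\HH^*(A)\cong\HH^0(A)/\rad\HH^0(A)$ and the identification $R_0=\rad\HH^0(A)$ fully precise, which is already essentially contained in Lemma~\ref{lemma:radical}. The genuinely new step, extracting the invariant $n$ from the isomorphism type of the quotient, is easy once one notes that it is the coordinate ring of $n$ lines meeting at a point.
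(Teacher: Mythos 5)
Your proof is correct and follows essentially the same route as the paper's: reduce via Lemma~\ref{lemma:radical} to the degree-zero quotient, read off the presentation from the cup-product computations (this is exactly what the paper means by ``follows immediately from Lemma~\ref{lemma:radical} and Theorem~\ref{thm:algebra}''), and then recover $n$ as a commutative-algebra invariant of the resulting ring --- the paper uses the cotangent space at the unique non-regular maximal ideal, while you use the number of minimal primes or the cotangent space at the augmentation ideal, which comes to the same thing. The only microscopic caveat, shared with the paper's own argument, is that these recipes for extracting $n$ need a separate word when $n\le 1$ (the ring is then regular and irreducible, so ``unique non-regular point'' and ``number of minimal primes'' both misfire); but the isomorphism class of $\kk[x_1,\dots,x_n]/(x_ix_j:1\leq i<j\leq n)$ plainly determines $n$ in every case, so this does not affect the conclusion.
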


Notice that the quotient $\HH^*(A)/\rad\HH^*(A)$ is in principle a graded
algebra, but in our situation it is concentrated in degree~$0$.

\begin{proof}
The description of the algebra~$B\coloneqq\HH^*(A)/\rad\HH^*(A)$ follows
immediately from that of the radical given by Lemma~\ref{lemma:radical} and
the presentation of~$\HH^*(A)$ given by Theorem~\ref{thm:algebra}. There is
in~$B$ a unique maximal ideal~$\m$ such that the localization
$B_{\m}$ is not regular, and its Zariski cotangent space $\m B_\m/\m^2
B_\m$ has dimension~$n$ as a vector space over~$\kk$: this shows that we
can compute the number~$n$ from the algebra~$\HH^*(A)$, which is a derived
invariant of~$A$, and proves the last claim in the corollary.
\end{proof}

Since the algebra~$\HH^*(A)/\rad\HH^*(A)$ is simply the quotient of the
center of~$A$ by its radical, we do not really need to know the Hochschild
cohomology of~$A$ to prove the derived invariance of the number of
primitive cocomplete cycles. On the other hand, we cannot take the next
natural step without knowing that cohomology.

\begin{corollary}
Let $(Q,I)$ be a gentle presentation and let $A\coloneqq\kk Q/I$.
The quotient
  \[
  \mathcal{T}(A) \coloneqq \frac{\rad\HH^*(A)}{\rad^2\HH^*(A)}
  \]
is a finite-dimensional graded vector space, and its
Hilbert-Poincar\'e polynomial
  \[
  h_{\mathcal{T}(A)}(t) \coloneqq
    \sum_{m\geq0} \dim\mathcal{T}^m(A)\cdot t^m
    \in\ZZ[t]
  \]
is a derived invariant of the algebra~$A$.
\end{corollary}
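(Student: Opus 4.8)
The plan is to read $\mathcal{T}(A)=\rad\HH^*(A)/\rad^2\HH^*(A)$ directly off the presentation of the cohomology algebra, and then to obtain the derived invariance essentially for free from the intrinsic, degree-preserving nature of every construction involved. First I would record that, by Proposition~\ref{prop:G}, the algebra $\HH^*(A)$ is generated by the \emph{finite} set~$\G$, and that by Lemma~\ref{lemma:radical} together with Corollary~\ref{coro:top-algebra} the quotient $\HH^*(A)/\rad\HH^*(A)\cong\kk[x_1,\dots,x_n]/(x_ix_j:i<j)$ is reduced. Consequently the graded Jacobson radical is exactly the ideal generated by the finite subset $\G\cap\rad\HH^*(A)$, namely the $\cB$-maximal classes, the pairs $(c,c)$ with $c\in Q_1\setminus T$, the $\Gamma$-maximal classes $(\gamma,\alpha)$, and the sums $\cycle{C}$ with $C\in\Crepprim(\Gamma)$. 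Since for any ideal $J$ of a graded ring $R$ the layer $J/J^2$ is generated as an $R/J$-module by the images of any generating set of $J$, this exhibits $\mathcal{T}(A)$ as a finitely generated graded module over $\HH^*(A)/\rad\HH^*(A)$, generated in the finitely many cohomological degrees carried by $\G\cap\rad\HH^*(A)$.

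For finite-dimensionality I would invoke Corollary~\ref{coro:findim}: finite-dimensionality of $A$ is equivalent to the absence of cocomplete cycles, so $\Crep(\cB)=\emptyset$ and the number $n$ of Corollary~\ref{coro:top-algebra} is~$0$; hence $\HH^*(A)/\rad\HH^*(A)\cong\kk$ is concentrated in degree~$0$. A graded module that is finitely generated over~$\kk$ is finite-dimensional, so $\mathcal{T}(A)$ is a finite-dimensional graded vector space whose dimension is bounded by the number of elements of $\G\cap\rad\HH^*(A)$, with grading inherited from their cohomological degrees. The precise graded dimensions $\dim\mathcal{T}^m(A)$ then amount to deciding, for each generator, whether it survives in $\rad/\rad^2$; this is exactly what the cup-product computations recorded in Table~\ref{tbl:cup} supply, since they show for instance that $(c,c)\smile\cycle{C}\equiv(bC,b)$ pushes the type-$(bC,b)$ classes into $\rad^2$, while the generators listed above are not expressible as products of two radical classes.

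The derived-invariance half is then soft. Every ingredient in the definition of $h_{\mathcal{T}(A)}$ is intrinsic to $\HH^*(A)$ as a \emph{graded} algebra: the graded Jacobson radical $\rad\HH^*(A)$, its square $\rad^2\HH^*(A)$, and the induced grading on the quotient $\mathcal{T}(A)$. By the invariance theorem cited at the end of Chapter~\ref{chapter:algebras} (equivalently, by Rickard's theorem applied to the cup-product/Yoneda structure), a derived equivalence between two gentle algebras induces an isomorphism of graded algebras of their Hochschild cohomologies, and any such isomorphism carries $\rad$ to $\rad$ and $\rad^2$ to $\rad^2$ while preserving cohomological degree. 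It therefore restricts to a degree-preserving isomorphism on $\mathcal{T}$, so each $\dim\mathcal{T}^m(A)$, and hence the polynomial $h_{\mathcal{T}(A)}(t)$, depends only on the derived equivalence class of $A$.

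The hard part will be the finite-dimensionality, and more precisely the control of $\rad^2\HH^*(A)$: although the generating set is finite, one must see that the only mechanism that could make $\mathcal{T}(A)$ infinite over the degree-$0$ quotient comes from the cocomplete contributions (the classes $(c,c\alpha)$ produced by multiplying $(c,c)$ against the non-radical classes $\cycle{\alpha}$), and that these are precisely excluded by the finite-dimensionality of~$A$ used above. Carrying this out requires no new computation beyond careful bookkeeping with the multiplication already established in Section~\ref{sect:cup:calculation} and summarised in Table~\ref{tbl:cup}.
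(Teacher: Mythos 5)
Your proposal follows essentially the same route as the paper's proof: identify $\rad\HH^*(A)$ via Lemma~\ref{lemma:radical}, decide which of its classes land in $\rad^2\HH^*(A)$ using the cup products of Table~\ref{tbl:cup} (the classes $(bC,b)$ and the non-primitive $\cycle{C}$ do; the $\cB$-maximal, $\Gamma$-maximal, $(c,c)$ and primitive $\cycle{C}$ classes do not), conclude finiteness by counting, and obtain derived invariance formally because every ingredient is intrinsic to the graded algebra $\HH^*(A)$. Your packaging through the principle that $J/J^2$ is generated over $R/J$ by the images of any generating set of $J$ is a mild streamlining of the paper's basis-by-basis bookkeeping, and is arguably cleaner.

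One point deserves emphasis. You invoke the finite-dimensionality of $A$ (via Corollary~\ref{coro:findim}) to force $\Crep(\cB)=\emptyset$ and hence $\HH^*(A)/\rad\HH^*(A)\cong\kk$; this hypothesis is not in the statement of the corollary, and without it the conclusion genuinely fails. Indeed, if $(Q,I)$ admits a cocomplete cycle $\alpha$ through an arrow $c\in Q_1\setminus T$, the classes $(c,c\alpha^k)$, $k\geq1$, all lie in $\rad\HH^*(A)$ in cohomological degree~$1$, while the degree-one component of $\rad^2\HH^*(A)$ vanishes: a degree-one product of two radical classes must involve a degree-zero factor $(s(\beta),\beta)$ with $\beta$ a $\cB$-maximal cycle, and the first row of Table~\ref{tbl:cup} shows all such products are zero in cohomology. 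Hence $\mathcal{T}^1(A)$ is infinite-dimensional in that case. The paper's own proof carries the same implicit restriction --- its list of spanning elements of $\rad\HH^*(A)$ silently omits the classes $(c,c\alpha)$ with $\alpha\in\Crep(\cB)$ --- so your explicit appeal to finite-dimensionality is the more transparent treatment; but strictly speaking both arguments establish the corollary only for finite-dimensional gentle presentations, and the statement should carry that hypothesis.
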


\begin{proof}
In view of Lemma~\ref{lemma:radical} and the form of the basis
for~$\HH^*(A)$ that we gave at the beginning of
Section~\ref{sect:corollaries}, the vector space~$\rad\HH^*(A)$ is
freely spanned by the classes of the following elements.
\begin{thmlist}

\item The pairs $(s(\alpha),\alpha)$ with $\alpha$ a
$\cB$-maximal path in~$(Q,I)$.

\item The pairs $(c,c)$ with $c$ an arrow in the
complement of the spanning tree~$T$.

\item The pairs $(\gamma,\alpha)$ with $\gamma$ a
$\Gamma$-maximal element of~$\Gamma$ and $\gamma$ and~$\alpha$ neither
beginning nor ending with the same arrow.

\item The sums $\cycle{C}$ with $C\in\Crep(\Gamma)$,
as defined in Lemma~\ref{lemma:sum}.

\item The pairs $(bC,b)$ with $C\in\Crep(\Gamma)$ and
$b$ the first arrow of~$C$.

\end{thmlist}
and $\rad^2\HH^*(A)$ is spanned by those elements in this list
that are a scalar multiple of a product of the elements of the list.
 Clearly, the elements in (v) and those in (iv) that correspond
to a cycle~$C$ that is not in $\Crepprim(\Gamma)$. Moreover, considering all possible factorisations, we will see that none of the other elements are in~$\rad^2\HH^*(A)$.
We thus see that the graded vector
space~$\mathcal{T}(A)$ is freely generated by the classes of the following
homogeneous elements.

\begin{itemize}

\item The pairs $(s(\alpha),\alpha)$ with $\alpha$ a
$\cB$-maximal path in~$(Q,I)$.

\item The pairs $(c,c)$ with $c$ an arrow in the
complement of the spanning tree~$T$.

\item The pairs $(\gamma,\alpha)$ with $\gamma$ a
$\Gamma$-maximal element of~$\Gamma$ and $\gamma$ and~$\alpha$ neither
beginning nor ending with the same arrow.

\item The sums $\cycle{C}$ with $C\in\Crepprim(\Gamma)$.

\end{itemize}
These are finitely many classes, so $\mathcal{T}(A)$ is finite dimensional.
The Hilbert--Poincar\'e polynomial~$h_{\mathcal{T}(A)}(t)$ can be computed
purely in terms of the graded algebra structure of~$\HH^*(A)$, so it is a
derived invariant of the algebra~$A$.
\end{proof}

\chapter{The Gerstenhaber bracket}
\label{chapter:structure}

The Gerstenhaber bracket on the Hochschild cohomology algebra~$\HH^*(A)$ 
was constructed by M.\,Gerstenhaber originally in terms of the
standard Hochschild complex of~$A$, and this is problematic when doing
explicit computations: when we use that construction we are forced to deal
with comparison morphisms between the standard resolution of the algebra
and the projective resolution that we used to compute~$\HH^*(A)$ --- which
in essentially all cases is not the standard one. This is the motivation of
recent work of C.\,Negron and S.\,Witherspoon \cite{Negron-Witherspoon} and
Yu.\,Volkov \cite{Volkov} that develops methods to do the computation
directly in terms of an arbitrary resolution. Similarly,
M.\,Su\'arez-{\'A}lvarez presented in~\cite{MSA} a different idea that allows
for a reasonably practical calculation of the Lie action of~$\HH^1(A)$
on~$\HH^*(A)$. We will combine these two approaches to exhibit the Lie
algebra structure on~$\HH^*(A)$ for gentle algebras.

\section{Computation of the Gerstenhaber bracket.}
The method of Negron and Witherspoon in our case amounts to the following.
We write as before ${\mathbb B}A$ for the bar resolution of~$A$ and $\epsilon:{\mathbb B}A\to A$
for its augmentation.  There is an injective morphism 
$\iota:\cR\to {\mathbb B}A$ 
of complexes
of $A$-bimodules such that $\epsilon\circ\iota=\mu$,
the augmentation of~$\cR$, with a section ${\mathbb B}A\to\cR$ that is also
a morphism of complexes of~$A$-bimodules. Moreover, if
$\Delta_{{\mathbb B}}:{\mathbb B}A\to {\mathbb B}A\otimes_A {\mathbb B}A$ is the standard diagonal map
of~${\mathbb B}A$, with
  \[
  \Delta_{\mathbb B}(1\otimes a_1\otimes\cdots\otimes a_{p}\otimes1)
        = \sum_{i=0}^{p} 
                (1\otimes a_1\otimes\cdots\otimes a_i\otimes 1)
                \otimes_A
                (1\otimes a_{i+1}\otimes\cdots\otimes a_p\otimes 1)
  \]
for each $p\geq0$ and each elementary tensor $1\otimes a_1\otimes\cdots\otimes
a_{p}\otimes 1$ in~${\mathbb B}_p(A)$. The diagonal
map~$\Delta:\cR\to\cR\otimes_A\cR$ that we constructed in
Chapter~\ref{chapter:cup} has the property that
$\Delta_{{\mathbb B}}\circ\iota = \iota\otimes\iota\circ\Delta$.
The complexes~$\cR\otimes_A\cR$ and~$\cR$ are $A^e$-projective resolutions
of~$A$, and the morphism of complexes
$F\coloneqq\mu\otimes\id_{\cR}-\id_{\cR}\otimes\mu:\cR\otimes_A\cR\to\cR$
is a lift of the zero map $0:A\to A$ to these resolutions. So there exists
a homotopy $\phi:\cR\otimes_A\cR\to\cR[-1]$ such that 
$F = \phi\circ d_{\cR\otimes_A\cR} + d_{\cR}\circ\phi$. Let now $f:\cR_p\to
A$ and $g:\cR_q\to A$ be a $p$- and a $q$-cocycle in the
complex~$\Hom_{A^e}(\cR,A)$, which we view as morphisms of complexes
$\cR\to A[p]$ and $\cR\to A[q]$. We can then define 
$f\circ_\phi g:\cR_{p+q-1}\to A$ to be the $(p+q-1)$-cocycle corresponding
to the composition
  \[
  \begin{tikzcd}
  \cR \arrow[d, swap, "\Delta^{(2)}"]
    &[2em]
    &[-1.5em] A[p+q-1]
    \\
  \cR\otimes_A\cR\otimes_A\cR \arrow[r, "\id_\cR\otimes g\otimes\id_\cR"]
    &[2em] \cR\otimes_AA[q]\otimes_A\cR 
      = (\cR\otimes_A\cR)[q] \arrow[r, "{\phi[q]}"]
    &[-1.5em] \cR[q-1] \arrow[u, swap, "{f[q-1]}"]
  \end{tikzcd}
  \]
with $\Delta^{(2)} \coloneqq \Delta\otimes\id_\cR\circ\Delta :
\cR\to\cR\otimes_A\cR\otimes_A\cR$, and then put
  \[
  [f,g]_\phi \coloneqq f\circ_\phi g
                       - (-1)^{(p-1)(p-1)}g\circ_\phi f
        : \cR_{p+q-1}\to A.
  \]
It is important to remember that when one evaluates compositions like this
on an element of~$\cR$ implicit Koszul signs appear.

Negron and Witherspoon show in~\cite{Negron-Witherspoon} that this bracket
operation~$[\place,\place]_\phi$ on the cocycles of the
complex~$\Hom_{A^e}(\cR,A)$ descends to its cohomology, which we are
identifying canonically with $\HH^*(A)$, and gives there the Gerstenhaber
bracket.

To carry this procedure out, we need to choose a
homotopy~$\phi:\cR\otimes_A\cR\to\cR[-1]$. To define one, we note that 
the complex of $A$-bimodules $\cR\otimes_A\cR$ is freely
spanned as a vector space by the elementary tensors of the form $(u\otimes
a\otimes b)\otimes_A(1\otimes c\otimes v)$ with $u$,~$b$,~$v\in\cB$,
$a$,~$c\in\Gamma$, and $t(v)=s(c)$, $t(c)=s(b)$, $t(b)=s(a)$
and $t(a)=s(u)$, and we make the convention that whenever we write an
elementary tensor in~$\cR\otimes_A\cR$ these conditions are satisfied. With
this in mind, we put, for each choice of integers $m$,~$n$,~$r\geq0$ and
paths $a=a_m\cdots a_1\in\Gamma_m$, $b=b_r\cdots b_1\in\cB_r$,
$c=c_n\cdots c_1\in\Gamma_n$, and $u$,~$v\in\cB$,
  \begin{multline*}
  \phi\bigl(
      (u\otimes a\otimes b) \otimes(1\otimes c\otimes v)
      \bigr) \\
      \coloneqq
      \begin{dcases*}
      \sum_{i=1}^r ub_r\cdots b_{i+1}\otimes b_i\otimes b_{i-1}\cdots b_1v
                & if $n=m=0$; \\
      (-1)^mu\otimes ab_r\otimes b_{r-1}\cdots b_1v
        & if $m>0$, $n=0$, $r\geq1$ and $a_1b_r\in R$; \\
      ub_r\cdots b_2\otimes b_1c\otimes v
        & if $m=0$, $n>0$, $r\geq1$ and $b_1c_n\in R$; \\
      (-1)^mu\otimes abc\otimes v
        & if $m$,~$n>0$, $r=1$, and $a_1b_1$,~$b_1c_n\in R$; \\
      0 & in any other case.
      \end{dcases*}
  \end{multline*}
A rather annoying calculation, which we omit here, shows that with this
choice of~$\phi$ we indeed have that $F=\phi\circ
d_{\cR\otimes_A\cR}+d_{\cR}\circ\phi$.

With this homotopy at hand, we can easily compute Gerstenhaber brackets of
elements of any degree, but for us here it will be convenient to use it only
when both elements do not have degree~$1$. The result we get is the
following.

\begin{proposition}
Let $(Q,I)$ be a  gentle presentation. Suppose that the quiver~$Q$ is not
the one with one vertex and one arrow. The $\circ_\phi$-compositon of the
representing cocycles of two elements in our generating set~$\G$ for the
algebra~$\HH^*(A)$ that are not of degree~$1$ is a coboundary and, in
particular, so is their Gerstenhaber bracket.
\end{proposition}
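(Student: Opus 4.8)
The plan is to reduce the statement to an explicit computation with the homotopy~$\phi$ and then to recognise the outcome as a coboundary. By bilinearity it suffices to treat $f$ and~$g$ equal to the representing cocycles of two generators from the set~$\G$ of Proposition~\ref{prop:G}, of cohomological degrees $p$ and~$q$ with $p,q\neq1$. The first key remark is that $\Delta$, and hence $\Delta^{(2)}$, preserves the length of paths, and that in the composition defining $f\circ_\phi g$ the cochain~$g$ is evaluated on the \emph{middle} tensor factor, which carries a path of length~$q$. Since that factor must be a sub-path of the length-$(p+q-1)$ path on which $f\circ_\phi g$ is evaluated, the whole composition vanishes identically as soon as $p=0$; so I may assume $p\geq2$. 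A second, similar observation disposes of many generators for~$g$: if the $\Gamma$-component of~$g$ is $\Gamma$-maximal --- that is, if $g$ is a generator of type~\ref{gen:clean} --- then this component can never occur as a \emph{proper} sub-path of a path in~$\Gamma$, so it can be the middle factor only when $p=1$, which is excluded. Thus only finitely many shapes of the pair $(f,g)$ remain, with $f$ of type~\ref{gen:clean} or~\ref{gen:Gamma-sum} and $g$ of type~\ref{gen:B-max}, \ref{gen:B-sum} or~\ref{gen:Gamma-sum}.

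For these remaining pairs I would substitute the closed formulas for~$\Delta^{(2)}$ and for~$\phi$ into the definition of~$\circ_\phi$. The homotopy~$\phi$ vanishes outside its three relation-gluing branches, the fourth branch ($m=n=0$) again forcing $p=1$; consequently the value of $f\circ_\phi g$ on $1\otimes\gamma\otimes1$ is a sum, taken over the factorizations $\gamma=\gamma_3\,\delta_g\,\gamma_1$ that place the $\Gamma$-support~$\delta_g$ of~$g$ as a sub-path of~$\gamma$, of a single elementary term. In that term the $\cB$-coefficient~$\beta_g$ of~$g$ is glued, across a length-two relation of~$R$, onto one of the flanking pieces $\gamma_3$ or~$\gamma_1$, producing a path in~$\Gamma_p$ on which $f$ is then evaluated. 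Carrying this out branch by branch, and summing over the rotations in the case of the $\cycle{\cdot}$-type generators, yields an explicit element of~$\kk(\Gamma_{p+q-1}\parallel\cB)$.

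It then remains to show that each pair $(\gamma,\alpha)$ so produced is a coboundary. The plan is to exhibit an explicit preimage under the differential: the gluing relation used by~$\phi$, together with the completeness of the cycle supplying the $\Gamma$-part and the $\cB$- or $\Gamma$-maximality built into the generators, forces the two components $\gamma$ and~$\alpha$ to share a first (or a last) arrow~$b$, say $\gamma=b\gamma'$ and $\alpha=b\alpha'$. Then by the formula~\eqref{eq:differential} one has $d^{p+q-2}(\gamma',\alpha')=(\gamma,\alpha)\mp(\gamma'a,\alpha'a)$, where by gentleness (Remark~\ref{rem:key}) the correction term has at most one summand; the same maximality and completeness hypotheses are exactly what guarantee that no arrow~$a$ extends both~$\gamma'$ and~$\alpha'$, so that the correction term is absent and $(\gamma,\alpha)=\pm d^{p+q-2}(\gamma',\alpha')$. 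Reassembling these identities over the rotation sums keeps the total a coboundary, and since $[f,g]_\phi$ is a difference of two such compositions it is a coboundary as well.

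The hard part will be this third step: verifying, uniformly across the surviving branches, that the produced pair really has its two components sharing an arrow and that the complementary differential term genuinely vanishes. This is where the combinatorics of gentleness does the essential work, and it has to be combined with a careful tracking of the Koszul signs hidden in the evaluation map~$\ev$ and of the alternating signs introduced by the $\cycle{\cdot}$-sums. By contrast, the two reduction steps and the mechanical substitution of~$\phi$ are routine, if laborious.
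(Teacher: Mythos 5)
Your overall strategy --- substituting the explicit homotopy $\phi$ into the definition of $\circ_\phi$ and analysing the finitely many surviving branches --- is exactly the paper's, and your two reductions are correct: $f\circ_\phi g$ vanishes identically when the outer cocycle has degree $0$, and a generator of type~\ref{gen:clean} cannot occupy the middle slot unless the outer degree is $1$. (The paper disposes of an inner generator of type~\ref{gen:Gamma-sum} even more quickly: its $\cB$-coefficient is a trivial path, so every branch of $\phi$ vanishes and the composition is identically zero; your remark about the "three relation-gluing branches" amounts to the same observation, though you should say explicitly that this branch contributes nothing rather than listing it among the survivors.)

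The problem lies in your plan for the decisive third step. You predict that every surviving branch produces pairs $(\gamma,\alpha)$ whose components share a first or last arrow, to be recognised as $\pm d^{p+q-2}(\gamma',\alpha')$ with vanishing correction term. That mechanism occurs in only one of the surviving sub-branches, namely when the inner degree-$0$ generator is a loop sitting at an end of the $\Gamma$-maximal path of the outer generator; there the output is a single pair $(\eta,\beta)$ and your shared-arrow argument (using $\Gamma$-maximality to kill the correction term) does work. In the branch where the outer generator is $\cycle{C}$ with $C$ complete, the left- and right-gluing branches of $\phi$, propagated through the rotation sum, contribute the \emph{same} pair with opposite signs (the cancellation hinging on the parity of $\abs{\eta}$), so the composition is zero on the nose; the individual pairs are not coboundaries, and looking for differential preimages of them would fail. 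In the branch where the outer generator is of type~\ref{gen:clean} and the inner cycle is not a loop, the composition vanishes because the glued path falls into the ideal, i.e.\ the product is zero in $A$ --- again not a shared-arrow phenomenon. An honest execution of your "mechanical substitution" would of course discover these vanishings, but as written your step three sends you hunting for coboundary preimages that do not exist in two of the three cases.
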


\begin{proof}
According to Proposition~\ref{prop:G}, 
the elements in our generating set for the algebra~$\HH^*(A)$ that are not of
degree~$1$ are the classes of the following elements
of~$\kk(\Gamma\parallel\cB)$:
\begin{itemize}

\item the pairs $(s(\alpha),\alpha)$ with $\alpha$ a $\cB$-maximal element
of~$\cB$;

\item the cocycles~$\cycle{\alpha}$ with $\alpha\in\Crepprim(\cB)$;

\item the pairs $(\gamma,\alpha)$ with $\gamma$ a $\Gamma$-maximal element
of~$\Gamma$ of length different from~$1$, and such that $\gamma$ and
$\alpha$ neither begin nor end with the same arrow;

\item the cocycles~$\cycle{C}$ with $C\in\Crepprim(\Gamma)$ of length
different from~$1$.

\end{itemize}
To check the statement of the proposition we will compute
$\circ_\phi$-compositions directly. Let us fix two non-negative
integers~$m$ and~$n$, both different from~$1$.

Let first $C$ be a $\Gamma$-complete cycle of length~$n$, and let
$f=(\gamma,\alpha)$ be an element of~$\Gamma_m\parallel\cB$. If $\eta$ is a
path in~$\Gamma_{m+n-1}$, then
  \begin{align*}
  \MoveEqLeft
  \bigl(f\circ_\phi(C,s(C))\bigr)(1\otimes\eta\otimes1) \\
  &= \sum_{\eta_1C\eta_2=\eta}
    (-1)^{n\abs{\eta_1}}
    f\Bigl(\phi\bigl(
        (1\otimes\eta_1\otimes s(C))\otimes(1\otimes\eta_2\otimes1)
        \bigr)\Bigr)
  = 0
  \end{align*}
because $\phi$ vanishes on all elementary tensors of the form $(1\otimes
a\otimes b)\otimes(1\otimes c\otimes 1)$ in~$\cR\otimes_A\cR$ with $b$ of
length~zero; the signs appearing here are Koszul's fault. It follows
immediately from this that $\circ_\phi$-multiplication on the right by the
cocycle~$\cycle{C}$ is identically zero.

Next, let $(\delta,\beta)$ be an element of~$\Gamma_n\parallel\cB$ with
$\Gamma$-maximal first component and $\delta$ and~$\beta$ neither beginning
nor ending with the same arrow. Let $f=(\gamma,\alpha)$ be an element
of~$\Gamma_m\parallel\cB$. If $\eta$ is a path in~$\Gamma_{m+n-1}$, then 
  \begin{align}
  \MoveEqLeft
  \bigl(f\circ_\phi(\delta,\beta)\bigr)(1\otimes\eta\otimes1)  \notag \\
     &= \sum_{\eta_1\delta\eta_2=\eta}
        (-1)^{n\abs{\eta_1}}
        f\Bigl(\phi\bigl(
            (1\otimes\eta_1\otimes\beta)\otimes(1\otimes\eta_2\otimes1)
            \bigr)\Bigr). \label{eq:some}
  \end{align}
Since $\delta$ is $\Gamma$-maximal, this sum has no terms unless
$\eta=\delta$, and in that case only one term in which both $\eta_1$
and~$\eta_2$ are of length~$0$. Therefore $\phi\bigl(
(1\otimes\eta_1\otimes\beta)\otimes(1\otimes\eta_2\otimes1) \bigr)$ is of
degree~$1$, which is not the degree of~$f$, and the sum~\eqref{eq:some} is
zero.

Finally, suppose that $n=0$, let $\alpha$ be a cycle in~$(Q,I$) such that
either $\alpha$ is $\cB$-maximal or $\alpha$ is a cocomplete primitive
cycle, and let $f=(\gamma,\beta)$ be  an element of~$\Gamma_m\parallel\cB$.
If $\eta$ is a path in~$\Gamma_{m+n-1}$, then this set is not empty, so
that $m\geq2$, and
  \begin{align}
  \MoveEqLeft \notag
  \bigl(f\circ_\phi(s(\alpha),\alpha)\bigr)(1\otimes\eta\otimes1)  \\
  &= \sum_{\substack{\eta_1\eta_2\eta_3=\eta\\\eta_2=s(\alpha)}}
  (-1)^{n\abs{\eta_1}}
  f\Bigl(\phi\bigl(
  (1\otimes\eta_1\otimes\alpha)\otimes(1\otimes\eta_3\otimes1)
  \bigr)\Bigr). \label{eq:last}
  \end{align}
Suppose there is a factorization~$\eta_1\eta_2\eta_3$ of~$\eta$
with~$\eta_2=s(\alpha)$ and such that the corresponding term of this sum is
not zero. 
\begin{itemize}

\item If both~$\eta_1$ and~$\eta_3$ have positive length, then the
definition of the map~$\phi$ tells us that $\alpha$ has length~$1$, so that
it is a loop in~$Q$, and that $\gamma=\eta_1\alpha\eta_3$: we thus have
that both $\eta_1\alpha\eta_3$ and $\eta_1\eta_3$ are elements of~$\Gamma$
of length at least~$2$, and the gentleness of~$(Q,I)$ implies
that $\eta_1$ and~$\eta_3$ are powers of~$\alpha$ and that $\alpha^2\in I$. Lemma~\ref{lemma:exception} tells us that this cannot occur, since we are
supposing that the quiver is not the one with one vertex and one arrow. 

\item On the other hand, if either $\eta_1$ or ~$\eta_3$ has length zero, then by
definition of $\phi$ and the gentleness of $(Q,I)$ we have that
$\gamma$ divides $\eta_1\alpha\eta_3$ when $\alpha$ is a cocomplete cycle.
Let $\delta$ be in $\cB$ such that $\delta \gamma$ or $\gamma\delta$ is
$\eta_1\alpha_1\eta_3$. There are three cases now: 
\begin{itemize}

\item if $\gamma=C$ is a
$\Gamma$-complete cycle then 
  \[
  \bigl(\cycle{C}\circ_\phi(s(\alpha),\alpha)\bigr)(1\otimes\eta\otimes1)
         =\delta+(-1)^{\abs{\eta_1}}\delta=0,
  \]
because $|\eta|$ is odd.;

\item in the second case, when $\gamma$ is
$\Gamma$-maximal and $\alpha$ is not a loop, we have 
  \[
  \bigl(f\circ_\phi(s(\alpha),\alpha)\bigr)(1\otimes\eta\otimes1)=0,
  \]
because $\delta\beta=\beta\delta=0$;

\item finally, if $\alpha$ is a loop  and
either the first arrow of $\gamma$ or the last one is $\alpha$, then
  \[
  \bigl(f\circ_\phi(s(\alpha),\alpha)\bigr)(1\otimes\eta\otimes1)
     =(-1)^{\abs{\eta_1}}\beta,
  \]
so $f\circ_\phi(s(\alpha),\alpha)=(\eta,\beta)$, that is a coboundary. 

\end{itemize}
\end{itemize}
\end{proof}

To complete the computation of the Lie bracket of~$\HH^*(A)$ we need to
deal with the brackets of elements of~$\HH^1(A)$ with other elements, and
for this we will use the approach of~\cite{MSA}. This is the content of the
next three propositions.

\bigskip

If $c$ is an arrow in~$Q$ and $\gamma=c_n\cdots c_1$ is a path, we set
  \[
  \deg_c(\gamma) \coloneqq \#\{i\in\{1,\dots,n\}:c_i=c\},
  \]
the number of times the path~$\gamma$ passes through~$c$, and if
$(\gamma,\alpha)$ is an element of~$\Gamma\parallel\cB$ we let 
  \[
  \deg_c(\gamma,\alpha) \coloneqq \deg_c(\alpha) - \deg_c(\gamma).
  \]

\medskip

\begin{proposition}\label{nonzerobracket}
If $(Q,I)$ is a gentle presentation, $c$ an arrow~$Q$, and $u$ the pair
$(c,c)\in\Gamma_1\parallel\cB$, then 
  \[
  [u,v] = \deg_c(v)\cdot v
  \]
for all $v\in\Gamma\parallel\cB$.
\end{proposition}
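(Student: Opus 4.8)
The statement asserts that $[(c,c),v] = \deg_c(v)\cdot v$ for every $v\in\Gamma\parallel\cB$, where the bracket is the Gerstenhaber bracket and $\deg_c(v)=\deg_c(\alpha)-\deg_c(\gamma)$ for $v=(\gamma,\alpha)$. Since the paper has just invoked the approach of~\cite{MSA} for computing the Lie action of $\HH^1(A)$, the plan is to recognize $(c,c)$ as a distinguished $1$-cocycle whose action as a derivation on the complex $\kk(\Gamma\parallel\cB)$ can be read off directly, without passing through the bar resolution. The key observation I would exploit is that the pair $(c,c)$ represents precisely the inner-derivation-type class associated to the arrow $c$, and that its Lie action on cohomology is induced by an honest \emph{derivation} of the cochain complex. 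So the first step is to write down, following the recipe of~\cite{MSA}, the explicit operator $D_c$ on $\kk(\Gamma\parallel\cB)$ that computes $[(c,c),\place]$, and to identify it.

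\textbf{Main steps.} First I would recall the formula from~\cite{MSA}: when $u$ is a $1$-cocycle corresponding to a derivation, the map $[u,\place]$ is realized at the cochain level by a Lie derivative that, for a diagonal pair $(c,c)$, simply counts occurrences of the arrow $c$. Concretely, I expect the action to send a pair $(\gamma,\alpha)$ to $\bigl(\deg_c(\alpha)-\deg_c(\gamma)\bigr)\cdot(\gamma,\alpha)$ on the nose, i.e.\ the cochain-level operator is already diagonal in the basis $\Gamma\parallel\cB$ with eigenvalue $\deg_c(\gamma,\alpha)$. The computation rests on the fact that $(c,c)$, under the identification $\Phi_m$, corresponds to the cochain that reads off the coefficient of the arrow $c$ in the resolution, and the Lie bracket measures how $c$ appears in both slots of a pair $(\gamma,\alpha)$: each appearance of $c$ in the target path $\alpha$ contributes $+1$ and each appearance in the source path $\gamma$ contributes $-1$, precisely giving $\deg_c(v)$. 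Second, I would verify that this diagonal operator is indeed a cochain map (it commutes with the differential $d^m$ of Proposition~\ref{prop:strametz}), which is immediate because the differential $d^m$ prepends or appends a single arrow $b$ or $a$ and hence shifts $\deg_c$ by the same amount in both the $\Gamma$ and $\cB$ components, leaving $\deg_c(\gamma,\alpha)=\deg_c(b\gamma,b\alpha)=\deg_c(\gamma a,\alpha a)$ invariant. Third, since the operator is already diagonal on the basis, it descends to cohomology giving exactly $[u,v]=\deg_c(v)\cdot v$ for basis cocycles $v$, and by linearity for all $v$.

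\textbf{The obstacle.} The hard part will be setting up the machinery of~\cite{MSA} correctly so that the cochain-level formula for $[(c,c),\place]$ is genuinely the diagonal degree-counting operator, rather than merely being cohomologous to it. I would need to confirm that for the specific cocycle $(c,c)$ the method of~\cite{MSA} produces the Lie derivative in closed form with no correction terms, which should follow from $(c,c)$ being a \emph{semisimple} element of $\HH^1(A)$ arising from the natural $\ZZ$-grading of $\kk Q/I$ by $\deg_c$. This grading is an algebra grading because the ideal $I$ is homogeneous (it is generated by paths), so the associated grading derivation is well-defined and its class is exactly $(c,c)$; the eigenvalue of its Lie action on a homogeneous cochain is its grading degree, which is $\deg_c(v)$. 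Once this grading interpretation is in place the result is essentially automatic, so I would organize the proof around first establishing that $(c,c)$ is the grading operator for $\deg_c$ and then invoking the standard fact that the adjoint action of a grading derivation on Hochschild cohomology is multiplication by the grading degree.
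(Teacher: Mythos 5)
Your proposal is correct and follows essentially the same route as the paper: the paper also identifies $(c,c)$ with the grading derivation $d$ of $A$ determined by $d(\gamma)=\deg_c(\gamma)\gamma$, lifts it diagonally to the Bardzell resolution via $f_m(1\otimes\gamma\otimes 1)=\deg_c(\gamma)\cdot 1\otimes\gamma\otimes 1$, and reads off the bracket from the procedure of~\cite{MSA} as the eigenvalue $\deg_c(\alpha)-\deg_c(\gamma)$. Your observation that the diagonal operator commutes with the differential because prepending or appending an arrow shifts $\deg_c$ equally in both components is exactly why the lift exists in this closed form.
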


\begin{proof}
There is a unique derivation~$d:A\to A$ vanishing on the subalgebra~$E$
spanned by the vertices and such that on each path~$\gamma\in\cB$ takes
the value $d(\gamma) = \deg_c(\gamma) \gamma$. From~$d$ we construct  the
derivation $$d^e\coloneqq d\otimes1+1\otimes d:A^e\to A^e,$$ so that
$d:A\to A$ is now a $d^e$-operator on the left $A^e$-module~$A$. We now
define a $d^e$-lift $f_\bullet:\cR\to\cR$ of that $ d^e$-operator to the
$A^e$-projective resolution~$\cR$ of~$A$ such that for each $m\geq0$ and
each $\gamma\in\Gamma_m$ we have
  \[
  f_m(1\otimes\gamma\otimes 1) 
        = \deg_c(\gamma)\cdot 1\otimes\gamma\otimes 1.
  \]
Using this $d^e$-lift we can immediately compute the brackets that appear in
the statement of the proposition following the procedure of~\cite{MSA}.
\end{proof}

Given an element  $(c,\alpha)$  in $\Gamma_1\parallel\cB$ and $\beta$  in
$\cB$,  we write
  \[
  \beta^{c,\alpha} = \sum_{\beta_2c\beta_1=\beta}\beta_2\alpha\beta_1,
  \]
where the sum runs over all factorizations of~$\beta$ of the
form~$\beta_2c\beta_1$. Note that if the presentation~$(Q,I)$ is \fd gentle,
then there is in fact at most one such factorization.

\medskip

\begin{proposition}\label{zerobracket2}
Let (Q,I) be a gentle presentation and suppose that $Q$ is neither
the Kronecker quiver nor a loop without relations. Let $(c,\alpha)$ be an
element of~$\Gamma_1\parallel\cB$ where $c$ is $\Gamma$-maximal and $c$ is
neither the first nor the last arrow of $\alpha$. 
\begin{thmlist}

\item If $v$ is an element of the generating set~$\G$ of degree different
from~$1$, then the bracket $[(c,\alpha),v]$ is a coboundary in the
complex~$\kk(\Gamma\parallel\cB)$.

\item If $v$ is an element of~$\G$ of degree~$1$ such that 
\begin{itemize}
	\item it is either of the form $(d,\beta)$ where $d$ is a $\Gamma$-maximal arrow that is neither the first nor the last arrow of $\beta$,
	\item or it is of the form $(d,s(d))$ with
	$d\in\Crepprim_1(\Gamma)$
\end{itemize}
then the bracket $[(c,\alpha),v]$ is a coboundary.

\end{thmlist}
\end{proposition}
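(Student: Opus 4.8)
The plan is to compute both brackets with the Lie-theoretic method of~\cite{MSA}, exactly as in the proof of Proposition~\ref{nonzerobracket}, rather than with the homotopy~$\phi$. Since $(c,\alpha)$ is a $1$-cocycle it corresponds to the unique derivation $D\colon A\to A$ that vanishes on~$E$ and on every arrow other than~$c$ and has $D(c)=\alpha$; on a basis path $\beta\in\cB$ it acts by $D(\beta)=\beta^{c,\alpha}=\sum_{\beta=\beta_2c\beta_1}\beta_2\alpha\beta_1$. Following~\cite{MSA} I would fix a $D^e$-lift $f_\bullet\colon\cR\to\cR$ of~$D$, so that for any cocycle $v\in\Gamma_m\parallel\cB$ the bracket $[(c,\alpha),v]$ is represented by the cochain $D\circ v-(\pm)\,v\circ f$ in $\kk(\Gamma\parallel\cB)$. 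I would build the lift by transport of structure: compose the inclusion $\iota\colon\cR\to{\mathbb B}A$, the standard bar lift $\widetilde f$ that sends $1\otimes a_1\otimes\cdots\otimes a_p\otimes 1$ to $\sum_i 1\otimes a_1\otimes\cdots\otimes D(a_i)\otimes\cdots\otimes a_p\otimes 1$, and the comparison map $G\colon{\mathbb B}A\to\cR$ of Proposition~\ref{prop:comp:G}, setting $f=G\circ\widetilde f\circ\iota$. The decisive feature is that $c$ is $\Gamma$-maximal, so $c$ occurs in no path of $\Gamma_m$ with $m\ge 2$; hence $f_m(1\otimes\gamma\otimes 1)=0$ for every $\gamma\in\Gamma_m$ with $m\ge 2$, while in degree one $f_1(1\otimes c\otimes 1)=G_1(1\otimes\alpha\otimes 1)$ and $f_1(1\otimes a\otimes 1)=0$ for $a\neq c$. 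Consequently $v\circ f_1=(c,\alpha^{d,\rho})$ whenever $v=(d,\rho)$, which is why the notation $\beta^{c,\alpha}$ was introduced.

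The engine of the whole argument is a single gentleness observation that I would isolate first: \emph{if $c$ is $\Gamma$-maximal, $\alpha$ neither begins nor ends with~$c$, and $\beta\in\cB$ is a path different from~$c$, then $\beta^{c,\alpha}=0$ in~$A$}. Indeed, at an occurrence $\beta=\beta_2c\beta_1$ the arrow of~$\beta$ adjacent to~$c$ has $c$ as its \emph{unique} continuation avoiding~$R$ (condition~(b) of gentleness), so replacing~$c$ by the path~$\alpha$, whose extreme arrows differ from~$c$, creates a length-two subpath lying in~$R$ and annihilates that term; when $c$ sits at an end of~$\beta$ one uses the neighbour on the other side, which is where both ``$c$ is not the first arrow of~$\alpha$'' and ``$c$ is not the last arrow of~$\alpha$'' are used. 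By the obvious left--right symmetry the same statement holds with the roles of the $\Gamma$-maximal arrow and the inserted path exchanged.

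With these tools both parts reduce to bookkeeping. For part (i) every $v\in\G$ of degree $\neq 1$ has $v\circ f=0$ (since $f$ kills $1\otimes\gamma\otimes 1$ in degrees $\ge 2$, and $f_0$ contributes nothing to the degree-zero generators), so the bracket is represented by $D\circ v$. For the degree-zero generators $(s(\beta),\beta)$ with $\beta$ a $\cB$-maximal cycle and $\cycle{\alpha'}$ with $\alpha'\in\Crepprim(\cB)$, and for $\cycle{C}$ with $C\in\Crepprim(\Gamma)$, the observation above (or simply $D(\text{vertex})=0$) forces $D\circ v=0$; for a generator $(\gamma,\beta)$ with $\gamma$ $\Gamma$-maximal of length $\ge 2$ it gives $D\circ v=(\gamma,\beta^{c,\alpha})=0$ unless $\beta=c$, the single residual case I address below. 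For part (ii) the representing cochain is $D\circ v-v\circ f=(d,\rho^{c,\alpha})-(c,\alpha^{d,\rho})$, and both summands vanish by the symmetric observation: for $v=(d,\rho)$ with $d$ interior to~$\rho$ because $c$ is interior to~$\alpha$ and $d$ to~$\rho$, and for $v=(d,s(d))$ because $\rho=s(d)$ carries no~$c$ while $\alpha^{d,s(d)}$ merely deletes the loop~$d$, again producing a relation. In all generic configurations the bracket is therefore the zero cochain.

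The hardest steps will be two. First, checking that $f=G\widetilde f\iota$ really is a $D^e$-lift in the low degrees that matter; the high degrees are free thanks to $\Gamma$-maximality, but degrees $0$ and $1$ require the explicit description of~$G$ from Proposition~\ref{prop:comp:G} and an honest verification of the $D^e$-Leibniz rule and chain-map identities. Second, the disposal of the degenerate residues --- $\beta=c$, or $\alpha$, $\rho$ of length zero or one, or $c$, $d$ loops --- in which the substitution sits at an extreme end and creates no relation, so the gentleness lemma no longer applies. There I would argue directly: in the case $\beta=c$ the representative is $(\gamma,\alpha)$ with $\gamma$ $\Gamma$-maximal, and since every vertex of a gentle quiver emits and receives at most two arrows, the $\Gamma$-path~$\gamma$ and the $\cB$-path~$\alpha$ are forced to share both the first and the last arrow (the unique arrows at $s(c)$ and $t(c)$ distinct from~$c$), whence $(\gamma,\alpha)$ lies in the image of the differential by the description of coboundaries obtained in the proof of Proposition~\ref{prop:hh:m}. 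The role of the excluded presentations (the Kronecker quiver, a loop without relations, and --- via Lemma~\ref{lemma:exception} --- the single loop with a relation) is precisely that they are the configurations for which this last identification, and hence the conclusion, would break down.
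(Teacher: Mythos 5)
Your overall strategy coincides with the paper's: the same derivation $D$ with $D(c)=\alpha$, a $D^e$-lift of it to the Bardzell resolution that vanishes in all degrees $m\geq 2$ because the $\Gamma$-maximal arrow $c$ occurs in no path of $\Gamma_m$ with $m\geq2$, the resulting formula $[(c,\alpha),(\delta,\beta)]=(\delta,\beta^{c,\alpha})-(c,\alpha^{\delta,\beta})$ with the second term present only in degree one, and the gentleness observation that $\beta^{c,\alpha}=0$ unless $\beta=c$. The only real difference of route is that you build the lift as $G\circ\widetilde f\circ\iota$ instead of writing it down directly; that yields the same map and is fine modulo the verification you defer.

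The genuine gap is in part (ii) for $v=(d,s(d))$ with $d$ a loop in $\Crepprim_1(\Gamma)$. The bracket there reduces to $-(c,\alpha^{d,s(d)})$, and your claim that deleting the loop $d$ from $\alpha$ ``again produces a relation'' is true only when $d$ sits in the interior of $\alpha$. When $d$ is the first or the last arrow of $\alpha$ the deletion leaves a genuine nonzero path, the resulting cochain has the form $(c,c\gamma)$, and it is a \emph{nonzero coboundary} rather than the zero cochain: one must exhibit it explicitly as the coboundary of $(s(c),\gamma)$, or of a vertex pair when $\gamma$ is trivial, which requires a separate gentleness argument that your ``engine'' observation does not supply. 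This configuration arises in perfectly ordinary gentle presentations, so your closing assertion that the degenerate residues only break down for the excluded quivers is false. Two smaller issues of the same kind: in the residual case $\beta=c$ of part (i), the inference ``$\gamma$ and $\alpha$ share first and last arrows, hence $(\gamma,\alpha)$ is a coboundary'' is valid only when $\alpha$ has length at least $2$; when $\alpha$ is a single arrow the pair need not be a coboundary, and the correct resolution is that $\gamma$ then starts and ends with that arrow and so cannot be $\Gamma$-maximal, making the case vacuous. Likewise, in the first bullet of part (ii) the exceptional subcase $\rho=c$ does not simply vanish; one must show it forces $Q$ to be the Kronecker quiver, which is exactly why that quiver is excluded from the hypotheses.
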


\medskip

We are excluding here the Kronecker quiver and the quiver with exactly one
vertex and one arrow because for them the second part of the proposition
does not hold. We will treat these special cases separately.

\medskip

\begin{proof}
Note that the path $\alpha$ has positive length  when $Q$ is not a loop without relations: 
if its length were zero, then $c$ would be a loop. Since $c$ is $\Gamma$-maximal, $c^2$ would not be in~$\Gamma_2$ and also $c$ is the only arrow by the $\Gamma$-maximality of~$c$.

There is a $\kk Q_0$-linear derivation~$D:A\to A$ 
such that for all arrows~$a\in Q_1$ we have
  \[
  D(a) = \begin{cases*} 
         \alpha & if $a=c$; \\
         0 & if not.
         \end{cases*}
  \]
If $D^e:A^e\to A^e$ is the corresponding derivation on the enveloping
algebra of~$A$, $r$~the length of~$\alpha$ and $\alpha=a_r\cdots a_1$, 
then there is a $D^e$-lift $f_\bullet:\cR\to\cR$ of the
$D^e$-operator $D:A\to A$ to the Bardzell resolution~$\cR$ 
such that for all $m\geq0$ and all $\gamma\in\Gamma_m$ we have
  \[
  f_m(1\otimes\gamma\otimes 1)
        = \begin{dcases*}
          \sum_{i=1}^r
                a_r\cdots a_{i+1}\otimes a_i\otimes a_{i-1}\cdots a_1
             & if $m=1$ and $\gamma=c$; \\
          0 & if not.
          \end{dcases*}
  \]
It follows from this using~\cite{MSA} that if $n\geq0$ and $(\delta,\beta)$
is an element of~$\Gamma_n\parallel\cB$ then
  \begin{equation}\label{eq:br}
  [(c,\alpha),(\delta,\beta)]
      = \begin{cases*}
          (\delta,\beta^{c,\alpha}) - (c,\alpha^{\delta,\beta })
                & if $n=1$; \\
          (\delta,\beta^{c,\alpha})
                & if not.
          \end{cases*}
  \end{equation}

To prove the first part of the proposition, let us now suppose that
$n\neq1$ and that $(\delta,\beta)\in\Gamma_n\parallel\cB$, and show that in
this case we have that
  \begin{equation} \label{eq:c-alpha}
  \claim{if $(\delta,\beta^{c,\alpha})$ is not zero, then $n\neq0$, the
  path~$\beta$ has length~$1$, and either $[(c,\alpha),(\delta,\beta)]$ is
  a coboundary or the path~$\delta$ is not $\Gamma$-maximal.}
  \end{equation}
To do that, let us suppose that $(\delta,\beta^{c,\alpha})\neq0$. The
arrow~$c$ then appears in the path~$\beta$, so that $\beta$ has positive
length. The presentation~$(Q,I)$ is  gentle and~$\alpha$ neither begins nor ends
 with the arrow~$c$,  thus $\beta$ must have length~$1$ and
be equal to~$c$. If~$n=0$, then this tells us that $c$ is a loop and by hypothesis $c^2\in I$. But this is
not possible, since $c$ is a $\Gamma$-maximal path. We thus have $n\geq2$. At
this point we have that  the paths $c$, $\delta$ and $\alpha$ are
parallel and that $[(c,\alpha),(\delta,\beta)]$ is $(\delta,\alpha)$.

Since $c$ is $\Gamma$-maximal and $\delta$ has length at
least~$2$, the arrow~$c$ does not appear in~$\delta$, and then the
gentleness of~$(Q,I)$ implies that $\delta$ and~$\alpha$ begin with the
same arrow and end with the same arrow. If $\alpha$ has length at
least~$2$, then the pair~$(\delta,\alpha)$ is a coboundary. If
instead~$\alpha$ has length~$1$, so that it is  just an arrow, then
the path~$\delta$ starts and ends with that arrow and, in particular, it is
not $\Gamma$-maximal. The claim~\eqref{eq:c-alpha} is thus proved. 

Using it we see at once that
$[(c,\alpha),v]$ is a coboundary whenever $v$ is an element of the set~$\G$
of degree different from~$1$. Indeed, such an element of~$\G$ has one  of
the following three forms. Either it is given by $(\delta,\beta)$ with $\delta$ a $\Gamma$-maximal path, or by  $\cycle{C}$ with $C\in\Crepprim(\Gamma)$, that is a
linear combination of pairs of~$\Gamma\parallel\cB$ with second component
of length~$0$, or it has degree~$0$ in cohomology. In each of these three
cases by~\eqref{eq:c-alpha} we have that $[(c,\alpha),v]$ is a coboundary.
This shows  part~\thmitem{1} of the proposition.

In order to prove part~\thmitem{2} now, let this time $(d,\beta)$ be an
element of~$\Gamma_1\parallel\cB$, and suppose that
$[(c,\alpha),(d,\beta)]=(d,\beta^{c,\alpha})-(c,\alpha^{d,\beta})\neq0$. We
consider two cases.
\begin{itemize}

\item Suppose first that $d$ is $\Gamma$-maximal and that $d$ is neither the first nor the last arrow of~$\beta$. By symmetry, we can assume that 
$\beta^{c,\alpha}\neq0$. The arrow~$c$ then appears in
the path~$\beta$ and, since $(Q,I)$ is gentle and by the fact that $c$ is neither the first nor the last arrow of $\alpha$, we see that
$\beta$ has length~$1$. Therefore $\beta=c$ and thus the arrows~$c$
and~$d$ are parallel. As both $c$ and $d$ are $\Gamma$-maximal, gentleness implies 
that $Q$ is a Kronecker quiver, contrary to
our hypothesis. This case does therefore not occur.

\item Next, suppose that $d\in\Crepprim_1(\Gamma)$ and that $\beta=s(d)$.
As  $c$ does not appear in~$\beta$, we have that
$[(c,\alpha),(d,\beta)]=-(c,\alpha^{d,\beta})\neq0$. Therefore the
path~$\alpha$, which is in~$\cB$, goes through the loop~$d$ and
$\alpha^{d,\beta}\neq0$. Since the presentation~$(Q,I)$ is  gentle and  since $\alpha^{d,\beta}\neq0$, 
we have that $d$ is either the first 
or the last arrow in~$\alpha$.

Let us consider the case in which $\alpha$ ends with~$d$, the other case being
 similar. As $d$ is a loop but $c$ is not a loop, gentleness implies 
that there is a cycle~$\gamma$ (possibly of length zero)  starting
and ending at~$s(c)$ such that $\alpha=dc\gamma$. Therefore
$(c,\alpha^{d,\beta})=(c,c\gamma)$. If $\gamma$ has positive length, then
 gentleness implies that the only arrow with target $s(c)$ is the last arrow
of~$\gamma$.  Then  the pair $(c,c\gamma)$ is the coboundary of
$(s(c),\gamma)$. If instead $\gamma$ has length zero, then using  gentleness
and the $\Gamma$-maximality of~$c$ we can see that $c$ and $d$ are the only
arrows incident with the vertex~$t(c)$. This implies  that the coboundary of
$(t(c),t(c))$ is $(c,c)$. In all cases, the bracket $[(c,\alpha),(d,\beta)]$
is a coboundary.

\end{itemize}
The proposition is proved.
\end{proof}

We have only one more computation left.

\begin{proposition}\label{zerobracket3}
Let (Q,I) be a gentle presentation. Suppose that the quiver~$Q$ is not the one with one vertex and one arrow.
Let $c$ be an arrow  in~$\Crepprim_1(\Gamma)$. 
\begin{thmlist}

\item If $v$ is an element of the generating set~$\G$ of degree different
from~$1$, then $[(c,s(c)),v]$ is a coboundary.

\item If $f$ is an element of~$\Crepprim_1(\Gamma)$, then
$[(c,s(c)),(f,s(f))]=0$.

\end{thmlist}
\end{proposition}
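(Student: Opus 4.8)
The plan is to compute these brackets with the method of~\cite{MSA}, exactly as in Propositions~\ref{nonzerobracket} and~\ref{zerobracket2}. Since $c\in\Crepprim_1(\Gamma)$ is a complete cycle of odd length~$1$, the field~$\kk$ must have characteristic~$2$ and $c$ is a loop with $c^2\in R$. First I would attach to the class of $(c,s(c))$ the $E$-linear derivation $D\colon A\to A$ with $D(c)=s(c)$ and $D(a)=0$ for every arrow $a\neq c$, so that $D$ deletes occurrences of~$c$. It is well defined: by gentleness $c^2$ is the only relation having $c$ as an adjacent factor (once $c^2\in I$, condition~(c) of the definition forbids a second arrow $b$ with $bc\in I$), and $D(c^2)=2c=0$ in characteristic~$2$. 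The crux is the choice of a $D^{e}$-lift $f_\bullet\colon\cR\to\cR$, and I claim that here one may take $f_m(1\otimes\gamma\otimes1)=0$ for all $\gamma\in\Gamma_m$, extending $f_m$ by $D^{e}$-linearity. To see this lifts $D$ one checks $f_{m-1}d_m(1\otimes\gamma\otimes1)$: the only path of $\Gamma_m$ through which $c$ passes is $c^m$ (gentleness forces the neighbours of a $c$ inside a $\Gamma$-path to be $c$ again), and on $c^m$ the two surviving summands combine with coefficient $1+(-1)^m=0$. With this lift the recipe of~\cite{MSA} collapses to its output half, giving the cochain identity $[(c,s(c)),(\delta,\beta)] = (\delta,D\beta)$, where $D\beta$ is computed in~$A$; the extra input summand that appears in~\eqref{eq:br} is absent precisely because $D(c)=s(c)$ has length zero, so $f$ kills the generators.

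Part~\thmitem{2} is then immediate: for $f\in\Crepprim_1(\Gamma)$ the second component of $(f,s(f))$ is a vertex, so $D(s(f))=0$ and $[(c,s(c)),(f,s(f))]=(f,0)=0$; equivalently, the bracket is the class of the commutator of the two deletion derivations, which vanishes because each sends arrows into~$E$ and $D$ annihilates~$E$. For part~\thmitem{1} I would run through the four kinds of generators of degree different from~$1$, using two consequences of gentleness. Deleting a $c$ lying in the \emph{interior} of a path of~$\cB$ creates a length-two relation: if $p,q\neq c$ are its neighbours then $cp\in\cB$ makes $c$ the unique arrow with $cp\notin I$ by condition~(b), whence $qp\in R$ and the deleted path is zero in~$A$. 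Moreover, since $c^2\in R$, condition~(c) gives $bc\notin I$ for every other arrow $b$ at $s(c)$. Now for $\cycle{C}$ with $C\in\Crepprim(\Gamma)$ the second components are vertices, so the bracket is $0$. For $(s(\alpha),\alpha)$ with $\alpha$ a $\cB$-maximal cycle, interior deletions vanish and a boundary occurrence of~$c$ is impossible: if $\alpha$ ended with~$c$ then the other arrow $c'$ leaving $s(c)$ satisfies $c'c\in\cB$, so $c'\alpha\in\cB$ would extend~$\alpha$ (and $c$ cannot be the sole arrow at $s(c)$ without $Q$ being the excluded one-loop quiver), contradicting maximality; hence $D\alpha=0$. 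For $\cycle{\alpha}$ with $\alpha\in\Crepprim(\cB)$, interior deletions again vanish, and since $c$ is a loop each of its cyclic occurrences is the first arrow of one rotation and the last arrow of another, both based at $s(c)$ and yielding the identical pair, so these boundary contributions cancel in characteristic~$2$.

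The remaining case, $(\gamma,\alpha)$ with $\gamma$ a $\Gamma$-maximal path of length $m\geq2$ such that $\gamma$ and~$\alpha$ share neither their first nor their last arrow, is the main obstacle and the one needing genuine bookkeeping. Interior deletions vanish as before, but a boundary $c$ in~$\alpha$ survives, and I would show the surviving pair is a coboundary. Suppose $\alpha$ ends with~$c$; then $t(\gamma)=t(\alpha)=s(c)$, and since $\gamma$ does not end with~$c$, gentleness forces the last arrow of~$\gamma$ and the new last arrow of the truncated path $\alpha'$ to be one and the same arrow~$q$. Writing $\gamma=q\bar\gamma$ and $\alpha'=q\bar\alpha$, the $\Gamma$-maximality of~$\gamma$ (no arrow can be appended on the source side of~$\bar\gamma$) removes the would-be correction term in $d^{\,m-1}(\bar\gamma,\bar\alpha)$, leaving exactly $(\gamma,\alpha')=d^{\,m-1}(\bar\gamma,\bar\alpha)$, a coboundary. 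The symmetric argument disposes of a~$c$ at the start of~$\alpha$, so $[(c,s(c)),(\gamma,\alpha)]=(\gamma,D\alpha)$ is a coboundary, which finishes the proof.
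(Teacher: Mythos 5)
Your proposal is correct and follows essentially the same route as the paper: the same derivation $D$ deleting occurrences of the loop $c$, the same zero lift $f_\bullet=0$ on the Bardzell resolution (which you justify more explicitly than the paper does), the same resulting formula $[(c,s(c)),(h,\beta)]=(h,\beta^{c,s(c)})$, and the same case-by-case analysis of the generators, including the characteristic-$2$ cancellation for cocomplete cycles and the coboundary identification in the $\Gamma$-maximal case. The only point you leave implicit is the degenerate sub-case $\alpha=c$ in that last step, where the truncated path is a vertex and your ``shared last arrow'' argument has nothing to bite on; this sub-case is in fact vacuous, since gentleness forbids a $\Gamma$-maximal cycle based at $s(c)$ that avoids $c$, which is essentially how the paper's own proof disposes of it.
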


\begin{proof}
Note that since $c$ is a path of length~$1$ that belongs
to~$\Crepprim(\Gamma)$, the characteristic of the ground field~$\kk$
is~$2$. There is a unique $(\kk Q_0)^e$-linear derivation~$D:A=\kk Q/I\to A$ such
that $D(c)=s(c)$ and $D(a)=0$ for all arrows~$a$ in~$Q$ different from~$c$.
From~$ D$ we can construct the derivation $D^e\coloneqq D\otimes1+1\otimes
D:A^e\to A^e$, and a $D^e$-lift $f_\bullet:\cR\to\cR$ of the $D^e$-operator
$D:A\to A$. It is easy to check that such a lift exists and that 
$f_n(1\otimes\gamma\otimes1)=0$, for all $n\geq0$ and all
$\gamma\in\Gamma_n$. Using~\cite{MSA}, it follows 
that, for all $(h,\beta)\in\Gamma\parallel\cB$, we have 
  \[
  [(c,s(c)),(h,\beta)] = (h,\beta^{c,s(c)}).
  \]
If $\beta$ has length zero, then this is zero. This implies that
$[(c,s(c)),\cycle{C}]=0$ for all $C\in\Crepprim(\Gamma)$ and
part~\thmitem{2} of the proposition holds.

Suppose now that $\beta$ has positive length, that $(h,\beta)$ does not
have degree~$1$ ---so that $h$ does not have length~$1$--- and that
$\beta^{c,s(c)}\not\in I$. In either case the loop~$c$ is in~$\beta$. By the gentleness, $\beta=\beta_2c\beta_1$ with one of~$\beta_1$ or~$\beta_2$
of length zero.  Indeed, there are two paths $\beta_1$ and~$\beta_2$ such
that $\beta=\beta_2c\beta_1$ and $\beta_2\beta_1\in \cB$. Let us consider,
for example, the case in which $\beta_1$ has length zero. We have three
cases.
\begin{itemize}

\item Suppose first that $\beta$ is $\cB$-maximal and so $h=s(\beta)$. By
 gentleness $\beta=c$ and $Q$ is a loop, which contradicts the
hypotheses. 

\item Next, suppose that $\beta$ is a primitive cocomplete cycle of period
$r$ and that $h=s(\beta)$. So $\beta^{c,s(c)}=\beta_2$ is a cycle and
$s(\beta_2)=s(c)$. Thus \[[(c,s(c)),\cycle{\beta}]=(s(c),
\beta_2)+(s(c), \beta_2)=0.\]

\item Finally, if $h$ is $\Gamma$-maximal, then
$(h,\beta^{c,s(c)})=(h,\beta_2)$. Since the presentation $(Q,I)$ is 
gentle and $Q$ is not a loop,  $h$ and $\beta_2$ begin with the same arrow.
Indeed, if $\beta_2$ is a vertex then $h$ is a cycle and $s(h)=s(c)$, so
either $h$ is a power of $c$ or $h$ is a complete cycle. That is not possible.

\end{itemize}
\end{proof}

Now, we treat the excluded cases. 

\begin{remark}
Suppose that the quiver $Q$ has exactly one vertex and one arrow~$a$. Using
Remark~\ref{rem:exception} we obtain:
\begin{itemize}

\item if $a^2\in I$ and the characteristic of~$\kk$ is not~$2$, then
$\mathscr{G}=\{(s(a),a),(a,a)\}$ and the only non-zero bracket between
elements of~$\mathscr{G}$ is
  \[
  [(a,a),(s(a),a)] = (s(a),a),
  \]
        
\item if either $a^2\notin I$ or $a^2\in I$ and $\kk$ is of characteristic
~$2$, then the generating set of~$\HH^*(A)$ is
$\mathscr{G}=\{(s(a),a),(a,s(a))\}$, and the only non-zero bracket between
elements of that set is 
  \[ 
  [(a, s(a)), (s(a),a)] = (s(a),s(a)).
  \]

\end{itemize}

Suppose now that $Q$ is the Kronecker quiver with arrows $a$ and $b$. Then
$$\mathscr{G}=\{(a,a), (a,b),(b,a)\}$$ and the non-zero brackets are
\begin{align*}
& [(a,a),(a,b)]=-(a,b),
& [(a,a),(b,a)]=(b,a),
& & [(b,a),(a,b)]=2(a,a).
\end{align*}	
\end{remark}

 We can summarize the previous results as follows.
 \begin{theorem}\label{thm:HH1Lie}
 Suppose that $Q$ is not the quiver with one loop and not the Kronecker quiver.
 \begin{enumerate}
\item If $(c,c)\in \HH^1(A)$ corresponds to an arrow $c$ in the complement of the spanning tree $T$ and $v\in \HH^n(A)$, then $[(c,c),v]=\deg_c(v)v$.
\item All other brackets amongst elements in~$\G$ are zero. 
\end{enumerate} 
Moreover, in (1) if $v$ is also in $\HH^1(A)$, then $\deg_c(v)$ is always $0$ or $1$, hence the Lie algebra structure of $\HH^1(A)$ does not  depend on 
$\charact(\kk)$.
\end{theorem}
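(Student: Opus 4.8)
My plan is to assemble the four preceding computations of this section, organising them by cohomological degree and exploiting that the Gerstenhaber bracket is graded antisymmetric, so that in $\HH^*(A)$ one has $[u,v]=-(-1)^{(\abs{u}-1)(\abs{v}-1)}[v,u]$; this lets me compute each bracket with the factors in whichever order is convenient. Part~(1) I would obtain at once from Proposition~\ref{nonzerobracket}, which gives $[(c,c),v]=\deg_c(v)\cdot v$ already at the cochain level for every $v\in\Gamma\parallel\cB$, hence for every class $v\in\HH^n(A)$; this is exactly the asserted formula.

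For part~(2) I would first read off from Proposition~\ref{prop:G} that the only generators in~$\G$ of cohomological degree~$1$, apart from the pairs $(c,c)$ with $c\in Q_1\setminus T$, are the pairs $(c,\alpha)$ with $c$ a $\Gamma$-maximal arrow that is neither the first nor the last arrow of~$\alpha$, and the cocycles $\cycle{c}=(c,s(c))$ with $c\in\Crepprim_1(\Gamma)$ (which forces $\charact\kk=2$); all remaining generators have degree different from~$1$. A bracket not already covered by~(1) is one between two generators neither of which is a pair $(c,c)$, and I would dispose of these in three cases. If both factors have degree $\neq1$, the proposition at the start of this section (that $\circ_\phi$-compositions of generators of degree $\neq1$ are coboundaries) shows their bracket is a coboundary, hence zero in cohomology. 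If exactly one factor has degree~$1$, I put it first: it is then either a $\Gamma$-maximal pair, handled by Proposition~\ref{zerobracket2}\thmitem{1}, or a cocycle $(c,s(c))$ with $c\in\Crepprim_1(\Gamma)$, handled by Proposition~\ref{zerobracket3}\thmitem{1}, and in both cases the bracket is a coboundary. If both factors have degree~$1$, the unordered pair is one of $(\Gamma\text{-maximal},\Gamma\text{-maximal})$, $(\Gamma\text{-maximal},(c,s(c)))$ or $((c,s(c)),(d,s(d)))$: the first two are coboundaries by the two bullet points of Proposition~\ref{zerobracket2}\thmitem{2}, and the last is zero by Proposition~\ref{zerobracket3}\thmitem{2}. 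Antisymmetry takes care of the opposite orderings, so every such bracket vanishes in~$\HH^*(A)$, giving part~(2). The hypotheses on~$Q$ match those of Propositions~\ref{zerobracket2} and~\ref{zerobracket3}, and the two excluded quivers are treated in the remark immediately preceding the theorem.

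For the final assertion I would evaluate $\deg_c(v)=\deg_c(\alpha)-\deg_c(\gamma)$ on each basis element $v=(\gamma,\alpha)$ of~$\HH^1(A)$ from Proposition~\ref{prop:hh:1}. On $(c',c')$ the two terms cancel and $\deg_c=0$; on a $\Gamma$-maximal pair $(d,\alpha)$ and on the loop generators $(b,s(b))$ the crucial point is that the strings appearing here traverse no arrow twice — any repeated arrow would exhibit a cocomplete subcycle — so each of $\deg_c(\alpha)$ and $\deg_c(\gamma)$ is~$0$ or~$1$ and one checks $\deg_c(v)\in\{0,1\}$, with the understanding that the generators $(b,s(b))$ occur only when $\charact\kk=2$, where the nominal value $-1$ is read as~$1$. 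Since the structure constants $\deg_c(v)$ of the bracket $[(c,c),v]=\deg_c(v)v$ thus lie in $\{0,1\}$ while all remaining brackets vanish by~(2), reducing modulo the characteristic neither annihilates a nonzero constant nor creates a new one, so the isomorphism type of the Lie algebra $\HH^1(A)$ is the same in every characteristic.

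The main obstacle is this last assertion. Parts~(1) and~(2) are, once the generators are sorted by degree and antisymmetry is invoked, essentially bookkeeping: the only care required is to match each degree configuration to the correct one of the four preceding propositions. The delicate point is instead the control of $\deg_c$ on the strings that actually appear in a basis of~$\HH^1(A)$, namely ruling out repeated arrows so that $\deg_c$ never exceeds~$1$, together with verifying that the loop generators present in characteristic~$2$ contribute~$1$ rather than an anomalous value; this is where the characteristic-independence of the Lie structure genuinely lives.
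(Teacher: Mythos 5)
Your parts (1) and (2) are correct and take the same route as the paper, whose proof is literally the one sentence ``this follows from Propositions~\ref{nonzerobracket}, \ref{zerobracket2} and \ref{zerobracket3}''. Your explicit bookkeeping by cohomological degree, invoking in addition the unnamed proposition on $\circ_\phi$-compositions of generators of degree different from~$1$ and graded antisymmetry to reorder the factors, is if anything more complete than what the paper records; the matching of each degree configuration to the right proposition is accurate.

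The gap is in your final paragraph, for which the paper offers no argument at all. First, the claim that the path $\alpha$ in a degree-one generator $(d,\alpha)$, with $d$ a $\Gamma$-maximal arrow, traverses no arrow twice is not justified by ``a repeated arrow would exhibit a cocomplete subcycle'': the presentation is only assumed gentle, not \fd gentle, so cocomplete cycles are permitted and their existence is not a contradiction. What closes the argument is the $\Gamma$-maximality of~$d$: if $\alpha=\alpha_3c\alpha_2c\alpha_1$, gentleness forces $\alpha$ to be a subpath of the periodic path $\cdots(c\alpha_2)(c\alpha_2)\cdots$, so $s(d)=s(\alpha)$ lies on the cocomplete cycle $c\alpha_2$; if $y$ is the arrow of that cycle ending at $s(d)$ and $g\neq d$ the one leaving it, then $gy\notin I$ forces $dy\in I$ by gentleness, and $d$ would not be $\Gamma$-maximal. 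Second, even granting $\deg_c(\alpha),\deg_c(\gamma)\in\{0,1\}$, the difference $\deg_c(v)=\deg_c(\alpha)-\deg_c(\gamma)$ can equal $-1$ outside the loop case you flag: take $c=\gamma=d$ with $d\in Q_1\setminus T$ $\Gamma$-maximal and $\alpha$ avoiding $d$, as for the \fd gentle presentation with arrows $a\colon 1\to2$, $b\colon 2\to 3$, $d\colon 1\to 3$ and $I=0$, where $(d,ba)$ is a basis element of $\HH^1(A)$ and $\deg_d(d,ba)=-1$. This imprecision is already present in the statement being proved and is harmless for the characteristic-independence conclusion, since structure constants in $\{0,\pm1\}$ never vanish upon reduction; but your ``one checks $\deg_c(v)\in\{0,1\}$'' does not go through as written. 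Finally, note that you enumerate only three of the four families of Proposition~\ref{prop:hh:1}: the elements $(c,c\delta)$ with $\delta\in\Crep(\cB)$ are basis elements of $\HH^1(A)$ but not generators in~$\G$, and for them $\deg_{c'}$ can be any integer $\geq -1$; the final assertion must therefore be read as a statement about $\G\cap\HH^1(A)$, and you should say so explicitly.
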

\begin{proof}
The theorem follows from Propositions  \ref{nonzerobracket}, \ref{zerobracket2} and
\ref{zerobracket3}.
\end{proof}

\section{The shifted Hochschild cohomology as a graded Lie algebra}
In this section, we will describe the Lie algebra structure of the Hochschild cohomology of a gentle algebra and exhibit some consequences of the results obtained when  
computing the Gerstenhaber bracket.

The Hochschild cohomology $\HH^*(A)$, for a finite dimensional algebra $A$, is a Lie algebra with Lie bracket given by the Gerstenhaber bracket. However, this bracket is not graded with respect to the cohomological degree. In order to obtain a graded Lie algebra, we need to consider ${\mathfrak g} = \HH^*(A)[1]$, given by the shifted cohomology spaces $\HH^n(A)[1] = \HH^{n+1}(A)$. The  following result on the graded Lie algebra structure of the Hochschild cohomology of a gentle algebra then follows from our computations above. 

For the following theorem, we fix a spanning tree $T$ of $Q$. 

\begin{theorem}\label{thm:Lie-cohomology}
Let $(Q,I)$ be a gentle presentation and $A = \kk Q/I$ where $Q$ is not a loop and not the Kronecker quiver and suppose the spanning tree $T$ has been chosen so that condition ($\star$) of page~\pageref{eq:star} is satisfied. Then there is a Lie algebra isomorphism 
\[{\mathfrak g} \cong \mathfrak{l}_1 \times \ldots \times \mathfrak{l}_n\]
where $n$ is the number of cycles of the graph underlying the quiver and for $K_i$ such a cycle, ${\mathfrak l}_i$ is a Lie subalgebra of $\mathfrak g$ belonging to one of the following four families. 
\begin{enumerate}
\item Suppose $K_i$ corresponds to an element of the form $(\gamma, \alpha) \in \kk(\Gamma || \cB)$ where either $\gamma$ is a $\Gamma$-maximal path and $\gamma$ and $\alpha$ do not start or end with the same arrow or $\gamma = s(\alpha)$ and $\alpha$ is a $\cB$-maximal path. Then ${\mathfrak l}_i$ is generated by the class $(\gamma, \alpha)$ and the classes of  elements of the form $(a,a)$ where $a$ belongs either to $\gamma$ or to $\alpha$ and $a$ is in the complement of the spanning tree. In particular, $${\mathfrak l}_i \simeq \langle (a,a) \rangle \ltimes \langle (\gamma, \alpha) \rangle.$$ 
\item Suppose $K_i$ corresponds to an element of the form $(\gamma, \alpha)$  where $\gamma$ is a complete cycle and $\alpha$ is a vertex in $\gamma$. Then  ${\mathfrak l}_i$ is generated by the non-zero classes $\cycle{\gamma}$ and $(a \gamma^k, a)$, for $k \in \mathbb Z_{>0}$, and the class of an element $(a,a)$ where  $a$ is in the complement of the spanning tree and where without loss of generality we assume that $a$ is the first arrow of $\gamma$. In particular, 
 \begin{align*} {\mathfrak l}_i & \simeq \langle (a,a) \rangle \ltimes  \langle \cycle{\gamma},  (a\gamma^k, a), k \geq 1 \rangle \\ & \simeq \langle (a,a) \rangle \ltimes ( \langle (a\gamma^k, a), k \geq 1  \rangle \ltimes \langle \cycle{\gamma^k},  k \geq 1\rangle).\end{align*} 
\item Suppose $K_i$ corresponds to an element of the form $(\gamma, \alpha)$  where  $\alpha$ is a cocomplete cycle and $\gamma$ a vertex in $\alpha$. Then  ${\mathfrak l}_i$ is generated by the non-zero classes $\cycle{\alpha^k}$  and $(a , a\alpha^k)$, for $k \in \mathbb Z_{>0}$, and the class of an element $(a,a)$ where $a$ is in the complement of the spanning tree and where without loss of generality we assume that $a$ is the first arrow of $\alpha$. In particular, \begin{align*} {\mathfrak l}_i & \simeq \langle (a,a) \rangle \ltimes  \langle \cycle{\alpha},  (a\alpha^k, a), k \geq 1 \rangle \\ & \simeq \langle (a,a) \rangle \ltimes ( \langle (a\alpha^k, a), k \geq 1  \rangle \ltimes \langle \cycle{\alpha^k},  k \geq 1\rangle).\end{align*} 
\item Suppose $K_i$ corresponds to none of the above then it only gives rise to an element of the form $(a,a)$ where $a$ is an arrow in $K_i$ which is in the complement of the spanning tree. In particular, it is abelian and we have
$$ {\mathfrak l}_i \simeq \langle (a,a) \rangle.$$ 
\end{enumerate}

\end{theorem}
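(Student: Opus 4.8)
The plan is to assemble the theorem from the bracket computations summarized in Theorem~\ref{thm:HH1Lie} together with the structural description of the generators $\G$ from Proposition~\ref{prop:G}, organized according to the circuits of the quiver. First I would observe that, under condition~\eqref{eq:star}, every element of $\Crepprim(\Gamma)$ and of $\Crepprim(\cB)$ passes through exactly one arrow of $Q_1\setminus T$, and more generally each cycle $K_i$ of the underlying graph contributes exactly one arrow $a\in Q_1\setminus T$. This is what lets me index the factors $\mathfrak{l}_i$ by the cycles $K_i$ and guarantees that the generators of $\G$ distribute among the $K_i$ without overlap in their $(a,a)$-parts.

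The key step is to check that the proposed decomposition is a decomposition of Lie algebras, i.e.\ that $[\mathfrak{l}_i,\mathfrak{l}_j]=0$ for $i\neq j$ and that each $\mathfrak{l}_i$ is closed under the bracket. For the vanishing of cross-brackets I would argue that by Theorem~\ref{thm:HH1Lie} the only nonzero brackets among basis elements are of the form $[(c,c),v]=\deg_c(v)\,v$, and $\deg_c(v)=0$ whenever the arrow $c$ (lying in the cycle $K_i$) does not appear in $v$; since under~\eqref{eq:star} the generators attached to distinct cycles involve disjoint sets of relevant arrows, all such cross-brackets vanish. For closure within $\mathfrak{l}_i$ I would read off, case by case, which generators of $\G$ are supported on $K_i$: in case~(1) a single clean or $\cB$-maximal generator together with the relevant $(a,a)$; in cases~(2) and~(3) the whole tower $\cycle{C^k}$ resp.\ $\cycle{\alpha^k}$ and the $(aC^k,a)$ resp.\ $(a,a\alpha^k)$, produced from $\cycle{C}$ resp.\ $\cycle{\alpha}$ and $(a,a)$ via the cup-product formulas of Proposition~\ref{prop:G}; and in case~(4) just the abelian $\langle(a,a)\rangle$.

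Next I would verify the internal semidirect-product structure of each $\mathfrak{l}_i$. The action of $(a,a)$ is, by Theorem~\ref{thm:HH1Lie}, scalar multiplication by $\deg_a(v)$ on each homogeneous generator $v$; computing $\deg_a$ on the towers $\cycle{C^k}$, $(aC^k,a)$ (and the $\cB$-analogues) gives the eigenvalues that exhibit $\langle(a,a)\rangle$ acting as a grading operator, so that $\mathfrak{l}_i$ splits as $\langle(a,a)\rangle\ltimes(\text{rest})$. The inner semidirect factorization in cases~(2) and~(3), namely $\langle(aC^k,a)\rangle\ltimes\langle\cycle{C^k}\rangle$, follows because all brackets \emph{among} the generators other than $(a,a)$ are zero by part~(2) of Theorem~\ref{thm:HH1Lie}, so these pieces are abelian ideals and the only nontrivial action is that of $(a,a)$; the displayed isomorphisms are then just a regrouping of this action. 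I would finish by noting that the count $n$ of factors equals the number of independent cycles of the underlying graph, which is exactly $\abs{Q_1}-\abs{Q_0}+1=\abs{Q_1\setminus T}$, matching the number of arrows outside the spanning tree.

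The main obstacle I expect is bookkeeping rather than conceptual: confirming that condition~\eqref{eq:star} genuinely forces each cycle of the graph to contribute a single, well-defined $\mathfrak{l}_i$ with the stated generators, and that the four families are mutually exclusive and exhaustive as $K_i$ ranges over all graph cycles. In particular I must check that a cycle $K_i$ supporting a $\Gamma$- or $\cB$-maximal generator (case~1) cannot simultaneously be a complete or cocomplete cycle (cases~2,~3), and that the degrees $\deg_a$ are computed consistently across the powers so the semidirect decompositions hold uniformly in $k$. These are direct consequences of gentleness and the preceding lemmas, but they require care to state cleanly, so the bulk of the written proof will be the explicit case analysis assigning each element of $\G$ to its cycle and invoking Theorem~\ref{thm:HH1Lie} for the brackets.
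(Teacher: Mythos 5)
Your proposal is correct and takes essentially the same approach as the paper: the paper in fact gives no explicit proof of this theorem, merely asserting that it ``follows from our computations above,'' i.e.\ from Theorem~\ref{thm:HH1Lie}, Proposition~\ref{prop:G} and the cup-product formulas, which is precisely the material you assemble. Your write-up supplies the case analysis the paper leaves implicit (the only point deserving extra care is that condition~\eqref{eq:star} constrains only $\Crepprim(\Gamma)$ and $\Crepprim(\cB)$, so the disjointness of the arrows of $Q_1\setminus T$ relevant to the $\Gamma$- and $\cB$-maximal generators must be checked separately, as you note in your final paragraph).
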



We now consider the connection of the dimensions of the Hochschild cohomology spaces and the AAG derived invariant defined in \cite{AAG}.

Let us first suppose that $\charact(\kk)\neq 2$. We know from 
\cite{Ladkani1} and \cite{Redondo-Roman}  that given a \fd gentle presentation $(Q,I)$ of $A$, \sloppy $\dim \HH^1(A)=
1- \chi(Q)+ \phi_A(1,1)$, where $\chi(Q)= |Q_0|- |Q_1|$ and given non-negative intergers $i,j$, we denote $\phi_A(i,j)$ the  AAG-invariant corresponding to the pair $(i,j)$ as defined in \cite{AAG}. It follows from our computations that If $\charact(\kk)= 2$, then the dimension of $ \HH^1(A)$ is  $1- \chi(Q)+ \phi_A(1,1) + \phi_A(0,1)$.

\begin{proposition}
Let $(Q,I)$ be a \fd gentle presentation and $A = \kk Q/I$ the associated algebra,  
 \begin{itemize}
\item if $\charact(\kk)\neq 2$, then we have 
\[\phi_A(1,1)= \dim [\HH^1(A), \HH^1(A)],\] 
so $\phi_A(1,1)$ equals the dimension of the derived Lie algebra $\HH^1(A)'$ of $\HH^1(A)$;
\item if $\charact(\kk)= 2$, then we have
\[\phi_A(1,1) + \phi_A(0,1)= \dim [\HH^1(A), \HH^1(A)].\] 
\end{itemize} 
\end{proposition}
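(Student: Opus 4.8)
The plan is to identify the derived subalgebra $[\HH^1(A),\HH^1(A)]$ explicitly from the bracket formula of Theorem~\ref{thm:HH1Lie} and then compare its dimension with the AAG-invariants by means of the dimension formulas for $\dim\HH^1(A)$ recalled just before the statement. The clean bracket formula of Theorem~\ref{thm:HH1Lie} requires $Q$ to be neither the single-loop quiver nor the Kronecker quiver, so I would first dispose of these two presentations using the explicit bracket computations recorded in the Remark following Proposition~\ref{zerobracket3} (the single-loop algebra satisfies both equalities; the Kronecker quiver, where the bracket $[(b,a),(a,b)]$ is non-zero, is genuinely exceptional and must be set aside), and carry out the main argument under the assumption that $Q$ is neither.

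First I would fix a spanning tree $T$ and recall from Proposition~\ref{prop:hh:1} the basis of $\HH^1(A)$; because $(Q,I)$ is \fd gentle there are no cocomplete cycles, so the basis splits into three families: (D) the diagonal classes $(c,c)$ with $c\in Q_1\setminus T$, (M) the classes $(c,\alpha)$ with $c$ a $\Gamma$-maximal arrow and $\alpha\in\cB$ parallel to $c$ that neither begins nor ends with $c$, and, only in characteristic $2$, (L) the classes $(b,s(b))=\cycle{b}$ attached to loops $b$ with $b^2\in I$. Since $\lvert D\rvert=\lvert Q_1\rvert-\lvert Q_0\rvert+1=1-\chi(Q)$, comparing $\dim\HH^1(A)=\lvert D\rvert+\lvert M\rvert+\lvert L\rvert$ with the cited identity $\dim\HH^1(A)=1-\chi(Q)+\phi_A(1,1)$ in characteristic $\neq2$ gives $\phi_A(1,1)=\lvert M\rvert$; as $\lvert M\rvert$ and $\phi_A$ do not depend on the characteristic, the characteristic-$2$ identity then forces $\phi_A(0,1)=\lvert L\rvert$. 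Hence it is enough to prove $\dim[\HH^1(A),\HH^1(A)]=\lvert M\rvert$ when $\charact(\kk)\neq2$ and $=\lvert M\rvert+\lvert L\rvert$ when $\charact(\kk)=2$.

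By Theorem~\ref{thm:HH1Lie} the only non-zero brackets of basis vectors are $[(c,c),v]=\deg_c(v)\,v$ with $c\in Q_1\setminus T$, so each $\mathrm{ad}_{(c,c)}$ is diagonal and $[\HH^1(A),\HH^1(A)]$ is precisely the span of those basis vectors $v$ with $\deg_c(v)\neq0$ for some $c\in Q_1\setminus T$. A class of type (D) has $\deg_d(c,c)=0$ for every $d$, so none occurs; a class $(b,s(b))$ of type (L) has $\deg_b(b,s(b))=-1\neq0$ with $b\in Q_1\setminus T$, so all of them occur. The heart of the matter, and the step I expect to be the main obstacle, is to show that every class of type (M) also occurs, i.e. that for such $(c,\alpha)$ there is some $d\in Q_1\setminus T$ with $\deg_d(\alpha)\neq\deg_d(c)$. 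I would prove this by a spanning-tree argument in $H_1$: viewing $c$ and the directed path $\alpha$ as integral $1$-chains, parallelism gives $\partial([c]-[\alpha])=0$, so $z\coloneqq[c]-[\alpha]$ lies in $Z_1(Q;\ZZ)$; in the basis of fundamental cycles $\{[C_d]:d\in Q_1\setminus T\}$ the coefficient of $[C_d]$ in $z$ equals the coefficient of the non-tree arrow $d$ in $z$, namely $\deg_d(c)-\deg_d(\alpha)$. If this vanished for all $d\in Q_1\setminus T$ then $z=0$ as a $1$-chain, forcing $\deg_e(\alpha)=\deg_e(c)=\delta_{e,c}$ for every arrow $e$ and hence $\alpha=c$, which contradicts that $\alpha$ does not begin with $c$.

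With this lemma the derived subalgebra equals the span of the families (M) and (L), which are linearly independent as part of a basis; therefore $\dim[\HH^1(A),\HH^1(A)]=\lvert M\rvert$ when $\charact(\kk)\neq2$ and $\lvert M\rvert+\lvert L\rvert$ when $\charact(\kk)=2$. Substituting $\phi_A(1,1)=\lvert M\rvert$ and $\phi_A(0,1)=\lvert L\rvert$ from the second paragraph yields the two claimed equalities, completing the proof once the single-loop case has been recorded from the Remark.
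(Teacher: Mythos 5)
Your argument is correct in substance but takes a genuinely different route from the paper's. The paper's proof works forwards from the combinatorial meaning of the AAG-invariants: it states that $\phi_A(1,1)$ counts the maximal pairs $(a,\alpha)$ and $\phi_A(0,1)$ the loops $b$ with $b^2\in I$, and then simply asserts that these are exactly the degree-one generators lying in $\HH^1(A)'$. You instead back-solve for $\phi_A(1,1)=\lvert M\rvert$ and $\phi_A(0,1)=\lvert L\rvert$ from the dimension identities quoted just before the statement, which sidesteps the AAG combinatorics entirely, and — more importantly — you actually prove the step the paper leaves implicit, namely that every class $(c,\alpha)$ of type (M) is hit by some $\mathrm{ad}_{(d,d)}$ with $d\in Q_1\setminus T$. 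Your fundamental-cycle argument in $Z_1(Q;\ZZ)$ is correct and is a real addition: parallelism makes $[c]-[\alpha]$ a cycle, its coordinates in the fundamental-cycle basis are read off from the non-tree edges, and vanishing of all of them would force $\alpha=c$. One small point you should make explicit: your lemma produces a nonzero \emph{integer} $\deg_d(c)-\deg_d(\alpha)$, whereas the bracket is computed in $\kk$; this is harmless here because in the \fd case a path in $\cB$ cannot traverse an arrow twice (that would produce a cocomplete cycle), so $\deg_d(v)\in\{0,\pm1\}$ — a fact also recorded in the final clause of Theorem~\ref{thm:HH1Lie} — but the sentence is needed.

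Concerning the exceptional cases: the single-loop presentation does follow your general pattern (there $T=\emptyset$, the type-(L) class is reached by $\mathrm{ad}_{(a,a)}$, and both equalities check out against the Remark). The Kronecker quiver, however, which you rightly flag and ``set aside'', is not merely an obstacle to your method: in characteristic different from $2$ the bracket $[(b,a),(a,b)]=2(a,a)$ recorded in the paper puts the diagonal class $(a,a)$ into $\HH^1(A)'$, so $\dim[\HH^1(A),\HH^1(A)]=3$ while $\dim\HH^1(A)=3$ and $1-\chi(Q)=1$ give $\phi_A(1,1)=2$. The stated equality therefore appears to fail for the Kronecker presentation in characteristic $\neq 2$ (it does hold in characteristic $2$, where $2(a,a)=0$). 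This is a defect of the statement rather than of your argument — the paper's own proof never addresses the Kronecker case and breaks there for the same reason — but your proof as written leaves that case open, and the proposition should either exclude it or be corrected.
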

\begin{proof}
For $A$ \fd gentle, the AAG-invariant $\phi_A(1,1)$ counts the number of classes of pairs $(a, \alpha)$, where $a$ is an arrow and $\alpha$ is a non zero path in $A$
of positive length, which is maximal, while $\phi_A(0,1)$ counts the number of classes of pairs $(b, s(b))$, where $b$ is a loop with source $s(b)$ and $b^2\in I$. As we have just computed, these are exactly those elements in~$\G$ of cohomological degree $1$ belonging to $\HH^1(A)'$ - notice that the pairs $(b, s(b))$ only appear in case the characteristic is $2$.
\end{proof}

Thus, the previous  proposition provides a good candidate for $\phi_A(1,1)$ when $A$ is infinite  dimensional gentle. Namely, set $$\phi_A(1,1)= \dim [\HH^1(A), \HH^1(A)].$$

\begin{corollary} Suppose $\charact (\kk)\neq 2$.
The number of arrows in $Q$ is a derived invariant of the algebra $A$.
\end{corollary}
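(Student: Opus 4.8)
The plan is to isolate the quiver's Euler characteristic $\chi(Q)=\abs{Q_0}-\abs{Q_1}$ in terms of quantities already known to be derived invariant, and then to recover $\abs{Q_1}$ by adding back the number of vertices. First I would invoke the dimension formula recalled just above the statement: for a \fd gentle presentation $(Q,I)$ with $\charact(\kk)\neq2$ one has $\dim\HH^1(A)=1-\chi(Q)+\phi_A(1,1)$. Feeding in the identity $\phi_A(1,1)=\dim[\HH^1(A),\HH^1(A)]$ of the preceding proposition and rearranging yields
\[
\abs{Q_1}-\abs{Q_0}
   = -\chi(Q)
   = \dim\HH^1(A) - 1 - \dim[\HH^1(A),\HH^1(A)].
\]
The right-hand side depends only on the Lie algebra $\HH^1(A)$, through its total dimension and the dimension of its derived subalgebra.

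Next I would argue that this right-hand side is a derived invariant. The Gerstenhaber bracket, and hence the whole Lie algebra structure carried by $\HH^1(A)$, is part of the Tamarkin--Tsygan calculus, which is invariant under derived equivalence by the theorem of Armenta and Keller quoted in Chapter~\ref{chapter:algebras}. Consequently both $\dim\HH^1(A)$ and $\dim[\HH^1(A),\HH^1(A)]$ are preserved, so $\abs{Q_1}-\abs{Q_0}$ is a derived invariant. Since gentle algebras are closed under derived equivalence, any \fd gentle algebra derived equivalent to $A$ again admits a \fd gentle presentation with $\charact(\kk)\neq2$, so the same formula applies to it and the comparison is legitimate.

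It then remains to observe that $\abs{Q_0}$ is itself a derived invariant. As $A$ is basic, $\abs{Q_0}$ is the number of isomorphism classes of simple $A$-modules, equivalently the rank of the Grothendieck group $K_0(\mathrm{mod}\,A)\cong K_0(D^b(\mathrm{mod}\,A))$; a triangle equivalence induces an isomorphism on Grothendieck groups, so this rank is preserved. Combining the two paragraphs, $\abs{Q_1}=(\abs{Q_1}-\abs{Q_0})+\abs{Q_0}$ is a derived invariant, as claimed. There is no serious obstacle here: the argument is pure assembly of already-established facts. The one point deserving attention is the characteristic hypothesis, since both the formula for $\dim\HH^1(A)$ and the identity $\phi_A(1,1)=\dim[\HH^1(A),\HH^1(A)]$ fail verbatim when $\charact(\kk)=2$ (an extra term $\phi_A(0,1)$ appears); because a derived equivalence is taken over a fixed ground field, both algebras share the same characteristic, and the hypothesis $\charact(\kk)\neq2$ applies uniformly.
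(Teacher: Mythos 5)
Your argument is correct and coincides with the paper's own proof: both rearrange the formula $\dim\HH^1(A)=1-(\abs{Q_0}-\abs{Q_1})+\dim\HH^1(A)'$, identify $\dim\HH^1(A)'$ with $\phi_A(1,1)$ via the preceding proposition, and use the derived invariance of $\dim\HH^1(A)$, of $\dim\HH^1(A)'$, and of $\abs{Q_0}$ (as the number of simples, equivalently the rank of the Grothendieck group) to isolate $\abs{Q_1}$. The only difference is that you spell out the sources of invariance (Armenta--Keller for the Lie structure, $K_0$ for the vertex count) where the paper simply calls them well known, and you correctly flag the role of the characteristic hypothesis.
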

\begin{proof}
We have already recalled that \sloppy
$$\dim \HH^1(A)= 1- (|Q_0|- |Q_1|) + \dim \HH^1(A)'$$ and \sloppy that  $\phi_A(1,1)= \dim \HH^1(A)'$ in the \fd gentle case. 
It is well known that $\dim \HH^1(A)$ and
$\dim \HH^1(A)'$ are derived invariants. Moreover, $|Q_0|$ is also a derived invariant of a gentle algebra, since it is the number of simple $A$-modules. In case $A$ is infinite dimensional gentle, $|Q_0|$ is also a derived invariant using $K$-theory arguments.
It follows from $\dim \HH^1(A)= 1- (|Q_0|- |Q_1|) +\dim \HH^1(A)'$ that  $|Q_1|$ is also a derived invariant of $A$.
\end{proof}

Now suppose that $\charact (\kk)$ is an odd integer $p$. Let $\cycle{C}$ be a primitive cycle of length $m$ that belongs to the basis of $\HH^n(A)$. Suppose first that $n$ is even and $m=n$. Let $c$ be an arrow in the complement of a spanning tree such that $c\in C$. We know that
\[[(c,c), \cycle{ C}]= \deg_c(\cycle{C})\cdot \cycle{C}= n\cdot \cycle{C}.\]
Now, if $p$ divides $n$, then $[(c,c), \cycle{C}]=0$. If not, then $p$ and $n$ are coprime and $[(c,c), \cycle{ C^p}]=0$.
In case $n$ is odd, the cycle $C$ is already a square, so $\deg_c( \cycle{C })=2n$. 
These comments can be summarized in the following corollary.

\begin{corollary}
Suppose that $\kk$ is a field of odd characteristic and that $(Q,I)$ is a finite dimensional gentle presentation of the algebra $A$. Let $(c,c)$ be such that $c$ is an arrow in the complement of the spanning tree $T$ and $\cycle{ C }$ be an element in the basis 
of $\HH^n(A)$ corresponding to a basic complete cycle.
Then  
\[[(c,c), \cycle{ C }]= \deg_c(\cycle{ C })\cdot \cycle{ C }.\]
From this, using different powers of the cycle and the fact that the Gerstenhaber bracket $[(c,c), -]$ is a
derivation with respect to the cup product, we can deduce the characteristic of $\kk$.  
 \end{corollary}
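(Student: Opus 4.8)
The plan is to read off the bracket formula directly from Proposition~\ref{nonzerobracket}, which already evaluates $[(c,c),\place]$ on every pair of $\Gamma\parallel\cB$, and then to extend it by linearity to the cocycle $\cycle{C}$. Recall from Theorem~\ref{thm:cohomology:basis}\ref{gen:Gamma-sum} that, with $m$ the length and $r$ the period of $C$,
\[
\cycle{C}=\sum_{i=0}^{r-1}(-1)^{im}\cdot(\rot^i(C),s(\rot^i(C))).
\]
First I would observe that rotating a cycle only permutes its arrows, so $\deg_c(\rot^i(C))=\deg_c(C)$ for every $i$; since the second component $s(\rot^i(C))$ is a vertex, each summand satisfies $\deg_c(\rot^i(C),s(\rot^i(C)))=\deg_c(s(\rot^i(C)))-\deg_c(\rot^i(C))=-\deg_c(C)$. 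Thus every pair occurring in $\cycle{C}$ is an eigenvector of $[(c,c),\place]$ with the \emph{same} eigenvalue, and Proposition~\ref{nonzerobracket} together with bilinearity of the bracket gives
\[
[(c,c),\cycle{C}]=-\deg_c(C)\cdot\cycle{C}.
\]
Writing $\deg_c(\cycle{C})$ for this common eigenvalue yields the asserted identity. Since the equality holds already at the level of cochains in $\kk(\Gamma\parallel\cB)$ and $\cycle{C}$ is a cocycle, it descends verbatim to the class in $\HH^*(A)$.

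Next I would compute $\deg_c(C)$ for a basic cycle by invoking gentleness. In a complete cycle each arrow is followed by the unique arrow forming a relation with it, so the whole cycle is determined by any one of its arrows via the successor rule; hence a primitive complete cycle traverses each of its arrows exactly once. Consequently, for $C\in\Crepprim(\Gamma)$ we have $\deg_c(C)=1$ when $C$ is itself primitive and $\deg_c(C)=2$ when $C$ is the square of a primitive cycle of odd length---the only two possibilities in odd characteristic. In either case $\deg_c(C)\in\{1,2\}$, which is coprime to the odd prime $p=\charact(\kk)$.

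For the final deduction I would pass to the powers of $C$. As $C\in\Crep(\Gamma)$, the cup product formula $\cycle{C}\smile\cycle{C}=\cycle{C^2}$ from our computation in Table~\ref{tbl:cup} gives $\cycle{C}^{\smile k}=\cycle{C^k}$ for all $k\geq1$, and each $\cycle{C^k}$ is a nonzero basis element of $\HH^{mk}(A)$ because $mk$ is even. Applying the formula of the first paragraph to $C^k$---or, equivalently, using that $[(c,c),\place]$ is a graded derivation of the cup product---yields
\[
[(c,c),\cycle{C^k}]=-k\,\deg_c(C)\cdot\cycle{C^k}.
\]
Since $\cycle{C^k}\neq0$, this bracket vanishes in $\HH^*(A)$ exactly when the scalar $k\,\deg_c(C)$ is zero in $\kk$, i.e.\ when $p\mid k\,\deg_c(C)$; as $\gcd(\deg_c(C),p)=1$, the least such positive $k$ is precisely $p$. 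Thus $\charact(\kk)$ is recovered as the smallest $k$ for which $[(c,c),\cycle{C}^{\smile k}]=0$, a quantity expressed purely through the cup product and the Gerstenhaber bracket.

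The main obstacle I anticipate is the combinatorial bookkeeping: pinning down $\deg_c(C)$ via the gentleness argument that a primitive complete cycle traverses each arrow exactly once, and guaranteeing that every power $\cycle{C^k}$ stays a nonzero basis element, which relies on the parity constraints of Theorem~\ref{thm:cohomology:basis} forcing basic $\Gamma$-complete cycles to have even length in odd characteristic. Once these are in place the extraction of $p$ is immediate, since only the vanishing of the integer scalar $k\,\deg_c(C)$ modulo $p$ is relevant.
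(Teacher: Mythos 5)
Your proposal is correct and follows essentially the same route as the paper's own proof: apply Proposition~\ref{nonzerobracket} termwise to $\cycle{C}$, note that the eigenvalue for $\cycle{C^{k}}$ scales linearly in $k$ while $\deg_c$ of the basic cycle is $1$ or $2$ (hence prime to the odd characteristic $p$), and recover $p$ as the least $k$ for which the bracket with $\cycle{C}^{\smile k}=\cycle{C^{k}}$ vanishes. You are in fact somewhat more careful than the paper on two points it glosses over --- the sign $\deg_c(\rot^i(C),s(\rot^i(C)))=-\deg_c(C)$ coming from the convention $\deg_c(\gamma,\alpha)=\deg_c(\alpha)-\deg_c(\gamma)$, and the verification that each $\cycle{C^{k}}$ stays a nonzero basis element of $\HH^{mk}(A)$ --- neither of which affects the conclusion.
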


 \begin{proof}
Let $\cycle{ C }$ be a basic complete cycle of length $m$ corresponding to an element in the basis 
of $\HH^n(A)$. We recall that  $n=m $ if $m$ is even  or $n =2m$ otherwise.
Suppose further that $c$ is an arrow in the complement of the spanning tree $T$ of $Q$ and that $c$ is an arrow in $C$. Then
 \[[(c,c), \cycle{ C^w }]= \deg_c(\cycle{ C^w })\cdot \cycle{ C^w }\]
 where $\deg_c(\cycle{ C^w }) = w$ where $w$ is equal to 1 if $m$ is even  and  $w$ is equal to $2$ if $m$ is odd. Moreover, 
 \[[(c,c), \cycle{ C^{wr} }]= \deg_c(\cycle{ C^{wr} })\cdot \cycle{ C^{wr}}\]
 where $\deg_c( \cycle{ C^{wr} })$ is equal to $r$  if $m$ is even or it is equal to $2r$ if $m$ is odd. 
 Now let $p$ be the smallest positive integer such that $[(c,c), \cycle{ C^{wp}}] =0$. Then the characteristic of $\kk$ is $p$.
\end{proof}




\chapter{Geometric surface interpretation of Hochschild (co)homology }
\label{chapter:geometric}

In this section we introduce the geometric model for 
gentle algebras following the ideas in \cite{HKK, LP, OPS, PPP} based on the
ribbon graph associated to a gentle algebra in \cite{S}. 

A \newterm{ribbon graph}\index{Ribbon graph} is  a finite, undirected graph with a
cyclic ordering on the set of edges adjacent to each vertex. A
\newterm{marked ribbon graph}\index{Marked ribbon graph} is a ribbon graph on which we have
additionally chosen, at some of the vertices , a pair of cyclically consecutive edges
incident to it. We make the convention that whenever we draw (a part of) a
ribbon graph in the plane, the cyclic ordering of the edges at an embedded
vertex is implicitly the clockwise one. 

If $(Q,I)$ is a gentle
presentation, then we 
construct  the marked ribbon graph $G$ associated to~$(Q,I)$ as follows:
\begin{itemize}

\item The edges of~$G$ are the vertices of~$Q$.

\item The vertices of~$G$ are 
\begin{itemize}

\item the  maximal non-zero paths in~$(Q,I)$ of finite length, that is, those paths~$q$
in~$Q$ such that for all arrows $a\in Q_1$ we have $aq$,~$qa\in I$, 

\item the trivial paths in~$Q$ corresponding to vertices~$v\in Q_0$ through
which passes exactly one maximal non-zero path of~$(Q,I)$, and

\item the infinite paths which are powers of a  cycle in $(Q,I)$.

\end{itemize}

\item An edge~$v$ of~$G$ is incident to a vertex~$q$ of~$G$ if and only if
the path~$q$ passes through~$v$  in Q.

\item The cyclic ordering of the edges adjacent to a vertex~$q$ of~$G$ is given by, 
 \begin{itemize}
\item if $q$ is of finite length, the cyclic closure of the linear ordering on the edges determined by the order
in which~$q$  viewed as a path in $Q$ visits them, and the marked pair of cyclically adjacent edges
adjacent to~$q$ is precisely the one closing the `cycle',
\item if $q$ is of infinite length, the cyclic order is determined by the order in which the cyclic path generating $q$ visits them. We note that such a vertex does not have any associated marked pair of edges.
\end{itemize}

\end{itemize}
Note that this makes sense: an edge of~$G$ is incident to exactly two
vertices of~$G$ precisely because the presentation $(Q,I)$ is   gentle.

From a marked ribbon graph~$G$ which is not the trivial graph with one
vertex and no edges we can construct a \newterm{marked ribbon
surface}\index{Marked ribbon surface} $S_G$ by a process of glueing as
follows. If $q$ is a vertex of~$G$ of valency~$n$, then  if $q$ corresponds to a path of
finite length the neighborhood of~$q$ in $G$ is as on the left side of Figure~\ref{finite
paths}.
 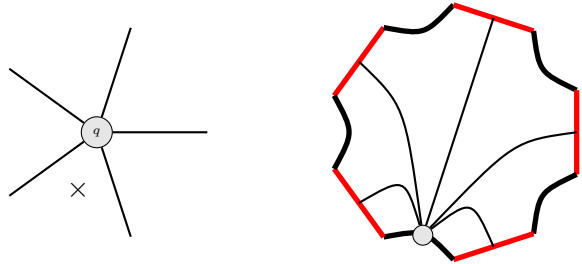
\begin{figure}[H]
  \[
  \begin{tikzpicture}[auto, thick, scale=0.8, font=\tiny,
        baseline={(0,0)}
        ]
  \node[cg] (0) at (0,0) {$q$};
  \foreach \i/\n in {2/a, 1/b, 0/c, -1/d, -2/x}
    {
    \node (x\i) at (\i*360/5:2) {};
    \draw (0) to (x\i);
    \ifnum\i>-2
    \else
      \node at (\i*360/5+360/5/2:1) {\large$\times$};
    \fi
    }
  \end{tikzpicture}
  \qquad\qquad
  \begin{tikzpicture}[auto, thick, scale=0.8, font=\tiny,
        baseline={(0,0)}
        ]
\foreach \i/\n in {2/a, 1/b, 0/c, -1/d, -2/x}
    {
    \coordinate (x\i) at (\i*360/5:2);
    \draw[red!100!black, line width=2pt] 
        ($(x\i)-(\i*360/5-90:0.7)$) 
        -- ($(x\i)+(\i*360/5-90:0.7)$) coordinate (q);
    \draw[line width=2pt] (q)
                .. controls (\i*360/5-360/5/2:1.7) 
                .. ($(\i*360/5-360/5:2)-(\i*360/5-360/5-90:0.7)$)
                coordinate (w)
                ;
    }
  \node[cg] (0) at (4*360/5-360/5/2:1.8) {};
  \foreach \i/\n in {2/a, 1/b, 0/c, -1/d, -2/x}
    {
    \draw[black, thick] (0) 
        .. controls ($(x\i)!0.5!(0) + (2*360/5-360/5:0.75)$)
        .. (x\i);
    }
  \end{tikzpicture} \]
   \caption{Neighborhood of {\newsib a vertex $q$ in the ribbon graph $G$ corresponding to} a finite path  and the corresponding polygon $P_q$. }
  \label{finite paths}
  \end{figure}

   If $q$ corresponds to a path of infinite length, then the neighbourhood of $q$ in $G$ is as on the left side of Figure~\ref{infinite paths}.
	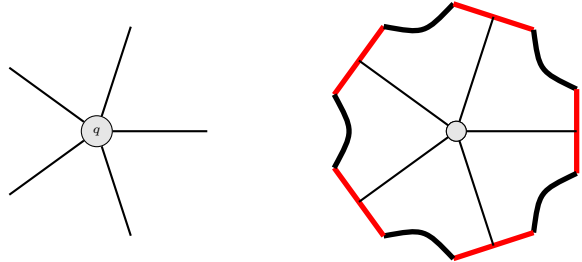
\begin{figure}[H]
	\[
	\begin{tikzpicture}[auto, thick, scale=0.8, font=\tiny,
	baseline={(0,0)}
	]
	\node[cg] (0) at (0,0) {$q$};
	\foreach \i/\n in {2/a, 1/b, 0/c, -1/d, -2/x}
	{
		\node (x\i) at (\i*360/5:2) {};
		\draw (0) to (x\i);
	}
	\end{tikzpicture}
	\qquad\qquad
	\begin{tikzpicture}[auto, thick, scale=0.8, font=\tiny,
	baseline={(0,0)}
	]
	\foreach \i/\n in {2/a, 1/b, 0/c, -1/d, -2/x}
	{
		\coordinate (x\i) at (\i*360/5:2);
		\draw[red!100!black, line width=2pt] 
		($(x\i)-(\i*360/5-90:0.7)$) 
		-- ($(x\i)+(\i*360/5-90:0.7)$) coordinate (q);
		\draw[line width=2pt] (q)
		.. controls (\i*360/5-360/5/2:1.7) 
		.. ($(\i*360/5-360/5:2)-(\i*360/5-360/5-90:0.7)$)
		coordinate (w)
		;
	}
	\node[cg] (0) at (0,0) {};
	\foreach \i/\n in {2/a, 1/b, 0/c, -1/d, -2/x}
	{
		\draw[black, thick] (0,0) 
		.. controls (0,0)
		.. (x\i);
	}
	\node[cg] (0) at (0,0) {};
	\end{tikzpicture} \]
	\caption{Neighborhood of  {\newsib a vertex $q$ in the ribbon graph $G$ corresponding to} an infinite path $q$ and the corresponding polygon $P_q$. }
	\label{infinite paths}
	\end{figure}
  
We then define a $2n$-gon $P_q$  as follows:  the $2n$ sides of $P_q$ are given by alternating black and red line segments where the $n$ red line segments are in one to one correspondence with the edges of $G$ incident to~$q$. ordered in the cyclic ordering at $q$.
If $q$ corresponds to a path of finite length in $(Q,I)$, then we add a marked point in the middle of the  (black) line segment of $P_q$ corresponding to the marking at~$q$. We then embed the
neighborhood of~$q$ in~$G$ as on the right side of Figure~\ref{finite paths}, connecting that
marked point to the middle points of the sides of~$P_q$ which correspond to
edges of~$G$. If $q$ corresponds to a path of infinite length in $(Q,I)$ then we embed the neighborhood of $q$ in $G$ as on  the right in Figure~\ref{infinite paths}. We construct the disjoint union $\bigsqcup_{q\in G}P_q$ and
identify, for each edge $v$ of~$G$ connecting vertices~$p$ and~$q$, the
sides of~$P_p$ and~$P_q$ corresponding to~$v$ with opposite orientations,
so that the resulting surface~$S$ 
is oriented. Clearly, this construction also gives us an embedding of the
graph~$G$ into~$S$  in such a way that the faces of $G$ correspond to the boundary components of $S$ and we refer to $S$ as the ribbon surface of $G$.

The point of these constructions is that in
\cite{OPS, PPP} it is shown that the isomorphism classes of  gentle algebras are in bijection with surface dissections as described above. Moreover,  it is shown in \cite{OPS}  that starting with a gentle  presentation~$(Q,I)$, if we
construct the corresponding marked ribbon graph~$G$ and from it the
corresponding ribbon surface $S$, with an embedding
of~$G$, then the indecomposable objects in the bounded derived category
$D^b(\rsmod{A})$ of the gentle algebra~$A=\kk Q/I$ are in bijection with
certain  homotopy classes of curves on~$S$  together with an indecomposable local system. For more information on this, the
papers~\cite{LP} and~\cite{HKK}  where  similar constructions in the context of partially wrapped Fukaya category of surfaces with stops and graded gentle algebras are considered, are also relevant. 

 Furthermore, in \cite{APS}, for every curve in the surface associated to a gentle algebra, a combinatorial winding number  is defined, see also \cite{LP} for another definition of a combinatorial winding number which coincides with the one in \cite{APS} on the set of closed curves. In order to define this winding number, we first define the dual graph $G^*$  of $G$ embedded in $S$ as follows. The vertices of $G^*$ correspond to marked points on the surface, placing one such marked point between any two marked points corresponding to vertices of $G$. Furthermore, if there is a boundary component with no marked points on it, it is sometimes convenient to replace it by a vertex of $G^*$ and treat it as an endpoint compactification of a puncture. The edges of $G$ are in bijection with the edges of $G^*$ where the bijection is given by sending an edge $v$ in $G$, to the unique edge of $G^*$ connecting two new marked points (corresponding to the vertices of $G^*$) and crossing $v$. In what follows, we will refer to the vertices of $G$ as marked points in the surface and if we need to refer to the marked points on $S$ corresponding to the vertices of $G^*$, we will explicitly say so. 

 We consider both open and closed curves in the surface. A \newterm{curve} in $S$ is a continuous map $\ssC:[0,1] \to S$.  We say that the 
curve is \newterm{closed} if $\ssC(0)= \ssC(1)$ and if $C(x) \in S \setminus \partial S$, for all $x \in [0,1]$, and that it is \newterm{open} otherwise. We only consider certain open curves, namely those such that $\ssC(0)$ and 
$\ssC(1)$ correspond to marked points associated to vertices of $G$ or to vertices of $G^*$ in the interior of $S$.  We note that when speaking about a curve, we usually identify it with its image in $S$. Furthermore, we always assume that the curves which we are considering are in minimal position with regards to  $G$ and $G^*$. 

With this we can define the combinatorial winding number 
$w(\ssC)$ of a curve $\ssC$. 
We first define, for all $n \in \mathbb Z$,  a \newterm{grading} on a curve $\ssC$ given by the function $f_n:  \ssC  \cap G^* \to \mathbb{Z}$, where the set $\ssC  \cap G^*$ is ordered by the direction of $\ssC$ defined as follows. 
Let $x_1, x_2 \in [0,1]$ be such that $\ssC(x_1) $ and $\ssC(x_2)$ are the  first and second crossing of $\ssC$ with $G^*$. Then by 
construction of $G^*$ the corresponding edges of $G^*$ are edges of a polygon with exactly one boundary segment $\ell$. Now the grading $f_n$  of $\ssC$ is defined by assigning the integer $n$ to $\ssC(x_1)$, that is $f_n(\ssC(x_1))=n$  and if when travelling along $\ssC$ from  
$\ssC(x_1)$ to $\ssC(x_2)$ the boundary segment $\ell$ lies to the right of $\ssC$ then $f_n(\ssC(x_2))=n+1$ and if $\ell$ lies to the left of $\ssC$ we set $f(\ssC(x_2))=n-1$. Propagating this along the whole of $\ssC$, defines the grading $f_n$. 

Given a curve $\ssC$, the \newterm{combinatorial winding number} $w(\ssC)$ is defined to be \sloppy $w(\ssC) =  f_n(\ssC(x_m))-n$ where for   $x_1, \ldots, x_m \in [0,1]$ the ordered set $\ssC  \cap G^*$ is given by $ \{\ssC(x_1), \ldots, \ssC(x_m)\}$ and where if $\ssC$ is a closed curve,  we fix $x_1$ to be any one of the intersections of  $\ssC$ and $G^*$ and let $x_m$, be the first $i \neq 1$ such that $\ssC (x_i) = \ssC(x_1)$. In particular, we note that $w(\ssC)$ is independent of $n$. 

 For every connected boundary component $B$ in $S$, we write $\ssC_B$ for a representative (in minimal position) of the homotopy class of curves homotopic to $B$. If the surface $S$ arises from a gentle algebra then we consider the vertices of $G$ in the interior of $S$ as (endpoint compactifications of) punctures. To distinguish the punctures corresponding to vertices of $G$ from the punctures corresponding to vertices of  $G^*$ arising from unmarked boundary components, we will call such a puncture a $G$-puncture.  For a $G$-puncture $P$ we denote by $\ssC_P$ the corresponding primitive closed curve and we set $w(\ssC_P) = 0$.  We note that this convention agrees with the winding number in terms of a line field associated to $(S,M)$  in \cite{HKK}, see also \cite{APS, LP}.   By construction of $(S,M)$, every curve $\ssC$ corresponds to a path in $\kk Q/I$, which we will denote by $p_\ssC$.   Furthermore,  for $(\gamma, \alpha)$ in $\kk (\Gamma\parallel\cB)$,    the winding number of the curve $\ssC$ associated to the path $p_\ssC = \gamma \alpha^{-1}$ is equal to $|\gamma| - |\alpha|$, that is, the negative of  the weight of the pair  $(\gamma, \alpha)$.

 \section{Geometric interpretation: cohomology, cup product and bracket} 
 Let $(Q,I)$ be a gentle presentation and $A = \kk Q/I$. Let $\G$ be the generating set of $\HH^*(A)$ described in Proposition~\ref{prop:G} in terms of cocycles in  $\kk(\Gamma\parallel\cB)$ and let $\cF \subset \G$ be a fixed set of derivations arising from the complement of a spanning tree of $Q$. Then we have the following result.

%
%
%
%
%

\begin{theorem}\label{thm:generators in gentle surface}
Let $(Q,I)$ be a gentle presentation and $A = \kk Q/I$.  Let $(S,M)$ be the marked surface induced by the ribbon graph $G$ of $A$. Then there is a one to one correspondence between the  generators of $\HH^*(A)$ in  $\G \setminus \cF$   and the set of  boundary components  with 0 or 1 marked point and $G$-punctures in $(S,M)$, that is punctures corresponding to vertices of $G$ in the interior of $S$. 

\end{theorem}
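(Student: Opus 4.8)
The plan is to set up an explicit dictionary between the combinatorial data that parametrises $\G\setminus\cF$ and the combinatorial data that parametrises the relevant features of $(S,M)$, and then to verify that this dictionary is a bijection, treating the four families of generators one at a time. By Proposition~\ref{prop:G}, after discarding the derivations $\cF=\{(c,c):c\in Q_1\setminus T\}$, the set $\G\setminus\cF$ consists of the pairs $(s(\alpha),\alpha)$ with $\alpha$ a $\cB$-maximal cycle; the sums $\cycle{\alpha}$ with $\alpha\in\Crepprim(\cB)$; the clean pairs $(\gamma,\alpha)$ with $\gamma$ a $\Gamma$-maximal path and $\gamma,\alpha$ sharing neither first nor last arrow; and the sums $\cycle{C}$ with $C\in\Crepprim(\Gamma)$. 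On the geometric side I would first record, from the ribbon-graph construction of this chapter together with \cite{OPS,PPP,APS}, the combinatorial description of the boundary components of $S$ and of the $G$-punctures in terms of $(Q,I)$: going around a boundary component one reads a cyclic alternation of maximal permitted paths, each carrying one marked point (the marking of the corresponding finite vertex of $G$), and of forbidden threads, so that the number of marked points on a boundary equals the number of finite maximal paths it meets, while the $G$-punctures are exactly the interior vertices of $G$, i.e.\ the vertices attached to infinite paths.

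I would then match the two ``closed'' families directly. The infinite-path vertices of $G$ are precisely the powers of cocomplete cycles, one interior vertex per primitive cocomplete circuit, which gives a bijection between $\{\cycle{\alpha}:\alpha\in\Crepprim(\cB)\}$ and the $G$-punctures, since $\Crepprim(\cB)$ is a set of representatives of the primitive cocomplete circuits. Dually, a boundary component carrying no marked point meets no finite maximal path, so it traces a purely forbidden closed thread, that is, a complete cycle; I would check that primitive complete circuits correspond bijectively to the unmarked boundary components, using that $\Crepprim(\Gamma)$ contains exactly one element per primitive complete circuit irrespective of $\charact(\kk)$ (the primitive cycle itself when its length is even, its square when it is odd), so that this matches $\{\cycle{C}:C\in\Crepprim(\Gamma)\}$.

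The core of the proof is the correspondence between the boundary components carrying exactly one marked point and the remaining two families, namely the $\cB$-maximal cycles $(s(\alpha),\alpha)$ and the clean pairs $(\gamma,\alpha)$. Here I would invoke the path–curve correspondence recalled at the end of the previous section, which assigns to $(\gamma,\alpha)\in\kk(\Gamma\parallel\cB)$ the curve $\ssC$ with $p_\ssC=\gamma\alpha^{-1}$ and winding number $|\gamma|-|\alpha|$. I would show that the curve attached to such a generator is homotopic to a boundary component bounding a region with exactly one marked point: for a $\cB$-maximal cycle the forbidden thread is trivial and the single marked point is the marking of the finite maximal path $\alpha$ itself, while for a clean pair $\gamma$ is the (nontrivial) forbidden thread of the boundary and $\alpha$ the complementary maximal permitted path. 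Conversely, from a boundary with one marked point I would read off its unique forbidden thread and its unique maximal permitted path, recovering either a $\cB$-maximal cycle or a clean pair, and check that gentleness together with $\Gamma$-maximality forces the cleanness condition and determines the completing path uniquely.

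The main obstacle I anticipate is exactly this last bijection: I must rule out over-counting, showing that a $\Gamma$-maximal $\gamma$ cannot be completed by two genuinely different clean permitted paths attached to the same boundary, and that no clean pair yields a curve homotopic to a boundary with two or more marked points (such classes being non-primitive and hence excluded from $\G$, which is precisely why the statement records \emph{$0$ or $1$} marked points). This requires a careful local analysis of how the boundary walk turns at each vertex of $Q$, using repeatedly that the presentation is gentle, so that at most two arrows meet on each side of a vertex and that $\Gamma$-maximal and $\cB$-maximal paths terminate exactly at relations. Finally I would dispose of the exceptional presentations by direct inspection, since their surfaces and generating sets are small enough to compare by hand: the one-loop quivers via Remark~\ref{rem:exception}, and the two-loop presentation singled out in Lemma~\ref{lemma:cap:7}.
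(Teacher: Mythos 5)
Your proposal follows essentially the same route as the paper's proof: it sets up the same dictionary (cocomplete circuits with $G$-punctures, complete circuits with unmarked boundary components, and $\cB$-maximal cycles together with clean pairs with boundary components carrying one marked point, the former being the degenerate case where the $\Gamma$-maximal path is trivial) and verifies both directions, with your ``forbidden thread'' reading of a boundary walk playing the role of the paper's maximal fans at the vertices $v$ of $G$ and $v^*$ of $G^*$. The approach is correct and matches the paper's argument.
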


\begin{proof}
We begin by showing that every generator $g$ of $\HH^*(A)$ in $\cG \setminus \cF$ as described in  Theorem~\ref{thm:cohomology:basis} can naturally be associated to either a boundary component with 0 or 1 marked point or a $G$-puncture.

Suppose that $g = (s(\alpha), \alpha) \in \HH^0(A)$ with $\alpha$ a $\cB$-maximal path. Then $\alpha = \alpha_m ....\alpha_1 $ is a cycle such that $\alpha_1 \alpha_m \in I$. Since $\alpha$ is $\cB$-maximal, no other arrow starts and ends at $s(\alpha)$. Since $A$ is connected, $G$ is connected and this implies that locally in  $S$, we have the following configuration


\begin{figure}[H]
	\[
	\begin{tikzpicture}[auto, thick, scale=1, font=\tiny,baseline={(0,0)}]
	
	\node[B] (0) at (0,0) {\textbf{B}};
	\node[b] (b) at ($(0)+(90:0.5)$) {};
	\node[r] (r) at ($(0)+(270:0.51)$) {};
	\node[font=\large] () at ($(0)+(45:1.8)$){$\ssC_B$};
	
	\draw (b) to node {} ($(b)+(70:1.1)$);
	\draw (b) to node {} ($(b)+(110:1.1)$);
	\draw[red] (r) to node {} ($(r)+(270:1.1)$);
	
	\draw[blue, thin, decoration={markings, mark=at position -0.1 with {\arrow{>}}},
		postaction={decorate}
		]
		($(0)+(90:0.16)$) circle (1.15);
	
	\draw[-] (b) to [out=20, in=0, looseness=2.6] node   {} ($(r)+(270:0.7)$);
	\draw[-] (b) to [out=160, in=180, looseness=2.6] node   {} ($(r)+(270:0.7)$);
	
	\node () at ($(b)+(140:0.6)$){$\alpha_1$};
	\draw[->, thin] (b)+(160:.4cm) arc (160:115:.4cm);
	
	\node () at ($(b)+(90:0.4)$){$...$};
	
	\node () at ($(b)+(40:0.6)$){$\alpha_m$};
	\draw[->, thin] (b)+(65:.4cm) arc (65:20:.4cm);
	
	\end{tikzpicture}
	\]
\end{figure}

It then follows from the definitions that $g$ corresponds to the boundary curve $\ssC_B$ with $w(\ssC_B) = -1$ and $p_{\ssC_B} = \alpha$. 

Suppose that $g=\cycle{\alpha} \in \HH^0(A)$ with $\alpha\in\Crepprim(\cB)$. Then $\alpha$ is a cycle in $Q$ and $\alpha \in \cB$. Thus it corresponds to a vertex $v$ of $G$. Then $v$ corresponds to a $G$-puncture $P$ in $(S,M)$ and $\alpha$ gives rise to the boundary curve $\ssC_P$, that is $w(\ssC_P)= 0$ and $p_{\ssC_P} = \alpha$. Locally, in $(S,M)$, this corresponds to the following configuration 


\begin{figure}[H]
	\[
	\begin{tikzpicture}[auto, thick, scale=1, font=\tiny,baseline={(0,0)}]
	
	\node[b] (0) at (0,0) {};
	\node () at ($(0)+(-90:0.2)$) {$P$};	
	\node[font=\large] () at ($(0)+(20:1.5)$){$\ssC_P$};
	
	\draw (0) to node {} ($(0)+(55:1.2)$);
	\draw (0) to node {} ($(0)+(90:1.2)$);
	\draw (0) to node {} ($(0)+(125:1.2)$);

	\draw[blue, thin, decoration={markings, mark=at position 0 with {\arrow{>}}},
	postaction={decorate}
	]
	(0) circle (1);
	
	\node () at ($(0)+(72:0.8)$){$\alpha_1$};
	\draw[->, thin] (0)+(87:.6cm) arc (87:58:.6cm);
	
	\node () at ($(0)+(108:0.8)$){$\alpha_m$};
	\draw[->, thin] (0)+(123:.6cm) arc (123:93:.6cm);
	
	\draw[dotted, thin] (0)+(10:.5cm) arc (10:-190:.5cm);
	
	\end{tikzpicture}
	\]
\end{figure}

Suppose now that $g =  (\gamma,\alpha) \in \HH^n(A)$ with $\gamma = \gamma_n \ldots \gamma_1$, where $\gamma_i \in Q_1$,  a  $\Gamma$-maximal element
of~$\Gamma$ and $\gamma$ and~$\alpha$ neither beginning nor ending with the
same arrow and $\alpha = \alpha_m \ldots \alpha_1 \in \cB$. Then first following $\gamma$ and then following $\alpha$ in the reversed order, traces out a path in $(S,M)$ which by construction  corresponds to a  closed curve $\ssC_{\gamma\alpha^{-1}}$ with $w(\ssC_{\gamma\alpha^{-1}}) = n-1$  and $p_{\ssC_{\gamma\alpha^{-1}}} = \gamma \alpha^{-1}$. Furthermore, $\alpha$ corresponds to a maximal fan $F$ at a vertex $v$ of $G$ with $m +1$ edges on some boundary component of $(S,M)$ and $\gamma$ corresponds to a maximal fan $F^*$ at a vertex $v^*$ of $G$ with $n+1$ edges of $G^*$ on the some boundary component. Since the first edge of $F$ (corresponding to $s(\alpha)$) intersects the first edge (corresponding to $s(\gamma)$) of $F^*$ and the last edge of $F$ (corresponding to $t(\alpha)$) of $F$ intersects the last edge (corresponding to $t(\gamma)$) of $F^*$,  by construction of $(S,M)$,  the vertices $v$ and $v^*$ correspond to marked points on the same boundary component $B$ and there are no other marked points on $B$. Thus $\ssC_{\gamma\alpha^{-1}}$ corresponds to the boundary curve $\ssC_B$ and  locally in $(S,M)$ we have the following configuration. 
\begin{figure}[H]
	\[
	\begin{tikzpicture}[auto, thick, scale=1, font=\tiny,baseline={(0,0)}]
	
	\node[B] (0) at (0,0) {\textbf{B}};
	\node[b] (b) at ($(0)+(90:0.5)$) {};
	\node[r] (r) at ($(0)+(270:0.51)$) {};
	\node[font=\large] () at ($(0)+(90:1.7)$){$\ssC_B$};
	
	\draw[-] (b) to [out=10, in=120, looseness=0.5] node   {} ($(b)+(-20:1.7)$);
	\draw (b) to node {} ($(b)+(30:1.5)$);
	\draw (b) to node {} ($(b)+(150:1.5)$);
	\draw[-] (b) to [out=170, in=60, looseness=0.5] node   {} ($(b)+(200:1.7)$);
	
	\draw[-,red] (r) to [out=-10, in=-120, looseness=0.5] node   {} ($(r)+(20:1.7)$);
	\draw[red] (r) to node {} ($(r)+(-30:1.5)$);
	\draw[red] (r) to node {} ($(r)+(-150:1.5)$);
	\draw[-, red] (r) to [out=-170, in=-60, looseness=0.5] node   {} ($(r)+(-200:1.7)$);
	
	\draw[blue, thin, decoration={markings, mark=at position 0.08 with {\arrow{>}}},
	postaction={decorate}
	]
	($(0)$) circle (1.3);

	\node () at ($(b)+(173:0.8)$){$\alpha_1$};
	\draw[->, thin] (b)+(182:.5cm) arc (182:153:.5cm);
	
	\node () at ($(b)+(90:0.4)$){$...$};
	
	\node () at ($(b)+(7:0.8)$){$\alpha_m$};
	\draw[->, thin] (b)+(27:.5cm) arc (27:-2:.5cm);
	\node () at ($(r)+(-173:0.8)$){$\gamma_1$};
	\draw[->, thin] (r)+(-182:.5cm) arc (-182:-153:.5cm);
	
	\node () at ($(r)+(-90:0.4)$){$...$};
	
	\node () at ($(r)+(-7:0.8)$){$\gamma_n$};
	\draw[->, thin] (r)+(-27:.5cm) arc (-27:2:.5cm);
	
	\end{tikzpicture}
	\]
\end{figure}

Finally suppose that $g=\cycle{C} \in \HH^{n}(A)$ with $C = c_n \ldots c_1 \in\Crepprim(\Gamma)$. Then $C$ gives rise to a $G^*$-puncture in $(S,M)$, which in turn corresponds to a boundary component $B$ with no marked points. If $C$ is a primitive cycle, then  the path traced out by $C$ in $(S,M)$  corresponds to the boundary curve $\ssC_B$ with $w(\ssC_B) = |C| $ and $p_{\ssC_B}= C$. When $C$ is not a primitive cycle, then it is a square of a primitive cycle $D$ and the path traced out by $D$ corresponds to the boundary curve $\ssC_B$, that is $p_D=\ssC_B$. Then $C$ corresponds to $\ssC^2_B$ with $p_{\ssC^2_B} = C$ and $w(\ssC^2_B) = |C| $. That is we have the following local configuration in $(S,M)$. 
 

\begin{figure}[H]
	\[
	\begin{tikzpicture}[auto, thick, scale=1, font=\tiny,baseline={(0,0)}]
	
	\node[r] (0) at (0,0) {};
	\node () at ($(0)+(90:0.2)$) {$P$};	
	\node[font=\large] () at ($(0)+(20:1.5)$){$\ssC_P$};
	
	\draw[red] (0) to node {} ($(0)+(-55:1.2)$);
	\draw[red] (0) to node {} ($(0)+(-90:1.2)$);
	\draw[red] (0) to node {} ($(0)+(-125:1.2)$);

	\draw[blue, thin, decoration={markings, mark=at position 0 with {\arrow{>}}},
	postaction={decorate}
	]
	(0) circle (1);
	
	\node () at ($(0)+(-72:0.8)$){$\gamma_1$};
	\draw[->, thin] (0)+(-87:.6cm) arc (-87:-58:.6cm);
	
	\node () at ($(0)+(-108:0.8)$){$\gamma_n$};
	\draw[->, thin] (0)+(-123:.6cm) arc (-123:-93:.6cm);
	
	\draw[dotted, thin, red] (0)+(-30:.5cm) arc (-30:210:.5cm);
	
	\end{tikzpicture}
	\]
\end{figure}

Conversely, suppose that $B$ is a boundary component with one marked point corresponding to a vertex $v$ of $G$. Then by construction,  this boundary also contains a vertex $v^*$ of $G^*$ and the maximal fan $F$ in $G$ at $v$ corresponds to a maximal path $\alpha \in \cB$ and the maximal fan $F^*$ in $G^*$ at $v^*$ corresponds to a maximal path $\gamma$ in $\Gamma_{|\gamma|}$ where $|\gamma| +1$ is equal to the number of edges of $F^*$. 
 Then the boundary curve $\ssC_B $ is homotopic to the curve $\ssC_{\gamma \alpha^{-1}}$, that is $p_{\ssC_B} =\gamma \alpha^{-1}$ and  $w(\ssC_B)=|\gamma|-1$. Since both $F$ and $F^*$ are maximal, we see that  $g = (\gamma, \alpha)$ is a generator in $\cG$ in degree $\HH^{|\gamma|}(A)$. We note in particular, that $F^*$ might only contain a single edge in which case $\gamma$ is a vertex and $|\gamma| = 0$. 

Let $P$ be a $G$-puncture in $(S,M)$. Then by definition the curve $\ssC_P$ has winding number $w(\ssC_P)=0$. By construction $p_{\ssC_P}$ is primitive cyclic path in $\cB$ and by Proposition \ref{prop:G} $\cycle{p_{\ssC_P}}$ is a generator of $\HH^*(A)$ in degree 0. 

Finally suppose $B$ is a boundary component with no marked points. By construction $B$ corresponds to a vertex $v^*$ of $G^*$ and the winding number $w(\ssC_B)$ of the boundary curve $\ssC_B$ is equal to the valency $n$ of $v^*$. Then $p_\ssC$ is a complete $\Gamma$ cycle and thus if $n$ is even or $\charact(\kk) =2$ then $\cycle{p_\ssC}$ is a generator in $HH^n(A)$ and if $n$ is odd then $\cycle{p^2_\ssC}$ is a generator in $\HH^{2n}(A)$.

\end{proof}

It follows from the proof of Theorem~\ref{thm:generators in gentle surface}  that the degree of a generator $g$ in $\G\setminus \cF$ of $\HH^*(A)$ is either given  by the sum of the  winding number of the corresponding boundary curve (or the square of the boundary curve) plus the number of marked points on  the associated  boundary component or,  in case $g$ corresponds to a curve around a puncture which by definition has winding number zero, then the degree of $g$ is again the winding number of this curve. More precisely, we have the following. 

\begin{corollary}
Let $(Q,I)$ be a gentle presentation and $A=\kk Q/I$. Given a generator $g \in \G\setminus \cF$, let $B$ be the corresponding boundary component or $G$-puncture in $(S,M)$ as in the proof of Theorem~\ref{thm:generators in gentle surface}. Then the degree of $g$ is $  w(\ssC_B) +b$, where $\ssC_B$ is the boundary curve of $B$ and $b$ is the number of marked points on $B$,  except if $B$ is an unmarked boundary corresponding to a vertex of $G^*$ with an odd number of edges and the characteristic of $\kk$ is not 2,  then the degree of $g$ is $ 2 w(\ssC_B)$.    
\end{corollary}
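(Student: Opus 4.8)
The plan is to revisit, case by case, the correspondence established in the proof of Theorem~\ref{thm:generators in gentle surface}, where for each of the four types of generator $g\in\G\setminus\cF$ both its cohomological degree and the winding number $w(\ssC_B)$ of the associated boundary curve were already recorded explicitly. For each case I would read off the number $b$ of marked points of $B$ --- recalling the convention that a \emph{marked point} of the surface is a vertex of $G$, so that vertices of the dual graph $G^*$ are \emph{not} counted by $b$ --- and check the identity $\deg g = w(\ssC_B)+b$, isolating the single situation in which it fails.

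The three elementary cases require only arithmetic. If $g=(s(\alpha),\alpha)\in\HH^0(A)$ with $\alpha$ a $\cB$-maximal path, then $\alpha$ determines a single $G$-vertex on $B$, so $b=1$, while the proof gives $w(\ssC_B)=-1$; hence $w(\ssC_B)+b=0=\deg g$. If $g=\cycle{\alpha}\in\HH^0(A)$ with $\alpha\in\Crepprim(\cB)$, then $B$ is a $G$-puncture carrying no marked points, so $b=0$ and $w(\ssC_B)=0=\deg g$. If $g=(\gamma,\alpha)\in\HH^n(A)$ with $\gamma$ a $\Gamma$-maximal path of length $n$, then $B$ carries exactly one $G$-vertex $v$ (the other distinguished point $v^*$ being a vertex of $G^*$ and therefore not counted), so $b=1$, while $w(\ssC_B)=n-1$; thus $w(\ssC_B)+b=n=\deg g$.

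The final case, $g=\cycle{C}$ with $C\in\Crepprim(\Gamma)$, is where the exception arises, and it is the step I expect to require the most care. Here $B$ is an unmarked boundary component corresponding to a vertex $v^*$ of $G^*$, so $b=0$ and the formula reduces to $\deg g=w(\ssC_B)$. When $C$ is itself a primitive $\Gamma$-complete cycle --- which, by the description of $\Crepprim(\Gamma)$, happens for every $C$ when $\charact(\kk)=2$, and for the even-length cycles otherwise --- the proof gives $w(\ssC_B)=|C|=\deg g$, as wanted. The genuine subtlety is the remaining possibility, available only when $\charact(\kk)\neq2$: then $\Crepprim(\Gamma)$ contains the squares $C=D^2$ of the primitive $\Gamma$-complete cycles $D$ of \emph{odd} length $r$, the class $\cycle{C}$ sits in degree $|C|=2r$, and the boundary curve $\ssC_B$ is the one traced by the primitive cycle $D$, so that $w(\ssC_B)=r$. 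Matching these against each other yields $\deg g=2r=2\,w(\ssC_B)$, which is precisely the stated exception, the valency of $v^*$ being the odd number $r$.

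The only real obstacle is bookkeeping: one must keep the $G$-marked points strictly separate from the $G^*$-vertices when computing $b$, and must track the degree-doubling for squares of odd primitive cycles carefully against the passage from $\ssC_B$ to $\ssC_B^2$. No new geometry is needed, since all the winding numbers used are exactly those established in the proof of Theorem~\ref{thm:generators in gentle surface} via the relation between the winding number of $\ssC_{\gamma\alpha^{-1}}$ and the weight $|\alpha|-|\gamma|$ of the pair $(\gamma,\alpha)$.
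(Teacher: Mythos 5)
Your proposal is correct and follows essentially the same route as the paper, which derives the corollary directly from the case-by-case analysis in the proof of Theorem~\ref{thm:generators in gentle surface}: in each of the four cases one reads off the degree, the winding number, and the count $b$ of $G$-vertices on $B$, and the only discrepancy occurs for $\cycle{C}$ with $C=D^2$ the square of an odd-length primitive complete cycle in characteristic different from~$2$. Your explicit care in counting only vertices of $G$ (and not of $G^*$) as marked points is exactly the convention the paper relies on, and your treatment of the exceptional case matches the paper's identification of $\cycle{D^2}$ with the curve $\ssC_B^2$.
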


In Table~\ref{Table:geometric}, we give the explicit bijective correspondence between the boundary components with zero or one marked point and the generators of $\HH^*(A)$. 
\strutlongstacks{T}
\begin{center}
\begin{table}[ht]\label{Table:geometric}
\begin{tabular}{|c|c|}
  \hline
  Boundary component &  Generator of $\HH^*(A)$ \\ 
  \hline
  &\\
	$
	\begin{tikzpicture}[auto, thick, scale=0.85, font=\tiny,baseline={(0,0)}]
	
	\node[B] (0) at (0,0) {\textbf{B}};
	\node[b] (b) at ($(0)+(90:0.5)$) {};
	\node[r] (r) at ($(0)+(270:0.51)$) {};
	
	\draw (b) to node {} ($(b)+(70:1.1)$);
	\draw (b) to node {} ($(b)+(110:1.1)$);
	\draw[red] (r) to node {} ($(r)+(270:1.1)$);
	\node () at ($(b)+(-100:1.9)$){$s(\alpha_1)$};

	
	\draw[-] (b) to [out=20, in=0, looseness=2.6] node   {} ($(r)+(270:0.7)$);
	\draw[-] (b) to [out=160, in=180, looseness=2.6] node   {} ($(r)+(270:0.7)$);
	
	\node () at ($(b)+(140:0.6)$){$\alpha_1$};
	\draw[->, thin] (b)+(160:.4cm) arc (160:115:.4cm);

	\node () at ($(b)+(90:0.4)$){$...$};
	
	\node () at ($(b)+(40:0.6)$){$\alpha_m$};
	\draw[->, thin] (b)+(65:.4cm) arc (65:20:.4cm);
	
	\end{tikzpicture}
	$
 & \small{$(s(\alpha), \alpha) \in \HH^0(A) $ where $\alpha= \alpha_m \ldots \alpha_1$ is $\Gamma$-maximal} \\ 
 &\\
  \hline
&\\
 $\begin{tikzpicture}[auto, thick, scale=0.85, font=\tiny,baseline={(0,0)}]
	
	\node[b] (0) at (0,0) {};
	\node () at ($(0)+(-90:0.2)$) {$P$};	
	
	\draw (0) to node {} ($(0)+(55:1.2)$);
	\draw (0) to node {} ($(0)+(90:1.2)$);
	\draw (0) to node {} ($(0)+(125:1.2)$);

	
	\node () at ($(0)+(72:0.8)$){$\alpha_1$};
	\draw[->, thin] (0)+(87:.6cm) arc (87:58:.6cm);
	
	\node () at ($(0)+(108:0.8)$){$\alpha_m$};
	\draw[->, thin] (0)+(123:.6cm) arc (123:93:.6cm);
	
	\draw[dotted, thin] (0)+(10:.5cm) arc (10:-190:.5cm);
	
	\end{tikzpicture}$  &  
	\Centerstack{
	 \small{$\cycle{\alpha}  = \sum_{i=0}^{r-1}(s(\rot^i(\alpha)),\rot^i(\alpha))  \in \HH^0(A) $} \\ \small{ where $\alpha= \alpha_m \ldots \alpha_1$ is a co-complete cycle} \\ 
	 \footnotesize{Note: if $\cycle{\alpha}$ is viewed as a simple closed curve around $P$,}\\ \footnotesize{ then  for all $k>1$, the non-zero element $\cycle{\alpha^k}$  in $\HH^0(A)$} \\ \footnotesize{ can be viewed as $k$-th power of the closed curve around $P$} } 
	 \\
&\\
  \hline
  &\\
  $
  \begin{tikzpicture}[auto, thick, scale=0.85, font=\tiny,baseline={(0,0)}]
	
	\node[B] (0) at (0,0) {\textbf{B}};
	\node[b] (b) at ($(0)+(90:0.5)$) {};
	\node[r] (r) at ($(0)+(270:0.51)$) {};
	
	\draw[-] (b) to [out=10, in=120, looseness=0.5] node   {} ($(b)+(-20:1.7)$);
	\draw (b) to node {} ($(b)+(30:1.5)$);
	\draw (b) to node {} ($(b)+(150:1.5)$);
	\draw[-] (b) to [out=170, in=60, looseness=0.5] node   {} ($(b)+(200:1.7)$);
	
	\draw[-,red] (r) to [out=-10, in=-120, looseness=0.5] node   {} ($(r)+(20:1.7)$);
	\draw[red] (r) to node {} ($(r)+(-30:1.5)$);
	\draw[red] (r) to node {} ($(r)+(-150:1.5)$);
	\draw[-, red] (r) to [out=-170, in=-60, looseness=0.5] node   {} ($(r)+(-200:1.7)$);
	

	\node () at ($(b)+(173:0.8)$){$\alpha_1$};
	\draw[->, thin] (b)+(182:.5cm) arc (182:153:.5cm);
	
	\node () at ($(b)+(90:0.4)$){$...$};
	
	\node () at ($(b)+(7:0.8)$){$\alpha_m$};
	\draw[->, thin] (b)+(27:.5cm) arc (27:-2:.5cm);
	\node () at ($(r)+(-173:0.8)$){$\gamma_1$};
	\draw[->, thin] (r)+(-182:.5cm) arc (-182:-153:.5cm);
	
	\node () at ($(r)+(-90:0.4)$){$...$};
	
	\node () at ($(r)+(-7:0.8)$){$\gamma_n$};
	\draw[->, thin] (r)+(-27:.5cm) arc (-27:2:.5cm);
	
	\end{tikzpicture}
$ & \Centerstack{ 
\small{$(\gamma, \alpha) \in \HH^n(A)$ where $\gamma$ is a $\Gamma$-maximal path with} \\ \small{ $\alpha$ and $\gamma$ not starting or ending with the same arrow.} 
 } \\
&\\
  \hline
  &\\

  $\begin{tikzpicture}[auto, thick, scale=0.85, font=\tiny,baseline={(0,0)}]
	
	\node[r] (0) at (0,0) {};
	\node () at ($(0)+(90:0.2)$) {$P$};	
	
	\draw[red] (0) to node {} ($(0)+(-55:1.2)$);
	\draw[red] (0) to node {} ($(0)+(-90:1.2)$);
	\draw[red] (0) to node {} ($(0)+(-125:1.2)$);

	
	\node () at ($(0)+(-72:0.8)$){$\gamma_1$};
	\draw[->, thin] (0)+(-87:.6cm) arc (-87:-58:.6cm);
	
	\node () at ($(0)+(-108:0.8)$){$\gamma_n$};
	\draw[->, thin] (0)+(-123:.6cm) arc (-123:-93:.6cm);
	
	\draw[dotted, thin, red] (0)+(-30:.5cm) arc (-30:210:.5cm);
	
	\end{tikzpicture}
$ &  \Centerstack{  \small{$\cycle{C} =  \sum_{i=0}^{r-1} (\rot^i(C),s(\rot^i(C))) \in \HH^{\varepsilon n}(A)$ } \\ \small{ where $C = D^\varepsilon$ and $D = \gamma_n \ldots \gamma_1$ is a complete cycle} \\ \small{ and $\varepsilon =1$ if $n$ is even or $\charact(\kk)=2$ and $\varepsilon =2$ otherwise.}}  \\
&\\
\hline
\end{tabular}
\vspace{0.5cm}\caption{Correspondence of boundary components with zero or one marked point and the generators of $\HH^*(A)$.}
\end{table}
\end{center}
\begin{remark}\label{rk:geom} (1)  \emph{ The geometric interpretation of the generators in $\cF$:}
From the construction of $G$ on $(S,M)$, we immediately see that an element in $(c, c)$ in $\cF$ corresponds to a curve $\ssC_{(c,c)}$ with $p_{\ssC_{(c,c)}} =c $ connecting two marked points on the boundary and that $w(\ssC_{(c,c)}) = 1$.

(2) \emph{ The geometric interpretation of the cup product:} Given the interpretation of elements in $\cF$ in (1), we note that any basis element in Theorem~\ref{thm:cohomology:basis} which is not a generator is a (cup) product of generators. It  follows from Theorem~\ref{thm:generators in gentle surface} and the description of the cup product in Chapter~\ref{chapter:cup} that it can be described by a curve obtained from concatenating the curves of the corresponding generators.

(3) \emph{ The geometric interpretation of the Gerstenhaber bracket:} Recall from Theorem~\ref{thm:HH1Lie} that the Gerstenhaber bracket of the generators of the Hochschild cohomology is almost always zero and that the only non-zero brackets arise from brackets of the form $[\cF, \cG\setminus \cF]$. More precisely, the bracket $[(c,c), v]$, for $(c,c) \in \cF$ and $v \in \cG \setminus \cF$ is non-zero if $\deg_c(v)$ is non-zero, that is $c$ appears in $v$ and in this case, 
$[(c,c), v] = \deg_c(v)v$. In terms of the geometric model, suppose that $(c,c)$ is given by an open curve   $\ssC_{(c,c)}$ with  $w(\ssC_{(c,c)}) = 1$.  The fact that $\deg_c(v)$ is non-zero then corresponds to the curve $\ssC_{(c,c)}$ and the curve associated to $v$ running parallel between two consecutive edges in a fan in the dual graph $G^*$ (embedded in $S$) and $\deg_c(v)$ counts the number of times the curves run parallel in such a way. 

\end{remark}

\begin{remark}
In \cite{OPS, LP} it was shown that the derived invariant for a \fd gentle algebra $A$  constructed by Avella-Alaminos and Geiss in \cite{AAG}, the AAG-invariant, is given by the boundary components of the marked surface $(S,M)$ associated to $A$. In particular, each boundary component gives to one non-zero entry for the AAG-invariant. On the other hand, we have seen that the Hochschild cohomology corresponds to  only boundary components with zero or one marked point. As a consequence, the Hochschild cohomology is a much weaker invariant than the AAG-invariant, since it is easy to construct two algebras that have the same number of boundary components with zero and one marked points, giving rise to isomorphic Hochschild cohomology but where the surfaces of the two algebras have different number of boundary components with more than one marked point. 
\end{remark}

\newpage
\section{Geometric interpretation of Hochschild homology} We now give a geometric interpretation of the basis of the Hochschild homology presented in Theorem~\ref{thm:basishomology} in the case of a  gentle algebra. We note that while the interpretation in the surface of the generators of the Hochschild cohomology as well as the cup product are very natural, for the interpretation of Hochschild homology there is not really a canonical choice to represent the basis elements. In what follows, we make a particular choice, but we note that other choices would also be possible.  Given this choice, the cap product can be interpreted as unwinding curves around $G^*$-punctures whereas the Connes differential, on the other hand, can be interpreted as winding a rotation of angle $2\pi/r$ around  each $G^*$-puncture of valence $r$, that is if the $G^*$-puncture corresponds  to a primitive complete cycle of length $r$. 

Recall that if $C$ is a complete circuit of period $r$ then $\bar  C$ is the $n$th power of some primitive cycle $\bar  C_{prim}$ of length $r$, that is $\bar  C = \bar C_{prim}^n$.  Furthermore,   $\bar  C_{prim}$ and hence also $ C$ corresponds to a $G^*$-puncture $P$ of valency $r$. In Figure~\ref{Figure:homologycurves} we define two curves $\ssC_n$ and $\ssC'_n$ associated to $\bar C$ corresponding to a particular choice of one of the $G^*$-edges incident  with $P$, namely such that the first intersection of $\ssC_n$ and $\ssC'_n$ with $G^*$ corresponds to the vertex $s(\bar C)$. 

\begin{figure}[H]
	\[
	\begin{tikzpicture}[auto, thick, scale=1, font=\tiny,baseline={(0,0)}]
	
	\node[r] (0) at (0,0) {};
	\node () at ($(0)+(240:0.2)$) {$P$};
	\node[b] (1) at ($(0)+(-90:1.5)$) {};
	\node[b] (2) at ($(0)+(-30:1.5)$) {};
	\node[b] (3) at ($(0)+(30:1.5)$) {};
	\node[b] (4) at ($(0)+(90:1.5)$) {};
	\node[b] (5) at ($(0)+(150:1.5)$) {};
	
	\draw[red] (0) to node [pos=0.8, right] {$3$} ($(0)+(-60:1.5)$);
	\draw[red] (0) to node [pos=0.8, above] {$2$} ($(0)+(0:1.5)$);
	\draw[red] (0) to node [pos=0.9, left] {$1$} ($(0)+(60:1.5)$);
	\draw[red] (0) to node [pos=0.8, left] {$r$} ($(0)+(120:1.5)$);
	
	\draw[black] (1) to [bend left] node {} (2);
	\draw[black] (2) to [bend left] node {} (3);
	\draw[black] (3) to [bend left] node {} (4);
	\draw[black] (4) to [bend left] node {} (5);
	
	\draw[dotted, thin, black] (0)+(160:1.5cm) arc (160:260:1.5cm);
	\draw[dotted, thin, red] (0)+(140:.33cm) arc (140:285:.33cm);

	\node (c) at ($(0)+(210:0.2)$) {};
	\draw[green] (4) to node [bend left] {} ($(c)+(10:0.6)$);
	\spiral[green](c)(270:10)(0.4:.6)[2];
	\spiral[green, dotted](c)(180:270)(0.4:.4)[0];
	\spiral[green](c)(110:180)(0.4:.4)[0];
	\draw[green] (3) to node {} ($(c)+(110:0.4)$);
	
	\node () at ($(1)+(270:0.6)$) {\large  $\ssC_n$ };
	
	
	\node[r] (0) at ($(0)+(0:5.5)$) {};
	\node () at ($(0)+(240:0.2)$) {$P$};
	\node[b] (1) at ($(0)+(-90:1.5)$) {};
	\node[b] (2) at ($(0)+(-30:1.5)$) {};
	\node[b] (3) at ($(0)+(30:1.5)$) {};
	\node[b] (4) at ($(0)+(90:1.5)$) {};
	\node[b] (5) at ($(0)+(150:1.5)$) {};
	
	\draw[red] (0) to node [pos=0.8, right] {$3$} ($(0)+(-60:1.5)$);
	\draw[red] (0) to node [pos=0.8, above] {$2$} ($(0)+(0:1.5)$);
	\draw[red] (0) to node [pos=0.9, left] {$1$} ($(0)+(60:1.5)$);
	\draw[red] (0) to node [pos=0.8, left] {$r$} ($(0)+(120:1.5)$);
	
	\draw[black] (1) to [bend left] node {} (2);
	\draw[black] (2) to [bend left] node {} (3);
	\draw[black] (3) to [bend left] node {} (4);
	\draw[black] (4) to [bend left] node {} (5);
	
	\draw[dotted, thin, black] (0)+(160:1.5cm) arc (160:260:1.5cm);
	\draw[dotted, thin, red] (0)+(140:.33cm) arc (140:285:.33cm);
	
	\node (c) at ($(0)+(210:0.2)$) {};
	\draw[green] (4) to node [bend left] {} ($(c)+(10:0.6)$);
	\spiral[green](c)(310:10)(0.4:.6)[2];
	\spiral[green, dotted](c)(180:310)(0.4:.4)[0];
	\spiral[green](c)(170:180)(0.4:.4)[0];
	\draw[green] (4) to node {} ($(c)+(170:0.4)$);
	
	\node () at ($(1)+(270:0.6)$) {\large  $\ssC_n'$ };
	
	\end{tikzpicture}
	\]
	\caption{Given a $G^*$-puncture $P$ of valency $r$,  the example on the left is a curve of the form $\ssC_n$ wrapping $n$ times around $P$ and the example  on the right is a curve of the form $\ssC'_n$, also wrapping $n$ times around $P$. }\label{Figure:homologycurves}
\end{figure}
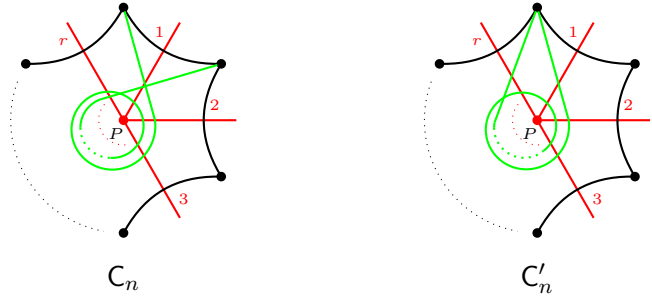

Note that the winding number of $\ssC_n$ is $w(\ssC_n) = nr$ and the winding number of $\ssC'_n$ is $w(\ssC'_n) = nr-1$.

\begin{theorem}\label{thm:geometric1}
Let $(Q,I)$ be a \fd gentle presentation and let $A\coloneqq\kk
Q/I$ be the finite dimensional algebra it presents with surface $(S,M)$ induced by the associated ribbon graph $G$. 
Assume further that the characteristic of $\kk$ is not equal to 2 and let $\cB$ be the basis of $\HH_*(A)$ in Theorem~\ref{thm:basishomology}. 
\begin{enumerate}
\item Let  $v$ be an edge in  $G$. Then $v$ is  an open curve of winding number  0 and it corresponds to the basis element $(e_v, e_v)$ in $\HH_0(A)$ where $e_v$ is the vertex in $Q_0$ corresponding to $v$. 
\item Let $P$ be a $G^*$-puncture of valency $r \geq 1$ and let $\ssC_n$ and $\ssC'_n$ be the corresponding curves of winding number $nr$ and $nr-1$ as defined in Figure \ref{Figure:homologycurves} for a fixed labelling of $G^*$. 
Then if $nr \geq2$, 
\begin{itemize} 
\item $\ssC_n$ corresponds to the basis element of $\HH_{nr}(A)$ of the form $
  \hcycle{\bar C}\coloneqq
  \sum_{i=0}^{r-1}(-1)^{(nr+1)i}\cdot
        \bigl(s(\rot^i(\bar C)), \rot^i(\bar C))$ if $(-1)^{(nr+1)r}=1$ in $\kk$ and 
\item $\ssC'_r$ corresponds to the basis element of $\HH_{nr-1}(A)$ of the form $(\lfact1{\bar C},\lfact2{\bar C})$ if 
$(-1)^{(nr-1)r}=1$ in $\kk$, 
\end{itemize}
 where in both cases $C$ is the complete  circuit of length $nr$ and of period $r$ corresponding to $P$.  
\end{enumerate}
Furthermore, every basis element in $\cB$ appears in this way. 
\end{theorem}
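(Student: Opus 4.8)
The plan is to set up an explicit bijective dictionary between the combinatorial data indexing the homology basis $\cB$ of Theorem~\ref{thm:basishomology} and the geometric data $(G^*$-punctures, their valencies, and the curves $\ssC_n,\ssC_n'$) on the surface $(S,M)$, and then to verify that under this dictionary the winding numbers match the homological degrees. Since we are in the \fd gentle case with $\charact(\kk)\neq 2$, the sign condition $(-1)^{(m+1)r}=1$ simplifies, and by Corollary~\ref{coro:homology:basis:tr} every basis element of $\HH_*(A)$ is attached either to a vertex of $Q$ (the $0$-cycles $(e,e)$) or to a complete circuit $C$ (the pair $\hcycle{\bar C}$ and $(\lfact1{\bar C},\lfact2{\bar C})$); there are no cocomplete circuits because the algebra is finite dimensional.

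First I would handle part (1), which is essentially a matter of unwinding the construction of $G$. An edge $v$ of $G$ is by definition a vertex $e_v\in Q_0$; viewed as a curve in $(S,M)$ it connects the two marked points (vertices of $G$) at its endpoints and crosses no edge of $G^*$ along the way, so its winding number is $0$ and the associated path $p_{\ssC}$ is the trivial path $e_v$. Matching this to the $0$-cycle $(e_v,e_v)$ in $\HH_0(A)$ is then immediate from the convention relating a pair $(\gamma,\alpha)$ to the curve $p_{\ssC}=\gamma\alpha^{-1}$.

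For part (2), I would fix a $G^*$-puncture $P$ of valency $r$, which by the construction of the dual graph $G^*$ corresponds to a primitive complete cycle $\bar C_{\mathrm{prim}}$ of length $r$; its $n$th power $\bar C=\bar C_{\mathrm{prim}}^n$ is the complete circuit of length $m=nr$ and period $r$ attached to $P$. The curve $\ssC_n$ of Figure~\ref{Figure:homologycurves} wraps $n$ times around $P$, and I would read off from the definition of the combinatorial winding number $w$ (counting the $\pm 1$ contributions as the curve passes the $r$ edges of $G^*$ incident to $P$, repeated $n$ times) that $w(\ssC_n)=nr$ and $w(\ssC_n')=nr-1$, exactly as stated just before the theorem. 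The heart of the argument is then to check that $\ssC_n$ carries the data of $\hcycle{\bar C}$ and that $\ssC_n'$ carries $(\lfact1{\bar C},\lfact2{\bar C})$: the starting $G^*$-edge is chosen so that the first crossing corresponds to $s(\bar C)$, and the successive crossings as the curve spirals around $P$ produce exactly the rotations $\rot^i(\bar C)$, reproducing the summands of $\hcycle{\bar C}$ with their signs $(-1)^{(nr+1)i}$; the truncated curve $\ssC_n'$, whose last arc stops one edge short, produces the single pair $(\lfact1{\bar C},\lfact2{\bar C})$. The sign conditions $(-1)^{(nr+1)r}=1$ and $(-1)^{(nr-1)r}=1$ select precisely when these are nonzero homology classes, in agreement with Theorem~\ref{thm:basishomology}.

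Finally I would argue surjectivity: every basis element in $\cB$ arises this way. Invoking Corollary~\ref{coro:homology:basis:tr} and the finite dimensionality of $A$, every class is either a $(e,e)$ (covered by part (1)) or is $\hcycle{\bar C}$ or $(\lfact1{\bar C},\lfact2{\bar C})$ for a complete circuit $C$; since every complete circuit is a power of a primitive one and primitive complete cycles biject with $G^*$-punctures of the corresponding valency, each such class is realized by some $\ssC_n$ or $\ssC_n'$. I expect the main obstacle to be the careful bookkeeping in the spiral step --- verifying that the geometric order in which $\ssC_n$ meets the $G^*$-edges incident to $P$ matches the algebraic rotation order $\rot^i(\bar C)$ and that the alternating-sign contributions to $w$ line up with the homological signs $(-1)^{(nr+1)i}$ --- rather than anything conceptually deep. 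Once the local picture at a single $G^*$-puncture is pinned down, the global statement follows from the bijections already established.
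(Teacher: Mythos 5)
Your proposal is correct and follows essentially the same route as the paper's proof: part (1) by unwinding the construction of $G$ and $G^*$, and part (2) by identifying each $G^*$-puncture with a primitive complete cycle of length equal to its valency, matching the curves $\ssC_n$ and $\ssC_n'$ to $\hcycle{\bar C}$ and $(\lfact1{\bar C},\lfact2{\bar C})$ via the paths they trace out, and then invoking Theorem~\ref{thm:basishomology} together with the absence of cocomplete circuits in the finite-dimensional case to see that every basis element is accounted for. One small correction: an edge of $G$ crosses exactly one edge of $G^*$ (its dual edge), not none; since the winding number is the difference of the grading between the first and last crossing, this still gives $w=0$, so the conclusion is unaffected.
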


\begin{proof}
Statement (1) directly follows from the fact that the vertices of $Q$ are in bijection with the edges of $G$ and that  by construction every edge of $G$ crosses exactly one edge of $G^*$ and therefore has winding number zero. 
For the statement of (2), note that the choice of the element $(\lfact1{\bar C},\lfact2{\bar C})$ (in its rotation equivalence class) corresponds to choosing a particular rotation of the corresponding cycle $\bar C$ in $Q$. Furthermore, by Theorem~\ref{thm:basishomology} we have that $\bar  C$ is the $n$th power of a primitive cycle of length $r$. Up to relabelling of the edges of $G$, the curve $\ssC_{\lfact2{\bar C}}$ defined by the path 
$\lfact2{\bar C}$ corresponds exactly to a  curve of the form   $\ssC'_n$ and has winding number $nr-1$. On the other hand, without loss of generality we can assume that $\bar C= \lfact2{\bar C}\lfact1{\bar C}$ and thus corresponds exactly to one of the cycles in the expression $\sum_{i=0}^{r-1}(-1)^{(nr+1)i}\cdot
        \bigl(s(\rot^i(\bar C)), \rot^i(\bar C))$. Thus the curve $\ssC_{\bar C}$ traced out by the path of $\bar C=\lfact2{\bar C}\lfact1{\bar C}$  corresponds exactly to the curve $\ssC_n$ and has winding number $nr$. Comparing with Theorem~\ref{thm:basishomology} we see that all basis elements are accounted for. 
\end{proof}

\begin{remark}
 Using the above geometric interpretation of the Hochschild homology and cohomology together with Theorem~\ref{thm:cap}, the cap product for a \fd gentle algebra can be interpreted as unwrapping or unwinding curves around $G^*$-punctures.  More precisely, in the case of a \fd gentle algebra $A$ given by a gentle presentation $(Q,I)$, we have the following three cases of cap products to consider (using the notation of Theorem~\ref{thm:cap}):
\setlength{\leftmargini}{0cm}  
\begin{itemize} 
\item Let~$c$ be an arrow in~$Q_1\setminus T$ where $T$ is a spanning tree of $Q$  let and let~$C$ be
a complete  circuit in which~$c$ appears. Suppose  that~$C$ is of length~$m$ and
period~$r$ and $(-1)^{(m+1)r}=1$ in~$\kk$. Then   $(c,c) \in \HH^1(A)$ and $\hcycle{\bar C} \in \HH_m(A)$, and we have 
  \begin{equation*}\label{Eq:GeomCap1}
   \hcycle{\bar C}\frown (c,c) = (-1)^{m+1}\cdot(\lfact1{\bar C},\lfact2{\bar C}) \in \HH_{m-1}(A).
  \end{equation*}
  According to Remark~\ref{rk:geom}(1) and Theorem~\ref{thm:geometric1} the Hochschild cocycle $(c,c)$ corresponds to an open curve of winding number one and the Hochschild cycle $\hcycle{\bar C}$ corresponds to a curve $\ssC_{n}$ starting and ending on two 'consecutive' marked points on the boundary and wrapping around a $G^*$-puncture $n=m/r$ times, where $r$ is the valency of the $G^*$-puncture (that is the period of $C$). The cap product of these two elements then corresponds to the curve $\ssC_{n}'$ as described in Theorem~\ref{thm:geometric1}(3), wrapping around the $G^*$-puncture $n$ times and starting and ending at the same marked point on the boundary. 
  
  
  \item Let $E$ be a primitive circuit in~$\Cprim(\Gamma)$ of length $r $ (that is $E$ corresponds to a $G^*$-puncture of valency $r$) and let $n  \geq w >0$  be such that $(-1)^{(nr+1)r}=1$. 
  Then $\hcycle{\bar E^r} \in \HH_{nr}(A)$  and $\cycle{\bar E^w} \in \HH^{rw}(A)$ and 
  \begin{equation*}\label{Eq:GeomCap2} \hcycle{\bar E^n}
  \frown \cycle{\bar E^w}
    = \begin{dcases*}
      \sum_{i=0}^{r-1}(-1)^i\cdot(s(\rot^i(\bar E)),s(\rot^i(\bar E))) \in \HH_0(A)
        & if $n=w$; 
        \\
      \hcycle{\bar E^{n-w}} \in \HH_{r(n-w)}(A)
        & if $n>w$.
      \end{dcases*}
  \end{equation*}
 Following  Theorem~\ref{thm:geometric1}, $\hcycle{\bar E^n} \in \HH_{nr}(A)$  corresponds to a curve of the form $\ssC_n$ wrapping $n$ times around the $G^*$-puncture corresponding to $E$ and by the proof of Theorem~\ref{thm:generators in gentle surface}, $\cycle{\bar E^w} \in \HH^{rw}(A)$ corresponds to a closed curve wrapping $w$ times around the same $G^*$-puncture. The cap product $\hcycle{\bar E^n}
  \frown \cycle{\bar E^w}$ then can be seen as corresponding the curve $\ssC_{n-w}$ which is obtained from $\ssC_n$ by unwinding it $w$ times if $n>w$ and as the (open) curve corresponding to the edge of the ribbon graph $G$ corresponding to the first edge of $G^*$ crossed by $\ssC_n$ (in the left picture of Figure~\ref{Figure:homologycurves} this would correspond to the edge labelled 1).  
 
If $n>w>0$ then we also have
  \begin{equation*}\label{Eq:GeomCap3}
  (\lfact1{{(\bar E^n)}},\lfact2{{(\bar E^n)}}) \frown \cycle{\bar E^w}
    = (\lfact1{(\bar E^{n-w})},\lfact2{(\bar E^{n-w})}) \in \HH_{(r(n-w))-1}(A),
  \end{equation*}
\end{itemize}
where $(\lfact1{{(\bar E^n)}},\lfact2{{(\bar E^n)}}) \in \HH_{nr-1}(A)$. Now by Theorem~\ref{thm:geometric1}, $(\lfact1{{(\bar E^n)}},\lfact2{{(\bar E^n)}})$ corresponds to a curve of the form $\ssC_n'$ that is wrapping $n$ times around the $G^*$-puncture corresponding to $E$ and $\cycle{\bar E^w}$ again corresponds to a closed curve winding $w$ times around the same $G^*$-puncture. Then the cap product  $(\lfact1{{(\bar E^n)}},\lfact2{{(\bar E^n)}}) \frown \cycle{\bar E^w}
    = (\lfact1{(\bar E^{n-w})},\lfact2{(\bar E^{n-w})})$ can be seen as the w-fold unwinding of $\ssC_n'$ resulting in the curve $\ssC_{n-w}'$. 
\end{remark}

Furthermore, it  follows directly from Theorem~\ref{thm:geometric1} and Proposition~\ref{prop:B} that the Connes differential can be interpreted in the following way.

\begin{corollary}
With the notation of Theorem~\ref{thm:geometric1},  the Connes differential $B: \HH_{nr-1}(A) \to \HH_{nr}(A)$ sends the element in $\HH_{nr-1}(A)$  corresponding to $\ssC'_n$ to the element in $\HH_{nr}(A)$  corresponding to $\ssC_n$. It thus can be interpreted as   wrapping the curve $\ssC'_n$ by a `further' angle of $2 \pi/r $ in the  anti-clockwise direction around the $G^*$ puncture giving rise to $\ssC'_n$.
\end{corollary}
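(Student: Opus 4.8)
The plan is to derive this corollary by transcribing the purely algebraic computation of the Connes differential in Proposition~\ref{prop:B} through the geometric dictionary set up in Theorem~\ref{thm:geometric1}; no genuinely new calculation is required, only a careful matching of the two descriptions of the same homology classes.

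First I would fix the $G^*$-puncture $P$ of valency $r$ and let $C$ be the complete circuit of length $m=nr$ and period $r$ that corresponds to $P$, so that $\bar C=\bar C_{\mathrm{prim}}^{\,n}$ with $\bar C_{\mathrm{prim}}$ a primitive cycle of length $r$. By Theorem~\ref{thm:geometric1}(2) the curve $\ssC'_n$, of winding number $nr-1$, represents the class of $(\lfact1{\bar C},\lfact2{\bar C})\in\HH_{nr-1}(A)$, while $\ssC_n$, of winding number $nr$, represents the class of $\hcycle{\bar C}\in\HH_{nr}(A)$. I would note that the two characteristic conditions in Theorem~\ref{thm:geometric1}, namely $(-1)^{(nr-1)r}=1$ for $\ssC'_n$ and $(-1)^{(nr+1)r}=1$ for $\ssC_n$, coincide, since they differ by the factor $(-1)^{2r}=1$, and that this common condition is precisely the hypothesis of the relevant case of Proposition~\ref{prop:B}. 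Applying that proposition to $C$ then yields $B(\lfact1{\bar C},\lfact2{\bar C})=\tfrac{m}{r}\,\hcycle{\bar C}$, and since $m=nr$ we have $\tfrac{m}{r}=n$, so $B$ carries the class represented by $\ssC'_n$ to $n$ times the class represented by $\ssC_n$; under the dictionary this is exactly the first assertion (up to the nonzero scalar $n$).

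For the geometric interpretation I would observe that passing from $\ssC'_n$ to $\ssC_n$ raises the winding number by exactly one, from $nr-1$ to $nr$. Since the $r$ red $G^*$-edges at $P$ cut a neighbourhood of the puncture into $r$ equal sectors of angle $2\pi/r$, crossing one further such edge, i.e.\ advancing the endpoint by one sector in the anti-clockwise sense, increases the winding number by $1$ and deforms the spiral $\ssC'_n$ into $\ssC_n$, as one reads off by comparing the two pictures of Figure~\ref{Figure:homologycurves}. The main obstacle, and really the only step needing care, is this final geometric matching: starting from the explicit constructions of $\ssC_n$ and $\ssC'_n$, one must verify that the single extra $2\pi/r$-rotation (and not some homotopically inequivalent modification) is exactly what realises the winding-number increment forced by $B$, and that the orientation implicit in the sign conventions of Proposition~\ref{prop:B} is indeed the anti-clockwise one around the $G^*$-puncture.
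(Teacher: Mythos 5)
Your proposal is correct and follows exactly the route the paper intends: the paper offers no separate argument for this corollary beyond asserting that it follows directly from Theorem~\ref{thm:geometric1} and Proposition~\ref{prop:B}, and that is precisely the dictionary-plus-computation you carry out, including the worthwhile check that the two parity conditions $(-1)^{(nr-1)r}=1$ and $(-1)^{(nr+1)r}=1$ agree. The only caveat is your parenthetical claim that the scalar $n=m/r$ is nonzero: in characteristic $p>2$ with $p\mid n$ the image under $B$ is actually zero, but the paper's own statement of the corollary glosses over this scalar in the same way, so this is not a defect of your argument relative to the source.
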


\backmatter

\bibliographystyle{amsalpha}
\bibliography{biblio}

\printindex[notations]
\printindex

\end{document}